\documentclass[a4paper,12pt]{amsart}
\usepackage{amsmath,amssymb,amsthm}
\usepackage{mathtools}
\usepackage{mathrsfs}
\usepackage{enumitem}
\usepackage{indentfirst}
\usepackage{microtype}
\usepackage[all]{xy}

\numberwithin{equation}{section}
\usepackage{stmaryrd}

\usepackage{xcolor}
\colorlet{darkgreen}{green!45!black} 
\usepackage[
  pagebackref=true,
  colorlinks=true,
  linkcolor=blue,
  citecolor=darkgreen,
  urlcolor=blue
]{hyperref}
\usepackage[nameinlink,capitalize]{cleveref}

\renewcommand*{\backref}[1]{}
\renewcommand*{\backrefalt}[4]{%
  \ifcase #1
    (not cited)
  \or
    (cited on p.~#2)
  \else
    (cited on pp.~#2)
  \fi
}

\makeatletter

\def\@settitle{%
  \begin{center}
    \baselineskip20\p@\relax
    \Large\bfseries
    \@title
  \end{center}
}

\def\@setauthors{%
  \begingroup
  \def\thanks{\protect\thanks@warning}
  \trivlist
  \centering
  \normalsize
  \@topsep30\p@\relax
  \advance\@topsep by -\baselineskip
  \item\relax
  \author@andify\authors
  \def\\{\protect\linebreak}
  \authors
  \ifx\@empty\contribs
  \else
    ,\penalty-3 \space \@setcontribs
    \@closetoccontribs
  \fi
  \endtrivlist
  \endgroup
}

\renewenvironment{abstract}{%
  \ifx\maketitle\relax
    \ClassWarning{\@classname}{Abstract should precede
      \protect\maketitle\space in AMS document classes; reported}%
  \fi
  \global\setbox\abstractbox=\vtop \bgroup
    \normalfont\Small
    \list{}{%
      \labelwidth\z@
      \leftmargin3pc
      \rightmargin\leftmargin
      \listparindent\normalparindent
      \itemindent\z@
      \parsep\z@ \@plus\p@
      
    }%
    \item[\hskip\labelsep\bfseries\abstractname.]%
}{%
  \endlist\egroup
  \ifx\@setabstract\relax
    \@setabstracta
  \fi
}

\def\section{\@startsection{section}{1}%
  \z@
  {.7\linespacing\@plus\linespacing}%
  {.5\linespacing}%
  {\large\normalfont\bfseries\centering}}

\makeatother

\newcommand{\Acon}{A_{\mathrm{con}}}

\newcommand{\Gam}{\Gamma}
\newcommand{\rad}{\operatorname{rad}}

\newcommand{\End}{\operatorname{End}}
\newcommand{\Hom}{\operatorname{Hom}}

\newcommand{\op}{\mathrm{op}}

\newcommand{\underlineEnd}{\underline{\operatorname{End}}}
\newcommand{\underlineHom}{\underline{\operatorname{Hom}}}

\newcommand{\DynA}{\mathsf{A}}
\newcommand{\DynD}{\mathsf{D}}
\newcommand{\DynE}{\mathsf{E}}
\theoremstyle{plain}
\newtheorem{theorem}{Theorem}[section]
\newtheorem{proposition}[theorem]{Proposition}
\newtheorem{lemma}[theorem]{Lemma}
\newtheorem{corollary}[theorem]{Corollary}

\theoremstyle{definition}

\theoremstyle{remark}
\newtheorem{remark}[theorem]{Remark}

\title{Representation type of contraction algebras}

\author{Qi Wang}
\email{wang2025@dlut.edu.cn (cc: infinite-wang@outlook.com)}
\address{School of Mathematical Sciences,
Dalian University of Technology, Dalian City, Liaoning Province 116024, China}

\subjclass[2020]{Primary 16G60; Secondary 14E30, 16G20}

\keywords{
contraction algebras,
representation type,
3-fold flops,
contracted preprojective algebras,
cDV singularities,
mutation
}

\date{October 24, 2026.}
\begin{document}
\setlength{\baselineskip}{16pt}

\maketitle

\begin{abstract}
We classify the representation-finite, tame, and wild contraction algebras arising from 3-fold flopping contractions of isolated cDV singularities with at worst Gorenstein terminal sources. To achieve this, we first determine the representation type of all contracted preprojective algebras of Dynkin types $\mathsf{A}$, $\mathsf{D}$, and $\mathsf{E}$, and then use the representation type invariance under iterated flops and the radical structure of contraction algebras.
\end{abstract}

\tableofcontents

\section{Introduction}
Contraction algebras arise from the birational geometry of 3-fold flops. Let $f\colon X\to\operatorname{Spec}R$ be a 3-dimensional flopping contraction in the complete local setting. As shown in \cite{VdB04}, the tilting-theoretic description of 3-dimensional flops associates to $f$ a noncommutative endomorphism algebra $\Lambda=\operatorname{End}_R(R\oplus N)$, where the summand $R$ records the non-exceptional part of the geometry. If $e\in\Lambda$ denotes the idempotent corresponding to $R$, the \emph{contraction algebra} is defined in \cite{DW16} by $\Acon:=\Lambda/\Lambda e\Lambda\cong\underline{\operatorname{End}}_R(N)$. Thus, $\Acon$ is obtained by removing the morphisms which factor through the distinguished projective summand, and in this sense isolates the algebraic information carried by the exceptional curves. It is shown in \cite{AugustMZ20,August20} that $\Acon$ is finite-dimensional and symmetric. Moreover, it represents the noncommutative deformation theory of the exceptional curves and controls the flop--flop autoequivalence, giving the contraction algebra a direct geometric meaning beyond its construction as a stable endomorphism algebra; see \cite{DW16,DW19,Wemyss18}.

Since its introduction, the structure of $\Acon$ has been studied from several complementary directions. On the geometric side, the dimensions of contraction algebras encode Gopakumar--Vafa invariants of simple flops as studied in  \cite{DW16,NW24}, while it is established in \cite{BWonGV} that $\Acon$ is a strictly finer invariant than the collection of these numerical invariants. This motivated the expectation, formulated in \cite{DW16} and addressed in \cite{JKM24}, that $\Acon$ should determine the local geometry of the flop. On the homological side, it is shown in \cite{AugustMZ20,August20} that mutation of minimal models is reflected by derived equivalences between their contraction algebras. The corresponding standard derived equivalences were constructed in \cite{August20}, where it was also shown that the mutation graph, together with its associated simplicial hyperplane arrangement, describes the derived equivalence class and controls the standard derived autoequivalences. In particular, it is known in \cite{August20} that the hyperplane arrangement can be recovered intrinsically from the two-term tilting theory of $\Acon$. It was subsequently shown in \cite{AW22} that this structure describes the full Bridgeland stability manifold: the same arrangement appears as the $g$-vector fan, and the stability manifold is its contractible universal cover. Further refinements, including derived and $A_\infty$ enhancements of the contraction algebra and descriptions by noncommutative potentials, are developed in \cite{JKM24,BW25,Zhang26}.

These developments reveal a remarkably homological and geometric structure. There is, however, a basic question from the classical representation theory of finite-dimensional algebras which has not been the organising focus of this theory. Namely, what is the representation type of $\Acon$? More precisely, when is $\Acon$ representation-finite, tame or wild? The purpose of this paper is to study this finite--tame--wild trichotomy systematically and to relate it to the birational and surface geometry from which the contraction algebra arises. We work over $\Bbbk=\mathbb C$ throughout the geometric part.

A first difficulty is that contraction algebras arise from the birational geometry of 3-folds, whereas the most accessible combinatorial information comes from a surface section. As recalled in \cite{Reid83,Reid97}, a sufficiently general hyperplane section of the cDV singularity is a Du Val surface singularity. The induced partial resolution determines a simply-laced Dynkin diagram $\Delta$ together with a subset $\Omega\subseteq\Delta_0$ corresponding to the exceptional curves. One obtains the associated contracted preprojective algebra $\Gam(\Delta,\Omega):=e_\Omega\Pi_\Delta e_\Omega$, as considered in \cite{IyamaWemyssTits,ChanIyamaMarczinzik}. It follows that the essential geometric input is the central quotient established in \cite{DW16,DW19,Wemyss18}, namely $\Acon/g\Acon\cong\Gam(\Delta,\Omega)$ for some $g\in Z(\Acon)\cap\rad\Acon$. Consequently, a wild surface quotient forces the contraction algebra to be wild. This is the first reduction in the paper.

The second difficulty is that the family of contracted preprojective algebras is still infinite. We overcome this by developing a uniform reduction strategy that combines quotient algebras, idempotent truncations, radical-square-zero quotients, and covering arguments to reduce the classification to smaller subdiagrams and low-rank cases. Together with the analysis of explicit boundary algebras, these methods yield the complete classification in types~$\DynA$, $\DynD$, and $\DynE$.

There is a third reduction which is specific to the 3-fold problem. It is shown in \cite{AugustMZ20,August20} that contraction algebras associated with different minimal models of the same isolated cDV singularity are derived equivalent. Since contraction algebras are symmetric, it follows from \cite{Rickard89,Krause97} that their classical representation type is constant under iterated flops. We may therefore test all surface quotients occurring in a mutation orbit: if one of them is wild, then every contraction algebra in that iterated-flop class is wild. 

The remaining lifting problem is governed by a common mechanism. The surface quotient determines the arrows between distinct vertices, while the subset-contraction correspondence in \cite{DW19} allows one-curve subset contractions to detect loops after suitable iterated flops. We then pass to a radical-square-zero quotient; whenever its separated quiver is already wild, no higher relations are needed. Only in the remaining boundary cases do we inspect the first radical multiplication not forced by the surface quotient, and in a few loopless cases, one further radical layer. This reduces the lifting problem to low-radical data and a finite number of boundary cases.

We have the following main result in this paper.
\begin{theorem}\label{thm:intro-main}
Let $f\colon X\to\operatorname{Spec}R$ be a complete local isolated cDV flopping contraction whose source has at worst Gorenstein terminal singularities, and let $\Acon$ be its contraction algebra. Let $(\Delta,\Omega)$ be the Dynkin data of a sufficiently general surface section, and write $n:=|\Omega|$ for the number of irreducible exceptional curves. Then, the representation type of $\Acon$ is determined as follows.
\begin{enumerate}
\item[\textup{(1)}] \textbf{One curve.} If $n=1$, then $\Acon$ is representation-finite if and only if its Gabriel quiver $Q$ has at most one loop, and it is wild if $Q$ has at least three loops. If there are exactly two loops in $Q$, the representation type of $\Acon$ is determined by the surface quotient and the quadratic radical multiplication, as specified in Proposition~\ref{prop:onecurve-classification}.

\item[\textup{(2)}] \textbf{Two curves.}
If $n=2$, then $\Acon$ is representation-finite if and only if
its surface data lie in one of the surviving type~$\DynA$
or type~$\DynD$ orbits and some iterated-flop representative
has a loopless Gabriel quiver.
In the remaining type~$\DynA$, type~$\DynD$, and exceptional
cases surviving the surface-quotient tests, the tame--wild
distinction is determined by the loop spaces and quadratic
radical multiplication in suitable iterated-flop representatives.
The precise criteria are given in
Propositions~\ref{thm:step3-A2-quadratic},
and~\ref{thm:two-curve-remaining-wild},
together with the mutation reductions in
Section~\ref{sec:flop-detection}.

\item[\textup{(3)}] \textbf{Three curves.} If $n=3$, then $\Acon$ is always representation-infinite. The tame cases occur precisely among the surviving type~$\DynA$ and type~$\DynD$ mutation orbits, subject to the explicit conditions of Propositions~\ref{thm:terminal-cA3-trichotomy} and~\ref{thm:terminal-D-rank3}; all other three-curve cases are wild.

\item[\textup{(4)}] \textbf{Four or more curves.} If $n\ge4$, then $\Acon$ is wild.
\end{enumerate}
\end{theorem}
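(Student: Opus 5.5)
The proof is an assembly of the three reductions sketched in the introduction, run case by case on $n=|\Omega|$. Two general facts come first.

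First, the surface quotient. There is $g\in Z(\Acon)\cap\rad\Acon$ with $\Acon/g\Acon\cong\Gam(\Delta,\Omega)$. Any representation of the quotient is a representation of $\Acon$, so wildness of $\Gam(\Delta,\Omega)$ forces wildness of $\Acon$. Likewise, representation-infiniteness of the quotient forces $\Acon$ to be representation-infinite.

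Second, flop invariance. By \cite{AugustMZ20,August20}, contraction algebras in one iterated-flop orbit are derived equivalent, and they are symmetric. Hence \cite{Rickard89,Krause97} give invariance of the finite/tame/wild type along the orbit. I may therefore replace $\Acon$ by whichever flop representative has the most convenient surface data or Gabriel quiver.

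With these in hand, I will invoke the classification of representation types of all contracted preprojective algebras $\Gam(\Delta,\Omega)$ in types $\DynA$, $\DynD$, $\DynE$, established earlier in the paper. This leaves only the surviving orbits, those in which every surface quotient is non-wild, and these have small $n$.

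For part (4), $n\ge4$, I check that every $(\Delta,\Omega)$ with $|\Omega|\ge4$ has some orbit member with wild $\Gam(\Delta,\Omega)$. The argument goes through idempotent truncation to a subdiagram with four vertices of $\Omega$, whose radical-square-zero separated quiver is already wild (non-Dynkin and non-Euclidean). For part (3), the quotient classification rules out all type~$\DynE$ data and most type~$\DynA$ and $\DynD$ data. On the survivors, representation-infiniteness follows from the quotient already being infinite, since a three-vertex contracted preprojective algebra with its double arrows has a non-Dynkin separated quiver. The tame/wild split is then deferred to Propositions~\ref{thm:terminal-cA3-trichotomy} and~\ref{thm:terminal-D-rank3}.

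For parts (1) and (2), the loops are not visible in $\Gam(\Delta,\Omega)$, and I detect them through the subset-contraction correspondence of \cite{DW19}. Once the Gabriel quiver $Q$ of a chosen flop representative is known, I pass to $\Acon/\rad^2\Acon$ and use the separated quiver test.

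For $n=1$, $\Acon$ is local and symmetric. With no loop it is $\Bbbk$; with one loop it is $\Bbbk[x]/(x^m)$, which is finite. With two loops, the separated quiver is the Kronecker quiver, so the algebra is infinite. With three or more loops the separated quiver is wild, hence $\Acon$ is wild. The two-loop case is tame or wild according to the quadratic relation, compared against the known local tame symmetric algebras, which gives Proposition~\ref{prop:onecurve-classification}.

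For $n=2$, if some representative is loopless, the surviving type~$\DynA$ and $\DynD$ data give a Brauer-graph-like symmetric algebra of finite type. Conversely, loops in every representative give a separated quiver of non-Dynkin shape, so the algebra is infinite. The remaining boundary cases are settled by Propositions~\ref{thm:step3-A2-quadratic} and~\ref{thm:two-curve-remaining-wild}.

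The main obstacle is these lifting boundary cases. There the radical-square-zero quotient is only Euclidean, so the decision depends on the first radical layer not forced by the surface quotient. I would handle them by exhibiting, in suitable flop representatives, explicit quotients or coverings that are either wild or of known tame type.
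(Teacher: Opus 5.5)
Your overall architecture (surface quotient, flop invariance, then lifting) matches the paper's, but part~(4) has a genuine gap.

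\textbf{Part (4).} The claim that every $(\Delta,\Omega)$ with $|\Omega|\ge4$ has an iterated-flop member with wild $\Gam(\Delta,\Omega)$ is false. It contradicts the Step~1/Step~2 results you invoke.

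\begin{itemize}
\item In type~$\DynA$, flops only permute the parameter sequence (Lemma~\ref{lem:A-gap-mutation}). So the orbits with $r=4$, $h(\mathbf d)\le1$ and with $r=5$, $h(\mathbf d)=0$ contain no wild surface quotient.
\item For example, $\mathbf d=(1,1,1,1,1)$ is fixed by mutation, and its surface quotient $\Pi_{\DynA_4}$ is representation-finite.
\item Likewise, $\Pi_{\DynA_5}$ and the full $\DynD_4$ marking with surface quotient $\Pi_{\DynD_4}$ are fixed by mutation and tame.
\end{itemize}

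Your proposed mechanism does not rescue this. The radical-square-zero quotient of the doubled $\DynA_4$ (resp.\ $\DynD_4$) quiver has separated quiver $\DynA_4\sqcup\DynA_4$ (resp.\ $\DynD_4\sqcup\DynD_4$), which is Dynkin.

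What is missing is a lifting argument for these surviving orbits, as in Proposition~\ref{prop:step3-higher-rank}:
\begin{itemize}
\item If some one-curve subset contraction algebra $\Acon/\Acon(1-e_i)\Acon$ in the orbit is nontrivial, move it by mutation to an interior (resp.\ central) vertex. The resulting loop, together with the doubled arrows, makes $\Acon/\rad^2\Acon$ wild.
\item Otherwise the loopless representative has no quadratic defining relations, so $\Acon/\rad^3\Acon\cong\Bbbk Q/J^3$. Its $\mathbb Z$-graded covering then contains a wild hereditary convex subcategory. In type~$\DynA$ this is a $4$-cycle with two leaves; in type~$\DynD_4$ it is the graph joining $2_0$ and $2_2$ to each of $1_1,3_1,4_1$.
\end{itemize}

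\textbf{Parts (3) and (2).} The same confusion affects your other representation-infiniteness claims.

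In part~(3), the doubled $\DynA_3$ quiver has separated quiver $\DynA_3\sqcup\DynA_3$, which is Dynkin. Moreover, for type~$\DynA$ orbits with $h(\mathbf d)\le1$, every surface quotient is representation-finite. So representation-infiniteness there has to come from the lifted structure, not from $\Gam(\Delta,\Omega)$: either loops from nontrivial subset factors, or the presentation of the loopless representative used in Proposition~\ref{thm:terminal-cA3-trichotomy}.

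In part~(2), the claim that ``loops in every representative give a non-Dynkin separated quiver'' fails when there is exactly one loop. For $1\rightleftarrows2$ with a single loop, the separated quiver is of type~$\DynA_4$. This case needs the separate two-point argument of Proposition~\ref{thm:step3-D-A2}.
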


We now describe the organisation of the paper. Section~\ref{sec:prelim} fixes the conventions and collects the representation-theoretic and geometric tools used throughout. Section~\ref{sec:A-stable} carries out Step~1 in type~$\DynA$, beginning with the uniform stable-endomorphism model and ending with the complete classification of all $\Gam(\DynA_m,\Omega)$. Section~\ref{sec:DE} completes Step~1 for types~$\DynD$ and~$\DynE$. Section~\ref{sec:flop-detection} studies the effect of iterated flops on the surface data and isolates the mutation classes which survive to Step~3. Finally, Section~\ref{sec:lifts} applies the unified lifting mechanism above to the remaining one-, two-, three-, and higher-rank cases, completing the classification.

\section{Preliminaries}\label{sec:prelim}
Let $\Bbbk$ be an algebraically closed field and let $A$ be a finite-dimensional basic $\Bbbk$-algebra. All modules are right modules unless otherwise stated, and we denote by $\operatorname{mod}A$ the category of finitely generated right $A$-modules. We write $Z(A)$ for the center of $A$ and $\rad A$ for the Jacobson radical of $A$. Fix a complete set $\{e_i\mid 1\leq i\leq n\}$ of primitive orthogonal idempotents and put $P_i:=e_iA$. With our convention, an irreducible homomorphism $P_i\to P_j$ corresponds to an arrow $j\to i$ in the Gabriel quiver of $A$. We use the traversal-order convention for paths: if $\alpha:j\to i$ and $\beta:k\to j$, then $\beta\alpha:k\to i$. Under the identification $\operatorname{Hom}_A(P_i,P_j)\cong e_jAe_i$, if homomorphisms $f:P_i\to P_j$ and $g:P_j\to P_k$ correspond to $\alpha$ and $\beta$, respectively, then the composition $g\circ f$ corresponds to the path $\beta\alpha$.

Recall that $A$ is \emph{representation-finite} if $\operatorname{mod}A$ contains only finitely many isomorphism classes of indecomposable modules; otherwise, $A$ is said to be \emph{representation-infinite}. A representation-infinite $\Bbbk$-algebra $A$ is said to be \emph{tame} if, for each dimension $d$, all but finitely many $d$-dimensional indecomposable $A$-modules can be organized into finitely many one-parameter families, and it is called \emph{wild} if there exists a representation embedding $\operatorname{mod}\Bbbk\langle x,y\rangle\to\operatorname{mod}A$, that is, an exact $\Bbbk$-linear functor which preserves indecomposability and reflects isomorphism classes. It is well-known from \cite{Drozd80} that every representation-infinite algebra over an algebraically closed field is either tame or wild. Since representation type is invariant under Morita equivalence, we shall freely replace an algebra by its basic algebra.

A basic algebra $A$ is \emph{self-injective} if the regular module $A_A$ is injective, and is \emph{symmetric} if $A\cong DA$ as $A$--$A$-bimodules, where $D(-):=\operatorname{Hom}_{\Bbbk}(-,\Bbbk)$ is the standard $\Bbbk$-duality. Every symmetric algebra is self-injective. A derived equivalence between self-injective algebras induces a stable equivalence (see \cite[Corollary~2.2]{Rickard89}), and stable equivalence preserves the finite--tame--wild representation type; see \cite[Corollaries~3.4--3.5]{Krause97}. It follows that representation type is a derived invariant for symmetric algebras. We note, however, that representation type is not preserved by derived equivalence for arbitrary algebras.

We shall repeatedly use several standard tools for determining representation type. We begin with the degeneration technique, which will be used to establish tameness. For a fixed $d$-dimensional vector space $V$, let $\operatorname{Alg}_d(\Bbbk)$ denote the affine variety of associative unital $\Bbbk$-algebra structures on $V$. The general linear group $\operatorname{GL}_{\Bbbk}(V)$ acts on $\operatorname{Alg}_d(\Bbbk)$ by transport of structure, and its orbits correspond to isomorphism classes of $d$-dimensional algebras. If $A$ and $B$ are two $d$-dimensional algebras in $\operatorname{Alg}_d(\Bbbk)$, we say that $A$ \emph{degenerates to} $B$, and write $A\rightsquigarrow B$, if the point corresponding to $B$ lies in the Zariski closure of the $\operatorname{GL}_{\Bbbk}(V)$-orbit of the point corresponding to $A$. In practice, it suffices to construct an algebraic family $\{A_\lambda\}_{\lambda\in\Bbbk}$ of algebra structures on a fixed vector space such that $A_\lambda\cong A_1$ for all $\lambda\neq0$ and $A_0\cong B$; one then obtains $A_\lambda\rightsquigarrow B$. It is proved in \cite{Geiss} that if $A\rightsquigarrow B$ and $B$ is tame, then $A$ is tame. Hence, a degeneration to a tame algebra provides a convenient method for excluding wildness. A classical class of tame algebras is provided by special biserial algebras; we refer to \cite{WaldWaschbusch} for details.

For detecting representation-infiniteness and wildness, we shall use quotients and idempotent truncations. If there is a surjection $A\twoheadrightarrow B$ and $B$ is representation-infinite (resp. wild), then $A$ is representation-infinite (resp. wild). We shall also use the idempotent truncation criterion: if $e\in A$ is an idempotent and a quotient algebra of $eAe$ is representation-infinite (resp. wild), then $A$ is representation-infinite (resp. wild). Notice that $eAe$ is not, in general, a quotient algebra of $A$. These reductions make algebras with very few simple modules particularly useful as test algebras.

It is well-known that $\Bbbk[x]/(x^n)$ for any $n\ge 2$ is a representation-finite local algebra. We shall also repeatedly use the following tame and wild local algebras.

\begin{proposition}\label{prop:local-benchmark-alg}
Let $A=\Bbbk Q/J$ be a local algebra with the quiver
\[
Q:\quad
\vcenter{\xymatrix@C=0.8cm{
\circ
\ar@(dl,ul)^{x}
\ar@(ur,dr)^{y}
}}\; .
\]
\begin{enumerate}
\item If $J=\langle x^2,y^2,xy-yx\rangle$, then $A$ is tame.

\item If $J=\langle x^2-y^2,xy,yx\rangle$, then $A$ is tame.

\item If $J=\langle x^2-y^2, xy\rangle$, then $A$ is wild.

\item If $J=\langle x^2,y^3,yx,xy^2\rangle$, then $A$ is wild.

\item If $J=\langle x^2,y^3,xy,y^2x\rangle$, then $A$ is wild.

\item If $J=\langle x^3,y^3,xy-yx,x^2y,xy^2\rangle$, then $A$ is wild.

\item If $J=\langle x^3,y^2,x^2y,xy-\lambda yx\rangle$, $\lambda \in\Bbbk^\times$, then $A$ is wild.
\end{enumerate}
\end{proposition}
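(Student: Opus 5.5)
Each of the seven local algebras will be handled separately, but always with the same three tools from the preliminaries: degeneration (for tameness, via \cite{Geiss}), special biserial algebras (tame in \cite{WaldWaschbusch}), and passage to a quotient that is already known to be wild. Throughout, $A$ is local with two loops $x,y$. Every algebra in the list is finite-dimensional; we check this first by listing a monomial basis.

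\textbf{Tame cases (1) and (2).}
\begin{enumerate}
\item[(1)] $A=\Bbbk[x,y]/(x^2,y^2)$ is commutative of dimension $4$, with basis $1,x,y,xy$. It is the classical group algebra of the Klein four group in characteristic $2$, but here we need a characteristic-free argument. We will show that $A$ is representation-infinite, since its radical-square-zero quotient is the Kronecker-type local algebra. For tameness we degenerate $A$ to the algebra with relations $x^2,y^2,xy+yx$. That algebra is special biserial: each of $x,y$ has at most one non-zero continuation, because $xy=-yx$ identifies the two length-two paths up to sign, and a rescaling handles the remaining sign issue. Alternatively, after the substitution $u=x+y$, $v=x-y$, one checks directly that $A$ is isomorphic to a special biserial algebra. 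If neither route works, we fall back on the known classification of tame local algebras of dimension $4$ (Ringel's list).
\item[(2)] Here $xy=yx=0$ and $x^2=y^2$, so $A$ has basis $1,x,y,x^2$. It is the dihedral-type algebra, and it is special biserial: $x^2$ and $y^2$ are the two maximal paths, identified by a binomial relation, and $xy$, $yx$ are zero relations. Hence $A$ is tame. It is representation-infinite because it surjects onto $\Bbbk\langle x,y\rangle/(x,y)^2$, which has infinitely many string modules.
\end{enumerate}

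\textbf{Wild cases (3)--(7).} In each case we exhibit a quotient, or a degeneration target, that is wild, using Ringel's list of minimal wild local algebras. The key benchmark is that the local algebras
\[
\Bbbk\langle x,y\rangle/(x^2,\,yx,\,y^3,\,xy^2)
\quad\text{and}\quad
\Bbbk\langle x,y\rangle/(x^2,\,xy,\,y^3,\,y^2x)
\]
and their variants are wild, and that $\Bbbk\langle x,y\rangle/(x^2,xy,y^2)$-type quotients with an extra independent radical-cube class are wild.
\begin{enumerate}
\item[(4), (5)] These are exactly Ringel's minimal wild local algebras (or their opposites), so we cite \cite{Ringel} directly.
\item[(3)] We pass to the quotient by $\rad^3$ and check that it contains (4) or (5) after a change of variables. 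Since $x^2=y^2$ and $xy=0$, the paths $yx$ and $x^2$ span the square of the radical, and $yx$ is free to continue, e.g.\ $yx^2\neq0$. Taking $x'=x+y$ and similar linear changes should give a quotient isomorphic to (4). If that fails, we use the covering/separated-quiver argument on the quotient by $\rad^3$.
\item[(6)] $A$ is commutative, $\Bbbk[x,y]/(x^3,y^3,x^2y,xy^2)$, with radical layers of dimensions $2,3$. Its quotient by $(x^2,\,xy)$ is $\Bbbk[x,y]/(x^2,xy,y^3)$. That algebra is not wild, so instead we use the known wildness of local commutative algebras $\Bbbk[x,y]/\mathfrak m^3$ (Drozd's result that $\Bbbk[x,y]/\mathfrak m^3$ is wild). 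Here $A\cong \Bbbk[x,y]/\mathfrak m^3$, since $x^3,x^2y,xy^2,y^3$ generate $\mathfrak m^3$. So (6) is exactly that algebra.
\item[(7)] We take the quotient by $y^2=0$, together with $x^2y=0$, $yx^2$ and $xy=\lambda yx$. The algebra has basis $1,x,y,x^2,yx,yx^2$ (with $yx^2$ possibly zero). We then quotient by $yx^2$ and compare with (5) or a $\lambda$-deformation of it, which is wild in Ringel's list for every $\lambda\neq0$.
\end{enumerate}

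I expect the main obstacle to be matching each wild case precisely to an entry of Ringel's list. Cases (3) and (7) need an explicit change of generators, and care is needed because $\lambda$-families appear in that list.
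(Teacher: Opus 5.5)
The tame cases and (4), (5), (6) are acceptable, but there is a genuine gap in case (3), and the same defect recurs in (7).

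\textbf{Case (3).} Your key computation is wrong. In $A=\Bbbk\langle x,y\rangle/(x^2-y^2,xy)$ you have $yx^2=y\cdot y^2=y^3=x^2y=x(xy)=0$. In fact every word of length three vanishes, so $\rad^3A=0$ and $A$ is the $5$-dimensional algebra with basis $1,x,y,x^2,yx$.

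Since (4) and (5) are also $5$-dimensional, a quotient of $A$ isomorphic to either of them would have to be $A$ itself. No change of variables achieves this:
\begin{itemize}
\item in (4) the generator satisfies $\rad\cdot x=0$;
\item in (5) it satisfies $x\cdot\rad=0$;
\item in $A$, for $v=ax+by$ one has $xv=ax^2$, $yv=ayx+bx^2$, $vx=ax^2+byx$ and $vy=bx^2$.
\end{itemize}
Since $x^2$ and $yx$ are linearly independent, either annihilator condition forces $a=b=0$, so $A$ is isomorphic to neither (4) nor (5).

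Your fallback fails too. The separated quiver of $A/\rad^2A$ is the Kronecker quiver, which is Euclidean, so it does not detect wildness. The only $\mathbb Z$-grading making $x^2-y^2$ homogeneous has $\deg x=\deg y$. For that grading, every relation-free convex subcategory of the covering has at most two consecutive vertices, so it is a Kronecker quiver. Hence (3) needs genuine outside input; the paper takes it from \cite[I.10.10]{Erdmann90}.

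\textbf{Case (7).} The step ``quotient by $yx^2$'' is empty, because $yx^2=\lambda^{-1}xyx=\lambda^{-2}x^2y=0$. Again $A$ is $5$-dimensional with $\rad^3A=0$ and basis $1,x,y,x^2,xy$. The same annihilator test applies: for $v=ax+by$ one has $vx=ax^2+b\lambda^{-1}xy$, $vy=axy$, $xv=ax^2+bxy$ and $yv=a\lambda^{-1}xy$. This shows $A$ is isomorphic to neither (4) nor (5), so it cannot be reduced to them. Appealing to ``a $\lambda$-deformation of (5) which is wild in Ringel's list'' just restates what is to be proved. You need a precise reference (the paper again cites \cite[I.10.10]{Erdmann90}) or an actual wildness argument.

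Once (7) is available for $\lambda=1$, (6) follows immediately, because $\Bbbk[x,y]/(y^2,x^2y,x^3)$ is a quotient of $\Bbbk[x,y]/\mathfrak m^3$.

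\textbf{The remaining cases.} Your direct route for (1) and (2) is a legitimate, more self-contained alternative to the paper's appeal to Erdmann's classification of tame symmetric local algebras. Both $\Bbbk[x,y]/(x^2,y^2)$ and $\Bbbk\langle x,y\rangle/(x^2-y^2,xy,yx)$ are special biserial as they stand, and both surject onto $\Bbbk\langle x,y\rangle/(x,y)^2$. Cases (4), (5) and (6) are citations, as in the paper.

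Drop the claimed degeneration in (1). Commutativity is a closed condition, so a commutative algebra degenerates only to commutative ones, and the $xy+yx$ algebra is not commutative when $\operatorname{char}\Bbbk\neq2$. Also, $u=x+y$, $v=x-y$ is not a change of variables in characteristic $2$. Neither step is needed.
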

\begin{proof}
\textup{(1)} and \textup{(2)} follow from the classification of
tame symmetric local algebras in \cite[Theorem~III.1]{Erdmann90}. \textup{(3)}, \textup{(6)} and \textup{(7)} are shown in \cite[I.10.10]{Erdmann90}. \textup{(4)} and \textup{(5)} are the minimal wild local algebras in \cite[(3.2)--(3.3)]{Ringel75}, respectively.
\end{proof}

A basic algebra with exactly two simple modules is called a \emph{two-point algebra}. The representation type of two-point algebras has been determined through a sequence of classification results. Up to isomorphism and duality, a two-point algebra is representation-finite or tame precisely when it degenerates to a quotient of an algebra in Table~T, while it is wild precisely when it has a minimal wild algebra from Table~W as a quotient; see \cite[Theorems~1 and~2]{Han02}. Since the conventions in the previous papers are not uniform, we use the right-module conventions of Tables~T and~W reproduced in \cite[Appendix~A]{Wang22}.

We shall use radical-square-zero quotients to detect representation type. Let $A$ be a basic algebra with $\rad^2 A=0$, and let $Q$ be its Gabriel quiver. The \emph{separated quiver} $Q^{\mathrm{s}}$ has two vertices $i^+$ and $i^-$ for each vertex $i$ of $Q$, and an arrow $i^+\to j^-$ for each arrow $i\to j$ in $Q$. By \cite[Chapter~X, Theorem~2.4]{ARS95}, $A$ is stably equivalent to the hereditary algebra $\Bbbk Q^{\mathrm{s}}$. In particular, $A$ is representation-finite if and only if every connected component of $Q^{\mathrm{s}}$ is of Dynkin type; see \cite[Chapter~X, Theorem~2.6]{ARS95}. Hence, if $Q^{\mathrm{s}}$ has a connected component of Euclidean type, then $A$ is representation-infinite. Moreover, if $Q^{\mathrm{s}}$ has a connected component whose underlying graph is neither Dynkin nor Euclidean, then $A$ is wild.

\subsection{Galois coverings}\label{subsec:galois-covering}
We recall a form of Galois covering theory that will be used below to detect wildness. Let $A=\Bbbk Q/I$ be a finite-dimensional bound quiver algebra. Suppose that each arrow $\alpha\in Q_1$ is assigned an integer degree $\deg\alpha\in\mathbb Z$, and that $I$ is generated by homogeneous relations with respect to the induced $\mathbb Z$-grading on $\Bbbk Q$. Let $\widetilde Q$ be the quiver with vertex set $\{i_t\mid i\in Q_0,\; t\in\mathbb Z\}$ and arrows $\alpha_t:i_t\longrightarrow j_{t+d}$, for every arrow $\alpha:i\to j$ of degree $d$ and every $t\in\mathbb Z$. Since $I$ is homogeneous, each defining relation $\rho\in I$ admits homogeneous lifts $\rho_t$ in $\Bbbk\widetilde Q$. Let $\widetilde I$ be the ideal generated by all such lifts. We call $\widetilde A:=\Bbbk\widetilde Q/\widetilde I$ the graded \emph{covering} of $A$. Moreover, the translations $i_t\longmapsto i_{t+1}$ and $\alpha_t\longmapsto\alpha_{t+1}$ induce a free action of $\mathbb Z$ on $\widetilde A$, whose orbit category is naturally isomorphic to $A$. Hence, the canonical projection yields a Galois covering $\mathcal F:\widetilde A\longrightarrow \widetilde A/\mathbb Z\cong A$. We refer to \cite{Green83} for the correspondence between group gradings and Galois coverings of bound quiver algebras.

The graded covering $\widetilde A$ is in general infinite-dimensional, but it is a locally bounded $\Bbbk$-category. We shall use the following standard consequence of covering theory.

\begin{proposition}[{\cite[Proposition~2]{DowborSkowronski85}, \cite[Theorem]{DowborSkowronski86}}]
\label{prop:covering-wild}
Let $\mathcal F:\widetilde A\longrightarrow A$ be a Galois covering with group $\mathbb Z$. If $\widetilde A$ contains a finite full subcategory which is wild, then $A$ is wild.
\end{proposition}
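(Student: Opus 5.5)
The statement is a standard consequence of Galois covering theory, so I will not reprove it from scratch. I will reduce it to the push-down functor $\mathcal F_\lambda\colon\operatorname{mod}\widetilde A\to\operatorname{mod}A$ and its properties for a free group action, here the action of $\mathbb Z$. Let $B\subseteq\widetilde A$ be a finite full subcategory which is wild. First, $B$ is a finite-dimensional algebra, namely $\bigoplus_{x,y\in B}\widetilde A(x,y)$, which is finite since $\widetilde A$ is locally bounded. Wildness of $B$ gives a representation embedding $\Phi\colon\operatorname{mod}\Bbbk\langle x,y\rangle\to\operatorname{mod}B$. Extension by zero then gives a fully faithful exact functor $E\colon\operatorname{mod}B\to\operatorname{mod}\widetilde A$ into finite-dimensional (finitely supported) $\widetilde A$-modules. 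This functor preserves indecomposability and isomorphism classes. I then consider the composite $\mathcal F_\lambda\circ E\circ\Phi$, where $\mathcal F_\lambda(M)_{i}=\bigoplus_{t}M(i_t)$ is the push-down. This functor is exact, being a direct sum of evaluations.

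The key steps, in order, are as follows. (i) I recall that, because $\mathbb Z$ acts freely on the objects of $\widetilde A$, the push-down satisfies
\[
\operatorname{Hom}_A(\mathcal F_\lambda M,\mathcal F_\lambda N)\cong\bigoplus_{g\in\mathbb Z}\operatorname{Hom}_{\widetilde A}(M,{}^gN)
\]
for finite-dimensional $M,N$ \cite{Green83}. (ii) Since $\mathbb Z$ is torsion-free, Gabriel's theorem (in the form in \cite{DowborSkowronski85}) shows that $\mathcal F_\lambda$ preserves indecomposability. Moreover, $\mathcal F_\lambda M\cong\mathcal F_\lambda N$ holds if and only if $N\cong{}^gM$ for some $g$. (iii) Composing, I obtain an exact functor that preserves indecomposables. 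It might, however, identify $\Phi(X)$ with a shift ${}^g\Phi(Y)$.

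The expected obstacle is exactly this failure of reflecting isomorphism classes. To handle it I would first shrink $B$. Take a wild finite full subcategory $B$ and use its finite support: only finitely many $g$ give ${}^gB\cap B\neq\emptyset$. I would then use the Dowbor--Skowroński argument. A wild algebra has a strict wild one-parameter-bimodule family; I would pick a $B$-$\Bbbk\langle x,y\rangle$-bimodule $M$ free of finite rank giving $\Phi=-\otimes M$. Then I restrict to modules $\Phi(X)$ whose dimension vector is supported with a fixed "leftmost" vertex pattern, for example by composing with a further representation embedding $\operatorname{mod}\Bbbk\langle x,y\rangle\to\operatorname{mod}\Bbbk\langle x,y\rangle$ that keeps all dimension vectors proportional to a fixed one. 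With that restriction, ${}^g\Phi(Y)\cong\Phi(X)$ forces $g=0$ because the supports are the same finite set and the shift moves supports. Hence the composite reflects isomorphisms and is a representation embedding, so $A$ is wild.

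Alternatively, I would simply cite \cite[Proposition~2]{DowborSkowronski85} together with \cite[Theorem]{DowborSkowronski86}: the former gives wildness of $A$ from wildness of $\widetilde A$ for any Galois covering, and the latter handles locally support-finite or torsion-free group cases. That citation is how the statement is attributed, so the proof amounts to checking that the graded covering with group $\mathbb Z$ from \Cref{subsec:galois-covering} meets their hypotheses. The hypotheses are that $\widetilde A$ is locally bounded, the action is free, and $\mathbb Z$ is torsion-free, and all three are clear from the construction.
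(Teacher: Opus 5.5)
Your fallback is exactly the paper's treatment: the paper proves nothing here and simply cites \cite{DowborSkowronski85,DowborSkowronski86}. Your hypothesis check for the graded covering (locally bounded, free action, torsion-free group $\mathbb Z$) is the right one.

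\emph{Gap in the self-contained sketch.} The step ``extension by zero gives a fully faithful exact functor $E\colon\operatorname{mod}B\to\operatorname{mod}\widetilde A$'' fails for a general finite \emph{full} subcategory $B$. Suppose $x,y\in B$, $z\notin B$, and some composite $x\to z\to y$ is a nonzero morphism of $B$. A $B$-module on which this morphism acts nontrivially has no extension by zero, because functoriality would force that action to factor through the zero space at $z$. Extension by zero is defined on all of $\operatorname{mod}B$ only when no nonzero morphism of $B$ is a sum of composites through objects outside $B$, for instance when $B$ is convex. For an arbitrary full $B$, the natural replacement is induction $-\otimes_B\widetilde A$. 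It is fully faithful but only right exact, so both exactness and your support argument would need further work. That is precisely the content absorbed by the Dowbor--Skowro\'nski citation. The paper only ever applies the proposition to finite full \emph{convex} subcategories without relations, and for those your functor $E$ is fine.

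\emph{Simplification in the convex case.} Your extra ``shrinking'' step and auxiliary embedding are unnecessary. If $\Phi=-\otimes M$ with $M$ free of finite rank $r_b$ over $\Bbbk\langle x,y\rangle$ at each object $b$ of $B$, then $\underline{\dim}\,\Phi(X)=(\dim_\Bbbk X)\,\mathbf r$. So every nonzero $\Phi(X)$ has the same finite support. A shift by $g\neq0$ moves the largest index $t$ among the objects $i_t$ of any finite nonempty set, so ${}^g E\Phi(X)\cong E\Phi(Y)$ forces $g=0$. Krull--Schmidt then passes this from indecomposables to all modules. Together with exactness of $\mathcal F_\lambda$ and Gabriel's theorem (free action of $\mathbb Z$ on indecomposables), this gives the representation embedding directly.
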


In practice, we shall apply Proposition~\ref{prop:covering-wild} by finding a finite full convex subcategory of $\widetilde A$ on which no defining relation occurs. Such a subcategory is hereditary, and it is wild if its underlying graph is neither Dynkin nor Euclidean.

\subsection{Contraction algebras}\label{subsec:contraction-algebras}
We work over the field $\Bbbk=\mathbb C$ of complex numbers for the geometric part of the paper. Let $f\colon X\to\operatorname{Spec}R$ be a \emph{$3$-fold flopping contraction}, where $X$ is an integral scheme of Krull dimension 3 with at worst Gorenstein terminal singularities, and $R$ is a 3-dimensional complete local normal Gorenstein $\mathbb C$-algebra. Such a morphism $f$ is a small projective birational morphism; in particular, it contracts no divisors, and its exceptional locus is a finite union of curves. In this setting, $R$ is necessarily an
isolated compound Du Val (\textup{cDV}) singularity; see
\cite[Theorem~1.1]{Reid83}. 

We denote by $\operatorname{CM}R$ the category of maximal
Cohen--Macaulay $R$-modules. Since $R$ is Gorenstein, a finitely
generated $R$-module $L$ belongs to $\operatorname{CM}R$ if and only if
$\operatorname{Ext}^i_R(L,R)=0$ for all $i>0$.
For an $R$-module $L$, we write $\operatorname{add}L$ for the full
subcategory consisting of direct summands of finite direct sums of copies
of $L$. An object $L\in\operatorname{CM}R$ is called \emph{basic} if its
indecomposable direct summands are pairwise non-isomorphic; it is called a
\emph{generator} if $R\in\operatorname{add}L$; and it is called \emph{rigid} if $\operatorname{Ext}^1_R(L,L)=0$.

Let $C_1,\ldots,C_n$ be the irreducible exceptional curves of $f$.
By the Auslander--McKay correspondence for isolated cDV singularities,
the $3$-fold flopping contraction $f\colon X\to\operatorname{Spec}R$ determines a basic rigid CM generator $M=R\oplus N_1\oplus\cdots\oplus N_n$. Here, the $N_i$ are pairwise non-isomorphic indecomposable non-free CM modules, with $N_i$ corresponding to the exceptional curve $C_i$; see \cite[Theorem~4.10 and Remark~4.13]{Wemyss18}. Set $N:=N_1\oplus\cdots\oplus N_n$ and $\Lambda:=\operatorname{End}_R(M)$, and let $e_0\in\Lambda$ be the idempotent corresponding to the free summand $R$. For any  $U,V\in\operatorname{CM}R$, let $[R](U,V)$ be the subspace of
$\operatorname{Hom}_R(U,V)$ consisting of morphisms which factor through
an object of $\operatorname{add}R$. Since $R$ is local,
$\operatorname{add}R$ is precisely the category of finitely generated
projective $R$-modules. We define the \emph{stable Hom-space} 
$\underline{\operatorname{Hom}}_R(U,V)
:=\operatorname{Hom}_R(U,V)/[R](U,V)$
and write
$\underline{\operatorname{End}}_R(U)
:=\underline{\operatorname{Hom}}_R(U,U)$.
We denote by $\underline{\operatorname{CM}}R$ the category with the same
objects as $\operatorname{CM}R$ and these stable Hom-spaces. The two-sided ideal $\Lambda e_0\Lambda$ consists precisely of the
endomorphisms of $M$ which factor through an object of
$\operatorname{add}R$. Then, the \emph{contraction algebra} associated to $f$ is defined by
\begin{equation}\label{eq:contraction-algebra}
\Acon
:=
\Lambda/\Lambda e_0\Lambda
\cong
\underline{\operatorname{End}}_R(N).
\end{equation}
This is the standard stable-endomorphism description of the contraction
algebra; see \cite[Subsection~3.3]{DW16}. By
\cite[Proposition~2.3 and Theorem~2.6]{August20}, building on results of
\cite{BIKR08}, every contraction algebra $\Acon$ in the above setting is
a finite-dimensional symmetric $\mathbb C$-algebra.

It is worth mentioning that the irreducible exceptional curves of $f$ index the simple modules of $\Acon$. For $1\le i\le n$, let $e_i\in\Lambda$ be the primitive idempotent corresponding to the summand $N_i$, and let $\overline e_i$ denote its image in $\Acon$. Then,
$\overline e_i\Acon$ is the corresponding indecomposable projective
$\Acon$-module, whose simple top is denoted by $S_i$. Consequently,
$\Acon$ has exactly $n$ isomorphism classes of simple modules.

We next compare contraction algebras associated to flopping contractions related by flops over the same singularity. Starting from a $3$-fold flopping contraction $f\colon X\to\operatorname{Spec}R$, one may choose an irreducible exceptional curve $C_i$ ($1\le i\le n$) of $f$, and then factor $f$ as $f=h\circ g$, where $g\colon X\to X_{\mathrm{con}}$ contracts $C_i$ to a point and is an isomorphism away from $C_i$. Then, one can replace $g$ by another small birational morphism $g^+\colon X^+\to X_{\mathrm{con}}$. We call the new $3$-fold flopping contraction
$f^+:=h\circ g^+$ obtained by composing $g^+$ with $h$ the \emph{simple flop} of $f$ at $C_i$, see \cite[Subsection 4.3]{AugustMZ20}, and repeating this procedure successively gives the \emph{iterated flops} of $f$. It is shown that this geometric operation is reflected by derived equivalences of contraction algebras. More precisely, \cite[Theorem~4.12]{AugustMZ20} shows that the basic algebras in the derived equivalence class of $\Acon$ are precisely the contraction algebras of the flopping contractions obtained from $f$ by iterated flops. 

Since contraction algebras in our setting are finite-dimensional symmetric algebras, we immediately obtain the following consequence.

\begin{proposition} \label{prop:chamber-invariance}
Let $f\colon X\to\operatorname{Spec}R$ be a $3$-fold flopping
contraction as above, and let
$f'\colon X'\to\operatorname{Spec}R$ be obtained from $f$ by iterated flops. If $\Acon$ and $\Acon'$ are the
corresponding contraction algebras, then $\Acon$ and $\Acon'$ have
the same representation type.
\end{proposition}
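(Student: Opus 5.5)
The plan is to chain together three facts already recorded in the preliminaries. First, \cite[Theorem~4.12]{AugustMZ20} gives that if $f'$ is obtained from $f$ by iterated flops, then $\Acon'$ is (after passing to basic algebras, which is harmless since representation type is Morita invariant) a basic algebra in the derived equivalence class of $\Acon$. So the opening step is to state that $\Acon$ and $\Acon'$ are derived equivalent. For a single simple flop this is the mutation derived equivalence of \cite{August20}. The iterated case then follows by composing the equivalences, or it can be read off directly from the cited theorem.

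Second, both algebras are finite-dimensional symmetric $\mathbb C$-algebras by \cite[Proposition~2.3 and Theorem~2.6]{August20}. This holds because $f'$ is again a $3$-fold flopping contraction over the same isolated cDV singularity $\operatorname{Spec}R$. The source $X'$ still has at worst Gorenstein terminal singularities, since flops preserve this class. In particular, both algebras are self-injective. By \cite[Corollary~2.2]{Rickard89}, a derived equivalence between self-injective algebras induces a stable equivalence $\underline{\operatorname{mod}}\,\Acon\simeq\underline{\operatorname{mod}}\,\Acon'$.

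Third, \cite[Corollaries~3.4--3.5]{Krause97} shows that stable equivalence preserves representation-finiteness, tameness and wildness. Combining this with Drozd's trichotomy \cite{Drozd80}, the two algebras have the same type.

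The only point requiring care is the second step. One must check that the hypotheses of the symmetry result remain valid after flopping, so that the Rickard--Krause argument really applies to both algebras. I would dispose of this by observing that flops of a flopping contraction over $R$ remain flopping contractions over $R$ with Gorenstein terminal source. This is exactly the setting in which \cite[Theorem~4.12]{AugustMZ20} is stated, so the symmetry statement applies verbatim to $\Acon'$.
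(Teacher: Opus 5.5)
Your proposal is correct and follows essentially the same route as the paper. The paper also combines the derived equivalence from \cite[Theorem~4.12]{AugustMZ20}, the symmetry of contraction algebras from \cite{August20}, Rickard's passage from derived to stable equivalence for self-injective algebras, and Krause's invariance of representation type under stable equivalence. Your explicit check that the flopped source remains Gorenstein terminal, so that symmetry also applies to $\Acon'$, is a point the paper leaves implicit.
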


\subsection{Contracted preprojective algebras}\label{subsec:marked-ADE}
By the defining property of a cDV singularity, a sufficiently general
surface section of $\operatorname{Spec}R$ is a Du Val surface singularity. Its minimal resolution replaces the singular point by a configuration of exceptional curves, whose dual graph is a simply-laced Dynkin diagram $\Delta\in\{\DynA_m\ (m\ge1),\DynD_m\ (m\ge4),
\DynE_6,\DynE_7,\DynE_8\}$; see, for example, \cite[Chapter~4]{Reid97}. We write $\Delta_0$ for the vertex set of $\Delta$. Then, after taking a sufficiently general surface section, the $3$-fold flopping contraction $f\colon X\to\operatorname{Spec}R$ induces a partial resolution of the corresponding Du Val singularity, whose exceptional curves correspond to the irreducible exceptional curves $C_1,\ldots,C_n$ of $f$. We denote by $\Omega\subseteq\Delta_0$ the subset of vertices corresponding to $C_1,\ldots,C_n$ of $f$ on the induced partial resolution. Thus, $|\Omega|=n$. In the standard convention, the vertices in $\Delta_0\setminus \Omega$ are called \emph{marked} vertices.

Let $Q$ be the quiver obtained by assigning an orientation to $\Delta$, and let $\overline Q$ be the double quiver of $Q$, obtained by adjoining to every arrow $p:i\to j$ an opposite arrow $p^*:j\to i$. For each
$i\in\Delta_0$, let
\[
\rho_i:=\sum_{\substack{p\in Q_1,\; s(p)=i}}pp^*
-\sum_{\substack{p\in Q_1,\; t(p)=i}}p^*p\; ,
\]
where $s(p)$ and $t(p)$ denote the source and target of $p$, respectively. Then, the \emph{preprojective algebra} of $\Delta$ is
defined by $\Pi_\Delta:=\Bbbk\overline Q/ \langle \rho_i\mid i\in\Delta_0\rangle$. It is obvious that the isomorphism class of $\Pi_\Delta$ is independent of the chosen orientation of $\Delta$. Since $\Delta$ is Dynkin, $\Pi_\Delta$ is a finite-dimensional algebra. Let $e_i$ be the primitive idempotent of $\Pi_\Delta$ corresponding to
the vertex $i\in\Delta_0$. For the subset $\Omega$ defined above, put $e_\Omega:=\sum_{i\in\Omega}e_i$ and $\Gam(\Delta,\Omega):=e_\Omega\Pi_\Delta e_\Omega$. Then, $\Gam(\Delta,\Omega)$ is the idempotent truncation of $\Pi_\Delta$
supported on the vertices in $\Omega$. We call it the \emph{contracted preprojective algebra} associated to $(\Delta,\Omega)$; see \cite[Definition~5.1]{IyamaWemyssTits}. It follows that $\Gam(\Delta,\Omega)$ has exactly $|\Omega|=n$ isomorphism classes of simple modules. 

Elements of $e_\Omega\Pi_\Delta e_\Omega$ are represented by paths whose
sources and targets lie in $\Omega$, but such paths may pass through
vertices in $\Delta_0\setminus\Omega$. Consequently, when $\Omega\neq\Delta_0$, the algebra $\Gam(\Delta,\Omega)$ is in general not the preprojective algebra of the subdiagram induced by $\Omega$. The precise quotient relation between $\Acon$ and $\Gam(\Delta,\Omega)$ is explained in the following.

\begin{proposition}[{\cite[(3.C)]{DW16}, \cite[Subsection 3.2]{DW19};
see also \cite[Corollary~5.20]{Wemyss18}}]
\label{prop:shadow}
Let $f\colon X\to\operatorname{Spec}R$ be a $3$-fold flopping
contraction as above, and let $\Acon$ be its contraction
algebra. There exists an element $g\in Z(\Acon)\cap\rad\Acon$ such that $\Acon/g\Acon\cong\Gam(\Delta,\Omega)$. 
\end{proposition}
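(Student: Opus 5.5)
This proposition is a citation of known results. The plan is to assemble it from the stable-endomorphism description \eqref{eq:contraction-algebra}, rather than to prove anything new. The key geometric input is the sufficiently general hyperplane section. Choose $g\in\mathfrak m_R$ generic so that $R/gR$ is a Du Val (ADE) surface singularity; this is possible since $R$ is cDV. By Reid's general elephant principle (\cite{Reid83}, as used in \cite[(3.C)]{DW16}), pulling back $f$ along $\operatorname{Spec}R/gR\hookrightarrow\operatorname{Spec}R$ gives a partial crepant resolution $Y\to\operatorname{Spec}R/gR$. It has the same exceptional curves $C_1,\dots,C_n$, and so determines $(\Delta,\Omega)$.

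Step one is to reduce $M$ modulo $g$. Since $g$ is a non-zero-divisor on the CM module $M$, the module $M/gM$ is a CM $R/gR$-module. The rigidity $\operatorname{Ext}^1_R(M,M)=0$ together with the long exact sequence for $0\to M\xrightarrow{g}M\to M/gM\to0$ gives
\[
\operatorname{End}_R(M)/g\operatorname{End}_R(M)\cong\operatorname{End}_{R/gR}(M/gM).
\]
The same argument with stable Hom spaces, where $[R]$ reduces to $[R/gR]$, gives $\underline{\operatorname{End}}_R(N)/g\cong\underline{\operatorname{End}}_{R/gR}(N/gN)$. Here $g$ acts centrally on $\Acon$ through the $R$-module structure, and it lies in $\rad\Acon$ because $g\in\mathfrak m_R$ and $\Acon$ is local-over-$R$ finite-dimensional. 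So the image of $g$ is the required element of $Z(\Acon)\cap\rad\Acon$.

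Step two is to identify $M/gM$ with the CM module on the Du Val singularity corresponding to the partial resolution $Y$. This is the Auslander--McKay correspondence on surfaces (\cite[Theorem~4.10]{Wemyss18}): $M/gM\cong R/gR\oplus\bigoplus_{i\in\Omega}M_i$, where $M_i$ is the indecomposable CM module attached to vertex $i$ of $\Delta$. Step three is to compute $\underline{\operatorname{End}}_{R/gR}(\bigoplus_{i\in\Omega}M_i)$. By Auslander's theorem, $\operatorname{End}(\bigoplus_{i\in\Delta_0\cup\{0\}}M_i)$ is Morita equivalent to the skew group algebra, which is the completed preprojective algebra of the extended Dynkin diagram. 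Factoring out the idempotent at the extending vertex gives $\Pi_\Delta$. Cutting down by $e_\Omega$ then yields $e_\Omega\Pi_\Delta e_\Omega=\Gam(\Delta,\Omega)$, as in \cite[Corollary~5.20]{Wemyss18} and \cite[Subsection~3.2]{DW19}.

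The main obstacle is step one. One must check carefully that reduction modulo $g$ commutes with passing to the stable category, so that morphisms factoring through $\operatorname{add}R$ reduce exactly to those factoring through $\operatorname{add}R/gR$. This uses the rigidity of $N$ together with the fact that $R/gR$ has the reductions of $R$-projectives as its projectives. I would handle it by applying $\underline{\operatorname{Hom}}_R(N,-)$ to the triangle $N\xrightarrow{g}N\to N/gN\to N[1]$ in $\underline{\operatorname{CM}}R$ and using $\underline{\operatorname{Hom}}(N,N[1])=\operatorname{Ext}^1(N,N)=0$.
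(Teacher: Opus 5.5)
Your outline is the standard argument behind the references the paper cites for this statement; the paper gives no proof of its own. You reduce $\Lambda=\End_R(M)$ modulo a generic hyperplane $g$ using $\operatorname{Ext}^1_R(M,M)=0$, identify $M/gM$ with $R/gR\oplus\bigoplus_{i\in\Omega}M_i$ via the general elephant, and read off $e_\Omega\Pi_\Delta e_\Omega$ from Auslander's theorem. That is fine.

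The one thing to correct is your treatment of what you call the main obstacle. There is no triangle $N\xrightarrow{g}N\to N/gN\to N[1]$ in $\underline{\operatorname{CM}}R$. The module $N/gN$ has depth $2$, so it is not a maximal CM $R$-module. Replacing it by the cone of $g$ would still leave you needing to identify $\underline{\Hom}_R(N,\operatorname{cone} g)$ with $\underline{\End}_{R/gR}(N/gN)$.

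Fortunately, nothing further is needed. The isomorphism $\Lambda/g\Lambda\cong\bar\Lambda:=\End_{R/gR}(M/gM)$, which you already proved, is induced by a surjective ring map. This map sends $e_0$ to the idempotent $\bar e_0$ of the summand $R/gR$. So the image of $\Lambda e_0\Lambda$ is $\bar\Lambda\bar e_0\bar\Lambda$, and
\[
\Acon/g\Acon\cong\Lambda/(g\Lambda+\Lambda e_0\Lambda)\cong\bar\Lambda/\bar\Lambda\bar e_0\bar\Lambda\cong\underline{\End}_{R/gR}(N/gN).
\]
The needed lifting of maps $N/gN\to(R/gR)^k$ to $N\to R^k$ is already built into this. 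It comes from $\operatorname{Ext}^1_R(N,R)=0$, which is part of $\operatorname{Ext}^1_R(M,M)=0$ because $R$ is a summand of $M$.

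The genuinely substantive input is your Step two, which you, like the paper, take from the literature. This is where genericity of $g$ is used. The tilting bundle on $X$ restricts to the one on $X_g$, and $R^1f_*=0$ makes pushforward commute with reduction modulo $g$.
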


We therefore call $\Gam(\Delta,\Omega)$ the \emph{surface quotient} of $\Acon$. Using Proposition~\ref{prop:chamber-invariance} and the surjection $\Acon\twoheadrightarrow\Gam(\Delta,\Omega)$, one may detect representation-infiniteness or wildness of $\Acon$ through $\Gam(\Delta,\Omega)$. One should note that even if $\Gam(\Delta,\Omega)$ associated to $f$ is representation-finite or tame, $\Acon$ may still be wild; for example, wildness may be detected by the contracted preprojective algebra of an iterated flop of $f$.

\begin{corollary} \label{cor:wildness-propagation}
Let $f'\colon X'\to\operatorname{Spec}R$ be a $3$-fold flopping
contraction obtained from $f\colon X\to\operatorname{Spec}R$ by iterated flops. If the contracted preprojective algebra associated to a sufficiently general surface section of $f'$ is representation-infinite or wild, then so is the contraction algebra $\Acon$ associated to $f$.
\end{corollary}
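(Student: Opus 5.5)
The statement follows by combining Proposition~\ref{prop:chamber-invariance} with Proposition~\ref{prop:shadow} and the quotient criterion from the preliminaries.

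First, apply Proposition~\ref{prop:shadow} to the iterated flop $f'$. This gives a central element $g'\in Z(\Acon')\cap\rad\Acon'$ with $\Acon'/g'\Acon'\cong\Gam(\Delta',\Omega')$. Here $(\Delta',\Omega')$ denotes the Dynkin data of a sufficiently general surface section of $f'$, and $\Acon'$ is the contraction algebra of $f'$. Since $g'\Acon'$ is a two-sided ideal, we obtain a surjection of algebras $\Acon'\twoheadrightarrow\Gam(\Delta',\Omega')$. The surjection criterion recalled in Section~\ref{sec:prelim} then applies. If $\Gam(\Delta',\Omega')$ is representation-infinite, so is $\Acon'$: every $\Gam(\Delta',\Omega')$-module is an $\Acon'$-module by restriction, and this restriction preserves indecomposability and isomorphism classes. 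If $\Gam(\Delta',\Omega')$ is wild, composing a representation embedding $\operatorname{mod}\Bbbk\langle x,y\rangle\to\operatorname{mod}\Gam(\Delta',\Omega')$ with the fully faithful exact restriction functor gives a representation embedding into $\operatorname{mod}\Acon'$, so $\Acon'$ is wild.

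Second, transport this conclusion back to $f$ with Proposition~\ref{prop:chamber-invariance}. That result says $\Acon$ and $\Acon'$ have the same representation type, via derived equivalence of symmetric algebras, then stable equivalence, then invariance of the finite--tame--wild type.

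One point needs care: in the representation-infinite case, "same representation type" must be read within the trichotomy. By Drozd's theorem, representation-infinite means tame or wild. So if $\Acon'$ is representation-infinite, $\Acon$ is tame or wild, and hence representation-infinite as well. In the wild case the type is wild on both sides directly.

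The main obstacle is only bookkeeping. One must check that Proposition~\ref{prop:shadow} applies to $f'$ itself. This holds because $f'$ is again a $3$-fold flopping contraction over the same $\operatorname{Spec}R$ with Gorenstein terminal source. One must also make sure the surface data are taken for $f'$ and not for $f$. No new computation is required.
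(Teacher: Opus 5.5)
Your proposal is correct and follows the paper's own reasoning. You apply Proposition~\ref{prop:shadow} to $f'$ to get the surjection $\Acon'\twoheadrightarrow\Gam(\Delta,\Omega')$, invoke the quotient criterion, and transfer the conclusion to $\Acon$ via Proposition~\ref{prop:chamber-invariance}. One small remark: $\Delta'=\Delta$, since the general surface section of $\operatorname{Spec}R$ is unchanged by flops and only the subset $\Omega'$ varies.
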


\subsection{Main strategy}\label{subsec:main-strategy}
We now summarize the strategy for the classification in this paper. Let
$f\colon X\to\operatorname{Spec}R$ be a $3$-fold flopping contraction
in our standing setup. We shall complete the classification procedure in three steps.
\begin{itemize}
\item \textbf{Step 1: classify the contracted preprojective
algebras.}
For each simply-laced Dynkin diagram $\Delta$ and each nonempty subset $\Omega\subseteq\Delta_0$, we determine the representation type of the contracted preprojective algebra $\Gam(\Delta,\Omega)$. We first treat type~$\DynA$, then type~$\DynD$, and finally the exceptional Dynkin types. Quotient and idempotent truncation techniques, including reductions to full Dynkin subdiagrams, are used to detect representation-infiniteness and wildness. 

\item \textbf{Step 2: use iterated flops to detect further cases.}
By Proposition~\ref{prop:shadow}, every contraction algebra $\Acon$
admits the corresponding $\Gam(\Delta,\Omega)$ as a quotient. The
results of Step~1 are therefore used to detect representation-infinite
and wild contraction algebras. If representation-infiniteness
(respectively, wildness) is detected from $\Gam(\Delta,\Omega)$ corresponding to $f\colon X\to\operatorname{Spec}R$, then the
same holds for the contraction algebra associated to every iterated
flop of $f$; see Corollary~\ref{cor:wildness-propagation}. This resolves
all cases whose representation type is already detected by a surface quotient, possibly after passing to an iterated flop.

\item \textbf{Step 3: analyse the remaining contraction algebras.}
A representation-finite or tame surface quotient does not necessarily determine the representation type of the contraction algebra. The surface quotient $\Gam(\Delta,\Omega)$ determines the arrows between distinct vertices in the Gabriel quiver of $\Acon$, while the one-curve subset contraction algebras $\Acon/\Acon(1-e_i)\Acon$ detect loops at the corresponding vertices $i$, possibly after suitable iterated flops. We then use the resulting Gabriel quiver and its radical-square-zero quotient to detect wildness of $\Acon$. When this does not show wildness, we pass to the next radical layer and study the corresponding multiplication in
\begin{center}
$(\rad \Acon/\rad^2 \Acon)\otimes_{\Acon/\rad \Acon}(\rad \Acon/\rad^2 \Acon) \longrightarrow \rad^2 \Acon/\rad^3 \Acon \;.$
\end{center}
In a few loopless boundary cases, one further radical layer is required. Together with the preceding reductions, this completes the classification of all cases surviving Step~2.
\end{itemize}

\section{Contracted preprojective algebras of type~$\DynA$} \label{sec:A-stable}
In this section, we consider the representation type of contracted
preprojective algebras $\Gam(\Delta,\Omega)$ with
$\Delta=\DynA_m$. We label the vertices of $\DynA_m$ by
$\Delta_0=\{1,\ldots,m\}$ and fix a nonempty subset
$\Omega\subseteq\Delta_0$. We first reconstruct
$\Gam(\DynA_m,\Omega)$ as a stable endomorphism
algebra over a truncated polynomial algebra. This yields a simple formula for its stable Hom spaces and, after introducing suitable combinatorial parameters, a uniform quiver presentation of
$\Gam(\DynA_m,\Omega)$. 

Let $\Lambda_{m+1}:=\Bbbk[x]/(x^{m+1})$ and $M_i:=\Lambda_{m+1}/(x^i)$ for $1\le i\le m+1$. Then, $\Lambda_{m+1}$ is a representation-finite self-injective Nakayama algebra, and $M_1,\ldots,M_{m+1}$ form a complete set of pairwise non-isomorphic indecomposable $\Lambda_{m+1}$-modules. Moreover, $M_{m+1}=\Lambda_{m+1}$ is the unique indecomposable projective module. By the classical construction in \cite{DlabRingel92} (see also \cite{RingelZhang}), the stable Auslander algebra of $\Lambda_{m+1}$ is isomorphic to the preprojective algebra $\Pi_{\DynA_m}$. Under this isomorphism, the primitive idempotent corresponding to the vertex $i$ is identified with the summand $M_i$. Hence, for every nonempty subset $\Omega\subseteq\{1,\ldots,m\}$, we have
\[
\Gam(\DynA_m,\Omega)
\cong
\underline{\End}_{\Lambda_{m+1}}
\left(\bigoplus_{i\in\Omega}M_i\right)^{\op}.
\]
Since $\Lambda_{m+1}$ is uniserial, $\dim_{\Bbbk}\Hom_{\Lambda_{m+1}}(M_i,M_j)=\min\{i,j\}$. A map $M_i\to\Lambda_{m+1}$ sends $1$ into
$x^{m-i+1}\Lambda_{m+1}$. Hence, the maps $M_i\to M_j$ factoring through $\Lambda_{m+1}$ are precisely those maps whose image of $1$ lies in $x^{m-i+1}M_j$, a subspace of dimension $\max\{0,i+j-(m+1)\}$. This gives
\[
\begin{aligned}
\dim_{\Bbbk}\underlineHom_{\Lambda_{m+1}}(M_i,M_j)
&= \min\{i,j\}-\max\{0,i+j-(m+1)\}\\
&= \min\{i,j,m-i+1,m-j+1\}.
\end{aligned}
\]

The stable Hom formula above makes precise the relation between the algebras considered here and the idempotent truncations of Auslander algebras of $\Lambda_{m+1}$ studied in \cite{DrozdMazorchuk}. The main distinction is that $\Gam(\DynA_m,\Omega)$ is defined in terms of stable Hom spaces, whereas the algebras in \cite{DrozdMazorchuk} are defined using
ordinary Hom spaces. Consequently, the classification result in \cite{DrozdMazorchuk} does not apply to $\Gam(\DynA_m,\Omega)$ in
general. The two constructions agree, however, whenever every
morphism that factors through the projective module $M_{m+1}$ is zero.

We recall the relevant construction from \cite{DrozdMazorchuk} in our module-theoretic notation. In the quiver presentation used there, the vertices are numbered in the opposite direction to the lengths of the indecomposable $\Bbbk[x]/(x^\ell)$-modules. Accordingly, for
$\ell\ge2$ and $X\subseteq\{2,\ldots,\ell\}$, we set
\[
\mathcal{A}_\ell^X:=
\End_{\Bbbk[x]/(x^\ell)}
\left(
\Bbbk[x]/(x^\ell)\oplus
\bigoplus_{r\in X}\Bbbk[x]/(x^{\ell-r+1})
\right)^{\op}.
\]
Up to opposite algebras, we may translate \cite[Theorem~2.1]{DrozdMazorchuk} directly into our notation.

\begin{proposition}\label{prop:oneside}
Suppose $\varnothing\neq\Omega\subseteq\{1,2,\ldots,\lfloor(m+1)/2\rfloor\}$ and 
$\ell:=\max\Omega\ge2$. Then, $\Gam(\DynA_m,\Omega)\cong\mathcal{A}_\ell^X$ with $X=\{\ell+1-i\mid i\in\Omega\setminus\{\ell\}\}$. Consequently,
\begin{enumerate}
\item $\Gam(\DynA_m,\Omega)$ is representation-finite if $\Omega\in \{\{1,\ell\}, \{\ell-1,\ell\}, \{1,\ell-1,\ell\}, \{\ell\}\}$.

\item $\Gam(\DynA_m,\Omega)$ is tame if either $\ell\ge4$ and $\Omega\in
\bigl\{
\{1,2,\ell\},
\{2,\ell\},
\{\ell-2,\ell\},
\{\ell-2,\ell-1,\ell\}
\bigr\}$, or $\ell=4$ and $\Omega=\{1,2,3,4\}$.

\item $\Gam(\DynA_m,\Omega)$ is wild in all remaining cases.
\end{enumerate}
\end{proposition}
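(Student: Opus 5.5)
The plan is to realise the isomorphism inside the stable category of $\Lambda_{m+1}$ and then quote the Drozd--Mazorchuk classification. By the stable Hom formula,
\[
\dim\underlineHom(M_i,M_j)=\min\{i,j,m-i+1,m-j+1\}.
\]
For $\Omega\subseteq\{1,\dots,\lfloor(m+1)/2\rfloor\}$ we have $i+j\le m+1$ for all $i,j\in\Omega$. Hence $\max\{0,i+j-(m+1)\}=0$, and no nonzero morphism between summands $M_i,M_j$ with $i,j\in\Omega$ factors through $\Lambda_{m+1}$. We are therefore in exactly the situation noted before the statement, where stable and ordinary Hom spaces agree. This gives
\[
\Gam(\DynA_m,\Omega)\cong\End_{\Lambda_{m+1}}\Bigl(\bigoplus_{i\in\Omega}M_i\Bigr)^{\op}.
\]
Here the composition in the stable category is the ordinary composition, since the ideal of maps factoring through $\add\Lambda_{m+1}$ vanishes on $\add\bigoplus_{i\in\Omega}M_i$.

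Next I will change the ambient ring. Every $M_i$ with $i\in\Omega$ is killed by $x^\ell$, where $\ell=\max\Omega$. So $\bigoplus_{i\in\Omega}M_i$ is a module over $\Bbbk[x]/(x^\ell)$, and Hom spaces computed over $\Lambda_{m+1}$ and over $\Bbbk[x]/(x^\ell)$ coincide, since $\Bbbk[x]/(x^\ell)$ is a quotient ring of $\Lambda_{m+1}$. In this ring $M_\ell=\Bbbk[x]/(x^\ell)$ is the regular module. Writing $i=\ell-r+1$, i.e.\ $r=\ell+1-i$, the summand $M_i$ is $\Bbbk[x]/(x^{\ell-r+1})$. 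For $i\in\Omega\setminus\{\ell\}$ we have $1\le i\le \ell-1$, so $r$ ranges over a subset $X$ of $\{2,\dots,\ell\}$. This is precisely $\mathcal A_\ell^X$, and the condition $\ell\ge2$ matches the hypothesis $\ell\ge2$ in the definition of $\mathcal A^X_\ell$.

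Parts (1)--(3) then follow by translating \cite[Theorem~2.1]{DrozdMazorchuk} via $r\leftrightarrow \ell+1-r$. Under this translation $\{1,\ell\}$ corresponds to $X=\{\ell\}$, $\{\ell-1,\ell\}$ to $X=\{2\}$, $\{2,\ell\}$ to $X=\{\ell-1\}$, $\{\ell-2,\ell\}$ to $X=\{3\}$, and so on. Representation type is invariant under passing to opposite algebras, via the duality $D$. So the op appearing in the definition of $\mathcal A_\ell^X$, and any left/right convention mismatch with \cite{DrozdMazorchuk}, is harmless.

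The main obstacle is bookkeeping rather than mathematics. I expect two points to need care:
\begin{itemize}
\item getting the index reversal and the edge conditions exactly right, for example that $\{1,2,\ell\}$ tame requires $\ell\ge4$ so that the listed sets do not collapse into the finite cases, and that $\ell=4$ with $\Omega=\{1,2,3,4\}$ is the one extra tame case;
\item checking that Drozd--Mazorchuk's list, which also treats small $\ell$, degenerate coincidences and $X=\varnothing$ (giving $\Omega=\{\ell\}$, the local Nakayama algebra $\Bbbk[x]/(x^\ell)$, which is finite), is fully reproduced.
\end{itemize}
I would handle this by listing each Drozd--Mazorchuk case explicitly and converting it one line at a time.
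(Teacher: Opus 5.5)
Your proposal is correct and follows the paper's proof essentially step for step. The bound $i+j\le 2\ell\le m+1$ kills all maps through $M_{m+1}$. The modules are then viewed over $\Bbbk[x]/(x^\ell)$, with $M_i$ matched to the vertex $\ell+1-i$ of the Drozd--Mazorchuk presentation. Parts (1)--(3) are read off from \cite[Theorem~2.1]{DrozdMazorchuk}. Your extra remarks only make explicit what the paper leaves implicit: that restriction along $\Lambda_{m+1}\twoheadrightarrow\Bbbk[x]/(x^\ell)$ is fully faithful, and that the $\op$ conventions do not affect representation type.
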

\begin{proof}
For any $i,j\in\Omega$, we have $i+j\le2\ell\le m+1$. Hence, $\max\{0,i+j-(m+1)\}=0$. By the stable Hom formula above, no nonzero morphism
$M_i\to M_j$ factors through the projective module $M_{m+1}$. Since $i\le\ell$ for every $i\in\Omega$, each $M_i$ is annihilated
by $x^\ell$. Hence, the module structures factor through $\Bbbk[x]/(x^\ell)$, without changing the Hom spaces or their compositions. Under the convention in \cite{DrozdMazorchuk}, the summand $\Bbbk[x]/(x^\ell)$ corresponds to vertex $1$, while $\Bbbk[x]/(x^i)$ corresponds to vertex $\ell+1-i$. Therefore, $\Gam(\DynA_m,\Omega)\cong \mathcal{A}_\ell^X$ for $X=\{\ell+1-i\mid i\in\Omega\setminus\{\ell\}\}$. The remaining statements are direct translations of
\cite[Theorem~2.1]{DrozdMazorchuk}.
\end{proof}

\subsection{Quiver presentation}
Contracted preprojective algebras have been studied in \cite{IyamaWemyssTits} and, more recently, in \cite{ChanIyamaMarczinzik}. For example, an explicit presentation for a particular contracted
preprojective algebra of type~$\DynA$ appears in \cite[Example~4.9]{ChanIyamaMarczinzik}. 

We suppose $|\Omega|=r$ and write $\Omega=\{j_1<j_2<\cdots<j_r\}\subseteq\{1,\ldots,m\}$. Set $d_0:=j_1$, $d_s:=j_{s+1}-j_s$ for $1\le s\le r-1$, and $d_r:=m-j_r+1$. Then, each $d_s$ is positive and $d_0+d_1+\cdots+d_r=m+1$. Moreover, $\Omega$ is recovered from the \emph{parameter sequence} $(d_0,\ldots,d_r)$ by $j_s=d_0+\cdots+d_{s-1}$ for $1\le s\le r$. We now give a uniform presentation for arbitrary nonempty subsets $\Omega$.

\begin{theorem}
\label{thm:uniformA}
Suppose $\Omega=\{j_1<j_2<\cdots<j_r\}$. If $r=1$, then $\Gam(\DynA_m,\Omega) \cong \Bbbk[x]/(x^{\min\{d_0,d_1\}})$. If $r\ge 2$, then $\Gam(\DynA_m,\Omega)$ is isomorphic to $\Bbbk Q/I$ with the quiver $Q$:
\[
\vcenter {\xymatrix@C=4em{
1
\ar@(ul,dl)[]_{x_1}
\ar@/^/[r]^{a_1}
&
2
\ar@(ul,ur)[]^{x_2}
\ar@/^/[l]^{b_1}
\ar@/^/[r]^{a_2}
&
\cdots
\ar@/^/[r]^{a_{r-2}}
\ar@/^/[l]^{b_2}
&
r-1
\ar@(ul,ur)[]^{x_{r-1}}
\ar@/^/[l]^{b_{r-2}}
\ar@/^/[r]^{a_{r-1}}
&
r
\ar@(ur,dr)[]^{x_r}
\ar@/^/[l]^{b_{r-1}}
}}
\]
modulo the ideal $I$ generated by the relations 
\[
x_1^{d_0}=0,\quad  x_r^{d_r}=0, \quad x_sa_s=a_sx_{s+1}, \quad b_sx_s=x_{s+1}b_s, \quad a_sb_s=x_s^{d_s},\quad b_sa_s=x_{s+1}^{d_s},
\]
where the last four relations hold for all $1\le s\le r-1$.
\end{theorem}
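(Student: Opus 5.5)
Work throughout with the stable endomorphism model $\Gam(\DynA_m,\Omega)\cong\underlineEnd_{\Lambda_{m+1}}(\bigoplus_{s=1}^r M_{j_s})^{\op}$ and the dimension formula $\dim\underlineHom(M_i,M_j)=\min\{i,j,m+1-i,m+1-j\}$. For $r=1$ this is immediate: $\underlineEnd(M_j)$ is a quotient of $\End(M_j)\cong\Bbbk[x]/(x^j)$, hence local and generated by the image of multiplication by $x$. Its dimension is $\min\{j,m+1-j\}=\min\{d_0,d_1\}$, so it is $\Bbbk[x]/(x^{\min\{d_0,d_1\}})$.

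For $r\ge2$ the plan is to write down explicit stable morphisms and define a map $\Bbbk Q\to\Gam$. At vertex $s$ let $x_s$ be the class of multiplication by $x$ on $M_{j_s}$. Let $a_s$ and $b_s$ be the classes of the canonical projection $M_{j_{s+1}}\twoheadrightarrow M_{j_s}$ and the inclusion $M_{j_s}\hookrightarrow M_{j_{s+1}}$, $1\mapsto x^{d_s}$, oriented as required by the $\op$. With the $\op$ convention, $a_s$ and $b_s$ point in the directions drawn in the quiver. Check the relations directly on representatives. Commutativity $x_sa_s=a_sx_{s+1}$ and $b_sx_s=x_{s+1}b_s$ holds because all maps are $\Lambda_{m+1}$-linear and $x$ is central. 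The composites of projection and inclusion are multiplication by $x^{d_s}$ on $M_{j_s}$ and on $M_{j_{s+1}}$, which gives $a_sb_s=x_s^{d_s}$ and $b_sa_s=x_{s+1}^{d_s}$. Finally, $x^{j_1}=0$ on $M_{j_1}$, so $x_1^{d_0}=0$. For $x_r^{d_r}=0$, multiplication by $x^{m+1-j_r}$ on $M_{j_r}$ factors through $\Lambda_{m+1}$ via $1\mapsto x^{m+1-j_r}$ followed by the projection, hence is stably zero. This gives a well-defined algebra map $\Phi\colon\Bbbk Q/I\to\Gam$.

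Surjectivity comes next. Every $\Lambda_{m+1}$-map $M_i\to M_j$ is $1\mapsto p(x)$ with $p(x)$ a multiple of $x^{\max\{0,j-i\}}$. It is therefore a composite of the canonical projection or inclusion with a power of $x$. The canonical map $M_{j_u}\to M_{j_v}$ factors as a chain of the $a$'s or $b$'s through the intermediate summands, since projections compose to projections and inclusions $1\mapsto x^{d}$ compose with additive exponents. Hence the images of the $e_s,x_s,a_s,b_s$ generate $\Gam$.

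For injectivity, compare dimensions. Using the relations, every path from $u$ to $v$ in $\Bbbk Q/I$ reduces to $x_u^k$ times the monotone path: commute the $x$'s to one end and replace every backtrack $a_sb_s$ or $b_sa_s$ by a power of $x$. Hence $e_u(\Bbbk Q/I)e_v$ is spanned by $\{x_u^k\cdot(\text{monotone path})\mid k\ge0\}$. It remains to bound the nilpotency order of $x_u$ acting on the monotone path by $\min\{j_u,j_v,m+1-j_u,m+1-j_v\}$. Take $u\le v$. Transporting $x_u$ leftwards gives $x_u^{j_u}=x_1^{d_0}\cdot(\text{path})=0$, so the bound $j_u$ holds. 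Transporting it rightwards along the monotone path to $x_v$, and then onward to $x_r$, gives the bound $m+1-j_v$. These two bounds are exactly the min in the formula, because $j_u\le j_v$. So $\dim\Bbbk Q/I\le\dim\Gam$, and $\Phi$ is an isomorphism.

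The main obstacle is this reduction and bounding step. One must check that the transport $x_ua_u\cdots=a_u\cdots x_v$ together with $x_s^{d_s}=a_sb_s$ really lets one shift powers of $x$ into $x_1^{d_0}$ or $x_r^{d_r}$. For example, $x_u^{j_u}$ should become $b_{u-1}\cdots b_1\,x_1^{d_0}\,a_1\cdots a_{u-1}$. I would verify this by induction on $u$, using $x_u^{d_{u-1}}=b_{u-1}a_{u-1}$ and pushing the remaining $x_u^{j_{u-1}}$ through $b_{u-1}$. A final detail is to keep the $\op$ and traversal-order conventions consistent when assigning $a_s,b_s$.
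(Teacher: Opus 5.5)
Your proposal is correct and follows essentially the same route as the paper: the same stable-endomorphism generators $x_s,a_s,b_s$, the same relation checks, and the same reduction of paths to $x_u^k$ times a monotone path, with nilpotency bounds obtained by induction toward the endpoint relations and compared against the stable Hom dimensions. The only difference is cosmetic: you prove surjectivity separately by showing that the arrows generate the stable Hom spaces, and then use the dimension bound for injectivity. The paper instead notes that the images of the spanning paths are the linearly independent maps $1\mapsto x^q$, and that their number equals the stable Hom dimension, which gives both directions at once.
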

\begin{proof}
If $r=1$, then $\Omega=\{j_1\}$ and $\Gam(\DynA_m,\Omega) \cong \underline{\End}_{\Lambda_{m+1}}(M_{j_1})^{\op}$. The stable endomorphisms are generated by multiplication by $x$, and the stable Hom calculation above shows that its nilpotency index is $\min\{j_1,m-j_1+1\}=\min\{d_0,d_1\}$. Hence, $\Gam(\DynA_m,\Omega) \cong \Bbbk[x]/(x^{\min\{d_0,d_1\}})$. We therefore assume $r\ge2$ in the following.

For each $1\le s\le r$, we denote by the loop $x_s$ for the multiplication by $x$ on $M_{j_s}$. Then, the arrow $a_s:s\to s+1$ corresponds to the canonical quotient map $M_{j_{s+1}}\longrightarrow M_{j_s}$ for $1\le s\le r-1$, whereas the arrow $b_s:s+1\to s$ corresponds to the map $M_{j_s}\longrightarrow M_{j_{s+1}}$ given by $1\longmapsto x^{d_s}$. Since all these maps commute with multiplication by $x$, we obtain $x_sa_s=a_sx_{s+1}$ and $b_sx_s=x_{s+1}b_s$. Moreover, the two compositions corresponding to $a_sb_s$ and
$b_sa_s$ are multiplication by $x^{d_s}$ on $M_{j_s}$ and $M_{j_{s+1}}$, respectively. Hence, $a_sb_s=x_s^{d_s}$ and $b_sa_s=x_{s+1}^{d_s}$. At the left endpoint, $d_0=j_1$, so multiplication by
$x^{d_0}=x^{j_1}$ vanishes on $M_{j_1}$. At the right endpoint, $d_r=m-j_r+1$, and multiplication by $x^{m-j_r+1}$ on $M_{j_r}$ factors as $M_{j_r}\longrightarrow M_{m+1}=\Lambda_{m+1} \longrightarrow M_{j_r}$, where the first map sends $1$ to $x^{m-j_r+1}$. Since $M_{m+1}$ is projective, this map is zero in the stable category. Therefore, $x_1^{d_0}=x_r^{d_r}=0$. Hence, the assignments above induce an algebra homomorphism $\Bbbk Q/I\longrightarrow \Gam(\DynA_m,\Omega)$, where $I$ is the ideal generated by the stated relations. It remains to show that this homomorphism is an isomorphism. 

We first record the nilpotency of the loops. Starting from the left endpoint, we have $x_1^{d_0}=0$; assuming moreover $x_{s-1}^{j_{s-1}}=0$ for $s>1$ and using $j_s=j_{s-1}+d_{s-1}$, we obtain
\[
x_s^{j_s}
=
x_s^{j_{s-1}}x_s^{d_{s-1}}
=
x_s^{j_{s-1}}b_{s-1}a_{s-1}
=
b_{s-1}x_{s-1}^{j_{s-1}}a_{s-1}
=
0.
\]
This gives us $x_s^{j_s}=0$ for all $1\le s\le r$. Similarly, starting from the right endpoint gives $x_s^{m-j_s+1}=0$ for all $1\le s\le r$. Using the commutation relations, powers of the loops can be moved along paths consisting entirely of arrows in the same direction. Moreover, whenever a path changes direction, it contains a subpath
$a_ib_i$ or $b_ia_i$, which can be replaced by
$x_i^{d_i}$ or $x_{i+1}^{d_i}$, respectively.
Then, every path can be reduced to a power of a loop followed by a path which moves only in one direction. Hence, for $s<t$, the paths from $s$ to $t$ in $\Bbbk Q/I$ are spanned by $x_s^q a_sa_{s+1}\cdots a_{t-1}$ for $0\le q<\min\{j_s,m-j_t+1\}$. Under the homomorphism above, these paths are sent to the stable maps $M_{j_t}\to M_{j_s}$ given by $1\mapsto x^q$. These stable maps are linearly independent, and their number is
\[
\min\{j_s,m-j_t+1\} = \dim_{\Bbbk}
\underlineHom_{\Lambda_{m+1}}(M_{j_t},M_{j_s}).
\]
Hence, they form a basis of the corresponding stable Hom space. The reverse off-diagonal blocks are treated in the same way. For the diagonal block at the vertex $s$, the paths $1,x_s,\ldots, x_s^{\min\{j_s,m-j_s+1\}-1}$ form a basis, in agreement with $\dim_{\Bbbk}
\underlineEnd_{\Lambda_{m+1}}(M_{j_s}) = \min\{j_s,m-j_s+1\}$. Thus, the homomorphism $\Bbbk Q/I\longrightarrow \Gam(\DynA_m,\Omega)$ is an isomorphism on every matrix block, and hence is an isomorphism.
\end{proof}

\begin{remark}
For $r\ge2$, the presentation in Theorem~\ref{thm:uniformA} is not necessarily a bound quiver presentation in the usual sense, since the ideal generated by the displayed relations need not be admissible. In fact, $I$ is admissible if and only if $d_0,d_1,\ldots,d_r\ge2$. The Gabriel quiver of $\Gam(\DynA_m,\Omega)$ is obtained from the quiver in Theorem~\ref{thm:uniformA} by deleting precisely those loops which
become redundant modulo $\rad^2 \Gam(\DynA_m,\Omega)$. More explicitly, the arrows
$a_s$ and $b_s$ always survive in $\rad \Gam(\DynA_m,\Omega)/\rad^2 \Gam(\DynA_m,\Omega)$, whereas the loop $x_s$ survives if and only if
$d_{s-1}\ge2$ and $d_s\ge2$.
\end{remark}

The presentation in Theorem~\ref{thm:uniformA} gives two elementary reductions that will be
used repeatedly. The first is obtained by killing all vertices outside a consecutive interval, while the second is obtained by passing to a smaller idempotent truncation. 

\begin{proposition} \label{prop:intervalquot}
Suppose $\Omega=\{j_1<j_2<\cdots<j_r\}$ and $1\le s\le t\le r$. Let $e_{j_u}$ be the primitive idempotent of $\Gam(\DynA_m,\Omega)$ corresponding to $j_u\in\Omega$. Then,
\[
\Gam(\DynA_m,\Omega)/\left\langle e_{j_u}\mid u\notin\{s,\ldots,t\}\right\rangle  \cong
\Gam\left(
\DynA_{m'},\Omega'
\right),
\]
where $m'=d_{s-1}+d_s+\cdots+d_t-1$ and $\Omega'=\left\{ d_{s-1}, \, d_{s-1}+d_s, \, \ldots, \, d_{s-1}+d_s+\cdots+d_{t-1} \right\}$.
\end{proposition}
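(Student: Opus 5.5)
We work directly with the presentation of Theorem~\ref{thm:uniformA}. Set $A:=\Gam(\DynA_m,\Omega)=\Bbbk Q/I$ and $e:=\sum_{u\notin\{s,\ldots,t\}}e_{j_u}$. The plan is to find a presentation of $A/AeA$, compare it with the presentation of $\Gam(\DynA_{m'},\Omega')$ from the same theorem, and match the relations.

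First we compute the parameter sequence of $(m',\Omega')$. Here $\Omega'$ has $r'=t-s+1$ elements $j'_1<\cdots<j'_{r'}$, with $j'_1=d_{s-1}$ and consecutive differences $d_s,\ldots,d_{t-1}$. The last parameter is $m'-j'_{r'}+1=d_t$. So the parameter sequence of $\Omega'$ is exactly $(d_{s-1},d_s,\ldots,d_t)$.

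Next we present $A/AeA$. Factoring out the idempotents $e_{j_u}$ for $u\notin[s,t]$ deletes those vertices together with all arrows incident to them. What remains is the quiver of Theorem~\ref{thm:uniformA} on the vertices $s,\ldots,t$, with loops $x_s,\ldots,x_t$ and arrows $a_u,b_u$ for $s\le u\le t-1$. The ideal becomes the image of $I$ plus $AeA$. The relations of $I$ involving only surviving arrows are the interior ones: $x_ua_u=a_ux_{u+1}$, $b_ux_u=x_{u+1}b_u$, $a_ub_u=x_u^{d_u}$ and $b_ua_u=x_{u+1}^{d_u}$. The relations touching deleted vertices yield new ones. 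If $s>1$, then $b_{s-1}a_{s-1}=x_s^{d_{s-1}}$ holds, and its left side factors through the deleted vertex $s-1$, so $x_s^{d_{s-1}}=0$. Similarly, if $t<r$, then $a_tb_t=x_t^{d_t}$ gives $x_t^{d_t}=0$. If $s=1$ or $t=r$, the boundary relations $x_1^{d_0}=0$ and $x_r^{d_r}=0$ already supply these. In every case we get exactly the relations of Theorem~\ref{thm:uniformA} for the parameter sequence $(d_{s-1},\ldots,d_t)$, so there is a surjection $\Gam(\DynA_{m'},\Omega')\twoheadrightarrow A/AeA$.

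The main obstacle is proving injectivity, that is, showing $AeA$ contributes nothing beyond the relations above. The cleanest route is a dimension count. For $s\le p\le q\le t$, the proof of Theorem~\ref{thm:uniformA} shows that the paths in $A$ from $p$ to $q$ are spanned by $x_p^ka_p\cdots a_{q-1}$ for $0\le k<\min\{j_p,m-j_q+1\}$. Modulo $AeA$, we expect the surviving part to be $k<\min\{j_p-j_{s-1},\,j_{t+1}-j_q\}$, using the conventions $j_0:=0$ and $j_{r+1}:=m+1$. This bound equals $\min\{j'_{p'},\,m'-j'_{q'}+1\}$, which is the dimension of the corresponding block of $\Gam(\DynA_{m'},\Omega')$. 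To verify it, we would show that a path through a deleted vertex, once reduced, lands in $x_p^{k}(\cdots)$ with $k\ge j_p-j_{s-1}$ or $k\ge j_{t+1}-j_q$. Conversely, the stable-Hom model gives a lower bound: by the stable Hom formula, the maps $1\mapsto x^k$ with $k$ below this bound do not factor through $\bigoplus_{u\notin[s,t]}M_{j_u}\oplus\Lambda_{m+1}$. Since the surjection already bounds $\dim A/AeA$ from above, the lower bound finishes the argument. Diagonal blocks and reverse-direction blocks are handled symmetrically.
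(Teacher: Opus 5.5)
Your argument is correct, and its first half is exactly the paper's proof. That first half consists of deleting the vertices outside $\{s,\ldots,t\}$, reading off $x_s^{d_{s-1}}=0$ from $b_{s-1}a_{s-1}=x_s^{d_{s-1}}$ and $x_t^{d_t}=0$ from $a_tb_t=x_t^{d_t}$, and checking that the new parameter sequence is $(d_{s-1},\ldots,d_t)$.

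The difference is that the paper stops there, and rightly so: the ``injectivity obstacle'' you identify does not exist. Let $Q'$ be the full subquiver on the surviving vertices, and let $\pi\colon\Bbbk Q\to\Bbbk Q'$ kill the deleted vertices and the arrows incident to them. Then $\pi$ is a surjective algebra map whose kernel is the ideal generated by the deleted vertex idempotents. Hence $A/AeA\cong\Bbbk Q'/\pi(I)$, and $\pi(I)$ is generated by the images of the generators of $I$. You have in effect computed all of these images; the ones you did not list become $0=0$. So the algebra you wrote down \emph{is} a presentation of $A/AeA$, and the surjection from $\Gam(\DynA_{m'},\Omega')$ is an isomorphism by construction.

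Your dimension count is nevertheless valid. In the stable model, the map $M_{j_q}\to M_{j_p}$ ($p\le q$) given by $1\mapsto x^k$ factors through $M_c$ if and only if $k\ge\max\{0,c-j_q\}+\max\{0,j_p-c\}$. Minimizing over the deleted summands and $\Lambda_{m+1}$ gives exactly $\min\{j_p-j_{s-1},\,j_{t+1}-j_q\}=\min\{j'_{p'},\,m'-j'_{q'}+1\}$. The reverse-direction blocks give the same bound. Together with the upper bound from your surjection, this closes the argument. Your sentence about reducing paths through deleted vertices is then superfluous.

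So your route buys an independent consistency check against the module-theoretic model. The cost is a computation that the presentation-level argument makes unnecessary.
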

\begin{proof}
In the presentation of Theorem~\ref{thm:uniformA}, we may kill the primitive idempotents corresponding to $j_u$ with $u\notin\{s,\ldots,t\}$. If $s>1$, then the arrows
$a_{s-1}$ and $b_{s-1}$ vanish, so that $b_{s-1}a_{s-1}=x_s^{d_{s-1}}$ becomes $x_s^{d_{s-1}}=0$; for $s=1$, this is already the left
endpoint relation $x_1^{d_0}=0$. Similarly, if $t<r$, the relation $a_tb_t=x_t^{d_t}$ becomes $x_t^{d_t}=0$; for $t=r$, this is the right endpoint relation $x_r^{d_r}=0$. All relations involving only the vertices $s,\ldots,t$ remain
unchanged. Thus, the resulting presentation has parameter sequence $d_{s-1},d_s,\ldots,d_t$. Its underlying Dynkin diagram has $d_{s-1}+d_s+\cdots+d_t-1$ vertices, and the remaining vertices are $d_{s-1}$, $d_{s-1}+d_s$, $\ldots$, $d_{s-1}+\cdots+d_{t-1}$.
\end{proof}

\begin{proposition}
\label{prop:coarsen}
Let $\varnothing\neq\Omega'\subseteq\Omega\subseteq \{1,\ldots,m\}$. Then, $\Gam(\DynA_m,\Omega')=e_{\Omega'}\Gam(\DynA_m,\Omega)e_{\Omega'}$. Suppose moreover that
$\Omega=\{j_1<\cdots<j_r\}$ with $r\ge2$, and let
$\Omega'=\Omega\setminus\{j_s\}$. Then, the parameters associated to $\Omega'$ are $d_0,\; \ldots,\; d_{s-2},\; d_{s-1}+d_s,\; d_{s+1},\; \ldots,d_r$.
\end{proposition}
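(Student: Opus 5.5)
The first assertion is a general property of idempotent truncations. I would prove it directly from the definition $\Gam(\DynA_m,\Omega)=e_\Omega\Pi_{\DynA_m}e_\Omega$ rather than through the presentation of Theorem~\ref{thm:uniformA}. Since $\Omega'\subseteq\Omega$, we have $e_{\Omega'}e_\Omega=e_\Omega e_{\Omega'}=e_{\Omega'}$ in $\Pi_{\DynA_m}$. The idempotent $e_{\Omega'}$ lies in $e_\Omega\Pi_{\DynA_m}e_\Omega$, and
\[
e_{\Omega'}\bigl(e_\Omega\Pi_{\DynA_m}e_\Omega\bigr)e_{\Omega'}=e_{\Omega'}\Pi_{\DynA_m}e_{\Omega'}=\Gam(\DynA_m,\Omega').
\]
I would add one remark: the idempotent $e_{\Omega'}$ of $\Gam(\DynA_m,\Omega)$, namely the sum of the primitive idempotents indexed by $\Omega'$, is exactly the one used in Theorem~\ref{thm:uniformA}. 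This holds because the primitive idempotents of the contracted algebra are the images of the $e_i$ with $i\in\Omega$.

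For the parameter statement, I would compute directly from the definition of the parameter sequence. Write $\Omega'=\{j_1<\cdots<j_{s-1}<j_{s+1}<\cdots<j_r\}$ and relabel it as $j'_1<\cdots<j'_{r-1}$. Then $d'_0=j'_1$ and $d'_u=j'_{u+1}-j'_u$, and $d'_{r-1}=m-j'_{r-1}+1$. The only consecutive difference that changes is the one across the removed element $j_s$.

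\begin{itemize}
\item If $2\le s\le r-1$, the new gap is $j_{s+1}-j_{s-1}=d_{s-1}+d_s$. All other gaps, and both endpoint parameters, are unchanged.
\item If $s=1$, the new first parameter is $j_2=d_0+d_1$, which is again $d_{s-1}+d_s$.
\item If $s=r$, the new last parameter is $m-j_{r-1}+1=d_{r-1}+d_r$, which is again $d_{s-1}+d_s$.
\end{itemize}

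In every case the resulting sequence is $d_0,\ldots,d_{s-2},d_{s-1}+d_s,d_{s+1},\ldots,d_r$, as claimed. This is a telescoping identity and needs no representation theory.

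As a consistency check, I would verify the presentation against Theorem~\ref{thm:uniformA}. Truncating away vertex $s$ replaces $a_{s-1},a_s$ by the composite $a_{s-1}a_s$, and the relation $(a_{s-1}a_s)(b_sb_{s-1})=x_{s-1}^{d_{s-1}+d_s}$ follows from the commutation relations. This is not logically needed.

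I expect no real obstacle. The only point needing care is the endpoint cases $s=1$ and $s=r$, where one must check that the endpoint conventions $d_0=j_1$ and $d_r=m-j_r+1$ merge in the same way as interior gaps.
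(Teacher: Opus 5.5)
Your proposal is correct and follows essentially the same route as the paper: the first assertion comes from $e_{\Omega'}e_\Omega=e_\Omega e_{\Omega'}=e_{\Omega'}$, and the parameter statement comes from observing that deleting $j_s$ merges the adjacent intervals of lengths $d_{s-1}$ and $d_s$. Your explicit check of the endpoint cases $s=1$ and $s=r$ only spells out what the paper states in one line.
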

\begin{proof}
Since $\Omega'\subseteq\Omega$, we have
$e_{\Omega'}e_\Omega=e_{\Omega'}$. Hence, $e_{\Omega'} \bigl(e_\Omega\Pi_{\DynA_m}e_\Omega\bigr)
e_{\Omega'} = e_{\Omega'}\Pi_{\DynA_m}e_{\Omega'}$, which gives the required equality. Recall that the parameters associated to $\Omega$ are $d_0,\; \ldots, \; d_{s-1},\; d_s,\; d_{s+1},\; \ldots,\; d_r$. Deleting $j_s$ removes the break between $d_{s-1}$ and $d_s$, so these two consecutive intervals are replaced by a single interval of length $d_{s-1}+d_s$. This gives the stated parameter sequence.
\end{proof}

We now apply the structural results above to determine the representation type of $\Gam(\DynA_m,\Omega)$.

\subsection{Two vertices}\label{sec:two-point}
Suppose $\Omega=\{j_1<j_2\}$. Let $d_0=j_1$, $d_1=j_2-j_1$, and $d_2=m-j_2+1$ as above.
By Theorem~\ref{thm:uniformA},
$\Gam(\DynA_m,\Omega)$ is isomorphic to $\Bbbk Q/I$ with
\[
\vcenter{\xymatrix@C=4em{ 1 \ar@(ul,dl)[]_{x_1} \ar@/^/[r]^{a_1} & 2 \ar@(ur,dr)[]^{x_2} \ar@/^/[l]^{b_1}
}}
\]
bounded by $x_1^{d_0}=0,\; x_2^{d_2}=0,\; 
x_1a_1=a_1x_2,\; b_1x_1=x_2b_1,\; a_1b_1=x_1^{d_1},\; b_1a_1=x_2^{d_1}$. 

\begin{proposition}\label{thm:two-point}
Suppose $\Omega=\{j_1<j_2\}$ and $d_0$, $d_1$, $d_2$ as above. Then, $\Gam(\DynA_m,\Omega)$ is representation-finite if
$\min\{d_0,d_1,d_2\}=1$, tame if $\min\{d_0,d_1,d_2\}=2$, and wild if $\min\{d_0,d_1,d_2\}\ge3$.
\end{proposition}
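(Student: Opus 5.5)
The plan is to use the two-point presentation displayed above, together with the symmetry $i\mapsto m+1-i$ of $\DynA_m$. This symmetry induces an automorphism of $\Pi_{\DynA_m}$, so it gives $\Gam(\DynA_m,\{j_1,j_2\})\cong\Gam(\DynA_m,\{m+1-j_2,m+1-j_1\})$ and interchanges $d_0$ and $d_2$ while fixing $d_1$. Hence in each case we may assume $d_0\le d_2$ where convenient. The trichotomy is then checked case by case via Han's classification of two-point algebras \cite[Theorems~1 and~2]{Han02}, in the form of Tables~T and~W of \cite[Appendix~A]{Wang22}, supplemented by special biserial arguments.

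\textbf{Finite case.} Suppose first $d_1=1$. The relations $a_1b_1=x_1$ and $b_1a_1=x_2$ make both loops redundant. The algebra becomes $\Bbbk\langle a_1,b_1\rangle$ modulo $(a_1b_1)^{d_0}=0$ and $(b_1a_1)^{d_2}=0$, which is a Nakayama-type algebra on a $2$-cycle; we expect it to be representation-finite, since it should be a quotient of a self-injective Nakayama algebra. Now suppose $d_0=1$ (the case $d_2=1$ is symmetric). Then $x_1=0$, so $a_1b_1=0$, $a_1x_2=0$, $x_2b_1=0$ and $b_1a_1=x_2^{d_1}$, with $x_2^{d_2}=0$. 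This is a special biserial algebra whose only non-monomial relation identifies $b_1a_1$ with a power of the loop. We would check directly that there are no bands, so that the algebra is representation-finite; we expect it to be a Brauer tree algebra with one exceptional vertex, or a quotient of one. When $j_2\le\lfloor(m+1)/2\rfloor$ this is also covered by Proposition~\ref{prop:oneside}(1) with $\Omega=\{1,\ell\}$, which serves as a consistency check.

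\textbf{Tame case} ($\min=2$). Here we need both representation-infiniteness and non-wildness. For infiniteness, we would locate a quotient with a band, or a Euclidean subquiver in a covering of a quotient. A natural choice is to kill $x_1^2$ and $x_2^2$, leaving an algebra of the shape of a Brauer graph algebra with a genuine cycle. For tameness we have two options. The first is to match the algebra, up to duality, with a degeneration of a quotient of an algebra in Table~T. The second is to construct an explicit one-parameter family $A_\lambda$ in which we rescale the commutativity relations $x_1a_1=a_1x_2$ and $b_1x_1=x_2b_1$ to $x_1a_1=\lambda a_1x_2$ and $b_1x_1=\lambda x_2b_1$, let $\lambda\to0$, and verify that the limit is special biserial. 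Tameness then follows by \cite{Geiss}. The dimension must be constant in the family, which we would verify using the basis from the proof of Theorem~\ref{thm:uniformA}.

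\textbf{Wild case} ($\min\ge3$). Both the local truncations and the radical-square-zero quotient fail to detect wildness: the separated quiver is of type $\widetilde{\DynA}_3$. So we must go to the cube of the radical. Since all $d_s\ge3$, the algebra surjects onto $\Gam/\rad^3\Gam$, where the relations $a_1b_1=x_1^{d_1}$ and $b_1a_1=x_2^{d_1}$ become $a_1b_1=b_1a_1=0$, while the commutativity relations survive. We then expect this quotient, or a further quotient of it, to contain a minimal wild two-point algebra from Table~W. As an alternative route, we would use Proposition~\ref{prop:covering-wild}. We grade by $\deg x_s=1$, $\deg a_1=0$ and $\deg b_1=d_1$, which makes all relations homogeneous. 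We then look in the covering for a finite convex subquiver free of relations whose underlying graph is a wild tree, such as $\widetilde{\DynD}$ with an extra vertex. The hypothesis $d_s\ge3$ is exactly what should ensure that paths of length $2$ through the loops are nonzero, so that such a subquiver is relation-free.

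The main obstacle is the wild case: the radical-square-zero test is inconclusive, so the argument must exhibit a precise wild quotient or covering subcategory. The tame upper bound via degeneration is the second most delicate point.
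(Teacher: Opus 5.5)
Your finite case is a valid alternative to the paper's route: for $d_1=1$ the algebra is a cyclic Nakayama algebra, and for $d_0=1$ it is special biserial and its string-algebra reduction has no bands. Your wild-case plan also works once made concrete: factoring $\Gamma/\rad^3\Gamma$ further by $b_1x_1$ gives exactly Han's $W_{33}$, which is the quotient the paper uses. The lower bound in the tame case is fine via the $\widetilde{\DynA}_3$ separated quiver.

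The genuine gap is the \emph{upper} bound in the tame case. Your explicit family is not a degeneration. In $A_\lambda$ you have $a_1b_1a_1=x_1^{d_1}a_1=\lambda^{d_1}a_1x_2^{d_1}$, and also $a_1b_1a_1=a_1x_2^{d_1}$. Hence $(\lambda^{d_1}-1)a_1x_2^{d_1}=0$. For $\lambda^{d_1}\neq1$ this kills $x_1^{d_1}a_1$. That path is one of the basis elements $x_1^qa_1$, $q<\min\{d_0,d_2\}$, of $e_1\Gamma e_2$ whenever $d_1<\min\{d_0,d_2\}$, for instance when $(d_0,d_1,d_2)=(3,2,3)$. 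So $\dim A_\lambda<\dim\Gamma$, the family is not flat, $A_\lambda\not\cong A_1$, and Geiss's theorem cannot be applied. In that example the limit $A_0$ is not even special biserial, since $a_1b_1=x_1^2$ and $a_1x_2$ are both nonzero there. The commutation relations encode the centrality of $x$ and cannot be deformed in this way. Your other option, ``a degeneration of a quotient of a Table~T algebra'', is never specified, so non-wildness is not actually proved.

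The missing idea is the paper's reduction to the one-sided Drozd--Mazorchuk situation, which gives the upper bounds in the finite and tame cases at once:
\begin{itemize}
\item Use the symmetry to arrange $\min\{d_0,d_1,d_2\}=\min\{d_0,d_1\}$. Put $\ell=d_0+d_1$ and choose $k\ge\max\{d_2,\ell\}$.
\item Then $\Gamma'=\Gamma(\DynA_{\ell+k-1},\{d_0,\ell\})$ has parameter sequence $(d_0,d_1,k)$, and $\{d_0,\ell\}$ lies in the left half $\{1,\ldots,\lfloor(\ell+k)/2\rfloor\}$.
\item By Theorem~\ref{thm:uniformA}, the presentation of $\Gamma'$ differs from that of $\Gamma(\DynA_m,\Omega)$ only in the endpoint relation $x_2^{k}=0$. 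Hence $\Gamma(\DynA_m,\Omega)\cong\Gamma'/\langle x_2^{d_2}\rangle$.
\item Proposition~\ref{prop:oneside} shows that $\Gamma'$ is representation-finite for $\{1,\ell\}$ and $\{\ell-1,\ell\}$, and tame for $\{2,\ell\}$ and $\{\ell-2,\ell\}$ with $\ell\ge4$.
\end{itemize}
Quotients inherit these upper bounds, which closes the gap.
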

\begin{proof}
Consider the automorphism of the Dynkin diagram $\DynA_m$ given by $i\mapsto m+1-i$. It induces an automorphism of $\Pi_{\DynA_m}$ and hence an isomorphism
\[
\Gam(\DynA_m,\{j_1,j_2\}) \cong
\Gam(\DynA_m,\{m+1-j_2,m+1-j_1\}).
\]
Under this automorphism, the parameter sequence $(d_0,d_1,d_2)$ is replaced by $(d_2,d_1,d_0)$. Hence, if $d_2<\min\{d_0,d_1\}$, we may replace the algebra by an isomorphic one for which the first parameter is minimal. It then suffices to
consider $\min\{d_0,d_1,d_2\}=\min\{d_0,d_1\}$.

Set $\ell:=d_0+d_1$ and choose an integer
$k\ge\max\{d_2,\ell\}$. Let $\Omega':=\{j_1'<j_2'\}:=\{d_0,\ell\}$ and consider $\Gam':=\Gam(\DynA_{\ell+k-1},\Omega')$. The parameter sequence associated to $\Omega'$ is
$(d_0,d_1,k)$. Since $k\ge\ell=j_2'$, the subset
$\Omega'=\{j_1',j_2'\}$ is contained in the left half of the corresponding Dynkin diagram, so Proposition~\ref{prop:oneside} applies.

Suppose first that $\min\{d_0,d_1,d_2\}=1$. By the reduction above, either $d_0=1$ or $d_1=1$. If $d_0=1$, then $\Omega'=\{1,\ell\}$; if $d_1=1$, then $d_0=\ell-1$ and hence $\Omega'=\{\ell-1,\ell\}$. Thus, $\Gam'$ is representation-finite by Proposition~\ref{prop:oneside}. By Theorem~\ref{thm:uniformA}, the presentations of $\Gam'$ and $\Gam(\DynA_m,\Omega)$ have the same quiver and the same relations except for the right endpoint relation, which is $x_2^k=0$ for
$\Gam'$ and $x_2^{d_2}=0$ for $\Gam(\DynA_m,\Omega)$. Since
$k\ge d_2$, we have $\Gam(\DynA_m,\Omega) \cong \Gam'/\langle x_2^{d_2}\rangle$. Therefore, $\Gam(\DynA_m,\Omega)$ is representation-finite.

Suppose next that $\min\{d_0,d_1,d_2\}=2$. We have again either $d_0=2$ or $d_1=2$. Accordingly,
$\{d_0,\ell\}=\{2,\ell\}$ or
$\{d_0,\ell\}=\{\ell-2,\ell\}$. Since $\ell\ge4$,
Proposition~\ref{prop:oneside} shows that $\Gam'$ is tame. As above,
$\Gam(\DynA_m,\Omega)\cong\Gam'/\langle x_2^{d_2}\rangle$, and hence
$\Gam(\DynA_m,\Omega)$ is either representation-finite or tame. Since $d_0,d_1,d_2\ge2$, both loops $x_1,x_2$ and both arrows $a_1,b_1$ survive in the Gabriel quiver of $\Gam(\DynA_m,\Omega)$. Therefore, $\Gam(\DynA_m,\Omega)/\rad^2\Gam(\DynA_m,\Omega)$ has separated quiver of Euclidean type $\widetilde{\DynA}_3$. Hence, $\Gam(\DynA_m,\Omega)$ is representation-infinite, and therefore it is tame.

Suppose $d_0,d_1,d_2\ge3$. Then, the quotient algebra $\Gam(\DynA_m,\Omega)/\langle x_1^3,x_2^3,b_1x_1,x_1^2a_1\rangle$ is exactly the minimal wild two-point algebra $W_{33}$ in \cite[Table~W]{Han02}. Hence, $\Gam(\DynA_m,\Omega)$ is wild.
\end{proof}

\subsection{Three vertices} \label{sec:A-three}
Suppose $\Omega=\{j_1<j_2<j_3\}$ with associated parameters $d_0,d_1,d_2,d_3$. By Theorem~\ref{thm:uniformA}, $\Gam(\DynA_m,\Omega)$ is isomorphic to $\Bbbk Q/I$, where $Q$ is presented as
\[
\vcenter{\xymatrix@C=4em{
1
\ar@(ul,dl)[]_{x_1}
\ar@/^/[r]^{a_1}
&
2
\ar@(ul,ur)[]^{x_2}
\ar@/^/[l]^{b_1}
\ar@/^/[r]^{a_2}
&
3
\ar@(ur,dr)[]^{x_3}
\ar@/^/[l]^{b_2}
}}
\]
and $I$ is generated by the following relations:
\[
\begin{matrix}
x_1^{d_0}=0,\quad x_1a_1=a_1x_2,\quad b_1x_1=x_2b_1,\quad
a_1b_1=x_1^{d_1},\quad b_1a_1=x_2^{d_1},\\[3pt]
x_3^{d_3}=0,\quad x_2a_2=a_2x_3,\quad b_2x_2=x_3b_2,\quad
a_2b_2=x_2^{d_2},\quad b_2a_2=x_3^{d_2}.
\end{matrix}
\]
We first use the radical-square-zero quotient to detect most of the wild cases.

\begin{proposition}\label{prop:radicalA}
If $d_1,d_2\ge2$ and at least one of $d_0,d_3$ is at least $2$, then $\Gam(\DynA_m,\Omega)$ is wild.
\end{proposition}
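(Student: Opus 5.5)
We pass to the radical-square-zero quotient $B:=\Gam(\DynA_m,\Omega)/\rad^2\Gam(\DynA_m,\Omega)$ and show that its separated quiver has a component which is neither Dynkin nor Euclidean. Wildness then follows from the criterion recalled in Section~\ref{sec:prelim}, together with the fact that $\Gam(\DynA_m,\Omega)\twoheadrightarrow B$.

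\textbf{Step 1: the Gabriel quiver.} By the remark following Theorem~\ref{thm:uniformA}, the arrows $a_1,b_1,a_2,b_2$ always survive in $\rad/\rad^2$. The loop $x_s$ survives if and only if $d_{s-1}\ge2$ and $d_s\ge2$. Since $d_1,d_2\ge2$, the loop $x_2$ survives. Moreover, $x_1$ survives if $d_0\ge2$, and $x_3$ survives if $d_3\ge2$. Using the diagram automorphism $i\mapsto m+1-i$, which reverses the parameter sequence as in the proof of Proposition~\ref{thm:two-point}, we may assume $d_0\ge2$. The Gabriel quiver then contains the loops $x_1,x_2$ and the arrows $a_1,b_1,a_2,b_2$.

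\textbf{Step 2: the separated quiver.} The separated quiver has vertices $1^\pm,2^\pm,3^\pm$. It contains the arrows $1^+\to1^-$ (from $x_1$) and $2^+\to2^-$ (from $x_2$). It also contains arrows $1^+\to2^-$ and $2^+\to1^-$ from $a_1,b_1$ (whichever orientation convention is used, one arrow joins $1^+$ to $2^-$ and one joins $2^+$ to $1^-$). Finally, $a_2,b_2$ give arrows joining $2^+$ to $3^-$ and $3^+$ to $2^-$.

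Consider the component containing $1^+$. It contains the cycle $1^+ - 1^- - 2^+ - 2^- - 1^+$, which is a $\widetilde{\DynA}_3$. Attached to it are the extra edges $2^+ - 3^-$ and $3^+ - 2^-$. Hence this component contains a $4$-cycle with at least one additional pendant vertex attached. Such a graph properly contains a Euclidean graph, so it is neither Dynkin nor Euclidean. This gives a wild hereditary component, and hence $B$ is wild. (If $x_3$ also survives, the component only gets larger, so the conclusion is unchanged.)

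\textbf{Main obstacle.} The only delicate point is the correct reading of which loops survive in $\rad/\rad^2$, which the remark already settles. After that, the separated-quiver check is routine. The hypothesis that one of $d_0,d_3$ is at least $2$ is exactly what guarantees a second loop, and hence the $4$-cycle. Without it, only $x_2$ survives, and the component can be Euclidean (for instance of type $\widetilde{\DynD}$), so the argument would genuinely fail.
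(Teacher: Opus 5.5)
Your proposal is correct and is essentially the paper's own argument. Use the automorphism $i\mapsto m+1-i$ to assume $d_0\ge2$, so that the loops $x_1,x_2$ and the arrows $a_1,b_1,a_2,b_2$ all survive in the Gabriel quiver; the separated quiver of the radical-square-zero quotient then contains a $4$-cycle on $1^\pm,2^\pm$ with an extra vertex attached (the paper uses the pendant vertex $3^+$), so it is neither Dynkin nor Euclidean.
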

\begin{proof}
The diagram automorphism of $\DynA_m$ given by $i\mapsto m+1-i$ replaces the parameter sequence $(d_0,d_1,d_2,d_3)$ by $(d_3,d_2,d_1,d_0)$. Hence, we may assume $d_0\ge2$, so that $d_0,d_1,d_2\ge2$. Then, both loops $x_1$ and $x_2$ survive in the Gabriel quiver of $\Gam(\DynA_m,\Omega)$. Hence, the separated quiver of $\Gam(\DynA_m,\Omega)/\rad^2\Gam(\DynA_m,\Omega)$ contains the connected subquiver
\[
\vcenter{\xymatrix@C=2em@R=1em{
& 1^+ \ar[dl] \ar[dr] & & 3^+ \ar[dl] \\
1^- & & 2^- & \\
& 2^+ \ar[ul] \ar[ur] &
}}.
\]
Its underlying graph contains a $4$-cycle with an additional vertex attached to it, and hence is neither Dynkin nor Euclidean. Therefore,
$\Gam(\DynA_m,\Omega)/\rad^2\Gam(\DynA_m,\Omega)$ is wild.
\end{proof}

\begin{proposition}\label{prop:1HH1tame}
Suppose $d_1,d_2\ge2$ and $d_0=d_3=1$. Then,
$\Gam(\DynA_m,\Omega)$ is tame.
\end{proposition}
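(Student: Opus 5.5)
We must show two things: that the algebra is representation-infinite, and that it is not wild. Here $d_0=d_3=1$, so the loops $x_1$ and $x_3$ lie in $\rad^2$, since $x_1=a_1x_2\cdots$ is killed. Indeed $x_1^{d_0}=x_1=0$ and $x_3=0$. The presentation of Theorem~\ref{thm:uniformA} then becomes the quiver $1\rightleftarrows 2\rightleftarrows 3$ with a loop $x_2$ at $2$, subject to the relations
\[
a_1x_2=0,\quad x_2b_1=0,\quad x_2a_2=0,\quad b_2x_2=0,\quad a_1b_1=0,\quad b_2a_2=0,\quad b_1a_1=x_2^{d_1},\quad a_2b_2=x_2^{d_2}.
\]
Here we use $x_1a_1=a_1x_2$ together with $x_1=0$, and similarly for the other commutation relations. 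In the Gabriel quiver, $x_2$ survives because $d_1,d_2\ge2$, and $m=d_1+d_2+1$.

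\textbf{Representation-infiniteness.} The vertex $2$ carries a loop and arrows to and from both $1$ and $3$. The separated quiver of $\Gam/\rad^2\Gam$ contains the component
\[
2^+\to 1^-,\quad 2^+\to 3^-,\quad 2^+\to 2^-,\quad 1^+\to 2^-,\quad 3^+\to 2^-,
\]
which is a $\widetilde{\DynD}_4$-type graph: the path $2^+ - 2^-$ with two leaves attached at each end. This graph is Euclidean, so $\Gam$ is representation-infinite.

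\textbf{Non-wildness.} The plan is to show that $\Gam$ is special biserial, or degenerates to a special biserial algebra, and then invoke \cite{WaldWaschbusch} together with \cite{Geiss}.

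First check biseriality at vertex $2$. Three arrows start there ($x_2,b_1,a_2$) and three end there ($x_2,a_1,b_2$), so the algebra is not special biserial as it stands. However, the relations are monomial apart from $b_1a_1=x_2^{d_1}$ and $a_2b_2=x_2^{d_2}$. The idea is to degenerate these: take the family $A_\lambda$ with relations $b_1a_1=\lambda x_2^{d_1}$ and $a_2b_2=\lambda x_2^{d_2}$, keeping the other relations unchanged. For $\lambda\ne0$, rescaling $a_1$ and $a_2$ shows $A_\lambda\cong A_1$. One must verify that the dimension stays constant, which can be checked on the basis of Theorem~\ref{thm:uniformA}.

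The limit $A_0$ is a monomial algebra. In $A_0$, one should check that for each arrow $\alpha$ there is at most one arrow $\beta$ with $\alpha\beta\ne0$, and at most one with $\beta\alpha\ne0$. The nonzero length-two paths through $2$ are $x_2x_2$, $b_1a_1$ (now zero), $a_2b_2$ (now zero), $b_1a_2$, and $b_2a_1$. The catch is that $x_2^2$ must remain nonzero, and then $x_2$ composes nontrivially only with itself; the other compositions are $b_1\cdot a_2$ and $b_2\cdot a_1$, each with a unique partner. So $A_0$ is a string algebra and hence tame or finite, which makes $\Gam$ tame or finite.

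\textbf{Main obstacle.} The delicate step is the dimension constancy of the degeneration. When $\lambda=0$, the elements $x_2^{d_1}$ and $x_2^{d_2}$ are no longer identified with paths through $1$ and $3$, so new paths such as $b_1a_2b_2a_1$ might survive and change the dimension. If this happens, the fallback is to use a cleverer family: rescale only the loop, setting $x_2\mapsto t x_2$ and transporting the structure. Alternatively, one can show directly that $\Gam$ is a Brauer graph algebra. The relations are exactly those of a symmetric special biserial algebra after replacing $x_2$ by two "halves", and the Brauer tree is a line with an exceptional vertex. If even this fails, one can reduce via Proposition~\ref{prop:oneside}, since $\Omega=\{1,1+d_1,1+d_1+d_2\}$, although $\Omega$ need not lie in the left half.
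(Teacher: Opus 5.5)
Your reduction of the relations is correct, and so is the representation-infiniteness argument. One small fix: the component you describe (the edge $2^+-2^-$ with two leaves at each end) is of type $\widetilde{\DynD}_5$, not $\widetilde{\DynD}_4$; it is Euclidean either way. The non-wildness half has a genuine gap, though, and the dimension issue you flag comes from a different source than the one you name.

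\textbf{The dimension issue.} In $\Gam(\DynA_m,\Omega)$ the loop is nilpotent only because of the relations you deform: $x_2^{d_1+1}=b_1a_1x_2=0$ and $x_2^{d_2+1}=x_2a_2b_2=0$. If you just set $\lambda=0$ in the relations, nothing kills the powers of $x_2$, and $A_0$ is infinite-dimensional. Paths such as $b_1a_2b_2a_1$ are not the problem; they are not even composable. Apart from powers of $x_2$, the only nonzero paths of length two are $a_1a_2$ and $b_2b_1$. The paper repairs this as follows. Use the diagram automorphism to assume $d_1\le d_2$, and impose $x_2^{d_1+1}=0$ throughout the family. Then $A_0$ has basis $e_1,e_2,e_3,a_1,b_1,a_2,b_2,x_2,\ldots,x_2^{d_1},a_1a_2,b_2b_1$, so $\dim A_0=9+d_1=\dim_{\Bbbk}\Gam(\DynA_m,\Omega)$. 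Your loop-rescaling fallback, if done as a transport of structure on a fixed basis of $\Gam(\DynA_m,\Omega)$, gives the same flat limit.

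\textbf{The real gap: $A_0$ is not a string algebra.} A degeneration of this kind does not change the quiver. So vertex $2$ is still the start of the three arrows $x_2,b_1,a_2$ and the end of the three arrows $x_2,a_1,b_2$, which is exactly the obstruction you noted for $\Gam(\DynA_m,\Omega)$. Checking that each arrow has at most one nonzero successor and one nonzero predecessor is not enough without the at-most-two-arrows condition. For example, the hereditary algebra on a star with five arms satisfies it vacuously and is wild. Your own computation also shows that $A_0/\rad^2A_0$ has separated quiver $\widetilde{\DynD}_5$, which has trivalent vertices. That can never happen for a special biserial algebra, whose separated quivers have all vertices of degree at most two.

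So the essential tameness input for $A_0$ is missing. The paper supplies it by recognising $A_0$, up to relabelling and passing to the opposite algebra, as a quotient of Erdmann's tame semidihedral algebra $SD(3C)_{2,I/II}$ \cite{Erdmann90}, and then concludes with \cite{Geiss}.

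None of your fallbacks closes this gap:
\begin{enumerate}
\item The loop-rescaling limit is this same non-special-biserial $A_0$.
\item Brauer graph algebras are special biserial. $\Gam(\DynA_m,\Omega)$ is not, since $d_1,d_2\ge2$ keeps $x_2$ in its Gabriel quiver.
\item Proposition~\ref{prop:oneside} requires $\Omega\subseteq\{1,\ldots,\lfloor(m+1)/2\rfloor\}$. This fails because $j_3=m$, and since also $j_1=1$, the diagram automorphism does not help.
\end{enumerate}
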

\begin{proof}
Since $d_1,d_2\ge2$, the loop $x_2$ survives in the Gabriel quiver of $\Gam(\DynA_m,\Omega)$. Together with the four arrows $a_1,b_1,a_2,b_2$, this implies that the separated quiver of
$\Gam(\DynA_m,\Omega)/\rad^2\Gam(\DynA_m,\Omega)$ is of Euclidean type $\widetilde{\DynD}_5$. Hence, $\Gam(\DynA_m,\Omega)$ is representation-infinite. By the diagram automorphism as before, which exchanges $d_1$ and $d_2$ in this case, we may assume $d_1\le d_2$.

Since $d_0=d_3=1$, the endpoint loops $x_1,x_3$ vanish, and the relations reduce to
\[
a_1b_1=0,\quad b_1a_1=x_2^{d_1},\quad
b_2a_2=0,\quad a_2b_2=x_2^{d_2},\quad
a_1x_2=x_2b_1=x_2a_2=b_2x_2=0.
\]
We obtain $x_2^{d_1+1}=b_1a_1x_2=0$. Hence, if $d_1<d_2$, then $a_2b_2=0$, whereas $a_2b_2=x_2^{d_1}$ if $d_1=d_2$. If $d_1<d_2$, replace the relation $b_1a_1=x_2^{d_1}$ by
$b_1a_1=\lambda x_2^{d_1}$. If $d_1=d_2$, replace the two relations $b_1a_1=x_2^{d_1}$ and $a_2b_2=x_2^{d_1}$ by $b_1a_1=\lambda x_2^{d_1}$ and $a_2b_2=\lambda x_2^{d_1}$, respectively. In either case, this gives an algebraic family $\{A_\lambda\mid \lambda\in \Bbbk\}$ with $A_\lambda\cong \Gam(\DynA_m,\Omega)$ for all $\lambda\neq 0$, and $A_0$ is the algebra with the same quiver of $\Gam(\DynA_m,\Omega)$ bounded by the following relations
\[
a_1b_1=b_1a_1=a_2b_2=b_2a_2=0,\quad
a_1x_2=x_2b_1=x_2a_2=b_2x_2=0,\quad
x_2^{d_1+1}=0.
\]
Up to relabelling the arrows and taking the opposite algebra if necessary, $A_0$ is a quotient of the tame semidihedral algebra $SD(3C)_{2,I/II}$ from Erdmann's classification \cite[Chapter~VIII]{Erdmann90}. On the other hand, the radical-square-zero quotient of $A_0$ has a separated quiver of Euclidean type $\widetilde{\DynD}_5$. Thus,
$A_0$ is representation-infinite, and hence tame. We have shown that $\Gam(\DynA_m,\Omega)$ degenerates to the tame algebra $A_0$, and hence it is tame.
\end{proof}

\begin{proposition}\label{prop:H1HH}
If $d_1=1$, $d_0,d_2,d_3\ge2$, or $d_2=1$, $d_0,d_1,d_3\ge2$, then $\Gam(\DynA_m,\Omega)$ is wild.
\end{proposition}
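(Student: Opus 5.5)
By the diagram automorphism $i\mapsto m+1-i$ of $\DynA_m$, which sends $(d_0,d_1,d_2,d_3)$ to $(d_3,d_2,d_1,d_0)$, it suffices to treat the case $d_1=1$ and $d_0,d_2,d_3\ge2$. The plan is to try the cheap tests first (radical-square-zero quotient, then two-vertex truncations), isolate the boundary cases they leave open, and settle those by a covering argument.

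\textbf{Step 1: the Gabriel quiver.} By the Remark after Theorem~\ref{thm:uniformA}, only the loop $x_3$ survives in the Gabriel quiver. Indeed, $x_1=a_1b_1$ and $x_2=b_1a_1$ lie in $\rad^2$, while $d_2,d_3\ge2$. The separated quiver of the radical-square-zero quotient is then a single path of type $\DynA_6$, so this test is inconclusive and a higher radical layer has to be used.

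\textbf{Step 2: two-vertex truncations.} By Proposition~\ref{prop:coarsen}, the truncation $e\Gam e$ with $e=e_{j_2}+e_{j_3}$ is $\Gam(\DynA_m,\{j_2,j_3\})$, with parameters $(d_0+1,d_2,d_3)$. Here $d_0+1\ge3$. Hence, if $d_2,d_3\ge3$, Proposition~\ref{thm:two-point} shows this truncation is wild, and so is $\Gam(\DynA_m,\Omega)$ by the idempotent truncation criterion. The truncation to $\{j_1,j_3\}$, with parameters $(d_0,1+d_2,d_3)$, is wild when $d_0,d_3\ge3$. Combining the two truncations, the cases left open are those with $d_3=2$, or with $d_2=2$ and $d_0=2$.

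\textbf{Step 3: the boundary cases.} Here I would use the $\mathbb Z$-grading with $\deg x_s=1$, $\deg a_s=0$ and $\deg b_s=d_s$. All relations in Theorem~\ref{thm:uniformA} are homogeneous for this grading, so the graded covering $\widetilde A$ exists, and I would apply Proposition~\ref{prop:covering-wild}. I would first replace $\Gam(\DynA_m,\Omega)$ by its quotient by $\langle x_1^{2},x_3^{2}\rangle$ and suitable powers of $x_2$. The aim is a finite-dimensional algebra, independent of the large parameters, that is still graded and contains the relevant small paths. In the covering of this quotient, I would look for a finite full convex subcategory on which no relation is supported. It should use the vertices $3_t$ and $3_{t+1}$ joined by $x_3$, the vertices $2_t$ and $2_{t+d_2}$ reached via $a_2,b_2$, and the vertices $1_t$ and $1_{t+1}$ reached via $a_1,b_1$. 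The target is a tree that is neither Dynkin nor Euclidean, for instance a wild star such as $\widetilde{\DynE}_7$ plus one vertex. The point of the grading is that the paths $a_1b_1=x_1$, $b_1a_1=x_2$ and $a_2b_2=x_2^{d_2}$ land in different degrees. This should separate the copies of vertex $2$ enough to make three long arms.

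If the covering does not produce a relation-free wild subcategory, the fallback is the following. I would truncate at $e_{j_1}+e_{j_3}$, or take a quotient by an ideal killing long paths. I would then check directly that the resulting two-point algebra has a minimal wild algebra from \cite[Table~W]{Han02} as a quotient, in the way $W_{33}$ was used in the proof of Proposition~\ref{thm:two-point}.

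\textbf{Main obstacle.} I expect the hardest step to be Step~3 in the smallest case, $(d_0,d_1,d_2,d_3)=(2,1,2,2)$, so $m=6$ and $\Omega=\{2,3,5\}$. There one has to verify by hand that the relations $a_1x_2=x_1a_1$, $b_2x_2=x_3b_2$, etc. do not cut the candidate subcategory in the covering. I would settle this case first by computing the bound quiver algebra of the covering explicitly. I would then check that the larger parameters only enlarge the same subcategory or its quotient, so that the argument carries over to them.
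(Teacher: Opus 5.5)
Your Steps~1 and~2 are correct. Only $x_3$ survives in the Gabriel quiver, and the separated quiver of the radical-square-zero quotient is a path of type $\DynA_6$. The two-vertex truncations leave exactly the cases $d_3=2$, or $d_0=d_2=2$.

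The gap is Step~3. It carries the whole proof, but it is only described, not carried out, and the pieces you do specify fail. With your grading ($\deg b_1=1$, $\deg b_2=d_2$), the vertices $3_t,3_{t+1},2_t,2_{t+d_2},1_t,1_{t+1}$ carry only the arrows $1_t\to2_t\to1_{t+1}$, $2_t\to3_t\to3_{t+1}$ and $3_t\to2_{t+d_2}$. Even ignoring relations, this is a tree of Euclidean type $\widetilde{\DynD}_5$, so it cannot certify wildness. It is not relation-free either, because $a_2b_2\colon 2_t\to2_{t+d_2}$ equals $(b_1a_1)^{d_2}$. Once $x_1^2$ is killed and $d_2\ge3$, this is zero, since $(b_1a_1)^3=b_1x_1^2a_1$. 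For $d_2=2$ it passes through $2_{t+1}$ and $1_{t+2}$, so your set is not convex.

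Your fallback cannot work either: in exactly the boundary cases, every two-point algebra you can reach is non-wild. The idempotent truncations have parameter sequences $(d_0+1,d_2,d_3)$, $(d_0,1+d_2,d_3)$ and $(d_0,1,d_2+d_3)$, each with minimum at most~$2$, so they are non-wild by Proposition~\ref{thm:two-point}. Any $e(A/I)e$ is a quotient of $eAe$, and killing a vertex gives representation-finite algebras. Wildness here needs all three vertices at once.

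The missing idea is to pass to $B=\Gam(\DynA_m,\Omega)/\langle x_1^2,x_3^2\rangle$ \emph{before} fixing the grading. Since $(b_1a_1)^3=0$ in $B$, the relations $a_2b_2=x_2^{d_2}$ and $b_2a_2=x_3^{d_2}$ change as follows. If $d_2=2$ they become $a_2b_2=(b_1a_1)^2$ and $b_2a_2=0$; if $d_2\ge3$ they become monomial zero relations. So you may take $\deg b_2=2$ for every $d_2$, with $\deg a_1=\deg a_2=0$ and $\deg b_1=\deg x_3=1$; for $d_2=2$ this is your grading.

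In the resulting covering, take the full convex subcategory on $3_0,3_1,2_1,1_1,1_2,2_2$. Its arrows are $3_0\to3_1\leftarrow2_1\to1_2\to2_2\leftarrow3_0$ and $1_1\to2_1$. No lifted relation has both endpoints in this set. The two commutation relations run $2_t\to3_{t+1}$ and $3_t\to2_{t+3}$, and the others run $1_t\to1_{t+2}$, $2_t\to2_{t+2}$ and $3_t\to3_{t+2}$. Hence this is the path category of an unoriented $5$-cycle with a leaf, which is hereditary wild. Proposition~\ref{prop:covering-wild} then settles all cases uniformly, so your Step~2 is unnecessary.

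The wildness certificate is a cycle, not a tree. Keeping $\deg b_2=d_2$ is exactly what pushes $b_2$ out of the window where this cycle lives.
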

\begin{proof}
We only prove the case $d_1=1$; the other case follows from the diagram automorphism $i\mapsto m+1-i$. In this case, $a_1b_1=x_1$ and $b_1a_1=x_2$, so the loops $x_1$ and
$x_2$ can be removed. Thus, $\Gam(\DynA_m,\Omega)$ has the quiver
\[
\vcenter{\xymatrix@C=4em{
1
\ar@/^/[r]^{a_1}
&
2
\ar@/^/[l]^{b_1}
\ar@/^/[r]^{a_2}
&
3
\ar@(ur,dr)[]^{x_3}
\ar@/^/[l]^{b_2}
}}
\]
with relations $(a_1b_1)^{d_0}=0$, $x_3^{d_3}=0$, $b_1a_1a_2=a_2x_3$, $b_2b_1a_1=x_3b_2$, $a_2b_2=(b_1a_1)^{d_2}$, $b_2a_2=x_3^{d_2}$. 

We set $B:=\Gam(\DynA_m,\Omega)/
\langle (a_1b_1)^2,x_3^2\rangle$. Then,
$(b_1a_1)^3=b_1(a_1b_1)^2a_1=0$. Consequently, if $d_2=2$, the remaining relations include
$a_2b_2=(b_1a_1)^2$ and $b_2a_2=0$, whereas if $d_2\ge3$, they reduce to $a_2b_2=b_2a_2=0$. In either case, assign the degrees
\[
\deg a_1=\deg a_2=0,\qquad
\deg b_1=\deg x_3=1,\qquad
\deg b_2=2.
\]
All defining relations of $B$ are homogeneous with respect to this $\mathbb Z$-grading. Let
$\mathcal F:\widetilde B\longrightarrow B$ be the associated Galois covering. The finite full convex subcategory of $\widetilde B$ on the vertices $3_0,\ 3_1,\ 2_1,\ 1_1,\ 1_2,\ 2_2$ has quiver
\[
\vcenter{\xymatrix@C=3em@R=2.5em{
&& 1_1 \ar[d]^{a_1} & \\
3_0
\ar[r]^{x_3}
\ar@/_1.2pc/[rrrr]_{b_2}
&
3_1
&
2_1 \ar[l]_{a_2} \ar[r]^{b_1}
&
1_2 \ar[r]^{a_1}
&
2_2
}}
\]
and no defining relation occurs on this full subcategory. Its underlying graph is neither Dynkin nor Euclidean, and hence this full subcategory is hereditary wild. By Proposition~\ref{prop:covering-wild}, $B$ is wild. Since $B$ is a quotient of $\Gam(\DynA_m,\Omega)$, the latter is wild.
\end{proof}

\begin{proposition}\label{prop:r3table}
Suppose $\Omega=\{j_1<j_2<j_3\}$ and $d_0,d_1,d_2,d_3$ as before. Then, $\Gam(\DynA_m,\Omega)$ is 
\begin{enumerate}
\item wild if at least three of $d_0,d_1,d_2,d_3$ are at least $2$;

\item tame if exactly two of $d_0,d_1,d_2,d_3$ are at least $2$ and their indices are one of
$\{0,1\},\; \{1,2\},\; \{2,3\},\; \{0,3\}$.

\item representation-finite if exactly two of $d_0,d_1,d_2,d_3$ are at least $2$ and their indices are one of $\{0,2\},\; \{1,3\}$, or if at most one of $d_0,d_1,d_2,d_3$ is at least $2$.
\end{enumerate}
\end{proposition}
\begin{proof}
The wild cases follow immediately from Propositions~\ref{prop:radicalA} and~\ref{prop:H1HH}. 

We consider the representation-finite cases; they are precisely the cases in which $d_0=d_2=1$ or $d_1=d_3=1$. By the diagram automorphism of $\DynA_m$, it suffices to consider $d_0=d_2=1$.
Set $\ell:=d_0+d_1+d_2=d_1+2$ and choose $k\ge\max\{d_3,\ell\}$. Let $\Omega':=\{j_1'<j_2'<j_3'\}:=\{1,\ell-1,\ell\}$ and set $\Gam':=\Gam(\DynA_{\ell+k-1},\Omega')$. The parameter sequence associated to $\Omega'$ is $(1,d_1,1,k)$. Since $k\ge\ell=j_3'$, the subset $\Omega'$ is contained in the left half of the corresponding Dynkin diagram $\DynA_{\ell+k-1}$. Hence, Proposition~\ref{prop:oneside} shows that $\Gam'$ is representation-finite. By Theorem~\ref{thm:uniformA}, $\Gam(\DynA_m,\Omega) \cong
\Gam'/\langle x_3^{d_3}\rangle$, and hence, $\Gam(\DynA_m,\Omega)$ is representation-finite.

It remains to consider the tame cases. The case
$d_0=d_3=1$ and $d_1,d_2\ge2$ is Proposition~\ref{prop:1HH1tame}. Suppose $d_0=d_1=1, d_2,d_3\ge2$. Set $\ell:=d_0+d_1+d_2=d_2+2$ and choose
$k\ge\max\{d_3,\ell\}$. Let $\Gam':=\Gam(\DynA_{\ell+k-1},\Omega')$ with $\Omega':=\{1,2,\ell\}$, and then the parameter sequence of $\Gam'$ is $(1,1,d_2,k)$. Since
$k\ge\ell$, Proposition~\ref{prop:oneside} applies and shows that $\Gam'$ is tame. As above, $\Gam(\DynA_m,\Omega) \cong \Gam'/\langle x_3^{d_3}\rangle$, so $\Gam(\DynA_m,\Omega)$ is either representation-finite or tame. By Proposition~\ref{prop:coarsen}, deleting $j_1$ gives the two-vertex idempotent truncation with parameter sequence $(2,d_2,d_3)$, which is representation-infinite by Proposition~\ref{thm:two-point}. Hence, $\Gam(\DynA_m,\Omega)$ cannot be representation-finite, and therefore
it is tame. The case $d_0,d_1\ge2$ and $d_2=d_3=1$ follows from the diagram automorphism.

Suppose $d_0,d_3\ge2, d_1=d_2=1$. Set $\ell:=d_0+d_1+d_2=d_0+2$ and choose $k\ge\max\{d_3,\ell\}$. Let $\Gam':=\Gam(\DynA_{\ell+k-1},\Omega')$ with $\Omega':=\{\ell-2,\ell-1,\ell\}$, so the parameter sequence of $\Gam'$ is $(d_0,1,1,k)$. Then, Proposition~\ref{prop:oneside} shows that $\Gam'$ is tame, and we have $\Gam(\DynA_m,\Omega) \cong \Gam'/\langle x_3^{d_3}\rangle$. By Proposition~\ref{prop:coarsen}, deleting $j_2$ gives the two-vertex idempotent truncation with parameter sequence $(d_0,2,d_3)$, which is representation-infinite by Proposition~\ref{thm:two-point}. Hence, $\Gam(\DynA_m,\Omega)$ is tame.
\end{proof}

\subsection{Four vertices}\label{sec:A-four}
Suppose $\Omega=\{j_1<j_2<j_3<j_4\}$ with associated parameters $d_0,\ldots,d_4$. By Theorem~\ref{thm:uniformA}, $\Gam(\DynA_m,\Omega)$ is isomorphic to $\Bbbk Q/I$, where $Q$ is presented as
\[
\vcenter{\xymatrix@C=4em{
1
\ar@(ul,dl)[]_{x_1}
\ar@/^/[r]^{a_1}
&
2
\ar@(ul,ur)[]^{x_2}
\ar@/^/[l]^{b_1}
\ar@/^/[r]^{a_2}
&
3
\ar@(ul,ur)[]^{x_3}
\ar@/^/[l]^{b_2}
\ar@/^/[r]^{a_3}
&
4
\ar@(ur,dr)[]^{x_4}
\ar@/^/[l]^{b_3}
}}
\]
and $I$ is generated by the following relations:
\[
\begin{matrix}
x_1^{d_0}=0,\quad
x_1a_1=a_1x_2,\quad b_1x_1=x_2b_1,\quad
a_1b_1=x_1^{d_1},\quad b_1a_1=x_2^{d_1},\\[3pt]
x_2a_2=a_2x_3,\quad b_2x_2=x_3b_2,\quad
a_2b_2=x_2^{d_2},\quad b_2a_2=x_3^{d_2},\\[3pt]
x_3a_3=a_3x_4,\quad b_3x_3=x_4b_3,\quad
a_3b_3=x_3^{d_3},\quad b_3a_3=x_4^{d_3},\quad
x_4^{d_4}=0.
\end{matrix}
\]

Suppose first that at least three of $d_0,\ldots,d_4$ are at least $2$. If exactly three are at least $2$, choose $j_s\in\Omega$ so that at least one of $d_{s-1},d_s$ is equal to $1$; such a choice is always possible. If at least four parameters are at least $2$, any $j_s$
may be chosen. By Proposition~\ref{prop:coarsen}, deleting $j_s$ replaces $d_{s-1},d_s$ by $d_{s-1}+d_s$, and the resulting parameter sequence still has at least three entries at least $2$. Hence, the corresponding idempotent truncation is wild by Proposition~\ref{prop:r3table}, and therefore $\Gam(\DynA_m,\Omega)$ is wild.

Suppose exactly two of $d_0,\ldots,d_4$ are at least $2$. If $d_{s-1}=d_s=1$ for some $1\le s\le4$, then deleting $j_s$ replaces $d_{s-1}$ and $d_s$ by their sum $2$, by Proposition~\ref{prop:coarsen}. The resulting parameter sequence therefore has at least three entries at least $2$, and the corresponding idempotent truncation is wild by Proposition~\ref{prop:r3table}. Since exactly three of $d_0,\ldots,d_4$ are equal to $1$, the only case in which no two of them are consecutive is $d_0=d_2=d_4=1, d_1,d_3\ge2$.

\begin{proposition}\label{prop:1H1H1wild}
Suppose $d_0=d_2=d_4=1$ and $d_1,d_3\ge2$. Then, $\Gam(\DynA_m,\Omega)$ is wild.
\end{proposition}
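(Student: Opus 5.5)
The plan is to eliminate the loops, pass to a quotient in which the long relations become zero relations, and then find a wild hereditary piece in a $\mathbb Z$-covering via Proposition~\ref{prop:covering-wild}, as in Proposition~\ref{prop:H1HH}. Smaller truncations will not do the job. By Proposition~\ref{prop:coarsen}, deleting one vertex gives parameter sequences such as $(d_0+d_1,1,d_3+1)$ or $(1,d_1,1,d_3+1)$. These have at most two entries at least $2$, so by Propositions~\ref{thm:two-point} and~\ref{prop:r3table} they are not wild. Likewise, by Proposition~\ref{prop:intervalquot} the interval quotients only give algebras of the shapes $(1,d_1,1)$ or $(1,d_3,1)$, which are not wild either. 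We therefore have to work with the full four-point algebra.

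First I would simplify the presentation of Theorem~\ref{thm:uniformA}. Since $d_0=d_4=1$, we have $x_1=x_4=0$. Since $d_2=1$, we have $x_2=a_2b_2$ and $x_3=b_2a_2$. The Gabriel quiver is therefore the loopless double quiver $1\rightleftarrows2\rightleftarrows3\rightleftarrows4$. The relations become
\[
a_1b_1=0,\quad b_1a_1=(a_2b_2)^{d_1},\quad a_3b_3=(b_2a_2)^{d_3},\quad b_3a_3=0,
\]
\[
a_1a_2b_2=0,\quad a_2b_2b_1=0,\quad b_2a_2a_3=0,\quad b_3b_2a_2=0.
\]
The radical-square-zero quotient has separated quiver a union of Dynkin paths, so it gives nothing, and higher layers are needed.

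Next I would pass to $B:=\Gam(\DynA_m,\Omega)/\langle (a_2b_2)^2,(b_2a_2)^2\rangle$. Because $d_1,d_3\ge2$, this forces $b_1a_1=0$ and $a_3b_3=0$, so $B$ is a monomial algebra with the zero relations listed above. Its monomial relations are homogeneous for any $\mathbb Z$-grading, so I would take $\deg a_i=0$ and $\deg b_i=1$, or modify the grading if needed so that the relevant vertices are separated. Then I would look in the covering $\widetilde B$ for a finite full convex subcategory that is a tree, avoids every zero relation, and is neither Dynkin nor Euclidean. Proposition~\ref{prop:covering-wild} would then show that $B$, and hence $\Gam(\DynA_m,\Omega)$, is wild.

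The key point is that $B$ is not special biserial, and this is where the extra room should come from. For example, both $b_2b_1$ and $b_2a_2$ are nonzero, and the alternating strings $a_2b_2a_2\cdots$ up to length $3$ are nonzero. I would start from the vertex $2_t$, with incoming $b_2$ from $3_{t-1}$ and outgoing $a_2$ and $b_1$. I would then extend along the surviving length-three strings $b_2a_2b_2$ and $a_2b_2a_2$, and attach the arms $a_1$ and $b_3$ or $a_3$ only at places where they avoid the four length-three zero relations. The aim is a tree containing $\widetilde{\DynE}_6$ or $\widetilde{\DynD}_n$ properly.

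The main obstacle is exactly this combinatorial choice. The relations $a_1a_2b_2$, $a_2b_2b_1$, $b_2a_2a_3$ and $b_3b_2a_2$ kill the most natural branchings at $2$ and $3$, so a naive star will contain a relation path. If no suitable convex tree appears, I would enlarge $B$ by factoring out less, namely only $\langle (a_2b_2)^3,(b_2a_2)^3\rangle$, so that longer alternating strings survive. Alternatively, I would change the grading so that the relation paths become nonconvex.
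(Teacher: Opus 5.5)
\paragraph{The reduction is correct, but the decisive step is missing.}
Your reduction agrees with the paper. Eliminating $x_1,x_4$ and substituting $x_2=a_2b_2$, $x_3=b_2a_2$ gives exactly the paper's loopless presentation. You are also right that the truncations, the interval quotients and $\Gam/\rad^2\Gam$ are all non-wild, so the whole four-point algebra must be handled through a $\mathbb Z$-covering and Proposition~\ref{prop:covering-wild}.

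However, you never exhibit a wild finite full subcategory of the covering. Moreover, the grading you commit to on the monomial quotient $B$, namely $\deg a_i=0$, $\deg b_i=1$, cannot supply one. In that covering, $b_1\colon 2_t\to 1_{t+1}$ and $a_1\colon 1_{t+1}\to 2_{t+1}$ meet at the same vertex. So the leaf you want to hang off $2_t$ via $b_1$ is the vertex feeding $2_{t+1}$ via $a_1$, and the zero relation $b_1a_1$ lands inside.

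The obstruction is in fact complete. Test the four neighbours $1_t,3_{t-1},1_{t+1},3_t$ of $2_t$ against $a_1b_1$, $b_1a_1$, $a_3b_3$ and the cubic zero relations $a_1a_2b_2$, $a_2b_2b_1$, $b_2a_2a_3$, $b_3b_2a_2$. You find that a branch vertex of a relation-free full subquiver has exactly three neighbours, all of them leaves; symmetrically at $3_t$. Every square of this $\mathbb Z\DynA_4$-shaped quiver contains $b_1a_1$ or $a_3b_3$, so relation-free connected full subquivers are trees. They are therefore only of type $\DynA_n$ or $\DynD_4$. Hence no convex hereditary wild subcategory exists for this grading, and factoring out less, as in your fallback, does not help. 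The problem is the grading, not the quotient.

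\paragraph{The missing idea: a separating grading.}
You need a grading that separates the two ends of the $b_1a_1$ and $a_3b_3$ squares. The paper takes
\[
\deg a_1=\deg a_2=\deg a_3=0,\qquad \deg b_2=1,\qquad \deg b_1=d_1,\qquad \deg b_3=d_3.
\]
This grading even makes $b_1a_1=(a_2b_2)^{d_1}$ and $a_3b_3=(b_2a_2)^{d_3}$ homogeneous, so no monomial quotient is needed.

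In that covering, take the full subcategory on $3_{-1},2_0,3_0,2_1,1_{d_1},4_{-d_3},1_1$. Its arrows are $b_2\colon 3_{-1}\to 2_0$, $a_2\colon 2_0\to 3_0$, $b_2\colon 3_0\to 2_1$, $b_1\colon 2_0\to 1_{d_1}$, $b_3\colon 4_{-d_3}\to 3_0$ and $a_1\colon 1_1\to 2_1$.
\begin{itemize}
\item It is convex, by a degree count using $d_1,d_3\ge 2$.
\item It contains no relation: its only path of length three is $b_2a_2b_2\neq 0$.
\item Its underlying tree properly contains $\widetilde{\DynD}_5$.
\end{itemize}
Proposition~\ref{prop:covering-wild} then gives wildness. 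Your passage to $B$ is harmless: this same subcategory avoids the extra zero relations of $B$, and in $B$ any grading is admissible. But until you choose such a separating grading and write down the subcategory, the argument is incomplete.
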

\begin{proof}
Since $d_0=d_4=1$, we have $x_1=x_4=0$. Since $d_2=1$, the relations $a_2b_2=x_2$ and $b_2a_2=x_3$ allow us to remove the loops $x_2$ and $x_3$. Substituting these identities into the presentation of Theorem~\ref{thm:uniformA}, we obtain the quiver
\[
\vcenter{\xymatrix@C=4em{
1
\ar@/^/[r]^{a_1}
&
2
\ar@/^/[l]^{b_1}
\ar@/^/[r]^{a_2}
&
3
\ar@/^/[l]^{b_2}
\ar@/^/[r]^{a_3}
&
4
\ar@/^/[l]^{b_3}
}}
\]
with defining relations
\[
\begin{matrix}
a_1b_1=0,\qquad
b_1a_1=(a_2b_2)^{d_1},\qquad
a_3b_3=(b_2a_2)^{d_3},\qquad
b_3a_3=0,\\
a_1a_2b_2=0,\qquad
a_2b_2b_1=0,\qquad
b_2a_2a_3=0,\qquad
b_3b_2a_2=0.
\end{matrix}
\]
Give this algebra the $\mathbb Z$-grading
$$
\deg a_1=\deg a_2=\deg a_3=0,\qquad
\deg b_2=1,\qquad
\deg b_1=d_1,\qquad
\deg b_3=d_3.
$$
Then, the displayed relations are obviously homogeneous. Let $\mathcal F:\widetilde B\to\Gam(\DynA_m,\Omega)$ be the associated Galois covering. Consider the full subcategory $\mathcal C$ of $\widetilde B$ on the vertices $3_{-1},\; 2_0,\; 3_0,\; 2_1,\; 1_{d_1},\; 4_{-d_3},\; 1_1$, in which the arrows are precisely
\[
\vcenter{\xymatrix@C=3.5em@R=2.5em{
& 1_{d_1} & 4_{-d_3} \ar[d]_{b_3} &  & \\
3_{-1} \ar[r]^{b_2} & 2_0 \ar[u]^{b_1} \ar[r]^{a_2} & 3_0 \ar[r]^{b_2} & 2_1 & 1_1 \ar[l]_{a_1}
}}
\]
The subcategory $\mathcal C$ is convex. Moreover, no lift of a defining relation is supported entirely on $\mathcal C$. Hence, $\mathcal C$ is
the path category of the displayed acyclic quiver, and in particular is hereditary. It is obvious that the underlying graph of $\mathcal C$ is neither Dynkin nor Euclidean, and therefore $\mathcal C$ is hereditary wild. By
Proposition~\ref{prop:covering-wild}, the algebra
$\Gam(\DynA_m,\Omega)$ is wild.
\end{proof}

We next consider the cases in which exactly one of
$d_0,\ldots,d_4$ is at least $2$. The diagram automorphism of $\DynA_m$ given by $i\mapsto m+1-i$ reverses the parameter sequence $(d_0,d_1,d_2,d_3,d_4)$ to $(d_4,d_3,d_2,d_1,d_0)$. Hence, up to this automorphism, the unique parameter at least $2$
may be assumed to be $d_4$, $d_2$, or $d_1$.

\begin{proposition}\label{prop:H1111tame}
Suppose $d_0=d_1=d_2=d_3=1$ and $d_4\ge2$. Then, $\Gam(\DynA_m,\Omega)$ is tame.
\end{proposition}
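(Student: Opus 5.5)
We follow the same pattern as in the proof of Proposition~\ref{prop:r3table}: a comparison algebra from Proposition~\ref{prop:oneside} gives the upper bound (representation-finite or tame), and an idempotent truncation gives the lower bound (representation-infinite).

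Here $\Omega=\{1,2,3,4\}$ and $m=d_4+3$. Set $\ell:=d_0+d_1+d_2+d_3=4$ and choose an integer $k\ge\max\{d_4,\ell\}=\max\{d_4,4\}$. Put $\Omega':=\{1,2,3,4\}$ and $\Gam':=\Gam(\DynA_{\ell+k-1},\Omega')=\Gam(\DynA_{k+3},\{1,2,3,4\})$. Its parameter sequence is $(1,1,1,1,k)$. Since $k\ge 4$, we have $2\ell=8\le k+4$. Hence $\Omega'\subseteq\{1,\ldots,\lfloor(k+4)/2\rfloor\}$, so Proposition~\ref{prop:oneside} applies with $\max\Omega'=4$. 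Item~(2) of Proposition~\ref{prop:oneside}, in the case $\ell=4$ and $\Omega=\{1,2,3,4\}$, shows that $\Gam'$ is tame.

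Next we compare presentations via Theorem~\ref{thm:uniformA}. The presentations of $\Gam'$ and $\Gam(\DynA_m,\Omega)$ have the same quiver and the same relations, except for the right endpoint relation. This relation is $x_4^{k}=0$ for $\Gam'$ and $x_4^{d_4}=0$ for $\Gam(\DynA_m,\Omega)$. Since $k\ge d_4$, adding $x_4^{d_4}$ to the relations of $\Gam'$ gives exactly the presentation of $\Gam(\DynA_m,\Omega)$, so $\Gam(\DynA_m,\Omega)\cong\Gam'/\langle x_4^{d_4}\rangle$. Being a quotient of a tame algebra, $\Gam(\DynA_m,\Omega)$ is therefore representation-finite or tame.

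It remains to show that $\Gam(\DynA_m,\Omega)$ is representation-infinite. By Proposition~\ref{prop:coarsen}, deleting $j_1$ gives the idempotent truncation $e_{\Omega''}\Gam(\DynA_m,\Omega)e_{\Omega''}$ with parameter sequence $(d_0+d_1,d_2,d_3,d_4)=(2,1,1,d_4)$. In this sequence exactly two entries are at least $2$, with indices $\{0,3\}$. By Proposition~\ref{prop:r3table}(2), this truncation is tame, and in particular representation-infinite. The idempotent truncation criterion then shows that $\Gam(\DynA_m,\Omega)$ is representation-infinite, and hence tame.

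The step needing the most care is the identification $\Gam(\DynA_m,\Omega)\cong\Gam'/\langle x_4^{d_4}\rangle$. One has to confirm that the presentation of Theorem~\ref{thm:uniformA} depends on $d_4$ only through the endpoint relation. This holds because the remaining relations involve only $d_0,\ldots,d_3$, which coincide for the two algebras. One also has to confirm that adding $x_4^{d_4}$ to the relations of $\Gam'$ produces no further collapse; this holds because the ideal generated is exactly the defining ideal for the smaller parameter. If desired, one could alternatively delete $j_4$ instead of $j_1$, which gives the sequence $(1,1,1,1+d_4)$. However, that sequence has only one entry at least $2$, so it is representation-finite and useless for the lower bound. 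This is why we delete $j_1$.
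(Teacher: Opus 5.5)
Your proof is correct and matches the paper's argument: the same comparison algebra $\Gam(\DynA_{k+3},\{1,2,3,4\})$ from Proposition~\ref{prop:oneside}, taken modulo $x_4^{d_4}$, gives the upper bound. The only difference is the lower bound: the paper uses the two-vertex truncation at $\{j_2,j_4\}$, with parameter sequence $(2,2,d_4)$, and applies Proposition~\ref{thm:two-point} directly, whereas your three-vertex truncation $(2,1,1,d_4)$ goes through Proposition~\ref{prop:r3table}, whose proof of the $\{0,3\}$ case reduces to that same two-vertex truncation.
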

\begin{proof}
Choose $k\ge\max\{d_4,4\}$ and set $\Gam':=\Gam(\DynA_{k+3},\{1,2,3,4\})$. Its parameter sequence is $(1,1,1,1,k)$. Since $k\ge4$, the subset $\{1,2,3,4\}$ is contained in the left half of $\DynA_{k+3}$, and Proposition~\ref{prop:oneside} shows that $\Gam'$ is tame. By Theorem~\ref{thm:uniformA}, $\Gam(\DynA_m,\Omega) \cong
\Gam'/\langle x_4^{d_4}\rangle$. Hence, $\Gam(\DynA_m,\Omega)$ is either representation-finite or tame. By Proposition~\ref{prop:coarsen}, the idempotent truncation of $\Gam(\DynA_m,\Omega)$ corresponding to $\{j_2,j_4\}$ has parameter sequence $(2,2,d_4)$, and it is representation-infinite by Proposition~\ref{thm:two-point}. Therefore, $\Gam(\DynA_m,\Omega)$ is tame.
\end{proof}

\begin{proposition}\label{prop:11H11tame}
Suppose $d_0=d_1=d_3=d_4=1$ and $d_2\ge2$. Then, $\Gam(\DynA_m,\Omega)$ is tame.
\end{proposition}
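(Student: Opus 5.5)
We prove this in two parts: first that $\Gam(\DynA_m,\Omega)$ is representation-infinite, then that it is not wild. The model is the proof of Proposition~\ref{prop:1HH1tame}.

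\emph{Representation-infiniteness.} By Proposition~\ref{prop:coarsen}, deleting $j_2$ from $\Omega$ gives the idempotent truncation with parameter sequence $(d_0,d_1+d_2,d_3,d_4)=(1,d_2+1,1,1)$. This has only one entry at least $2$, so it is representation-finite by Proposition~\ref{prop:r3table}, and is useless. Deleting $j_1$ and $j_4$ instead gives the two-vertex truncation with parameter sequence $(2,d_2,2)$. By Proposition~\ref{thm:two-point} this truncation is tame, hence representation-infinite. The idempotent truncation criterion then shows that $\Gam(\DynA_m,\Omega)$ is representation-infinite.

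\emph{Simplifying the presentation.} Since $d_0=d_4=1$, the loops $x_1$ and $x_4$ vanish. Since $d_1=d_3=1$, the relations $b_1a_1=x_2$ and $a_3b_3=x_3$ let us eliminate $x_2$ and $x_3$. The algebra is then given by the linear quiver $1\rightleftarrows2\rightleftarrows3\rightleftarrows4$ with the following relations:
\[
a_1b_1=0,\quad b_3a_3=0,\quad a_2b_2=(b_1a_1)^{d_2},\quad b_2a_2=(a_3b_3)^{d_2},\quad a_1b_1a_1a_2=a_1a_2a_3b_3,
\]
together with the analogous commutation relation for the $b$'s. The first gives $a_1b_1a_1=0$, so the commutation relation becomes $a_1a_2a_3b_3=0$, and similarly $a_3b_3b_2b_1=0$ after using $b_3a_3=0$. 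Similarly, $b_1a_1a_2=a_2a_3b_3$ and $a_3b_3b_2=b_2b_1a_1$ (from $x_2a_2=a_2x_3$ and $b_2x_2=x_3b_2$) remain as genuine relations.

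\emph{Degeneration.} Following the proof of Proposition~\ref{prop:1HH1tame}, we replace $a_2b_2=(b_1a_1)^{d_2}$ and $b_2a_2=(a_3b_3)^{d_2}$ by
\[
a_2b_2=\lambda (b_1a_1)^{d_2},\qquad b_2a_2=\lambda (a_3b_3)^{d_2},
\]
leaving the other relations unchanged. Rescaling $a_2$ by $\lambda$ shows that $A_\lambda\cong\Gam(\DynA_m,\Omega)$ for $\lambda\neq0$. One must still check that $\dim A_\lambda$ is constant. We would do this by exhibiting the explicit basis of Theorem~\ref{thm:uniformA} and confirming that it stays a basis at $\lambda=0$. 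If the dimension jumps, we would add the vanishing products forced for $\lambda\ne0$ (for example $(b_1a_1)^{d_2+1}=0$) as extra relations before degenerating. The limit $A_0$ satisfies $a_2b_2=b_2a_2=0$, together with the commuting-square relations linking $b_1a_1$ and $a_3b_3$ through $a_2,b_2$.

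\emph{Tameness of $A_0$, the main obstacle.} We expect $A_0$ to be special biserial, or at least a degeneration or quotient of a known tame symmetric algebra with four simples in Erdmann's list. If it is special biserial, it is tame by \cite{WaldWaschbusch}. The difficulty is the commutativity relations such as $b_1a_1a_2=a_2a_3b_3$, which violate special biseriality. To handle these we would attempt a second degeneration, scaling $a_2$ in the commutation relations to make both sides zero, while keeping the dimension constant. That dimension count is the delicate point.

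If degeneration fails, the fallback is Proposition~\ref{prop:oneside}. Here $j_2=2$ and $j_3=2+d_2$, but $j_4=3+d_2$ exceeds $(m+1)/2=(5+d_2)/2$, so it does not apply directly. One would instead realize the algebra as a quotient of the tame algebra $\mathcal A_\ell^X$ with $\Omega'=\{1,2,\ell-1,\ell\}$. However, for $\ell>4$ that case is wild by Proposition~\ref{prop:oneside}, so the fallback is unavailable. The degeneration route is therefore the intended one.
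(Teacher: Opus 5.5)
Your representation-infiniteness step is correct and is the paper's own argument: the truncation at $\{j_2,j_3\}$ has parameters $(2,d_2,2)$. The gap is in the non-wildness half. You leave it unresolved, and the obstacle you name there does not exist.

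First, your degeneration is vacuous. Since $a_1b_1=x_1^{d_1}=x_1=0$, you have $(b_1a_1)^2=b_1(a_1b_1)a_1=0$. Likewise $(a_3b_3)^2=a_3(b_3a_3)b_3=0$; equivalently $x_2^{j_2}=x_2^2=0$ and $x_3^{m-j_3+1}=x_3^2=0$, as in the proof of Theorem~\ref{thm:uniformA}. So for $d_2\ge2$ your relations $a_2b_2=(b_1a_1)^{d_2}$ and $b_2a_2=(a_3b_3)^{d_2}$ already read $a_2b_2=b_2a_2=0$. Your family $A_\lambda$ therefore does not depend on $\lambda$. Your $A_0$ is $\Gam(\DynA_m,\Omega)$ itself, presented by $a_1b_1=a_2b_2=b_2a_2=b_3a_3=0$, $b_1a_1a_2=a_2a_3b_3$, and $b_2b_1a_1=a_3b_3b_2$.

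Second, the commutativity relations do not violate special biseriality. The definition only asks two things. Each vertex has at most two incoming and two outgoing arrows. For each arrow, at most one arrow can follow it, and at most one can precede it, with the product outside the ideal. Binomial relations between longer paths are allowed; they occur in every Brauer graph algebra. Here the four zero relations leave $a_1a_2$, $b_1a_1$, $a_2a_3$, $b_2b_1$, $a_3b_3$, $b_3b_2$ as the only possibly nonzero composites of two arrows. Each arrow occurs at most once as a first factor and at most once as a second factor. Hence the algebra is special biserial, so it is not wild by \cite{WaldWaschbusch}. Combined with your representation-infiniteness, it is tame. This is exactly the paper's short proof.

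No second degeneration is needed. The one you sketch, forcing both sides of $b_1a_1a_2=a_2a_3b_3$ to vanish, could not be flat anyway. It kills the path $x_2a_2$, which is a basis element by Theorem~\ref{thm:uniformA} since $\min\{j_2,m-j_3+1\}=2$.
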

\begin{proof}
Since $d_0=d_1=d_3=d_4=1$, the loops $x_1,x_4$ vanish and $x_2=b_1a_1$, $x_3=a_3b_3$. Moreover,
$x_2^2=x_3^2=0$, so $d_2\ge2$ gives
$a_2b_2=b_2a_2=0$. Then, the remaining relations are $a_1b_1=a_2b_2=b_2a_2=b_3a_3=0$, $b_1a_1a_2=a_2a_3b_3$, $b_2b_1a_1=a_3b_3b_2$. We deduce that $\Gam(\DynA_m,\Omega)$ is a special biserial algebra. On the other hand, the idempotent truncation corresponding to $\{j_2,j_3\}$ has parameter sequence $(2,d_2,2)$ by Proposition~\ref{prop:coarsen}, and it is representation-infinite by Proposition~\ref{thm:two-point}. Hence, $\Gam(\DynA_m,\Omega)$ is tame.
\end{proof}

\begin{proposition}\label{prop:111H1tame}
Suppose $d_0=d_2=d_3=d_4=1$ and $d_1\ge2$. Then,
$\Gam(\DynA_m,\Omega)$ is tame.
\end{proposition}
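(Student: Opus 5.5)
We work directly with the presentation from Theorem~\ref{thm:uniformA}, since neither the one-sided comparison used for Proposition~\ref{prop:H1111tame} nor the two-vertex truncations is enough here. We cannot embed into Proposition~\ref{prop:oneside}: the comparison algebra $\Gam'$ with parameter sequence $(1,d_1,1,1,k)$ has $\Omega'=\{1,\ell-2,\ell-1,\ell\}$. That set is not in the tame list, so the endpoint relation $x_4^{d_4}=0$ with $d_4=1$ really matters. The idempotent truncations to two or three vertices do not help either. By Propositions~\ref{prop:coarsen}, \ref{thm:two-point} and~\ref{prop:r3table}, all of them have at most one parameter $\ge2$, or the pattern $\{0,2\},\{1,3\}$, so they are representation-finite. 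Also the Gabriel quiver is the loopless double quiver of $\DynA_4$, whose separated quiver is Dynkin. So both representation-infiniteness and tameness have to be extracted from the full relations.

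The first step is to simplify the presentation. From $d_0=d_4=1$ we get $x_1=x_4=0$. From $d_2=d_3=1$ we get $x_2=a_2b_2$ and $x_3=b_2a_2=a_3b_3$, and then $b_3a_3=x_4=0$. From $d_1\ge2$ we get $a_1b_1=0$ and $b_1a_1=(a_2b_2)^{d_1}$. The commutation relations become
\[
a_1a_2b_2=0,\quad a_2b_2b_1=0,\quad b_2a_2a_3=0,\quad b_3b_2a_2=0,\quad a_2a_3b_3=a_2b_2a_2,\quad a_3b_3b_2=b_2a_2b_2.
\]
We will also record the nilpotency $x_2^{j_2}=0$ and $x_2^{m-j_2+1}=x_2^{3}=0$. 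Hence $(a_2b_2)^3=0$, and if $d_1\ge3$ then $b_1a_1=0$. We must keep track of the case $d_1=2$ separately.

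The second step is representation-infiniteness. For this we choose a $\mathbb Z$-grading in which all relations are homogeneous, e.g. $\deg a_i=0$, $\deg b_2=\deg b_3=1$ and $\deg b_1=d_1$. Then we look in the Galois covering $\widetilde B$ for a finite convex full subcategory that is a bound quiver of Euclidean type, or that contains a Euclidean hereditary piece with no relation on it. The commutativity square $a_3b_3=b_2a_2$ at vertex $3$ becomes a mesh-like square in the cover. Gluing the arms through $1$ (via $a_1,b_1$) and through $4$ should produce a $\widetilde{\DynD}_n$- or $\widetilde{\DynE}$-shaped tilted piece. Exhibiting a one-parameter family on it proves that $\Gam(\DynA_m,\Omega)$ is representation-infinite.

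The third step is tameness, where we use a degeneration as in Proposition~\ref{prop:1HH1tame}. If $d_1=2$, we first replace $b_1a_1=(a_2b_2)^2$ by $b_1a_1=\lambda(a_2b_2)^2$ and let $\lambda\to0$. Next we handle the commutativity relation at $3$ by a rescaling family $a_3\mapsto\lambda a_3$, so that $a_3b_3=\lambda b_2a_2$. The aim is to land in an algebra $A_0$ that is special biserial, hence tame. Before concluding we must check that $A_0$ is still representation-infinite, or at least has the same dimension. If this family changes the dimension, the fallback is to recognise $A_0$ directly, up to opposite algebra, as a quotient of an algebra in Erdmann's lists \cite{Erdmann90}. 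Alternatively, we show that $\widetilde B$ is locally support-finite with tame finite subcategories, and invoke the Dowbor--Skowro\'nski theorem in its tame direction. I expect this step to be the main obstacle: the relation $a_3b_3=b_2a_2$ is not monomial or biserial, and a careless degeneration may drop dimension. The choice of the family, and the verification that it is flat, is where the real work lies.
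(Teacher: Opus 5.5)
Your simplification of the relations agrees with the paper, and so does your first degeneration $b_1a_1=\lambda(a_2b_2)^2$ for $d_1=2$. Everything reduces to the single $21$-dimensional algebra
$S=\Bbbk Q/\langle a_1b_1,\,b_1a_1,\,b_2a_2-a_3b_3,\,b_3a_3,\,a_1a_2b_2,\,a_2b_2b_1\rangle$.
However, your claim that every three-vertex truncation is representation-finite is false. Deleting $j_3$ gives the parameter sequence $(1,d_1,2,1)$ by Proposition~\ref{prop:coarsen}, whose two entries $\ge2$ sit at indices $\{1,2\}$. That truncation is representation-infinite by Proposition~\ref{prop:1HH1tame}. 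This one line is the paper's proof of representation-infiniteness, so your covering search is unnecessary; as written it is only a hope in any case.

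The genuine gap is that you give no proof that $S$ is tame, and the rescaling family you propose cannot supply one.
\begin{itemize}
\item For $\lambda\ne0$, the relation $a_3b_3=\lambda b_2a_2$ gives algebras $A_\lambda\cong S$. But the algebra presented by the $\lambda=0$ relations is not the limit: it is infinite-dimensional, since $(a_2b_2)^n$ avoids every relation.
\item In $A_\lambda$ one has $\lambda b_2a_2a_3=a_3b_3a_3=0$, $\lambda b_3b_2a_2=b_3a_3b_3=0$ and $\lambda^2(b_2a_2)^2=(a_3b_3)^2=0$. So the flat limit also contains $b_2a_2a_3$, $b_3b_2a_2$ and $(b_2a_2)^2$. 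The resulting monomial algebra already has dimension $21$, so it is the limit.
\item In this limit $a_2b_2\ne0\ne a_2a_3$, so it is not special biserial.
\item In fact the limit is wild. Grade it by $\deg a_i=0$, $\deg b_2=1$, $\deg b_1=\deg b_3=100$. The full subcategory of the covering on $2_0,3_0,4_0,2_1,1_1,3_1,4_{-99}$ is the path category of the relation-free tree $2_0\to3_0\to4_0$, $3_0\to2_1$, $1_1\to2_1\to3_1\leftarrow4_{-99}$. This tree has two adjacent branch vertices and an arm of length two, so it is neither Dynkin nor Euclidean.
\end{itemize}
Hence Geiss's theorem yields nothing here. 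The opposite rescaling $b_2a_2=\lambda a_3b_3$ fails in the same way, since $a_2b_2$ and $a_2a_3$ again both survive in the limit.

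Your fallbacks do not close the gap either. The algebras in Erdmann's lists have at most three simple modules, and a quotient cannot acquire a fourth. The Dowbor--Skowro\'nski route is only named, not carried out.

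The paper proves tameness of $S$ in the appendix, by a different and much heavier argument:
\begin{itemize}
\item Vertex $1$ is a node of $S$, since $a_1b_1=b_1a_1=0$, so one may pass to the node-separated algebra.
\item The separated algebra is a one-point extension of $B\cong\operatorname{Aus}(\Bbbk[x]/(x^3))$. This extension is representation-finite because its hammock poset has width $2$ (Kleiner's criterion).
\item This is followed by a one-point coextension whose vector-space category is schurian of polynomial growth (Bekkert's criterion).
\end{itemize}
An argument of that strength, or an explicitly verified flat degeneration to a tame algebra, is what your proposal is missing.
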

\begin{proof}
We first show that $\Gam(\DynA_m,\Omega)$ is
representation-infinite. By Proposition~\ref{prop:coarsen}, deleting the vertex $j_3$ gives the three-vertex idempotent
truncation with parameter sequence $(1,d_1,2,1)$.
Since $d_1\ge2$, Proposition~\ref{prop:r3table} shows that this truncation is representation-infinite.

We have $x_1=x_4=0$, $x_2=a_2b_2$, and $x_3=b_2a_2=a_3b_3$. Hence, the defining relations reduce to $a_1b_1=0$, $b_1a_1=(a_2b_2)^{d_1}$, $b_2a_2=a_3b_3$, $b_3a_3=0$, $a_1a_2b_2=0$, $a_2b_2b_1=0$. Since $(b_2a_2)^2=(a_3b_3)^2=a_3(b_3a_3)b_3=0$, we have $(a_2b_2)^3=0$. Thus, if $d_1\ge3$, $\Gam(\DynA_m,\Omega)$ is precisely the tame algebra established in the final appendix. For $d_1=2$, consider the one-parameter family obtained by replacing the relation $b_1a_1=(a_2b_2)^2$ with $b_1a_1=\lambda(a_2b_2)^2$, $\lambda\in\Bbbk$, while keeping all the other relations unchanged. Then, $\Gam(\DynA_m,\Omega)$ degenerates to the algebra occurring for $d_1\ge3$. Hence, $\Gam(\DynA_m,\Omega)$ is tame.
\end{proof}

\begin{proposition}\label{prop:r4A}
Suppose $\Omega=\{j_1<j_2<j_3<j_4\}$ and $d_0,\ldots,d_4$ as above. Then,
\begin{enumerate}
\item if $d_0=d_1=d_2=d_3=d_4=1$, then
$\Gam(\DynA_m,\Omega)\cong\Pi_{\DynA_4}$ is
representation-finite;

\item if exactly one of $d_0,\ldots,d_4$ is at least $2$, then $\Gam(\DynA_m,\Omega)$ is tame.

\item if at least two of $d_0,\ldots,d_4$ are at least $2$, then $\Gam(\DynA_m,\Omega)$ is wild.
\end{enumerate}
\end{proposition}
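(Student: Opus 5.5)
The plan is to assemble the statement from the case analysis carried out just before it, treating the three parts separately according to how many of $d_0,\ldots,d_4$ are at least $2$.

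For part (1), all $d_s=1$ means $m=4$ and $\Omega=\{1,2,3,4\}=\Delta_0$. Hence $e_\Omega=1$ and $\Gam(\DynA_4,\Omega)=\Pi_{\DynA_4}$. The preprojective algebra of $\DynA_4$ is representation-finite by the classical Dlab--Ringel result that $\Pi_{\DynA_n}$ is representation-finite for $n\le4$. Alternatively, we can use Theorem~\ref{thm:uniformA}: with every $d_s=1$, all loops are redundant, so we get exactly the quiver and relations of $\Pi_{\DynA_4}$. I would cite this rather than recompute.

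For part (2), we use the diagram automorphism $i\mapsto m+1-i$, which reverses the parameter sequence. Up to this automorphism, the unique parameter at least $2$ is $d_4$, $d_2$, or $d_1$.
\begin{itemize}
\item The case $d_4\ge2$ (equivalently $d_0\ge2$) is Proposition~\ref{prop:H1111tame}.
\item The case $d_2\ge2$ is Proposition~\ref{prop:11H11tame}.
\item The case $d_1\ge2$ (equivalently $d_3\ge2$) is Proposition~\ref{prop:111H1tame}.
\end{itemize}
Since the automorphism induces an isomorphism of contracted preprojective algebras, these three cases cover everything.

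For part (3), we repeat the argument given before Proposition~\ref{prop:1H1H1wild}.
\begin{itemize}
\item \textbf{At least three parameters at least $2$.} Choose $j_s$ as described there and delete it. By Proposition~\ref{prop:coarsen}, this gives a three-vertex idempotent truncation whose parameter sequence still has at least three entries at least $2$. That truncation is wild by Proposition~\ref{prop:r3table}(1). Wildness lifts from $e\Gam e$ to $\Gam$ by the idempotent truncation criterion.
\item \textbf{Exactly two parameters at least $2$, with consecutive ones.} If some $d_{s-1}=d_s=1$, deleting $j_s$ merges them into a $2$. This again produces three entries at least $2$, so we are wild by the same argument.
\item \textbf{Exactly two parameters at least $2$, no consecutive ones.} The only configuration with no consecutive pair of $1$'s is $d_0=d_2=d_4=1$ with $d_1,d_3\ge2$. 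This is wild by Proposition~\ref{prop:1H1H1wild}.
\end{itemize}

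The one step needing care is checking that the choice of $j_s$ in the three-large case always exists. If exactly three of the five parameters are at least $2$, two are equal to $1$. We need some adjacent pair $(d_{s-1},d_s)$, with $1\le s\le4$, containing a $1$, such that merging it does not reduce the count of large entries below three. Merging a $1$ with anything leaves the number of entries at least $2$ unchanged, and some $1$ must have a neighbour because there are five positions. So the choice always exists. The rest is routine bookkeeping.
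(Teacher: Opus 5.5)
Your proposal is correct and follows the paper's proof essentially step for step. Part (1) uses $e_\Omega=1$ together with Dlab--Ringel. Part (2) uses the diagram automorphism to reduce to Propositions~\ref{prop:H1111tame}, \ref{prop:11H11tame} and~\ref{prop:111H1tame}. Part (3) uses the coarsening reduction to Proposition~\ref{prop:r3table}, plus Proposition~\ref{prop:1H1H1wild} for the configuration $d_0=d_2=d_4=1$, $d_1,d_3\ge2$. Your explicit check that a suitable $j_s$ exists when exactly three parameters are at least $2$ is a detail the paper only asserts.
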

\begin{proof}
If $d_0=\cdots=d_4=1$, then $m=4$ and
$\Omega=\{1,2,3,4\}$. Hence, $\Gam(\DynA_m,\Omega)=\Pi_{\DynA_4}$ is representation-finite by \cite[Proposition 6.3]{DlabRingel92}. If exactly one of $d_0,\ldots,d_4$ is at least $2$, then $\Gam(\DynA_m,\Omega)$ is tame by
Propositions~\ref{prop:H1111tame}, \ref{prop:11H11tame}, and~\ref{prop:111H1tame}. Applying the reduction strategy mentioned at the beginning of this subsection and Proposition~\ref{prop:1H1H1wild}, all remaining cases are wild. 
\end{proof}

\subsection{Five or more vertices}
\label{sec:A-general}
We now complete the classification for $|\Omega|\ge5$. The only non-wild case occurs when $|\Omega|=5$ and all parameters are equal
to $1$.

\begin{proposition}\label{thm:r5A}
Suppose $|\Omega|=5$. Then, $\Gam(\DynA_m,\Omega)$ is tame if $d_0=d_1=\cdots=d_5=1$, and wild otherwise.
\end{proposition}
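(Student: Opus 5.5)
The plan splits into the wild part, handled by reduction to four vertices, and the tame part, handled as a boundary case. Write the parameter sequence as $(d_0,\ldots,d_5)$, so that $d_0+\cdots+d_5=m+1$.

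\textbf{Wild part.} Suppose some $d_s\ge2$. We look for a vertex $j_t\in\Omega$ whose deletion, via Proposition~\ref{prop:coarsen}, gives a four-vertex idempotent truncation with at least two parameters $\ge2$; that truncation is then wild by Proposition~\ref{prop:r4A}.

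Deleting $j_t$ merges $d_{t-1}$ and $d_t$ into $d_{t-1}+d_t$. Since $d_{t-1}+d_t\ge2$, the merged parameter is always at least $2$. So it suffices to find $t$ such that some parameter outside the pair $\{t-1,t\}$ is already $\ge2$.

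Fix an index $s$ with $d_s\ge2$. There are five possible merges $t=1,\ldots,5$, and at most two of them involve the index $s$, namely $t=s$ and $t=s+1$. Pick any other $t$. Then $d_s\ge2$ survives untouched, and together with the merged entry the truncation has at least two parameters $\ge2$. It is therefore wild, and $\Gam(\DynA_m,\Omega)$ is wild because an idempotent truncation of it is wild. This disposes of every case except $d_0=\cdots=d_5=1$.

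\textbf{Tame part.} If all $d_s=1$, then $m=5$, $\Omega=\Delta_0$, and $\Gam(\DynA_5,\Omega)=\Pi_{\DynA_5}$.

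It is classical that $\Pi_{\DynA_5}$ is tame, representation-infinite and of finite type in the sense of Dlab--Ringel \cite[Proposition~6.3]{DlabRingel92}; equivalently, it is the stable Auslander algebra of $\Bbbk[x]/(x^6)$. So we cite that result.

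For a self-contained check of representation-infiniteness, use Proposition~\ref{prop:coarsen}. Deleting $j_1$ gives parameters $(2,1,1,1,1)$, which is tame by Proposition~\ref{prop:H1111tame}. Hence $\Pi_{\DynA_5}$ is at least representation-infinite.

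Tameness itself does not follow from the earlier results of this paper. I would therefore cite the known classification of representation types of preprojective algebras of Dynkin type: $\Pi_{\DynA_n}$ is finite for $n\le4$, tame for $n=5$, and wild for $n\ge6$ (Dlab--Ringel; Geiss--Leclerc--Schröer).

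The main obstacle is exactly this tameness step. If a citation is not acceptable, I would try a degeneration argument in the style of Proposition~\ref{prop:111H1tame}: degenerate $\Pi_{\DynA_5}$ to a tame algebra. Alternatively, one could use a Galois covering that realises $\Pi_{\DynA_5}$ as a quotient of a tame self-injective algebra of Euclidean tubular type.
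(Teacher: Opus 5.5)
Your proof is correct. The tame half matches the paper: it also identifies $\Gam(\DynA_5,\{1,\ldots,5\})=\Pi_{\DynA_5}$ and cites \cite[Proposition~6.3]{DlabRingel92}.

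The wild half takes a different route. The paper argues contrapositively with two specific three-vertex truncations:
\begin{itemize}
\item $\{j_1,j_2,j_4\}$ has parameters $(d_0,d_1,d_2+d_3,d_4+d_5)$, so Proposition~\ref{prop:r3table}(1) forces $d_0=d_1=1$, and the diagram automorphism gives $d_4=d_5=1$;
\item $\{j_2,j_3,j_5\}$ then has parameters $(2,d_2,d_3+1,1)$, which forces $d_2=1$, and by symmetry $d_3=1$.
\end{itemize}
You instead delete one vertex and apply the four-vertex classification. Deleting $j_t$ merges only $d_{t-1}$ and $d_t$, so at most two of the five deletions touch a fixed index $s$ with $d_s\ge2$. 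Any other deletion leaves $d_s$ alongside a merged entry $\ge2$, and Proposition~\ref{prop:r4A}(3) applies. This pigeonhole argument is shorter and uniform, and needs no symmetry.

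The price is that it uses Proposition~\ref{prop:r4A}(3) as a black box. For some of your choices of $t$ this rests on the Galois-covering argument of Proposition~\ref{prop:1H1H1wild}. For example, $t=2$ applied to $(1,1,1,1,2,1)$ gives $(1,2,1,2,1)$. The paper's argument needs only the three-vertex wildness criterion, Proposition~\ref{prop:r3table}(1).

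Three minor points:
\begin{itemize}
\item In your representation-infiniteness check, the sequence $(2,1,1,1,1)$ is the reversal of the one in Proposition~\ref{prop:H1111tame}. Cite Proposition~\ref{prop:r4A}(2), or invoke the diagram automorphism explicitly.
\item That check is redundant anyway: in the paper's convention, tameness already includes representation-infiniteness.
\item Drop the phrase ``of finite type in the sense of Dlab--Ringel''. It reads as contradicting ``representation-infinite''.
\end{itemize}
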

\begin{proof}
Suppose that $\Gam(\DynA_m,\Omega)$ is not wild. Consider the idempotent truncation corresponding to $\{j_1,j_2,j_4\}$. By Proposition~\ref{prop:coarsen}, its parameter sequence is $(d_0,\ d_1,\ d_2+d_3,\ d_4+d_5)$. The last two parameters are at least $2$. Hence, Proposition~\ref{prop:r3table} forces $d_0=d_1=1$, since otherwise this three-vertex truncation would have at least three parameters at least $2$ and would be wild. Applying the diagram automorphism of $\DynA_m$ gives $d_4=d_5=1$. Now consider the idempotent truncation corresponding to $\{j_2,j_3,j_5\}$. Its parameter sequence is $(2,\ d_2,\ d_3+1,\ 1)$. The first and third parameters are at least $2$. If $d_2\ge2$, then this truncation has at least three parameters at least $2$, and is wild by Proposition~\ref{prop:r3table}. Thus, $d_2=1$. Applying the same argument after reversing the parameter sequence gives $d_3=1$. Therefore, $d_0=d_1=\cdots=d_5=1$. 

Conversely, if all six parameters are equal to $1$, then $m=5$ and $\Omega=\{1,2,3,4,5\}$. Hence, $\Gam(\DynA_m,\Omega)=\Pi_{\DynA_5}$, which is tame by \cite[Proposition~6.3]{DlabRingel92}.
\end{proof}

\begin{proposition}\label{thm:rge6wild}
If $|\Omega|\ge6$, then $\Gam(\DynA_m,\Omega)$ is wild.
\end{proposition}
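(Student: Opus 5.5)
The plan is to reduce to the five-vertex case by idempotent truncation, using Proposition~\ref{prop:coarsen} together with Proposition~\ref{thm:r5A}. Since wildness of an idempotent truncation $e\Gam e$ forces wildness of $\Gam$, it suffices to find, for every $\Omega$ with $|\Omega|=r\ge6$, a five-element subset $\Omega'\subseteq\Omega$ whose parameter sequence is not identically $1$. By Proposition~\ref{thm:r5A}, $\Gam(\DynA_m,\Omega')=e_{\Omega'}\Gam(\DynA_m,\Omega)e_{\Omega'}$ is then wild.

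To construct $\Omega'$, write $\Omega=\{j_1<\cdots<j_r\}$ with parameters $d_0,\ldots,d_r$. Take $\Omega'=\{j_1,\ldots,j_5\}$, deleting $j_6,\ldots,j_r$. By repeated application of Proposition~\ref{prop:coarsen}, its parameter sequence is
\[
(d_0,d_1,d_2,d_3,d_4,\ d_5+d_6+\cdots+d_r).
\]
Since $r\ge6$, the last entry is a sum of at least two positive integers, so it is at least $2$. Hence the sequence is not all ones, and Proposition~\ref{thm:r5A} shows this truncation is wild. Therefore $\Gam(\DynA_m,\Omega)$ is wild.

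There is no real obstacle here. The only point to check is that Proposition~\ref{prop:coarsen} may be iterated, deleting one vertex at a time, with parameters merging additively. This holds because $e_{\Omega''}e_{\Omega'}=e_{\Omega''}$ for $\Omega''\subseteq\Omega'$, so the truncations compose.
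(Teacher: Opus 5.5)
Your argument is correct and follows the paper's strategy of idempotent truncation plus parameter merging via Proposition~\ref{prop:coarsen}; the only difference is the subset. The paper truncates directly to the three vertices $\{j_2,j_4,j_5\}$, whose parameters $(d_0+d_1,\,d_2+d_3,\,d_4,\,d_5+\cdots+d_r)$ contain three entries $\ge2$, and applies Proposition~\ref{prop:r3table}; you instead truncate to $\{j_1,\ldots,j_5\}$ and invoke Proposition~\ref{thm:r5A}, which is itself proved by three-vertex truncations, so both routes are valid.
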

\begin{proof}
Write $\Omega=\{j_1<\cdots<j_r\}$ with $r\ge6$, and consider the idempotent truncation corresponding to $\{j_2,j_4,j_5\}$. By
Proposition~\ref{prop:coarsen}, its parameter sequence is $(d_0+d_1,\ d_2+d_3,\ d_4,\ d_5+\cdots+d_r)$. Since $d_0+d_1\ge2$, $d_2+d_3\ge2$, $d_5+\cdots+d_r\ge2$, this three-vertex truncation has at least three parameters at least $2$, and is therefore wild by Proposition~\ref{prop:r3table}. Consequently, $\Gam(\DynA_m,\Omega)$ is wild.
\end{proof}

\subsection{Summary}
Combining the preceding results gives the complete classification of the representation types of contracted preprojective algebras of
type~$\DynA$. We give a summary without proofs as follows.

\begin{theorem}\label{thm:markedA-main}
Let $\Omega=\{j_1<\cdots<j_r\}\subseteq \{1,2,\cdots, m\}$ be a non-empty set with associated parameter sequence $(d_0,\ldots,d_r)$. 
\begin{enumerate}
\item If $r=1$, then $\Gam(\DynA_m,\Omega)$ is
representation-finite.

\item If $r=2$, then $\Gam(\DynA_m,\Omega)$ is
representation-finite if $\min\{d_0,d_1,d_2\}=1$, tame if $\min\{d_0,d_1,d_2\}=2$, and wild if $\min\{d_0,d_1,d_2\}\ge3$.

\item If $r=3$, then $\Gam(\DynA_m,\Omega)$ is wild if at least three of $d_0,d_1,d_2,d_3$ are at least $2$; it is tame if exactly two are at least $2$ and their indices are one of $\{0,1\},
\{1,2\},\{2,3\},\{0,3\}$, and it is representation-finite in all remaining cases.

\item If $r=4$, then $\Gam(\DynA_m,\Omega)$ is
representation-finite if $d_0=d_1=d_2=d_3=d_4=1$, tame if exactly one of
$d_0,\ldots,d_4$ is at least $2$, and wild if at least two of them are at least $2$.

\item If $r=5$, then $\Gam(\DynA_m,\Omega)$ is tame if $d_0=d_1=\cdots=d_5=1$, and is wild otherwise.

\item If $r\ge6$, then $\Gam(\DynA_m,\Omega)$ is wild.
\end{enumerate}
\end{theorem}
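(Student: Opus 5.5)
This summary theorem is assembled from results already proved in this section, so the plan is to match each item to its source and check that the cases cover everything.

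\textbf{Item (1).} For $r=1$, Theorem~\ref{thm:uniformA} gives $\Gam(\DynA_m,\Omega)\cong\Bbbk[x]/(x^{\min\{d_0,d_1\}})$. This truncated polynomial algebra is representation-finite (the trivial case $\min\{d_0,d_1\}=1$ gives $\Bbbk$).

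\textbf{Item (2).} This is exactly Proposition~\ref{thm:two-point}.

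\textbf{Item (3).} This is Proposition~\ref{prop:r3table}. The one check needed is that its three clauses cover every parameter sequence $(d_0,d_1,d_2,d_3)$, sorted by how many entries are at least $2$:
\begin{itemize}
\item \emph{three or more}: wild;
\item \emph{exactly two}: the pair of indices is one of the six $2$-subsets of $\{0,1,2,3\}$, and the statement lists four of them as tame and the other two as finite;
\item \emph{at most one}: finite.
\end{itemize}
So ``all remaining cases'' in item (3) is precisely the representation-finite list of Proposition~\ref{prop:r3table}.

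\textbf{Item (4).} This is Proposition~\ref{prop:r4A}, whose three clauses are exhaustive by the same count: zero, exactly one, or at least two parameters $\ge2$.

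\textbf{Item (5).} This is Proposition~\ref{thm:r5A}.

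\textbf{Item (6).} This is Proposition~\ref{thm:rge6wild}.

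There is no genuine obstacle. The only point needing care is verifying that the wild clause of (4) really is exhaustive, i.e.\ that the reduction paragraph preceding Proposition~\ref{prop:1H1H1wild} handles every configuration.
\begin{itemize}
\item \emph{At least three parameters $\ge2$.} Deleting a suitable $j_s$ keeps at least three parameters $\ge2$, and Proposition~\ref{prop:coarsen} together with Proposition~\ref{prop:r3table} gives wildness.
\item \emph{Exactly two parameters $\ge2$.} Either two consecutive parameters equal $1$, and merging them creates a third parameter $\ge2$, which is wild as before; or the pattern is $(1,\ge2,1,\ge2,1)$, which is Proposition~\ref{prop:1H1H1wild}.
\end{itemize}
I would also restate that the case with exactly one parameter $\ge2$ reduces by the diagram automorphism to the three tame Propositions~\ref{prop:H1111tame}, \ref{prop:11H11tame} and~\ref{prop:111H1tame}. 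With these checks the summary follows.
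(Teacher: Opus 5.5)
Your proposal is correct and takes the same approach as the paper. The paper gives this theorem as a summary, without a separate proof, assembled from Theorem~\ref{thm:uniformA} and Propositions~\ref{thm:two-point}, \ref{prop:r3table}, \ref{prop:r4A}, \ref{thm:r5A} and~\ref{thm:rge6wild}; your exhaustiveness check for $r=4$ is exactly the reduction paragraph preceding Proposition~\ref{prop:1H1H1wild}, together with the three tame propositions for a single parameter $d_s\ge2$.
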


\section{Contracted preprojective algebras of types~$\DynD$ and~$\DynE$}\label{sec:DE}
In this section, we determine the representation type of contracted preprojective algebras for the remaining simply-laced Dynkin diagrams. We shall repeatedly use quotients, idempotent truncation, radical-square-zero quotients, and the standard representation theory of Nakayama algebras and special biserial algebras.

\subsection{Type $\DynD$}
For any $m\ge4$, we label $\DynD_m$ by
\[
\DynD_m:\; \vcenter{\xymatrix@C=2.5em@R=0.5em{
&&&& &m-1\\
1\ar@{-}[r]&2\ar@{-}[r]&\cdots\ar@{-}[r]&m-3\ar@{-}[r]&m-2\ar@{-}[ur]\ar@{-}[dr]&\\
&&&&&m
}}
\]
We use the diagram automorphism of $\DynD_m$ that interchanges $m-1$ and $m$ without further comment. We record the bound quiver presentation of $\Pi_{\DynD_m}$ with the quiver
\[
\vcenter{\xymatrix@C=4em@R=2.2em{
&&&&m-1
\ar@/^/[dl]^{b_{m-2}}
\\
1
\ar@/^/[r]^{a_1}
&
2
\ar@/^/[l]^{b_1}
\ar@/^/[r]^{a_2}
&
\cdots
\ar@/^/[l]^{b_2}
\ar@/^/[r]^{a_{m-3}}
&
m-2
\ar@/^/[l]^{b_{m-3}}
\ar@/^/[ur]^{a_{m-2}}
\ar@/^/[dr]^{a_{m-1}}
&
\\
&&&&m
\ar@/^/[ul]^{b_{m-1}}
}},
\]
bounded by the ideal which is generated by $a_1b_1=0$, $b_{m-2}a_{m-2}=0$, $b_{m-1}a_{m-1}=0$, $b_{m-3}a_{m-3}=a_{m-2}b_{m-2}+a_{m-1}b_{m-1}$, and $a_ib_i=b_{i-1}a_{i-1}$ for all $2\le i\le m-3$. These relations will be used below to reduce paths in the idempotent truncation $e_\Omega\Pi_{\DynD_m}e_\Omega$.

For every nonempty subset $\Omega\subseteq\{1,2,\ldots,m\}$, we write $\Gam(\DynD_m,\Omega):=e_\Omega\Pi_{\DynD_m}e_\Omega$. We first consider the one-vertex cases, as follows.

\begin{proposition}\label{prop:D-onepoint-internal}
Let $\Omega=\{i\}\subseteq\{1,2,\ldots,m\}$. Then, $\Gam(\DynD_m,\Omega)$ is representation-finite if $i=1,m-1,m$, tame if $i=2,m-2$, and wild otherwise.
\end{proposition}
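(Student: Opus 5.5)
The plan is to write $\Gam(\DynD_m,\{i\})=e_i\Pi_{\DynD_m}e_i$ as a local algebra $\Bbbk\langle x,y,\dots\rangle/J$ in each case and compare it with Proposition~\ref{prop:local-benchmark-alg}. First I would compute $\dim_{\Bbbk} e_i\Pi_{\DynD_m}e_i$ from the known dimension vectors of the indecomposable projective $\Pi_{\DynD_m}$-modules, which are the stable Hom dimensions over the stable Auslander algebra. Then I would find the loop space $\rad/\rad^2$ of the local algebra $e_i\Pi e_i$. Its elements are cycles at $i$ that do not factor through a shorter return to $i$.

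Using the relations $a_jb_j=b_{j-1}a_{j-1}$ and the branch relation, a cycle at $i$ can be rewritten as a product of ``excursions'' that leave $i$ and come back only once. There are three kinds of excursion: one into the left arm $1,\dots,i-1$, one into the right part through $m-2$, and one into a branch $m-1$ or $m$.

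\emph{Finite cases.} For $i=1$, every excursion goes to the right. Since $a_1b_1=0$, any nonzero cycle must reach the branch point. I would show that all such cycles are powers of one element $z$, namely the shortest nonzero return. Then $\rad/\rad^2$ is one-dimensional and $e_1\Pi e_1\cong\Bbbk[z]/(z^t)$, which is representation-finite. For $i=m$ (and $m-1$ by the symmetry), I would argue the same way: $b_{m-1}a_{m-1}=0$ forces every cycle through $m-2$ to go further, and I would check that the resulting loop space is again one-dimensional. The point to verify here is that the two routes around the other branch and down the arm are proportional modulo $\rad^2$. This should follow from the relation at $m-2$ together with the one-dimensionality of the relevant $\Hom$-space between projectives.

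\emph{Tame cases.} For $i=2$, the left excursion $x=b_1a_1$ equals $a_2b_2$, so it coincides with the shortest right return. A second generator $y$ comes from the excursion to the branch, and I would show it is not in $\rad^2$. I would then compute the relations: they should be $x^2=0$ (from $a_1b_1=0$), a nilpotency relation for $y$, and a commutation-type relation. From these I would either exhibit $e_2\Pi e_2$ as one of the benchmark algebras (1) or (2) of Proposition~\ref{prop:local-benchmark-alg}, or give an explicit degeneration to one of them, which yields tameness by \cite{Geiss}. Representation-infiniteness follows because two independent loops give a separated radical-square-zero quiver of Kronecker type $\widetilde{\DynA}_1$. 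For $i=m-2$, I would run the same argument with the two generators given by the two branch excursions, namely $a_{m-2}b_{m-2}$ and $a_{m-1}b_{m-1}$. Their sum equals $b_{m-3}a_{m-3}$ and each squares to zero. This makes the algebra tame, of the form $\Bbbk\langle x,y\rangle/(x^2,y^2,\dots)$, plus higher relations that I would check against case (1) of Proposition~\ref{prop:local-benchmark-alg}.

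\emph{Wild cases} ($3\le i\le m-3$). Here there is a left excursion $x$ and a branch excursion $y$. Because $i\ge3$, the power $x^2=b_1a_1$-type paths do not vanish immediately: $x$ has nilpotency index at least $3$. I would pass to a suitable quotient by $\rad^4$ and by appropriate monomials, and identify the result with case (6) or (7) of Proposition~\ref{prop:local-benchmark-alg}, or with (4) or (5). Wildness then follows because it passes up from a quotient of an idempotent truncation. The main obstacle is this step. It requires exact control of the multiplication $\rad^2\to\rad^3$ in $e_i\Pi e_i$ in order to match one of Ringel's or Erdmann's minimal wild local algebras. I would first try to show that $xy$ and $yx$ are proportional while $x^3\neq0$, and then match the quotient with benchmark (7).
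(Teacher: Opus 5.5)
Your overall route is the paper's. You present $e_i\Pi_{\DynD_m}e_i$ as a local algebra and get the finite cases from a single generator. For $3\le i\le m-3$ you get wildness from the quotient $\Bbbk\langle x,y\rangle/(x^3,y^2,x^2y,xy+yx)$, which is Proposition~\ref{prop:local-benchmark-alg}(7) with $\lambda=-1$. However, the tame step cannot work as you plan it.

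At $i=2$ (and at $i=m-2$ when $m=4$) the relation you will find is an \emph{anti}commutation. With $x=b_1a_1$ and $y$ the branch excursion, one gets $e_2\Pi_{\DynD_m}e_2\cong\Bbbk\langle x,y\rangle/(x^2,y^2,xy+yx)$, the exterior algebra. In characteristic $\ne 2$ this algebra is noncommutative, so it is neither benchmark (1) nor (2). It also cannot degenerate to either of them. Every invertible linear change of $x,y$ preserves its relations, so its automorphism group has dimension $6$, whereas (1) and (2) have $4$. Its orbit is therefore strictly smaller, and no degeneration is possible.

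For $i=m-2$ with $m\ge5$, one has $\dim e_{m-2}\Pi e_{m-2}=2(m-2)>4$, so case (1) is irrelevant there as well. The missing idea is the one the paper uses. Both algebras are quotients of $\Bbbk\langle x,y\rangle/(x^2,y^2)$, hence special biserial, hence tame. Your Kronecker argument on the radical-square-zero quotient then gives representation-infiniteness.

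In the wild case, drop the requirement $x^3\neq0$. The relations $a_1b_1=0$ and $a_jb_j=b_{j-1}a_{j-1}$ give $x^i=0$, so for $i=3$ (already in $\DynD_6$) one has $x^3=0$. Benchmark (7) only needs $x^2\ne0$ and $xy\notin\Bbbk x^2$ in the quotient. The proportionality you want holds exactly. Put $P=a_i\cdots a_{m-3}$, $P'=b_{m-3}\cdots b_i$, $X=a_{m-2}b_{m-2}$ and $Y=a_{m-1}b_{m-1}$, and take $y=PXP'$. Sliding $x$ along $P$ or $P'$ turns it into $b_{m-3}a_{m-3}=X+Y$. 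Since $X^2=Y^2=0$, this gives $xy+yx=P(X+Y)^2P'=x^{m-i}$, which lies in $(x^3)$ because $m-i\ge3$.
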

\begin{proof}
It is easy to write down the radical layers of the indecomposable projective $\Pi_{\DynD_m}$-modules. Then, one may verify that
\begin{itemize}
\item $\Gam(\DynD_m,\{1\})\simeq \Bbbk[x]/(x^2)$, which is a representation-finite local algebra;

\item $\Gam(\DynD_m,\{2\})\simeq \Bbbk\langle x,y\rangle/(x^2,y^2,xy+yx)$, which is a representation-infinite special biserial algebra, and hence tame;

\item for any $3\le i\le m-3$, the algebra $\Gam(\DynD_m,\{i\})$ has a quotient $\Bbbk\langle x,y\rangle/(x^3,y^2,x^2y,xy+yx)$, which is the wild algebra in Proposition~\ref{prop:local-benchmark-alg}\textup{(7)} with $\lambda=-1$;

\item $\Gam(\DynD_m,\{m-2\})\simeq \Bbbk\langle x,y\rangle/(x^2,y^2,(x+y)^{m-2})$, which is a representation-infinite special biserial algebra, and hence tame;

\item $\Gam(\DynD_m,\{m-1\})\simeq \Bbbk[x]/(x^{\lfloor m/2\rfloor})$, which is a representation-finite local algebra; the case $i=m$ is obtained by the diagram automorphism of $\DynD_m$.
\end{itemize}
This completes the proof.
\end{proof}

The following observation eliminates most of the remaining cases.
\begin{lemma}\label{lem:D-trunk-pairs}
Let $m\ge4$ and $1\le i<j\le m-2$. If $\{i,j\}\ne\{1,2\}$, then $\Gam(\DynD_m,\{i,j\})$ is wild.
\end{lemma}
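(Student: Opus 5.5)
The plan is to reduce to type~$\DynA$. Write $\Pi:=\Pi_{\DynD_m}$ and let $e_{\le m-2}$ be the idempotent supported on the trunk vertices $1,\ldots,m-2$. The first step is to show that the quotient
\[
\Pi/\Pi(e_{m-1}+e_m)\Pi
\]
is isomorphic to $\Pi_{\DynA_{m-2}}$. Killing $e_{m-1}$ and $e_m$ sets $a_{m-2},b_{m-2},a_{m-1},b_{m-1}$ to zero. The mesh relation at $m-2$ then becomes $b_{m-3}a_{m-3}=0$, and the remaining relations are exactly the preprojective relations of $\DynA_{m-2}$.

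Next, for $1\le i<j\le m-2$, the truncation $\Gam(\DynD_m,\{i,j\})=e\Pi e$ with $e=e_i+e_j$ has the quotient
\[
e\Pi e/\bigl(e\Pi(e_{m-1}+e_m)\Pi e\bigr)\cong e\,\Pi_{\DynA_{m-2}}\,e=\Gam(\DynA_{m-2},\{i,j\}).
\]
The parameter sequence of this quotient is $(d_0,d_1,d_2)=(i,\,j-i,\,m-1-j)$. By Proposition~\ref{thm:two-point}, it is wild once $\min\{i,j-i,m-1-j\}\ge3$. However, this alone is too weak, because $j=m-2$ gives $d_2=1$.

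So I will instead pick a quotient that keeps one branch vertex. Kill only $e_m$. This gives $\Pi/\Pi e_m\Pi\cong\Pi_{\DynA_{m-1}}$, with the chain $1,\ldots,m-2,m-1$: the relation $b_{m-1}a_{m-1}=0$ disappears, and the relation at $m-2$ becomes $b_{m-3}a_{m-3}=a_{m-2}b_{m-2}$. The resulting truncated quotient is $\Gam(\DynA_{m-1},\{i,j\})$ with parameters $(i,\,j-i,\,m-j)$. This is still often representation-finite, since $i=1$ or $j-i=1$ gives $\min=1$. Consequently the quotient approach via type~$\DynA$ only handles the "deep" pairs, and the real work lies elsewhere.

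For general pairs, I would use the radical-square-zero quotient directly. First compute the Gabriel quiver of $\Gam(\DynD_m,\{i,j\})$:
\begin{itemize}
\item there are arrows $i\rightleftarrows j$ given by the direct trunk paths;
\item at $j$ there is a loop from the path going towards the branch and back, namely $a_j\cdots b_j$ when $j<m-2$, or the two loops $a_{m-2}b_{m-2}$ and $a_{m-1}b_{m-1}$ when $j=m-2$;
\item at $i$ there is a loop if $i\ge2$, namely $b_{i-1}a_{i-1}$, which is not in the square of the radical when $i\ge2$.
\end{itemize}

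Counting the loops at $j$ requires care. When $j\le m-3$, the loop space at $j$ in $\Gam(\DynD_m,\{j\})$ is two-dimensional by Proposition~\ref{prop:D-onepoint-internal}: there is the loop $x$ going left and the loop $y$ going right. Inside $\{i,j\}$, the left loop factors through $i$ only if $i=j-1$; the right loop (towards the branch) never does, since $i<j$.

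With arrows $i\rightleftarrows j$ and at least one loop at $j$, plus either a loop at $i$ or a second loop at $j$, the separated quiver contains the wild pattern of Proposition~\ref{prop:radicalA}, a $4$-cycle with an extra vertex. This gives wildness whenever $i\ge2$, or whenever $j\ge3$ carries two loops.

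The remaining case is $i=1$ with $j$ having a single loop in the pair algebra, which forces $j=2$; that pair is excluded by hypothesis. For $i=1$ and $j\ge3$, the left loop at $j$ passes through $j-1\ne1$ and therefore survives, as does the right loop, so $j$ has two loops plus the arrows to $1$. The separated quiver then has the vertex $j^+$ of valency $\ge3$ joined to $1^-$ and $j^-$, with $j^-$ also receiving from $1^+$, which again gives the wild subgraph.

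The main obstacle is verifying precisely which loops survive modulo $\rad^2$ of the truncation: a loop may factor through a path $j\to i\to j$, and whether it does so depends on the trunk relations. I expect this to need an explicit check of radical layers of $e_j\Pi e_j$ near the branch point $m-2$.
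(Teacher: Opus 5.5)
Your treatment of the non-adjacent pairs $j-i\ge2$ is correct and is the paper's argument. In that case two loops survive at $j$: the left loop $b_{j-1}a_{j-1}$ and the branch loop, or, for $j=m-2$, the two short-arm loops. Two loops at $j$ together with the opposite arrows $i\rightleftarrows j$ already give a Kronecker pair with attached edges in the separated quiver, whether or not $i$ has a loop.

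The gap is the adjacent case $j=i+1$ with $i\ge2$, which you claim to cover by ``wildness whenever $i\ge2$''. This fails for two reasons.

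First, the loop $b_{i-1}a_{i-1}$ at $i$ does not survive. The preprojective relation at $i$, valid since $2\le i\le m-3$, gives $b_{i-1}a_{i-1}=a_ib_i$. But $a_ib_i$ is the composite $i\to j\to i$ of the two arrows of the truncation, so it lies in $\rad^2$. Similarly, when $j=m-2$ and $i=m-3$, the two short-arm loops at $j$ differ modulo $\rad^2$ by $b_{m-3}a_{m-3}$, which factors through $i$. So your bullet claiming two loops at $m-2$ is false here; only one survives. For $j\le m-3$, likewise only the branch loop survives. Hence for adjacent pairs the Gabriel quiver is $i\rightleftarrows j$ with a single loop at $j$. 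The separated quiver of the radical-square-zero quotient is then of type $\DynA_4$, so this method gives no information.

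Second, even granting a loop at $i$, one loop at each vertex plus the two arrows gives the separated quiver $\widetilde{\DynA}_3$. This is Euclidean, exactly the tame situation of Proposition~\ref{thm:two-point}; the wild pattern of Proposition~\ref{prop:radicalA} needs a third vertex. Your type-$\DynA$ quotients do not rescue this case either, since $d_1=j-i=1$ makes them representation-finite.

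The missing idea is to pass to one-vertex truncations when the pair is adjacent. Since $i\ge2$, we have $j\ge3$.
\begin{itemize}
\item If $j\le m-3$, then $\Gam(\DynD_m,\{j\})$ is wild by Proposition~\ref{prop:D-onepoint-internal}.
\item If $j=m-2$ and $m\ge6$, then $i=m-3\ge3$, and $\Gam(\DynD_m,\{i\})$ is wild by the same proposition.
\item This leaves $m=5$ and $\{i,j\}=\{2,3\}$. Here both one-vertex truncations are tame and the radical-square-zero quotient is representation-finite, so an explicit two-point computation is unavoidable.
\end{itemize}
For the last case, the paper writes down the presentation: the quiver $2\rightleftarrows3$ with loop $a_3b_3$ at $3$, and relations $(a_2b_2)^2=(a_3b_3)^2=(b_2a_2-a_3b_3)^2=0$. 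It then exhibits Han's minimal wild algebra $W_{20}$ (Table~W) as a quotient.
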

\begin{proof}
We first suppose that $j-i\ge2$. Then, the path $a_ia_{i+1}\cdots a_{j-1}:i\longrightarrow j$ and its reverse $b_{j-1}\cdots b_{i+1}b_i:j\longrightarrow i$ give opposite arrows in the Gabriel quiver of $\Gam(\DynD_m,\{i,j\})$. Moreover, the removed vertices between $i$ and $j$ give a loop at $j$, while the part of the diagram to the right of $j$ gives another loop at $j$. If $i>1$, the removed part to the left of $i$ also gives a loop at $i$. Hence, if $i=1$ (resp. $i>1$), the separated quiver of $\Gam(\DynD_m,\{i,j\})/\rad^2\Gam(\DynD_m,\{i,j\})$ contains the left (resp. right) connected subquiver
\[
\vcenter{\xymatrix@C=3em@R=2em{
i^+ \ar[dr]
&
j^+ \ar[dl] \ar@/^/[d] \ar@/_/[d]
\\
i^-
&
j^-
}},\qquad\qquad
\vcenter{\xymatrix@C=3em@R=2em{
i^+ \ar[d] \ar[dr]
&
j^+ \ar[dl] \ar@/^/[d] \ar@/_/[d]
\\
i^-
&
j^-
}}.
\]
Since the underlying graph is neither Dynkin nor Euclidean, $\Gam(\DynD_m,\{i,j\})$ is wild.

Suppose that $j=i+1$. Since $\{i,j\}\ne\{1,2\}$, we have $i\ge2$. If $j\le m-3$, then $j\ge3$, and Proposition~\ref{prop:D-onepoint-internal} shows that the idempotent truncation $\Gam(\DynD_m,\{j\})$ is wild. If $j=m-2$ and $m\ge6$, then $i=m-3\ge3$, so $\Gam(\DynD_m,\{i\})$ is wild by Proposition~\ref{prop:D-onepoint-internal}. In both cases, $\Gam(\DynD_m,\{i,j\})$ is wild. The only remaining case is $m=5$ and $\{i,j\}=\{2,3\}$. In this case, $\Gam(\DynD_5,\{2,3\})$ has the bound quiver
\[
\vcenter{\xymatrix@C=4em{
2
\ar@/^/[r]^{a_2}
&
3
\ar@/^/[l]^{b_2}
\ar@(ur,dr)[]^{a_3b_3}
}}
\]
with relations $(a_2b_2)^2=0$, $(a_3b_3)^2=0$, and $(b_2a_2-a_3b_3)^2=0$. We consider the quotient $\Gam(\DynD_5,\{2,3\})/
\left\langle a_2a_3b_3,\; a_2b_2a_2,\; a_3b_3b_2a_2 \right\rangle$, which is the minimal wild two-point algebra $W_{20}$ in \cite[Table~W]{Han02}. Hence, $\Gam(\DynD_5,\{2,3\})$ is wild.
\end{proof}

We classify the two-vertex cases in this following. By Proposition~\ref{prop:D-onepoint-internal} and Lemma~\ref{lem:D-trunk-pairs}, only a small number of cases require further consideration.

\begin{proposition}\label{prop:D-rank2}
Suppose $|\Omega|=2$. Then, $\Gam(\DynD_m,\Omega)$ is representation-finite if $\Omega=\{1,m-1\}$, $\{1,m\}$, $\{m-1,m\}$, tame if $\Omega=\{1,2\}$, $\{2,m-1\}$, $\{2,m\}$, $\{m-2,m-1\}$, $\{m-2,m\}$, and wild otherwise.
\end{proposition}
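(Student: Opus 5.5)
We organise the two-vertex subsets $\Omega=\{i<j\}$ of $\DynD_m$ by how many of $i,j$ lie on the trunk $\{1,\ldots,m-2\}$. The wild cases come first, from idempotent truncation. If both $i,j\le m-2$, Lemma~\ref{lem:D-trunk-pairs} gives wildness unless $\Omega=\{1,2\}$. If one vertex $i$ lies on the trunk with $3\le i\le m-3$, then $\Gam(\DynD_m,\{i\})=e_i\Gam(\DynD_m,\Omega)e_i$ is wild by Proposition~\ref{prop:D-onepoint-internal}, so $\Gam(\DynD_m,\Omega)$ is wild. Using the automorphism exchanging $m-1$ and $m$, this leaves the following subsets:
\[
\{1,2\},\ \{1,m-1\},\ \{2,m-1\},\ \{m-2,m-1\},\ \{m-1,m\}.
\]
(For $m=4$ the vertices $2$ and $m-2$ coincide.)

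For the representation-finite cases $\{1,m-1\}$ and $\{m-1,m\}$, I would compute $e_\Omega\Pi_{\DynD_m}e_\Omega$ explicitly from the radical layers of the projectives $e_1\Pi$, $e_{m-1}\Pi$, $e_m\Pi$. Paths between $1$ and $m-1$ pass through the whole trunk, so they lie deep in the radical. I expect the Gabriel quiver to have two opposite arrows $\alpha,\beta$, a loop only at $m-1$ (since $\Gam(\DynD_m,\{1\})\cong\Bbbk[x]/(x^2)$ already has $x=\beta\alpha$ in $\rad^2$), and short relations. The aim is then to recognise the algebra, up to degeneration or as a quotient, as a representation-finite two-point algebra from Table~T of \cite{Han02}, or as a Brauer graph (tree) algebra. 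For $\{m-1,m\}$ the two vertices are symmetric, and the cross paths $m-1\to m-2\to m$ should give a symmetric Nakayama-like or Brauer-tree algebra with two simples. That algebra is representation-finite.

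For the tame cases I would prove two things. Representation-infiniteness should come from a radical-square-zero quotient whose separated quiver is Euclidean, for example $\widetilde{\DynA}_3$ when loops survive at both vertices as in $\{2,m-1\}$ and $\{m-2,m-1\}$. For $\{1,2\}$ it can also come from the idempotent truncation at $2$, which is tame and representation-infinite by Proposition~\ref{prop:D-onepoint-internal}. Non-wildness should come from showing that the explicit presentation is special biserial, possibly after a degeneration as in Proposition~\ref{prop:1HH1tame}, or that it degenerates to a quotient of an algebra in Table~T. For $\{1,2\}$ the vertex $1$ is a leaf and the algebra should be close to $\Gam(\DynA,\cdot)$-type relations, which makes it likely special biserial. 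For $\{m-2,m-1\}$ the local algebra at $m-2$ is $\Bbbk\langle x,y\rangle/(x^2,y^2,(x+y)^{m-2})$. I would take $x$ to be the path through $m-1$ and $y$ the path through $m$, so that $x$ factors as arrows $m-2\to m-1\to m-2$.

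The hard step is producing clean presentations of $\Gam(\DynD_m,\Omega)$ for general $m$, especially $\{2,m-1\}$ and $\{m-2,m-1\}$. I would handle this as in Theorem~\ref{thm:uniformA}: write down candidate generators and relations, verify that the relations hold in $\Pi_{\DynD_m}$, then compare dimensions of $e_i\Pi e_j$ with the number of reduced paths. The dimensions of $e_i\Pi_{\DynD_m}e_j$ can be read off the known Cartan matrix of $\Pi_{\DynD_m}$.
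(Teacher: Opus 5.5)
Your overall plan is the paper's. The wild pairs are eliminated exactly as in part~(3) of its proof, via Lemma~\ref{lem:D-trunk-pairs} and Proposition~\ref{prop:D-onepoint-internal}. The five surviving pairs are then treated through explicit presentations, special biseriality (after a degeneration for $\{2,m-1\}$ with $m$ odd), and a separate representation-infiniteness check.

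That last check fails as you state it for $\{m-2,m-1\}$: there is no loop at $m-1$ in $\Gam(\DynD_m,\{m-2,m-1\})$. The vertex $m-1$ is a leaf whose only neighbour $m-2$ lies in $\Omega$, so every cycle at $m-1$ factors through $m-2$ and lies in $\rad^2$. The Gabriel quiver is one loop at $m-2$ (namely $a_{m-1}b_{m-1}$) plus the two arrows $a_{m-2},b_{m-2}$. The separated quiver of its radical-square-zero quotient is therefore of type $\DynA_4$, which detects nothing.

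The same happens for $\{2,m-1\}$ when $m=5$; for $m=4$ this pair coincides with $\{m-2,m-1\}$. For $m=5$, the relations $b_2a_2=a_3b_3+a_4b_4$ and $b_3a_3=0$ give $b_3a_4b_4a_3=b_3b_2a_2a_3$. So the would-be loop at $4$ is a product of the two arrows $b_3b_2$ and $a_2a_3$. Only for $m\ge6$ does $\{2,m-1\}$ carry loops at both vertices and yield $\widetilde{\DynA}_3$.

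The uniform fix, which is what the paper does in all three tame cases, is to truncate at $2$ (for $\{1,2\}$ and $\{2,m-1\}$) or at $m-2$ (for $\{m-2,m-1\}$). This gives the tame representation-infinite local algebras of Proposition~\ref{prop:D-onepoint-internal}.

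A smaller logical point concerns the finite cases. Han's Table~T describes the two-point algebras that are representation-finite \emph{or} tame. So exhibiting a quotient of a Table~T algebra, even up to degeneration, does not prove representation-finiteness. Your alternative identifications do work, and they amount to the paper's band-free check.

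Put $\mu=a_1\cdots a_{m-2}$, $\nu=b_{m-2}\cdots b_1$, and let $\alpha=b_{m-2}a_{m-1}b_{m-1}a_{m-2}$ be the loop. Then $\Gam(\DynD_m,\{1,m-1\})$ satisfies $\mu\alpha=\alpha\nu=0$ and $\alpha^{\lfloor m/2\rfloor}=0$. For even $m$ it also satisfies $\nu\mu=\alpha^{m/2-1}$, giving a Brauer tree algebra (for $m=4$ the loop is redundant). For odd $m$ it satisfies $\nu\mu=0$, giving a quotient of a Brauer tree algebra.

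For $\{m-1,m\}$ one gets the cyclic Nakayama algebra in which all paths of length $m-1$ vanish. It is not symmetric for odd $m$, since the socle of the projective at $m-1$ is the simple at $m$, reflecting the Nakayama permutation of $\Pi_{\DynD_m}$. It is representation-finite in any case.
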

\begin{proof}
(1) Set $t:=m-1$, $\mu:= a_1a_2\cdots a_{m-2}$, $\nu:=b_{m-2}\cdots b_2b_1$, $\alpha:=b_{m-2}a_{m-1}b_{m-1}a_{m-2}$. The Gabriel quiver of $\Gam(\DynD_m,\{1,m-1\})$ is
\[
\vcenter{\xymatrix@C=5em{
1
\ar@/^/[r]^{\mu}
&
t
\ar@/^/[l]^{\nu}
\ar@(ur,dr)[]^{\alpha}
}} \;.
\]
Reducing paths with the defining relations of $\Pi_{\DynD_m}$ gives $\mu\alpha=0$, $\alpha\nu=0$ and $\alpha^{\lfloor m/2\rfloor}=0$. Moreover, $\nu\mu=0$ if $m$ is odd, $\nu\mu=\alpha^{\lfloor m/2\rfloor-1}$ if $m$ is even. Hence, $\Gam(\DynD_m,\{1,m-1\})$ is special biserial. If $m=4$, eliminating $\alpha=\nu\mu$ gives the cyclic Nakayama algebra with all length 3 paths being zero. If $m>4$, any cyclic reduced walk using vertex $1$ contains $\nu\mu$ or its inverse after a cyclic rotation, and this path is zero for odd $m$ and is a term of a socle relation for even $m$, so it cannot occur in a band; on the other hand, a cyclic reduced walk supported only at $t$ is a power of $\alpha$ or its inverse and cannot be repeated indefinitely. We conclude that there is no band if $m\ge 4$, and hence, $\Gam(\DynD_m,\{1,m-1\})$ is representation-finite. By the diagram automorphism, $\Gam(\DynD_m,\{1,m\})$ is also representation-finite.

We then consider $\Gam(\DynD_m,\{m-1,m\})$. This algebra is given by the quiver
\[
\vcenter{\xymatrix@C=5em{
m-1
\ar@/^/[r]^{b_{m-2}a_{m-1}}
&
m
\ar@/^/[l]^{b_{m-1}a_{m-2}}
}}
\]
with all paths of length $m-1$ equal to zero. Hence, $\Gam(\DynD_m,\{m-1,m\})$ is a truncated cyclic Nakayama algebra and is representation-finite.

(2) We consider the three tame families, up to the diagram automorphism of $\DynD_m$. Then, $\Gam(\DynD_m,\{1,2\})$ is presented by 
\[
\vcenter{\xymatrix@C=5em{
1
\ar@/^/[r]^{a_1}
&
2
\ar@/^/[l]^{b_1}
\ar@(ur,dr)[]^{\alpha}
}},\quad \alpha:=a_2a_3\cdots a_{m-2}b_{m-2}\cdots b_3b_2,
\]
with relations $a_1b_1=0$, $\alpha^2=0$ and $b_1a_1\alpha+\alpha b_1a_1=0$. Thus, $\Gam(\DynD_m,\{1,2\})$ is special biserial. Since the idempotent truncation at vertex $2$ is representation-infinite, $\Gam(\DynD_m,\{1,2\})$ is tame.

We find that $\Gam(\DynD_m,\{m-2,m-1\})$ is presented by 
\[
\vcenter{\xymatrix@C=5em{
t
\ar@/^/[r]^{a_{m-2}}
\ar@(ul,dl)[]_{a_{m-1}b_{m-1}}
&
m-1
\ar@/^/[l]^{b_{m-2}}
}},\quad  t:=m-2 
\]
with relations $b_{m-2}a_{m-2}=0$, $(a_{m-1}b_{m-1})^2=0$ and $(a_{m-2}b_{m-2}+a_{m-1}b_{m-1})^{m-2}=0$. Hence, this algebra is again special biserial. Its idempotent truncation at vertex $m-2$ is representation-infinite, so it is tame.

It remains to consider $\Omega=\{2,m-1\}$. Set $t=m-1$, $\mu:=a_2a_3\cdots a_{m-2}$, $\nu:=b_{m-2}\cdots b_3b_2$, $\alpha:=b_1a_1$, $\beta:=b_{m-2}a_{m-1}b_{m-1}a_{m-2}$. Then, $\Gam(\DynD_m,\{2,m-1\})$ is presented by 
\[
\vcenter{\xymatrix@C=5em{
2
\ar@/^/[r]^{\mu}
\ar@(ul,dl)[]_{\alpha}
&
t
\ar@/^/[l]^{\nu}
\ar@(ur,dr)[]^{\beta}
}}.
\]
with the relations $\nu\mu=0$, $\beta^{\lfloor(m-2)/2\rfloor}=\nu\alpha\mu$ if $m$ is even, the relations $\nu\mu=\beta^{\lfloor(m-2)/2\rfloor}$, $\nu\alpha\mu=0$ if $m$ is odd, and the common relations $\alpha^2=0$, $\mu\beta=0$, $\beta\nu=0$,  $\alpha\mu\nu+\mu\nu\alpha=0$. We find that $\Gam(\DynD_m,\{2,m-1\})$ is a special biserial algebra if $m$ is even, and an algebra degenerate to a special biserial algebra if $m$ is odd, by replacing $\nu\mu = \beta^{\lfloor(m-2)/2\rfloor}$ with $\nu\mu=0$. In both cases, the idempotent truncation at vertex $2$ is representation-infinite, and thus $\Gam(\DynD_m,\{2,m-1\})$ is tame. By the diagram automorphism of $\DynD_m$, $\Gam(\DynD_m,\{2,m\})$ is also tame.

(3) Every remaining pair either contains a vertex $i$ with $3\le i\le m-3$, giving a wild idempotent truncation by Proposition~\ref{prop:D-onepoint-internal}, or consists of two long-arm vertices other than $\{1,2\}$, giving wildness by Lemma~\ref{lem:D-trunk-pairs}. This proves the proposition.
\end{proof}

We can now classify all three-vertex cases.

\begin{proposition}\label{prop:D-rank3}
Let $|\Omega|=3$. Then, $\Gam(\DynD_m,\Omega)$ is tame if $\Omega=\{1,2,m-1\}$, $\{1,2,m\}$, $
\{1,m-1,m\}$, $\{m-2,m-1,m\}$, and wild otherwise.
\end{proposition}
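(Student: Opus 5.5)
Since $\Gam(\DynD_m,\Omega')=e_{\Omega'}\Gam(\DynD_m,\Omega)e_{\Omega'}$ for $\Omega'\subseteq\Omega$ (as in Proposition~\ref{prop:coarsen}), the plan is to eliminate almost every $\Omega$ via wild two-vertex truncations from Proposition~\ref{prop:D-rank2}. If $\Omega$ contains a vertex $i$ with $3\le i\le m-3$, it is wild by Proposition~\ref{prop:D-onepoint-internal}. If $\Omega$ contains two trunk vertices $i<j\le m-2$ other than $\{1,2\}$, it is wild by Lemma~\ref{lem:D-trunk-pairs}. So $\Omega\subseteq\{1,2,m-2,m-1,m\}$, and every $2$-subset of $\Omega$ must lie in the non-wild list of Proposition~\ref{prop:D-rank2}. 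Those allowed pairs are $\{1,2\}$, $\{1,m-1\}$, $\{1,m\}$, $\{m-1,m\}$, $\{2,m-1\}$, $\{2,m\}$, $\{m-2,m-1\}$, and $\{m-2,m\}$.

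Enumerating $3$-subsets all of whose pairs are allowed gives $\{1,2,m-1\}$, $\{1,2,m\}$, $\{1,m-1,m\}$, $\{m-2,m-1,m\}$, and $\{2,m-1,m\}$. Note that $\{1,m-2\}$ and $\{2,m-2\}$ are wild by the lemma when $m\ge5$; for $m=4$ the vertices $2$ and $m-2$ coincide and the list needs separate care. So, beyond the four tame cases in the statement, the single extra candidate $\{2,m-1,m\}$ must be shown wild. For it I would pass to $\Gam/\rad^2\Gam$. The arrows are the loop $b_1a_1$ at $2$, the arrows $2\leftrightarrow m-1$ and $2\leftrightarrow m$ through the trunk, and the arrows $m-1\leftrightarrow m$ via $b_{m-2}a_{m-1}$ and $b_{m-1}a_{m-2}$. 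The separated quiver then contains a vertex $2^+$ of valency $3$ together with a cycle through $m-1^\pm$ and $m^\pm$, which should be neither Dynkin nor Euclidean. If the loop at $2$ fails to survive, I would instead use a Galois covering as in Proposition~\ref{prop:H1HH}. The small values $m=4,5$ need separate attention: for $m=4$ the triality symmetry makes $\{1,3,4\}$ equivalent to the listed cases.

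For tameness of the four listed families, I would write presentations in the style of the proof of Proposition~\ref{prop:D-rank2}, reducing paths with the preprojective relations. For $\{1,m-1,m\}$ and $\{m-2,m-1,m\}$, I expect the relations to be monomial or commutativity-type relations on a triangle-shaped quiver. I would then show each algebra is special biserial, possibly after a one-parameter degeneration killing a socle term as in Proposition~\ref{prop:1HH1tame}, so that it is tame or finite by \cite{Geiss}. Representation-infiniteness follows because each of these sets contains a tame pair from Proposition~\ref{prop:D-rank2} (for $\{1,m-1,m\}$ use $\{m-1,m\}$ together with the radical-square-zero quotient), or directly from a Euclidean separated quiver of $\Gam/\rad^2\Gam$.

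The main obstacle is the explicit computation of the presentations of the four tame algebras uniformly in $m$ and the verification that they degenerate to special biserial algebras. This is especially delicate for $\{1,m-1,m\}$, where the parity of $m$ changes which socle relation appears, and for $\{m-2,m-1,m\}$, where the relation $(a_{m-2}b_{m-2}+a_{m-1}b_{m-1})^{m-2}=0$ is not biserial as written.
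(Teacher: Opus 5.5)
Your proposal is essentially the paper's proof. The paper removes the wild triples by case analysis rather than by your enumeration of triangles among the non-wild pairs, but it uses the same one-vertex and trunk-pair truncations. The extra candidate $\{2,m-1,m\}$ is killed exactly as in the paper's case $1<j<m-2$, via the separated quiver of $\Gam/\rad^2\Gam$; the loop $b_1a_1=a_2b_2$ at $2$ survives for degree reasons, so no covering fallback is needed. The four listed families are handled by special biserial presentations, as you plan.

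One correction: for $\{1,m-1,m\}$ all three pairs are representation-finite, so representation-infiniteness cannot come from a pair truncation. It must come, as in the paper, from the $\widetilde{\DynA}_5$ separated quiver of the loopless doubled triangle. Also, the quiver of $\Gam(\DynD_m,\{m-2,m-1,m\})$ is the doubled star centred at $m-2$, not a triangle.
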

\begin{proof}
Suppose first that $\Omega=\{i<j<k\}\subseteq\{1,\ldots,m-2\}$. Since $\{i,k\}\ne\{1,2\}$, Lemma~\ref{lem:D-trunk-pairs} gives a wild idempotent truncation. Hence, $\Gam(\DynD_m,\Omega)$ is wild. Suppose next that $\Omega=\{i,j,m-1\}$ with $i<j\le m-2$. If $\{i,j\}\ne\{1,2\}$, Lemma~\ref{lem:D-trunk-pairs} gives wildness again. The remaining case is $\Omega=\{1,2,m-1\}$. In this case, set $t=m-1$, the quiver $\Gam(\DynD_m,\{1,2,m-1\})$ is presented by 
\[
\vcenter{\xymatrix@C=5em{
1
\ar@/^/[r]^{a_1}
&
2
\ar@/^/[l]^{b_1}
\ar@/^/[r]^{a_2a_3\cdots a_{m-2}}
&
t
\ar@/^/[l]^{b_{m-2}\cdots b_3b_2}
\ar@(ur,dr)[]^{b_{m-2}a_{m-1}b_{m-1}a_{m-2}}
}}.
\]
Similar to the proof of $\Gam(\DynD_m,\{2,m-1\})$, one may verify that $\Gam(\DynD_m,\{1,2,m-1\})$ is tame. Moreover, the case $\{1,2,m\}$ gives a tame algebra by automorphism.

Suppose $\Omega=\{j,m-1,m\}$. If $j=m-2$, then $\Gam(\DynD_m,\Omega)$ has the bound quiver
\[
\vcenter{\xymatrix@C=4em@R=2em{
&m-1\ar@/^/[dl]^{b_{m-2}}\\
m-2\ar@/^/[ur]^{a_{m-2}}\ar@/^/[dr]^{a_{m-1}}&\\
&m\ar@/^/[ul]^{b_{m-1}}
}}
\]
with relations $b_{m-2}a_{m-2}=0$, $b_{m-1}a_{m-1}=0$, $(a_{m-2}b_{m-2}+a_{m-1}b_{m-1})^{m-2}=0$. This is obviously a representation-infinite special biserial algebra, and hence tame.

If $1<j<m-2$, the Gabriel quiver of $\Gam(\DynD_m,\{j,m-1,m\})$ is
\[
\vcenter{\xymatrix@C=5em@R=2.5em{
& m-1
\ar@/^/[dl]
\ar@/^/[dr]
&
\\
j
\ar@/^/[ur]
\ar@/^/[rr]
\ar@(dl,ul)[]
&
&
m
\ar@/^/[ll]
\ar@/^/[ul]
}}
\]
such that the separated quiver of its radical-square-zero quotient is
\[
\vcenter{\xymatrix@C=3.5em@R=1.5em{
&j^+\ar[dl]\ar[dr]\ar[ddd]&\\
(m-1)^-&&m^-\\
m^+\ar[u]\ar[dr]&&(m-1)^+\ar[u]\ar[dl]\\
&j^-&
}}.
\]
Its underlying graph is neither Dynkin nor Euclidean, showing that $\Gam(\DynD_m,\{j,m-1,m\})$ is wild. 

Finally, let $j=1$. The Gabriel quiver of $\Gam(\DynD_m,\{1,m-1,m\})$ is
\[
\vcenter{\xymatrix@C=5em@R=2.5em{
& m-1
\ar@/^/[dl]
\ar@/^/[dr]
&
\\
1
\ar@/^/[ur]
\ar@/^/[rr]
&
&
m
\ar@/^/[ll]
\ar@/^/[ul]
}}
\]
and direct reduction with the defining relations of $\Pi_{\DynD_m}$ shows that every arrow has at most one nonzero successor and at most one nonzero predecessor. Hence, $\Gam(\DynD_m,\{1,m-1,m\})$ is special biserial. Moreover, the separated quiver of its radical-square-zero quotient is
\[
\vcenter{\xymatrix@C=3.5em@R=1.5em{
&1^+\ar[dl]\ar[dr]&\\
(m-1)^-&&m^-\\
m^+\ar[u]\ar[dr]&&(m-1)^+\ar[u]\ar[dl]\\
&1^-&
}},
\]
whose underlying graph is $\widetilde{\DynA}_5$. Thus, $\Gam(\DynD_m,\{1,m-1,m\})$ is representation-infinite tame.
\end{proof}

\begin{proposition}\label{prop:markedD-rank}
Let $\varnothing\ne\Omega\subseteq\{1,2,\ldots,m\}$. If $|\Omega|\ge4$, then $\Gam(\DynD_m,\Omega)$ is tame if $m=4$ and $\Omega=\{1,2,3,4\}$, and wild otherwise.
\end{proposition}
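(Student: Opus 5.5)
The plan is to reduce to three-vertex idempotent truncations. By Proposition~\ref{prop:coarsen}-type reasoning (which holds for any Dynkin type, since $e_{\Omega'}\Gam(\DynD_m,\Omega)e_{\Omega'}=\Gam(\DynD_m,\Omega')$ for $\Omega'\subseteq\Omega$), $\Gam(\DynD_m,\Omega)$ is wild as soon as some $3$-subset $\Omega'\subseteq\Omega$ gives a wild $\Gam(\DynD_m,\Omega')$. By Proposition~\ref{prop:D-rank3}, the only non-wild $3$-subsets are $\{1,2,m-1\}$, $\{1,2,m\}$, $\{1,m-1,m\}$ and $\{m-2,m-1,m\}$. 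So I first determine which $\Omega$ with $|\Omega|\ge4$ have all their $3$-subsets in this list.

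For $|\Omega|=4$, every $3$-subset must lie in this list. Note that the union of any two listed sets has at least four elements, and a $4$-set containing $\{1,2,m-1\}$ and $\{1,2,m\}$ must be $\{1,2,m-1,m\}$. Its $3$-subsets include $\{2,m-1,m\}$, which is on the list only when $2=m-2$, i.e.\ $m=4$. The other candidates can be checked the same way. Containing $\{1,m-1,m\}$ forces $\Omega=\{1,m-1,m,k\}$ with $\{1,k,m-1\}$ listed, so $k=2$, giving the same set. Containing $\{m-2,m-1,m\}$ forces $\{k,m-1,m\}$ to be listed, so $k=1$, and then $\{1,m-2,m-1\}$ must be listed, which forces $m-2=2$. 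Hence the only $4$-subset with no wild $3$-subset is $\Omega=\{1,2,3,4\}$ in $\DynD_4$. For $|\Omega|\ge5$, every $4$-subset would have to equal $\{1,2,3,4\}\subseteq\DynD_4$, which is impossible since $\DynD_4$ has only four vertices. So all such cases are wild.

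It remains to show that $\Gam(\DynD_4,\{1,2,3,4\})=\Pi_{\DynD_4}$ is tame. This is the classical fact that the preprojective algebra of type $\DynD_4$ is tame, by \cite[Proposition~6.3]{DlabRingel92} or Geiss--Leclerc--Schr\"oer, exactly as $\Pi_{\DynA_5}$ was handled in Proposition~\ref{thm:r5A}. Representation-infiniteness also follows directly: the truncation at $\{1,3,4\}$ (that is, $\{1,m-1,m\}$) is representation-infinite by Proposition~\ref{prop:D-rank3}.

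The main obstacle is the bookkeeping in the combinatorial step, especially keeping track of the coincidences that occur for small $m$. For $m=4$, the vertices $2$ and $m-2$ coincide, and the four listed sets become $\{1,2,3\}$, $\{1,2,4\}$, $\{1,3,4\}$ and $\{2,3,4\}$, i.e.\ all $3$-subsets of $\{1,2,3,4\}$. For $m=5$, $\{1,2,3\}$ is a trunk triple and is therefore wild. The only external input needed is the citation for tameness of $\Pi_{\DynD_4}$.
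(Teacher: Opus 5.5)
Your proof is correct and follows essentially the paper's route: you reduce via idempotent truncations $e_{\Omega'}\Gam(\DynD_m,\Omega)e_{\Omega'}=\Gam(\DynD_m,\Omega')$ to the already classified smaller subsets, then cite Dlab--Ringel for $\Pi_{\DynD_4}$. The only difference is that the paper handles the case $|\Omega\cap\{m-1,m\}|\le1$ with the pair lemma (Lemma~\ref{lem:D-trunk-pairs}) and uses the triple list of Proposition~\ref{prop:D-rank3} only when $\{m-1,m\}\subseteq\Omega$, whereas you route every case through the triple list. Your case check can also be shortened: for $m\ge5$ the four non-wild triples are distinct, so a $4$-set whose four distinct $3$-subsets are all non-wild would have to contain all four triples, hence the five-element set $\{1,2,m-2,m-1,m\}$, which is impossible.
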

\begin{proof}
It is enough to consider $|\Omega|=4$. Suppose first that $|\Omega\cap\{m-1,m\}|\le1$. Then, $|\Omega\cap\{1,2,\ldots,m-2\}|\ge3$. Among these vertices there is a pair $\{i,j\}\ne\{1,2\}$ with $1\le i<j\le m-2$. Hence, Lemma~\ref{lem:D-trunk-pairs} gives a wild idempotent truncation of $\Gam(\DynD_m,\Omega)$, and therefore $\Gam(\DynD_m,\Omega)$ is wild.

Suppose that $\{m-1,m\}\subseteq\Omega$. Then, $\Omega=\{i,j,m-1,m\}$ for some $1\le i<j\le m-2$. If $\Gam(\DynD_m,\Omega)$ were non-wild, then the two idempotent truncations $\Gam(\DynD_m,\{i,j,m-1\})$ and $\Gam(\DynD_m,\{j,m-1,m\})$ would both be non-wild. By Proposition~\ref{prop:D-rank3}, the first condition forces $\{i,j\}=\{1,2\}$, while the second forces $j=m-2$. Thus, $j=2=m-2$, so $m=4$ and $\Omega=\{1,2,3,4\}$. In this case, $\Gam(\DynD_4,\Omega)=\Pi_{\DynD_4}$, which is tame by \cite[Proposition~6.3]{DlabRingel92}.

Finally, suppose that $|\Omega|\ge5$. Then $m\ge5$, and every four-element subset $\Omega'\subseteq\Omega$ gives a wild idempotent truncation $\Gam(\DynD_m,\Omega')$ by the preceding argument. Hence, $\Gam(\DynD_m,\Omega)$ is wild by the idempotent-truncation criterion.
\end{proof}

We now obtain a complete classification of the representation types of contracted preprojective algebras of type $\DynD$, as summarized below.
\begin{proposition}\label{thm:markedD-main}
Let $\varnothing\ne\Omega\subseteq\{1,2,\ldots,m\}$. Then, $\Gam(\DynD_m,\Omega)$ is representation-finite if
\[
\Omega=\{1\},\ \{m-1\},\ \{m\},\ \{1,m-1\},\ \{1,m\},\ \{m-1,m\},
\]
tame if
\[
\begin{aligned}
\Omega={}&\{2\},\ \{m-2\},\ \{1,2\},\ \{2,m-1\},\ \{2,m\},\\
&\{m-2,m-1\},\ \{m-2,m\},\ \{1,2,m-1\},\ \{1,2,m\},\\
&\{1,m-1,m\},\ \{m-2,m-1,m\},
\end{aligned}
\]
or if $m=4$ and $\Omega=\{1,2,3,4\}$, and wild otherwise.
\end{proposition}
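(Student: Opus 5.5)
This summary statement is obtained by collecting the preceding results according to $|\Omega|$, so the plan is to check that the cases listed exhaust all nonempty subsets and that each label matches an earlier proposition.

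\textbf{Case $|\Omega|=1$.} Proposition~\ref{prop:D-onepoint-internal} gives the classification directly.
\begin{itemize}
\item $\{1\}$, $\{m-1\}$, $\{m\}$ are representation-finite.
\item $\{2\}$, $\{m-2\}$ are tame.
\item Every $\{i\}$ with $3\le i\le m-3$ is wild.
\end{itemize}
For small $m$ the interior range may be empty, and the labels $2$ and $m-2$ may coincide. When $m=4$, vertex $2=m-2$ and both descriptions give tame, so the statement is consistent.

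\textbf{Case $|\Omega|=2$.} This is exactly Proposition~\ref{prop:D-rank2}.
\begin{itemize}
\item Finite: $\{1,m-1\}$, $\{1,m\}$, $\{m-1,m\}$.
\item Tame: $\{1,2\}$, $\{2,m-1\}$, $\{2,m\}$, $\{m-2,m-1\}$, $\{m-2,m\}$.
\item Every other pair is wild.
\end{itemize}
For $m=4$ some of these sets coincide, e.g.\ $\{2,m-1\}=\{m-2,m-1\}$. Since every label involved is tame, no conflict arises.

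\textbf{Case $|\Omega|=3$.} Proposition~\ref{prop:D-rank3} gives the four tame families $\{1,2,m-1\}$, $\{1,2,m\}$, $\{1,m-1,m\}$, $\{m-2,m-1,m\}$, and wildness otherwise. No three-vertex set is representation-finite, which agrees with the statement.

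\textbf{Case $|\Omega|\ge4$.} Proposition~\ref{prop:markedD-rank} shows the only non-wild case is $m=4$, $\Omega=\{1,2,3,4\}$, i.e.\ $\Pi_{\DynD_4}$, which is tame.

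\textbf{Expected obstacle.} No new argument is needed. The only point requiring care is consistency for small $m$, especially $m=4$ and $m=5$, where labels such as $2$ and $m-2$ coincide or the wild interior range $3\le i\le m-3$ is empty. There I would re-check that each subset receives the same type from every description that applies to it. For $m=4$, $\{1,2,3\}$ is both $\{1,2,m-1\}$ and $\{m-2,m-1\}\cup\{1\}$, and is consistently listed as tame.
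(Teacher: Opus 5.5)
Your proposal is correct and matches the paper's proof: the paper also simply sorts by $|\Omega|$ and cites Propositions~\ref{prop:D-onepoint-internal}, \ref{prop:D-rank2}, \ref{prop:D-rank3} and~\ref{prop:markedD-rank}. Your extra check for $m=4$ and $m=5$ (for instance $2=m-2$ when $m=4$, and the empty range $3\le i\le m-3$) confirms that no subset receives conflicting labels. The paper leaves this implicit, so it is a sensible addition.
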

\begin{proof}
The one-vertex case is Proposition~\ref{prop:D-onepoint-internal}. The two- and three-vertex cases are Propositions~\ref{prop:D-rank2} and~\ref{prop:D-rank3}. The remaining cases are covered by Proposition~\ref{prop:markedD-rank}.
\end{proof}

\subsection{Type~$\DynE$}\label{sec:Eexceptional}
We use the following uniform labeling:
\[
\DynE_m:\; \vcenter{\xymatrix@C=2.5em@R=1.5em{
&&2\\
1\ar@{-}[r]&3\ar@{-}[r]&4\ar@{-}[u]\ar@{-}[r]&5\ar@{-}[r]&\cdots\ar@{-}[r]&m
}},
\qquad m=6,7,8.
\]
We record the bound quiver presentation of $\Pi_{\DynE_m}$ with the quiver
\[
\vcenter{\xymatrix@C=4em@R=2.5em{
&&2
\ar@/^/[d]^{b_2}
\\
1
\ar@/^/[r]^{a_1}
&
3
\ar@/^/[l]^{b_1}
\ar@/^/[r]^{a_3}
&
4
\ar@/^/[l]^{b_3}
\ar@/^/[u]^{a_2}
\ar@/^/[r]^{a_4}
&
5
\ar@/^/[l]^{b_4}
\ar@/^/[r]^{a_5}
&
\cdots
\ar@/^/[l]^{b_5}
\ar@/^/[r]^{a_{m-1}}
&
m
\ar@/^/[l]^{b_{m-1}}
}},
\]
bounded by the relations $a_1b_1=0$, $b_2a_2=0$, $a_3b_3=b_1a_1$, $b_3a_3=a_2b_2+a_4b_4$, $b_{m-1}a_{m-1}=0$, $a_ib_i=b_{i-1}a_{i-1}$ for all $5\le i\le m-1$. If $\Sigma$ is a connected full Dynkin subdiagram of $\DynE_m$, then killing the vertices outside $\Sigma$ gives a surjection $\Gam(\DynE_m,\Omega)\twoheadrightarrow\Gam(\Sigma,\Omega\cap\Sigma)$, whenever $\Omega\cap\Sigma\ne\varnothing$. Thus, a wild full subdiagram part forces the original algebra to be wild.

We first consider type~$\DynE_6$. One note that the Dynkin diagram $\DynE_6$ admits an automorphism exchanging $1\leftrightarrow 6$ and $3\leftrightarrow 5$, while fixing
the vertices $2$ and $4$.

\begin{proposition}\label{prop:E6-main}
Let $\varnothing\ne\Omega\subseteq\{1,2,3,4,5,6\}$. Then, $\Gam(\DynE_6,\Omega)$ is representation-finite if $\Omega=\{1\},\{6\}$, tame if $\Omega=\{2\},\{3\},\{5\},\{1,2\},\{2,6\},\{1,6\},\{1,3\},\{5,6\}$, and wild otherwise.
\end{proposition}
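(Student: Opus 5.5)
The plan is to follow the pattern of type $\DynD$: settle the one-vertex cases by hand, use full-subdiagram quotients and idempotent truncations to dispose of most larger $\Omega$, and analyse the few survivors directly. The symmetry $1\leftrightarrow6$, $3\leftrightarrow5$ halves the work throughout.

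\textbf{One vertex.} I will compute the radical layers of the indecomposable projectives $e_i\Pi_{\DynE_6}$, which have small Loewy length since $\Pi_{\DynE_6}$ has Coxeter number $12$. From these I read off $e_i\Pi e_i$.
\begin{itemize}
\item For $i=1$, I expect a truncated polynomial ring in one generator, hence representation-finite.
\item For $i=2$ and $i=3$, I expect two loops with relations of the shape in Proposition~\ref{prop:local-benchmark-alg}(1) or (2), possibly after a degeneration. They should be tame because the loops generate a special biserial algebra that is representation-infinite; the latter follows from the radical-square-zero quotient, whose separated quiver is $\widetilde{\DynA}_1$.
\item For $i=4$, the branch vertex, there are three directions out of $4$. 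So the local algebra should have at least three loops, or should have a quotient among Proposition~\ref{prop:local-benchmark-alg}(3)--(7), giving wildness.
\end{itemize}
As a consistency check, the quotient by vertex $2$ gives $\DynA_5$ with $\Omega'=\{i\}$, which is finite. This is not enough for vertex $4$, so there a direct computation is needed.

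\textbf{Two or more vertices.} First, any $\Omega$ containing $4$ is wild by idempotent truncation. Next, any $\Omega$ meeting $\{1,3,5,6\}$ in two vertices can be passed to the $\DynA_5$ quotient killing $2$, where Theorem~\ref{thm:markedA-main} applies. For example:
\begin{itemize}
\item $\{1,3\}$ in $\DynA_5$ on $1,3,4,5,6$ has parameters $(1,1,4)$ and is finite there;
\item $\{1,5\}$ has parameters $(1,3,2)$ and is finite;
\item $\{3,5\}$ has parameters $(2,2,2)$ and is tame;
\item $\{1,6\}$ has parameters $(1,4,1)$ and is finite.
\end{itemize}
So these quotients only give lower bounds on complexity. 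I will instead use the $\DynD_5$ subdiagram on $\{1,2,3,4,5\}$ (similarly on $\{2,3,4,5,6\}$), where $4$ is the branch vertex and $1,2,5$ are endpoints. Proposition~\ref{thm:markedD-main} then shows that pairs such as $\{3,5\}$ and $\{2,3\}$ are wild. Here $3$ is the second vertex on the long arm in one $\DynD_5$ labelling but lies at position $m-2$ in the other, so I need to choose the labelling carefully.

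Pairs that survive all subdiagram tests should be exactly the tame list $\{1,2\},\{2,6\},\{1,6\},\{1,3\},\{5,6\}$. For these I will write the Gabriel quiver and relations of $e_\Omega\Pi e_\Omega$ and show the algebra is special biserial, or degenerates to one. It is representation-infinite because the separated quiver of its radical-square-zero quotient contains a Euclidean component, or because a one-vertex truncation such as $\{2\}$ or $\{3\}$ is already infinite.

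For $\{1,6\}$ the one-vertex truncations are finite, so infiniteness has to come from the pair itself. I expect the Gabriel quiver to have arrows in both directions plus loops, giving a $\widetilde{\DynA}$ separated component.

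\textbf{Three or more vertices.} A set with $|\Omega|\ge3$ is wild whenever it contains a wild pair. The only triples all of whose pairs are non-wild lie inside $\{1,2,3,5,6\}$ and are built from the tame or finite pairs. Candidates like $\{1,2,6\}$ and $\{1,3,6\}$ need a radical-square-zero check. For $\{1,2,6\}$ the separated quiver of the Gabriel quiver (arrows between all three vertices and loops) should be neither Dynkin nor Euclidean, and I expect the same kind of argument to handle $\{1,3,6\}$ and the remaining triples.

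\textbf{Main obstacle.} I expect the hardest step to be the exact path computations in $\Pi_{\DynE_6}$ for the local algebras at $2$, $3$ and $4$ and for the pair $\{1,6\}$. Relations in $\DynE_6$ are longer and less uniform than in types $\DynA$ and $\DynD$. To pin down the relevant radical layers I would use the known graded dimension vectors of the projectives (the Loewy length is $11$) and the Nakayama permutation, which is the diagram automorphism.
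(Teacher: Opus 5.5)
\textbf{The two-vertex step has a genuine gap.} Your claim that the $\DynD_5$ subdiagrams make $\{3,5\}$ and $\{2,3\}$ wild is false.

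In both copies of $\DynD_5$ inside $\DynE_6$ (on $\{1,\dots,5\}$ and on $\{2,\dots,6\}$), the branch vertex $m-2$ is $4$, the only trivalent vertex. Vertex $3$ is either the second long-arm vertex or a short-arm end, never $m-2$. Hence:
\begin{itemize}
\item $\{3,5\}$ becomes $\{2,m-1\}$ or $\{2,m\}$ in either copy, and both are tame by Proposition~\ref{thm:markedD-main}.
\item $\{2,3\}$ becomes $\{2,m-1\}$ (tame) in the first copy and $\{m-1,m\}$ (finite) in the second.
\end{itemize}
The same happens for $\{1,5\}$. It becomes $\{1,m\}$ (finite) in the first copy, only $\{5\}\mapsto\{2\}$ (tame) survives in the second, and your own $\DynA_5$ computation $(1,3,2)$ is finite. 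So the pairs $\{1,5\},\{3,6\},\{2,3\},\{2,5\},\{3,5\}$ all pass every full-subdiagram and one-vertex-truncation test. Your assertion that the survivors are exactly the five tame pairs is therefore wrong. There are five further survivors, all wild, and your plan contains no argument for them.

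\textbf{What is needed instead.} These pairs require direct computation in $\Pi_{\DynE_6}$, which is what the paper does.
\begin{itemize}
\item For $\{1,5\}$, and $\{3,6\}$ by symmetry, both loops $a_5b_5$ and $b_4b_3a_3a_4$ at $5$ survive in the Gabriel quiver. Indeed $(a_5b_5)^2=0$, and any loop through $1$ has length at least six. Together with the two opposite arrows, this gives a wild radical-square-zero quotient.
\item For $\{2,3\}$ the radical-square-zero test gives nothing. Via $b_3a_3=a_2b_2+a_4b_4$, the loops at $2$ and the length-four loop at $3$ factor through the other vertex. This leaves $2\rightleftarrows 3$ with one loop at $3$, whose separated quiver is of type $\DynA_4$.
\item For $\{3,5\}$ the test also gives nothing: there is one loop at each vertex, and the separated quiver is $\widetilde{\DynA}_3$.
\end{itemize}
For the last two, some finer wildness argument is required. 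The paper exhibits explicit quotients isomorphic to the minimal wild two-point algebras $W_{20}$ and $W_{32}$ of Han's list.

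\textbf{The omission propagates to triples.} For example, $\Gam(\DynE_6,\{1,3,6\})$ has Gabriel quiver $1\rightleftarrows 3\rightleftarrows 6$ with a single loop at $3$. Here $a_3b_3=b_1a_1$ factors through $1$, and no loops survive at $1$ or $6$. Its separated quiver is $\widetilde{\DynD}_5$, so the radical-square-zero check you propose for this triple fails. It is wild only because it contains the wild pair $\{3,6\}$. Once all ten wild pairs inside $\{1,2,3,5,6\}$ are established, $\{1,2,6\}$ is the only remaining triple.

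A smaller point: at the branch vertex $4$, the relation $b_3a_3=a_2b_2+a_4b_4$ leaves only two loops, not three. Wildness there must come from a quotient such as Proposition~\ref{prop:local-benchmark-alg}(4).
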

\begin{proof}
We first consider the one-vertex cases. The idempotent truncations with $\Omega=\{1\}$ and $\Omega=\{6\}$ are both isomorphic to $\Bbbk[x]/(x^2)$. If $\Omega=\{2\}$, taking $x=b_2a_4b_4a_2$ and $y=b_2a_4a_5b_5b_4a_2$ give $\Gam(\DynE_6,\Omega)\cong \Bbbk\langle x,y\rangle/(x^2,y^2,xy+yx)$, which is tame by Proposition \ref{prop:local-benchmark-alg}. If $\Omega=\{3\}$, we take $x=a_3b_3$ and $y=a_3a_4b_4b_3$ to obtain $\Gam(\DynE_6,\Omega)\cong \Bbbk\langle x,y\rangle/(x^2,y^2+xyx,yxy)$; this algebra degenerates to a special biserial algebra by replacing $y^2+xyx$ by $y^2+\lambda xyx$ with $\lambda\in \Bbbk$, and hence, it is representation-infinite tame. 
If $\Omega=\{4\}$, we take $x=a_2b_2$ and $y=a_4b_4$ to obtain $\Gam(\DynE_6,\Omega)\cong \Bbbk\langle x,y\rangle/(x^2,y^3,(x+y)^3)$; imposing $yx=xy^2=0$ gives the wild local algebra in Proposition~\ref{prop:local-benchmark-alg}\textup{(4)}. We omit the case $\Omega=\{5\}$ since it is same with $\Omega=\{3\}$ up to the diagram automorphism of $\DynE_6$.

In the case of $\Omega=\{1,3\}$, $\Gam(\DynE_6,\{1,3\})$ has the bound quiver
\[
\vcenter{\xymatrix@C=5em{
1\ar@/^/[r]^{a_1}
&
3\ar@/^/[l]^{b_1}\ar@(ur,dr)[]^{z}
}}, \quad z:=a_3a_4b_4b_3
\]
with relations $a_1b_1=0$, $z^2+b_1a_1zb_1a_1=0$, $zb_1a_1z=0$. This presentation also appears, up to notation, in
\cite[Example~4.10]{ChanIyamaMarczinzik}. We find that this degenerates to a special biserial algebra by replacing the second relation by $z^2+\lambda b_1a_1zb_1a_1=0$ with $\lambda\in \Bbbk$. Since the idemponent truncation of $\Gam(\DynE_6,\{1,3\})$ at vertex $3$ has two loops, $\Gam(\DynE_6,\{1,3\})$ is representation-infinite and hence, tame. Moreover, the case $\Omega=\{5,6\}$ is tame again by the diagram automorphism of $\DynE_6$. 

Suppose $\Omega=\{1,2\}$. Then, $\Gam(\DynE_6,\{1,2\})$ has the bound quiver
\[
\vcenter{\xymatrix@C=5em{
1\ar@/^/[r]^{u}
&
2\ar@/^/[l]^{v}\ar@(ur,dr)[]^{z}
}},\quad u:=a_1a_3a_2,\; v:=b_2b_3b_1,\; z:=b_2a_4b_4a_2,
\]
with relations $z^2=uvu=vuv=uzv=0$, $zvu+vuz=0$. This algebra is obviously representation-infinite, and it is a quotient of the tame algebra $T_9$ in \cite[Table T]{Han02}. Hence, $\Gam(\DynE_6,\{1,2\})$ is tame. By the automorphism, so is
$\Gam(\DynE_6,\{2,6\})$. 

Suppose $\Omega=\{1,6\}$. Set $u:=a_1a_3a_4a_5$, $v:=b_5b_4b_3b_1$, $x:=a_1a_3a_4b_4b_3b_1$, $y:=b_5b_4b_3a_3a_4a_5$. Then, $\Gam(\DynE_6,\{1,6\})$ has the bound quiver
\[
\vcenter{\xymatrix@C=5em{
1\ar@/^/[r]^{u}\ar@(ul,dl)[]_{x}
&
6\ar@/^/[l]^{v}\ar@(ur,dr)[]^{y}
}},
\]
with relations $x^2=y^2=uv=vu=0$, $xu=uy$, $yv=vx$. This is a special biserial algebra, and its radical-square-zero
quotient has separated quiver of type $\widetilde{\DynA}_3$. It is therefore representation-infinite tame.

In the case of $(p,q)=(1,5)$ or $(6,3)$, 
$\Gam(\DynE_6,\{p,q\})$ admits a radical-square-zero quotient with the bound quiver
\[
\vcenter{\xymatrix@C=5em{
p\ar@/^/[r]^{ }
&
q\ar@/^/[l]^{ }\ar@(ul,ur)[]^{ }\ar@(dl,dr)[]_{ }
}}
\]
whose separated quiver is neither Dynkin nor Euclidean. Hence, $\Gam(\DynE_6,\{p,q\})$ is wild.

Suppose $\Omega=\{2,3\}$. Set $u=b_2b_3$, $v=a_3a_2$, and $z=a_3b_3$. By imposing the
additional relations $uz=uvu=zvu=0$, we obtain a quotient of $\Gam(\DynE_6,\{2,3\})$  whose bound quiver is
\[
\vcenter{\xymatrix@C=5em{
2\ar@/^/[r]^{u}
&
3\ar@/^/[l]^{v}\ar@(ur,dr)[]^{z}
}}
\]
with relations $z^2=uz=uvu=zvu=0$. This is exactly the minimal wild algebra $W_{20}$ in
\cite[Table W]{Han02}. Thus, $\Gam(\DynE_6,\{2,3\})$ is wild, and so is $\Gam(\DynE_6,\{2,5\})$ by automorphism.

Suppose $\Omega=\{3,5\}$. Set $x=a_3b_3$, $u=a_3a_4$, $v=b_4b_3$, and $y=a_5b_5$.
Imposing $xu=uy$ and $vx=yv=uvu=vuv=0$ produces a quotient of $\Gam(\DynE_6,\{3,5\})$ with bound quiver
\[
\vcenter{\xymatrix@C=5em{
3\ar@/^/[r]^{u}\ar@(ul,dl)[]_{x}
&
5\ar@/^/[l]^{v}\ar@(ur,dr)[]^{y}
}}
\]
and relations $x^2=y^2=0$, $xu=uy$, $vx=yv=uvu=vuv=0$. This is the minimal wild algebra $W_{32}$ in
\cite[Table W]{Han02}; hence
$\Gam(\DynE_6,\{3,5\})$ is wild.

For the pairs containing vertex $4$, each one has the idempotent truncation $\Gam(\DynE_6,\{4\})$, whose one-vertex bound quiver is
\[
\vcenter{\xymatrix@C=3.5em{
4\ar@(ul,dl)[]_{x}\ar@(ur,dr)[]^{y}
}}
\]
with relations $x^2=y^3=(x+y)^3=0$. The additional relations $yx=xy^2=0$ give the wild quotient in Proposition~\ref{prop:local-benchmark-alg}\textup{(4)}. Thus, every
two-vertex subset containing $4$ gives a wild algebra.

We finally consider the subsets with at least three vertices. Since any such subset containing $4$ is wild, the wild two-vertex cases eliminate every triple in
$\{1,2,3,5,6\}$ except $\Omega=\{1,2,6\}$. For this remaining triple, $\Gam(\DynE_6,\{1,2,6\})$ admits the following quiver
\[
\vcenter{\xymatrix@C=5em@R=3.5em{
&2\ar@(ul,ur)[]^{z}\ar@/^/[dl]^{v}\ar@/^/[dr]^{p}&\\
1\ar@/^/[ur]^{u}\ar@/^/[rr]^{s}
&&
6\ar@/^/[ll]^{t}\ar@/^/[ul]^{q}
}}\; .
\]
One then find that the separated quiver of its radical-square-zero quotient is neither Dynkin nor Euclidean. Thus, $\Gam(\DynE_6,\{1,2,6\})$ is wild. Every subset with at least four vertices contains a wild
three-vertex idempotent truncation. This completes the proof.
\end{proof}

\begin{proposition}\label{prop:E7-main}
Let $\varnothing\ne\Omega\subseteq\{1,2,\ldots,7\}$. Then, $\Gam(\DynE_7,\Omega)$ is representation-finite if $\Omega=\{7\}$, tame if $\Omega=\{1\},\{6\},\{1,7\},\{6,7\}$, and wild otherwise.
\end{proposition}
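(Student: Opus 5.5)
We follow the pattern of Proposition~\ref{prop:E6-main}. In $\DynE_7$ with the labelling above, the long arm is $4-5-6-7$, and there is no diagram automorphism to reduce cases. The main tool is the full-subdiagram surjection $\Gam(\DynE_7,\Omega)\twoheadrightarrow\Gam(\Sigma,\Omega\cap\Sigma)$. The most useful choices are $\Sigma=\DynE_6$ on $\{1,\dots,6\}$, $\Sigma=\DynD_6$ on $\{2,3,4,5,6,7\}$ (with $7$ as the end of the long arm, $2,3$ as the two short legs and $4$ as the branch vertex), and type~$\DynA$ subdiagrams. Together with idempotent truncation, these should dispose of most wild cases.

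\textbf{Step 1: one-vertex cases.} Write down the radical layers of the indecomposable projectives $e_i\Pi_{\DynE_7}$, as in the $\DynE_6$ case.
\begin{itemize}
\item $\Omega=\{7\}$: expect $\Gam\cong\Bbbk[x]/(x^n)$, hence representation-finite.
\item $\Omega=\{1\}$ and $\Omega=\{6\}$: expect two-generated local algebras, each isomorphic to, or degenerating to, an algebra in Proposition~\ref{prop:local-benchmark-alg}\textup{(1)} or \textup{(2)}. For example, at vertex $1$ take $x=a_1a_3b_3b_1$ and $y=a_1a_3a_4b_4b_3b_1$. This should give tameness, with representation-infiniteness visible from the two surviving loops.
\item $\Omega=\{4\}$: wild, since $\Gam(\DynE_6,\{4\})$ is a quotient.
\item $\Omega=\{2\},\{3\},\{5\}$: wild. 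For $\{5\}$, use $\Gam(\DynD_6,\{\text{vertex }3\text{ of }\DynD_6\})$, which is wild by Proposition~\ref{prop:D-onepoint-internal}. For $\{2\}$ and $\{3\}$, the subdiagram surjections do not suffice, since they give tame quotients. Here one computes the local algebra directly and exhibits a quotient of the form of Proposition~\ref{prop:local-benchmark-alg}\textup{(4)}--\textup{(7)}. Alternatively, find three independent loops in $\rad/\rad^2$ and apply the radical-square-zero criterion.
\end{itemize}

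\textbf{Step 2: two-vertex cases.} Any $\Omega$ containing a wild singleton is wild by idempotent truncation. This leaves pairs inside $\{1,6,7\}$.
\begin{itemize}
\item $\{1,6\}$: the two loops at each vertex together with the arrows in both directions give a separated quiver of the radical-square-zero quotient that is neither Dynkin nor Euclidean. Alternatively, the $\DynE_6$ quotient $\Gam(\DynE_6,\{1,6\})$ combined with the extra loop should force wildness.
\item $\{1,7\}$ and $\{6,7\}$: compute presentations as for $\Gam(\DynE_6,\{1,6\})$. I expect a loop $x$ at the tame vertex, a loop $y$ at $7$ (or none), and arrows $u,v$ with commutativity relations. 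These are either special biserial, or degenerate (via a $\lambda$-family scaling one relation, in the manner of Proposition~\ref{prop:1HH1tame}) to a special biserial algebra or to a quotient of an algebra in Han's Table~T. Representation-infiniteness then follows from the tame one-vertex truncation at $1$ or $6$.
\end{itemize}

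\textbf{Step 3: three or more vertices.} The only remaining triple is $\{1,6,7\}$, and it contains the wild pair $\{1,6\}$. So every $\Omega$ with $|\Omega|\ge3$ is wild by idempotent truncation.

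\textbf{Main obstacle.} The hardest part is the explicit path reductions in $\Pi_{\DynE_7}$: the local algebras at vertices $2$ and $3$, needed to exhibit wild quotients, and the two-point algebras $\{1,7\}$ and $\{6,7\}$, needed to establish tameness. For the tame two-point cases, I would first try to match them, after a degeneration, with entries of Han's Table~T, as was done for $\Gam(\DynE_6,\{1,2\})$.
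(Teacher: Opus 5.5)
Your skeleton matches the paper's proof. You kill vertex $7$ to import wildness from $\DynE_6$, compute the remaining one-vertex corners directly, and use the radical-square-zero quotient for $\{1,6\}$. You use special biserial presentations for $\{1,7\}$ and $\{6,7\}$, and idempotent truncation for $|\Omega|\ge3$. Your treatment of $\{5\}$ is a genuinely different and lighter step: killing vertex $1$ gives $\Gam(\DynE_7,\{5\})\twoheadrightarrow\Gam(\DynD_6,\{3\})$, which is wild by Proposition~\ref{prop:D-onepoint-internal}, whereas the paper computes the corner at $5$ by hand. The gap is that the rest of the substance is left as an expectation: the corners at $1,2,3,6,7$ and the presentations for $\{1,7\}$ and $\{6,7\}$. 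Several of the specific expectations you state are false, so the plan as written does not go through.

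Concretely, the problems are these.
\begin{enumerate}
\item \emph{Vertex $1$.} Your sample generator vanishes: $a_1a_3b_3b_1=a_1(b_1a_1)b_1=0$. The corner is $4$-dimensional, in path degrees $0,6,10,16$. It is generated by $x=a_1a_3a_4b_4b_3b_1$ and $y=a_1a_3a_4a_5a_6b_6b_5b_4b_3b_1$, with $x^2=y^2=xy+yx=0$.
\item \emph{Vertex $6$.} The corner is $8$-dimensional: it is $\Bbbk\langle x,y\rangle/(x^2,y^2,(xy)^2+(yx)^2)$ with $x=a_6b_6$ and $y=b_5b_4b_3a_3a_4a_5$. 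So it is neither isomorphic to nor a degeneration of the $4$-dimensional algebras in Proposition~\ref{prop:local-benchmark-alg}(1),(2). Tameness comes instead from its being special biserial.
\item \emph{Vertices $2$ and $3$.} Your three-loop fallback fails, because both corners are two-generated. For instance, at $2$ the cotangent space is spanned by the classes of $b_2wa_2$ with $w\in e_4(\Pi_{\DynE_7}/(e_2))e_4\cong\Bbbk[t]/(t^3)$, and $b_2a_2=0$. What is needed are explicit wild quotients.
\begin{itemize}
\item At $2$, the elements $x=b_2a_4b_4a_2$ and $y=b_2a_4a_5b_5b_4a_2$ satisfy $xy+yx=0$, $x^3=y^2$ and $x^2y=0$. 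Factoring out $x^3$ gives Proposition~\ref{prop:local-benchmark-alg}(7) with $\lambda=-1$.
\item At $3$, take $x=a_3b_3$ and $y=a_3a_4b_4b_3$; imposing $yx=y^3=xy^2=0$ gives item~(4). This works because the $\DynE_6$ relation $y^2+xyx=0$ breaks in $\DynE_7$: the new path $a_3a_4a_5a_6b_6b_5b_4b_3$ equals $y^2+xyx$.
\end{itemize}
\item \emph{The pair $\{1,6\}$.} Vertex $1$ carries only one loop, since its degree-$10$ generator runs through $6$. The argument still works because $6$ carries the two loops $a_6b_6$ and $b_5b_4b_3a_3a_4a_5$.
\item \emph{Remaining cases.} You still need $\Gam(\DynE_7,\{7\})\cong\Bbbk[x]/(x^3)$. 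For $\{1,7\}$ there is a loop at each vertex, with relations $x^2=uy=yv=vu=0$, $xuv+uvx=0$ and $y^2=vxu$; this is the presentation of $\Gam(\DynD_6,\{2,5\})$. For $\{6,7\}$ there is a single loop, at $6$, and the algebra is special biserial.
\end{enumerate}
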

\begin{proof}
Killing vertex $7$ gives a quotient of type~$\DynE_6$, so Proposition~\ref{prop:E6-main} eliminates every subset whose intersection with $\{1,\ldots,6\}$ is already wild.

We consider the remaining one-vertex cases. Taking $x:=a_1a_3a_4b_4b_3b_1$ and $y:=a_1a_3a_4a_5a_6b_6b_5b_4b_3b_1$ at vertex $1$ give $\Gam(\DynE_7,\{1\})\cong\Bbbk\langle x,y\rangle/(x^2,y^2,xy+yx)$, which is tame. At vertex $6$, taking $x:=a_6b_6$ and $y:=b_5b_4b_3a_3a_4a_5$ with the relations are $x^2=y^2=0$ and $(xy)^2+(yx)^2=0$ gives a representation-infinite special biserial algebra, and hence tame. Lastly, $\Gam(\DynE_7,\{7\})\cong \Bbbk[x]/(x^3)$ is representation-finite.

For other remaining one-vertex cases, the wild local quotients of $\Gam(\DynE_7,\Omega)$ can be specified as follows. At vertex $2$, take $x=b_2a_4b_4a_2$ and $y=b_2a_4a_5b_5b_4a_2$; the corner has relations $xy+yx=0$, $x^3=y^2$, and $x^2y=0$. Quotienting by $x^3$ gives $\Bbbk\langle x,y\rangle/(x^3,y^2,x^2y,xy+yx)$. At vertex  $3$, take $x=a_3b_3$ and $y=a_3a_4b_4b_3$ and impose $yx=y^3=xy^2=0$; the result is $\Bbbk\langle x,y\rangle/(x^2,y^3,yx,xy^2)$. At vertex $5$, take $x=a_5b_5$ and $y=b_4b_3a_3a_4$ and impose $xy-yx=x^2y=y^2=0$; the result is $\Bbbk\langle x,y\rangle/(x^3,y^2,x^2y,xy-yx)$. These are precisely the wild algebras \textup{(7)}, \textup{(4)} of Proposition~\ref{prop:local-benchmark-alg}. 

It remains to consider the pairs $\{1,6\}$, $\{1,7\}$, and $\{6,7\}$. The pair $\{1,6\}$ has two loops at $6$, a loop at $1$, and opposite arrows between its vertices, so its radical-square-zero quotient is wild. For $\{1,7\}$, put $u=a_1a_3a_4a_5a_6$, $v=b_6b_5b_4b_3b_1$, $x=a_1a_3a_4b_4b_3b_1$, and $y=b_6b_5b_4b_3a_3a_4a_5a_6$. The relations are $x^2=uy=yv=vu=0$, $xuv+uvx=0$, $y^2=vxu$. This is the same special biserial presentation as $\Gam(\DynD_6,\{2,5\})$. For $\{6,7\}$, it gives a special biserial algebra. Since the idempotent truncations at $1$ and $6$ are representation-infinite, both $\Gam(\DynE_7,\{1,7\})$ and $\Gam(\DynE_7,\{6,7\})$ are tame.

Every subset with at least three vertices either contains a wild singleton or is $\{1,6,7\}$, which contains the wild idempotent truncation $\Gam(\DynE_7,\{1,6\})$. Hence, every such subset is wild.
\end{proof}

\begin{proposition}\label{prop:E8-main}
Let $\varnothing\ne\Omega\subseteq\{1,2,\ldots,8\}$. Then, $\Gam(\DynE_8,\Omega)$ is tame if $\Omega=\{8\}$, and wild otherwise.
\end{proposition}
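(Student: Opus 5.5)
The plan follows the pattern of Proposition~\ref{prop:E7-main}: first use the full-subdiagram quotients, then settle the few surviving one-vertex cases by explicit local computations, and finally propagate wildness to larger subsets by idempotent truncation.

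\textbf{Step 1 (reduction to $\DynE_7$).} Killing vertex $8$ gives a surjection $\Gam(\DynE_8,\Omega)\twoheadrightarrow\Gam(\DynE_7,\Omega\cap\{1,\ldots,7\})$ whenever $\Omega\cap\{1,\ldots,7\}\neq\varnothing$. By Proposition~\ref{prop:E7-main}, the only non-wild subsets of $\{1,\ldots,7\}$ are $\{1\},\{6\},\{7\},\{1,7\},\{6,7\}$. It follows that for every other $\Omega$ containing some vertex of $\{1,\ldots,7\}$, the algebra $\Gam(\DynE_8,\Omega)$ is wild.

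\textbf{Step 2 (the three one-vertex cases).} It then suffices to show that $\Gam(\DynE_8,\{1\})$, $\Gam(\DynE_8,\{6\})$ and $\Gam(\DynE_8,\{7\})$ are wild. For each, the corner $e_i\Pi_{\DynE_8}e_i$ is computed from the radical layers of the indecomposable projective $\Pi_{\DynE_8}$-module at $i$.
\begin{itemize}
\item At vertex $7$, take $x=a_7b_7$ and $y=b_6b_5a_5a_6$, the detour through $5$.
\item At vertex $6$, take $x=a_6b_6$ and $y=b_5a_5$.
\item At vertex $1$, take $x=a_1a_3a_4b_4b_3b_1$ and $y$ the detour through the arm $5,6,\ldots$.
\end{itemize}
In each case we aim to exhibit a quotient isomorphic to one of the wild local algebras (4)--(7) of Proposition~\ref{prop:local-benchmark-alg}. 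The most likely candidate is (7) with $\lambda=\pm1$, or (4) after imposing $yx=xy^2=0$. The expected dimensions of these corners (each at least $8$) give enough room for this, as in the $\DynE_7$ vertices $2,3,5$.

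\textbf{Step 3 (vertex $8$).} For $\Omega=\{8\}$, the relations give a two-generator local algebra. The generators are $x=a_7b_7$ (nilpotent of small order) and $y$, a longer loop through $6$. We expect a presentation of the form $\Bbbk\langle x,y\rangle/(x^2,y^2,\ldots)$, analogous to $\Gam(\DynE_7,\{6\})$ or $\Gam(\DynE_7,\{1\})$, with relations such as $(xy)^k\pm(yx)^k=0$. This algebra is special biserial, or degenerates to a special biserial algebra via a scaling family $\lambda$, and is therefore tame by \cite{Geiss}. It is representation-infinite because it has two loops with $x^2=y^2=0$: the radical-square-zero quotient has separated quiver $\widetilde{\DynA}_1$.

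\textbf{Step 4 (larger subsets).} Any $\Omega$ with $|\Omega|\ge2$ contains a vertex other than $8$, and the singleton truncation at that vertex is wild by Steps~1--2. The idempotent truncation criterion then makes $\Gam(\DynE_8,\Omega)$ wild.

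\textbf{Main obstacle.} The main obstacle is the explicit path reduction in $\Pi_{\DynE_8}$ at vertices $1$, $6$, $7$ and $8$. We must identify exact relations among the long loops, in particular the commutation relation between $x$ and $y$ and the socle relation. The difficulty is to produce a clean wild quotient for vertices $1$, $6$, $7$, and to verify that the vertex $8$ presentation really is special biserial up to degeneration. I would first compute the radical layers of $P_8$ and $P_7$ using the grading by path length, where the Loewy length is $h-1=29$.
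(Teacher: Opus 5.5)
Your outer reduction matches the paper's and is sound. Killing vertex $8$ leaves only $\Omega\cap\{1,\ldots,7\}\in\{\varnothing,\{1\},\{6\},\{7\},\{1,7\},\{6,7\}\}$. Once every singleton other than $\{8\}$ is known to be wild, idempotent truncation disposes of all $|\Omega|\ge2$.

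The gap is that the real content of the proposition lies in Steps~2 and~3. You leave both as expectations, and the generators you do name fail:
\begin{itemize}
\item At vertex $6$, the preprojective relation at $6$ is $a_6b_6=b_5a_5$, so your $x$ and $y$ are the same element.
\item At vertex $7$, the relations at $6$, $7$ and $8$ give
\[
b_6b_5a_5a_6=b_6(a_6b_6)a_6=(a_7b_7)^2=a_7(b_7a_7)b_7=0,
\]
so your $y$ is zero. Indeed $e_7\Pi_{\DynE_8}e_7$ vanishes in degrees $4$ and $6$, and the second generator is the degree-$8$ detour $b_6b_5b_4b_3a_3a_4a_5a_6$ to vertex $3$.
\item At vertex $1$, the detour $a_1a_3a_4a_5b_5b_4b_3b_1$ is zero; the detour must reach vertex $6$ (degree $10$).
\item Most seriously, in the tame case your $x=a_7b_7$ is the element you already used at vertex $7$: it is a cycle at $7$, not at $8$. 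The only length-two cycle at $8$ is $b_7a_7=0$. So the presentation you expect at vertex $8$ has no basis, and relations of the form $(xy)^k\pm(yx)^k$ do not describe that corner.
\end{itemize}

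To close the gap you need actual descriptions of these corners.

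For vertex $6$ you can avoid computing altogether, as the paper does. Killing vertex $1$ leaves $\Pi_{\DynD_7}$ on $\{2,\ldots,8\}$, in which $6$ is an inner trunk vertex. Hence $\Gam(\DynE_8,\{6\})$ surjects onto a wild algebra by Proposition~\ref{prop:D-onepoint-internal}.

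For vertex $8$, the path-length grading settles everything. For instance, via the Hilbert series of $\Pi_{\DynE_8}$, $e_8\Pi_{\DynE_8}e_8$ is $4$-dimensional with nonzero pieces only in degrees $0,10,18,28$. Put $p=b_7b_6b_5b_4b_3$ and $q=a_3a_4a_5a_6a_7$. The generators are $x=pq$ in degree $10$ and $y=p\,a_3a_4a_5a_6b_6b_5b_4b_3\,q$ in degree $18$. Then $x^2$ and $y^2$ vanish for degree reasons, and path reduction gives $xy+yx=0$ with basis $1,x,y,xy$. Thus $\Gam(\DynE_8,\{8\})\cong\Bbbk\langle x,y\rangle/(x^2,y^2,xy+yx)$, which is special biserial and representation-infinite.

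For vertices $1$ and $7$, the paper shows $e_1\Pi_{\DynE_8}e_1\cong\Bbbk\langle x,y\rangle/(x^4,y^2,xy+yx)$. At vertex $7$ it takes $x=b_6b_5b_4b_3a_3a_4a_5a_6$ and $y=a_7b_7$. Both cases yield quotients of the form in Proposition~\ref{prop:local-benchmark-alg}\textup{(7)}.

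Without computations of this kind, your Steps~2 and~3 remain conjectural.
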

\begin{proof}
Killing vertex $8$ gives a quotient of type~$\DynE_7$. Hence, Proposition~\ref{prop:E7-main} shows that a potentially non-wild subset meets $\{1,\ldots,7\}$ either trivially or in one of $\{1\},\{6\},\{7\},\{1,7\},\{6,7\}$. Killing vertex $1$ gives a full subdiagram of type~$\DynD_7$, and Proposition~\ref{thm:markedD-main} excludes every subset containing $6$. Thus every potentially non-wild subset is contained in $\{1,7,8\}$.

At vertex $1$, we have $e_1\Pi_{\DynE_8}e_1\simeq\Bbbk\langle x,y\rangle/(x^4,y^2,xy+yx)$, whose quotient by $x^3$ and $x^2y$ is the wild algebra in Proposition~\ref{prop:local-benchmark-alg}\textup{(7)}. At vertex $7$, take $x=b_6b_5b_4b_3a_3a_4a_5a_6$ and $y=a_7b_7$. Imposing $xy-yx=x^2y=0$ gives the five-dimensional quotient $\Bbbk\langle x,y\rangle/(x^3,y^2,x^2y,xy-yx)$ with basis $1,x,x^2,y,xy$, which is Proposition~\ref{prop:local-benchmark-alg}\textup{(7)} with $\lambda=1$. Hence,  $\Gam(\DynE_8,\{1\})$ and $\Gam(\DynE_8,\{7\})$ are wild.

At vertex $8$, put $p=b_7b_6b_5b_4b_3$, $q=a_3a_4a_5a_6a_7$, and take $x=pq$, $y=p\,a_3a_4a_5a_6b_6b_5b_4b_3\,q$. Path reduction gives $x^2=y^2=0$ and $xy+yx=0$, with basis $1,x,y,xy$. Thus, $\Gam(\DynE_8,\{8\})$ is the special biserial algebra $\Bbbk\langle x,y\rangle/(x^2,y^2,xy+yx)$, and it is representation-infinite tame. Every subset of $\{1,7,8\}$ other than $\{8\}$ contains one of the wild case $\{1\}$ or $\{7\}$. Therefore, $\Gam(\DynE_8,\Omega)$ is wild for every $\Omega\ne\{8\}$.
\end{proof}

\section{Contraction algebras under iterated flops}\label{sec:flop-detection}
In this section, we shall use the classification of the contracted preprojective algebra $\Gam(\Delta,\Omega)$ to obtain further information about the contraction algebra $\Acon$. We introduce mutation in order to keep track of the surface data under iterated flops. On the algebraic side, these flops are realised by mutation of the corresponding CM generators. More precisely, under the Auslander--McKay correspondence, simple mutation of a basic CM generator corresponds to the simple flop of the associated exceptional curve; equivalently, the mutation graph of the corresponding generators agrees with the flop graph. We refer to \cite[Corollary~1.12]{Wemyss18} for this correspondence.
 
 The main point is that the surface data may change after a flop, whereas
the representation type of the corresponding contraction algebra does
not. Thus, even when the contracted preprojective algebra associated to
the original flopping contraction is representation-finite or tame, an
iterated flop may produce another surface quotient which is representation-infinite or wild. We will  describe how the surface data change under mutation, and then determine the resulting iterated-flop orbits in types $\DynA$, $\DynD$, and $\DynE$, and finally apply the surface quotient classification from Step~1 to these orbits.

\subsection{Mutation and flop}\label{subsec:classical-safe}
Let $M=R\oplus N_1\oplus\cdots\oplus N_n$
be the basic CM generator corresponding to
$f\colon X\to\operatorname{Spec}R$. Fix $1\le i\le n$ and set
$M_i:=M/N_i$. Choose a minimal right $\operatorname{add}M_i$-approximation
$V_i\to N_i$ and let $K_i$ be its kernel, so that $0\longrightarrow K_i\longrightarrow V_i\longrightarrow N_i$ is exact with $V_i\in\operatorname{add}M_i$. We then call $\mu_i^+(M):=M_i\oplus K_i$ the simple \emph{right mutation} of $M$ at $N_i$; the simple \emph{left mutation} $\mu_i^-(M)$ is defined dually. In our cDV setting, however, no distinction between left and right mutation is necessary since $\mu_i^+(M)\cong\mu_i^-(M)$ by \cite[Theorem~1.25]{IyamaWemyss14}. We therefore write simply $\mu_i(M)$ for this common mutation. Moreover, the same theorem gives $\mu_i^2(M)\cong M$. When the mutation $\mu_i(M)$ is realised geometrically, it corresponds to the simple flop of $f$ at $C_i$. Hence, mutation provides an algebraic way to move between the different flopping contractions over the same singularity. In particular, the corresponding contraction algebras have the same representation type by Proposition~\ref{prop:chamber-invariance}.

We now recall from \cite[Definition~1.16 and Theorem~1.20]
{IyamaWemyssTits} how mutation changes the $\DynA\DynD\DynE$ data of a sufficiently general surface section. Let $\Delta$ be the corresponding Dynkin diagram and $\Omega\subseteq\Delta_0$ the subset corresponding to the exceptional curves. We denote $I:=\Delta_0\setminus\Omega$ and fix $i\in\Omega$. Let $\Gamma_i$ be the connected component containing $i$ in the full subdiagram of $\Delta$ on $I\cup\{i\}$. Since $\Gamma_i$ is a Dynkin diagram, its preprojective algebra $\Pi_{\Gamma_i}$ is finite-dimensional and self-injective. Let $\iota_I$ be the Nakayama permutation of $\Pi_{\Gamma_i}$. Then, the mutation at $i$ changes the marked subset $I$ by $\nu_i(I):=(I\cup\{i\})\setminus\{\iota_I(i)\}$. Equivalently, the resulting subset after mutation is $\Omega':=\Delta_0\setminus\nu_i(I)$. Hence, the combinatorial effect of mutation is obtained by adjoining $i$ to the marked set and removing the vertex $\iota_I(i)$ determined by the Nakayama permutation of the local Dynkin preprojective algebra $\Pi_{\Gamma_i}$. 

Starting from $(\Delta,\Omega)$, successive mutations allow us to determine the surface data associated to iterated flops. If some iterated flop gives a subset $\Omega'$ for which $\Gam(\Delta,\Omega')$ is wild, then the corresponding contraction algebra is wild by Proposition~\ref{prop:shadow}. Moreover, Proposition~\ref{prop:chamber-invariance} implies that the original contraction algebra is also wild. We mention that, however, the representation type of $\Gam(\Delta,\Omega)$ is not necessarily the same as that of $\Gam(\Delta,\Omega')$.

\subsection{Representation type}
Let $f\colon X\longrightarrow \operatorname{Spec}R$ be a flopping contraction
in our standing setup, and let $\Acon$ be its contraction algebra. If
$f'\colon X'\longrightarrow \operatorname{Spec}R$ is obtained from $f$ by
iterated flops, write $\Acon'$ for the corresponding contraction algebra and
$(\Delta,\Omega')$ for the Dynkin data of a sufficiently general surface
section of $f'$. By Proposition~\ref{prop:chamber-invariance}, $\Acon$ and
$\Acon'$ have the same representation type, while
Proposition~\ref{prop:shadow} gives a surjection $\Acon'\twoheadrightarrow \Gam(\Delta,\Omega')$. Therefore, if a contracted preprojective quotient associated to an iterated
flop is representation-infinite or wild, then the same conclusion holds for
$\Acon$. Step~2 is thus reduced to determining the surface data in the
mutation orbit and reading off the strongest representation-theoretic
conclusion from the corresponding quotients.

We first consider type~$\DynA$. Let $\Omega=\{j_1<\cdots<j_r\}\subseteq\{1,\ldots,m\}$. Recall $(d_0,\ldots,d_r)$ with $d_0=j_1$, $d_s:=j_{s+1}-j_s$ for $1\le s\le r-1$, and $d_r:=m-j_r+1$, from Section~\ref{sec:A-stable}.

\begin{lemma}
\label{lem:A-gap-mutation}
Simple mutation at the vertex $j_s$ interchanges the adjacent entries
$d_{s-1}$ and $d_s$ and leaves all other entries unchanged. Consequently,
the surface data occurring in the iterated-flop class are precisely those
obtained from the distinct permutations of $(d_0,\ldots,d_r)$.
\end{lemma}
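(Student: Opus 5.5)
We use the combinatorial description of mutation recalled above from \cite{IyamaWemyssTits}. Fix $s$ with $1\le s\le r$ and set $i:=j_s$, $I:=\Delta_0\setminus\Omega$. The first step is to identify $\Gamma_i$, the connected component of $i$ in the full subdiagram of $\DynA_m$ on $I\cup\{i\}$. Since the marked vertices adjacent to $j_s$ are exactly those strictly between $j_{s-1}$ and $j_s$ and strictly between $j_s$ and $j_{s+1}$ (with the conventions $j_0:=0$ and $j_{r+1}:=m+1$), $\Gamma_i$ is the interval $\{j_{s-1}+1,\ldots,j_{s+1}-1\}$. This is a diagram of type $\DynA_{d_{s-1}+d_s-1}$, and $i$ sits in position $d_{s-1}$ counting from its left end.

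The second step is to compute the Nakayama permutation of $\Pi_{\DynA_k}$. It is the diagram involution $p\mapsto k+1-p$, which is standard; it can also be read off the stable Auslander algebra model of Section~\ref{sec:A-stable}, where $\Omega_{\Lambda_{k+1}}M_p\cong M_{k+1-p}$. Applying this with $k=d_{s-1}+d_s-1$, the vertex in position $d_{s-1}$ goes to position $d_s$, that is, to the absolute vertex $\iota_I(i)=j_{s-1}+d_s$.

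The third step is to apply $\nu_i(I)=(I\cup\{i\})\setminus\{\iota_I(i)\}$. The new unmarked set is $\Omega'=(\Omega\setminus\{j_s\})\cup\{j_{s-1}+d_s\}$. Its elements, in order, are $j_1,\ldots,j_{s-1},\,j_{s-1}+d_s,\,j_{s+1},\ldots$. The gaps are then $d_s$ followed by $j_{s+1}-j_{s-1}-d_s=d_{s-1}$, and every other gap is unchanged. This proves the first claim, including the endpoint cases $s=1$ and $s=r$, which are handled by the conventions for $j_0$ and $j_{r+1}$. The degenerate case $d_{s-1}=d_s$ is also covered: there $\iota_I(i)=i$, so mutation fixes $\Omega$, which agrees with swapping two equal entries.

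For the consequence, the orbit under repeated simple mutations is the orbit of the parameter sequence under adjacent transpositions. Adjacent transpositions generate the symmetric group $S_{r+1}$, so every permutation of the parameter sequence is reached. Conversely, nothing else is reached, since each mutation only permutes the entries. A parameter sequence determines $\Omega$ via $j_s=d_0+\cdots+d_{s-1}$, and $m+1=\sum_s d_s$ is invariant, so the reachable surface data are exactly the distinct permutations. This uses that iterated flops correspond to iterated simple mutations (\cite[Corollary~1.12]{Wemyss18}), and that every mutation sequence is realised geometrically in our setting, because the flop graph equals the mutation graph.

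The main point needing care is the position bookkeeping in the second and third steps. In particular one must check that the convention for $\iota_I$ matches the involution $p\mapsto k+1-p$ rather than some other labelling, and I would verify this on the small case $\DynA_2$.
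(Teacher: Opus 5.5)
Your proof is correct and follows the paper's argument. Both identify the component of $j_s$ as the interval $[j_{s-1}+1,j_{s+1}-1]$ of type $\DynA_{d_{s-1}+d_s-1}$, use that its Nakayama permutation is the reversal so that $j_s\mapsto j_{s-1}+j_{s+1}-j_s=j_{s-1}+d_s$, read off the swapped gaps, and conclude because adjacent transpositions generate the symmetric group. Your explicit treatment of the converse inclusion and of the case $d_{s-1}=d_s$ only spells out points the paper leaves implicit.
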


\begin{proof}
Set $I=\Delta_0\setminus\Omega$ and use the conventions $j_0:=0$ and
$j_{r+1}:=m+1$. The connected component containing $j_s$ in the full
subdiagram on $I\cup\{j_s\}$ is the interval $[j_{s-1}+1,j_{s+1}-1]$, which is of type $\DynA_{d_{s-1}+d_s-1}$. Its Nakayama permutation reverses this interval and therefore sends $j_s$ to $j_{s-1}+j_{s+1}-j_s$. Thus mutation at $j_s$ interchanges
$d_{s-1}$ and $d_s$ and fixes all other entries. Since adjacent
transpositions generate the symmetric group, all distinct permutations of
the parameter sequence occur in the iterated-flop class.
\end{proof}

We set $\mathbf{d}:=(d_0,\ldots,d_r)$ and $h(\mathbf{d}):=\#\{s\mid d_s\ge2\}$. The classification in Theorem~\ref{thm:markedA-main} now gives the complete
output of Step~2 in type~$\DynA$.

\begin{proposition}\label{prop:A-mutation-safe}
Let $\Acon$ be the contraction algebra of a flopping contraction whose
general surface section has type $\DynA_m$ and the subset
$\Omega=\{j_1<\cdots<j_r\}$ with parameter sequence
$\mathbf{d}=(d_0,\ldots,d_r)$. Then,
\[
\begin{array}{c|c|c|c}
r
&
\begin{array}{c}
\text{all surface quotients}\\
\text{are representation-finite}
\end{array}
&
\begin{array}{c}
\Acon \text{ is representation-infinite}
\end{array}
&
\begin{array}{c}
\Acon\text{ is wild}
\end{array}
\\ \hline
1
&
\text{always}
&
-
&
-
\\[2pt]
2
&
\min\{d_0,d_1,d_2\}=1
&
\min\{d_0,d_1,d_2\}=2
&
\min\{d_0,d_1,d_2\}\ge3
\\[2pt]
3
&
h(\mathbf{d})\le1
&
h(\mathbf{d})=2
&
h(\mathbf{d})\ge3
\\[2pt]
4
&
h(\mathbf{d})=0
&
h(\mathbf{d})=1
&
h(\mathbf{d})\ge2
\\[2pt]
5
&
-
&
h(\mathbf{d})=0
&
h(\mathbf{d})\ge1
\\[2pt]
r\ge6
&
-
&
-
&
\text{always}.
\end{array}
\]
In the middle column, no contracted preprojective quotient occurring in the
iterated-flop class is wild, but at least one such quotient is tame.
\end{proposition}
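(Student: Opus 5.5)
The proof reduces to the combinatorics of Lemma~\ref{lem:A-gap-mutation} combined with Theorem~\ref{thm:markedA-main}. By that lemma, the surface data occurring in the iterated-flop class of $f$ are exactly the subsets $\Omega'$ whose parameter sequences are the distinct permutations of $\mathbf d$. In each such representative $f'$, Proposition~\ref{prop:shadow} gives a surjection $\Acon'\twoheadrightarrow\Gam(\DynA_m,\Omega')$. Proposition~\ref{prop:chamber-invariance} then transfers any representation-infiniteness or wildness back to $\Acon$. So for each $r$ I will determine the strongest conclusion over all permutations $\sigma(\mathbf d)$, and check that the three columns exhaust the cases. The quantities $\min\{d_s\}$ and $h(\mathbf d)$ are permutation-invariant, which is why the conditions in the table can be stated in terms of them.

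The cases go as follows.
\begin{enumerate}
\item[$r=1$:] Every $\Gam(\DynA_m,\Omega')$ is representation-finite by Theorem~\ref{thm:markedA-main}(1).
\item[$r=2$:] The type depends only on $\min\{d_0,d_1,d_2\}$, so it is constant on the orbit. Part (2) of the theorem gives the three columns directly.
\item[$r=3$:] If $h\ge3$, every permutation is wild by part (3). If $h=2$, choose a permutation placing the two large entries at indices $\{0,1\}$; this gives a tame quotient. No permutation is wild, since $h$ stays $2$. If $h\le1$, every permutation falls into the representation-finite clause of part (3).
\item[$r=4$:] $h=0$ forces $m=4$ and gives the finite algebra $\Pi_{\DynA_4}$. $h=1$ gives tame for every permutation, and $h\ge2$ gives wild, both by part (4).
\item[$r=5$:] Part (5) gives tame exactly when all $d_s=1$, and wild as soon as $h\ge1$.
\item[$r\ge6$:] Everything is wild by part (6).
\end{enumerate}

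The only point needing care is the final sentence about the middle column. I must check that no quotient in the orbit is wild, and also that at least one is tame rather than all representation-finite. For $r=3$ with $h=2$, the representation-finite index patterns $\{0,2\}$ and $\{1,3\}$ do occur in the orbit, so tameness has to be exhibited by the permutation chosen above, not read off from $\Omega$ itself. For $r=2,4,5$ the type is constant on the orbit, so this point is automatic there.

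Finally, "$\Acon$ is representation-infinite" in the middle column follows from the surjection onto that tame quotient, which is representation-infinite.
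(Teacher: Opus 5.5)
Your proposal is correct and is essentially the paper's proof. Both use Lemma~\ref{lem:A-gap-mutation} to realise every permutation of $\mathbf d$ in the iterated-flop class and then read off the types from Theorem~\ref{thm:markedA-main}. In both, the only real bookkeeping is the case $r=3$, $h(\mathbf d)=2$, where a tame placement such as indices $\{0,1\}$ must be selected, because the placements $\{0,2\}$ and $\{1,3\}$ are representation-finite.
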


\begin{proof}
By Lemma~\ref{lem:A-gap-mutation}, every permutation of
$\mathbf{d}=(d_0,\ldots,d_r)$ occurs in the iterated-flop class. The table
then follows directly from Theorem~\ref{thm:markedA-main}, except that the
case $r=3$ requires keeping track of the positions of the entries
$d_s\ge2$.

Suppose $r=3$. If $h(\mathbf{d})\le1$, every permutation belongs to the
representation-finite part of Theorem~\ref{thm:markedA-main}. If
$h(\mathbf{d})=2$, no permutation is wild. The placements of the two entries
which are at least $2$ with index sets $\{0,1\}$, $\{1,2\}$, $\{2,3\}$, $\{0,3\}$ give tame surface quotients, whereas the placements $\{0,2\}$, $\{1,3\}$ are representation-finite. Since all permutations occur in the
iterated-flop class, this class contains a tame quotient. Hence $\Acon$ is
representation-infinite by Corollary~\ref{cor:wildness-propagation}. If
$h(\mathbf{d})\ge3$, every permutation is wild, and hence $\Acon$ is wild.
\end{proof}

It follows from Proposition~\ref{prop:A-mutation-safe} that every contracted
preprojective algebra in the iterated-flop class is representation-finite if
and only if $r=1$, or $r=2$ with $\min\{d_0,d_1,d_2\}=1$, or $r=3$ with
$h(\mathbf{d})\le1$, or $r=4$ with $h(\mathbf{d})=0$. There is no such case
for $r\ge5$. Moreover, the only case in which the original contracted
preprojective quotient may be representation-finite while another one in the
same iterated-flop class is representation-infinite is $r=3$ with
$h(\mathbf{d})=2$. In this case, some permutations of $\mathbf{d}$ give
representation-finite surface quotients, while others give tame
representation-infinite surface quotients.

We next consider type~$\DynD$. In this case, the surface representation type
is unchanged under iterated flops.

\begin{proposition}\label{prop:D-mutation-safe}
Let $\varnothing\ne\Omega\subseteq\{1,\ldots,m\}$ and set $r:=|\Omega|$.
Then the representation type of $\Gam(\DynD_m,\Omega)$ is preserved under
iterated flops. Moreover,
\[
\begin{array}{c|c|l|c}
r
&
\begin{array}{c}
\text{all surface quotients}\\
\text{are representation-finite}
\end{array}
&
\begin{array}{c}
\Acon\text{ is representation-infinite}
\end{array}
&
\begin{array}{c}
\Acon\text{ is wild}
\end{array}
\\ \hline
1
&
\begin{array}{c}
\Omega=\{1\},\ \{m-1\},\ \{m\}
\end{array}
&
\begin{array}{c}
\Omega=\{2\},\ \{m-2\}
\end{array}
&
\text{otherwise}
\\[6pt]
2
&
\begin{array}{c}
\Omega=\{1,m-1\},\ \{1,m\},\\
\{m-1,m\}
\end{array}
&
\begin{array}{c}
\Omega=\{1,2\},\ \{2,m-1\},\ \{2,m\},\\
\{m-2,m-1\},\ \{m-2,m\}
\end{array}
&
\text{otherwise}
\\[10pt]
3
&
-
&
\begin{array}{l}
\Omega=\{1,2,m-1\},\ \{1,2,m\},\\
\{1,m-1,m\},\ \{m-2,m-1,m\}
\end{array}
&
\text{otherwise}
\\[10pt]
4
&
-
&
\begin{array}{c}
m=4,\quad
\Omega=\{1,2,3,4\}
\end{array}
&
\text{otherwise}
\\[6pt]
r\ge5
&
-
&
-
&
\text{always}.
\end{array}
\]
\end{proposition}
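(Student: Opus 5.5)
Our plan is to compute the mutation orbits in type~$\DynD$ explicitly with the Iyama--Wemyss rule recalled in Subsection~\ref{subsec:classical-safe}, and to check that each orbit lies inside a single row and column of the table. The table itself then follows from Proposition~\ref{thm:markedD-main} together with Corollary~\ref{cor:wildness-propagation}. Write $I=\Delta_0\setminus\Omega$. For $i\in\Omega$, the component $\Gamma_i$ of $I\cup\{i\}$ containing $i$ is a connected subdiagram of $\DynD_m$, so it is of type $\DynA_k$ or $\DynD_k$. The Nakayama permutation $\iota$ is the diagram reversal on $\DynA_k$. On $\DynD_k$ it is the identity for $k$ even, and for $k$ odd it is the swap of the two short-arm vertices. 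Mutation at $i$ replaces $i$ by $\iota(i)$ in $\Omega$.

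We first establish the following invariance principle. If $\Gamma_i$ is of type $\DynA$ and lies on the long arm $\{1,\dots,m-2\}$, or if $\Gamma_i$ is of type $\DynD$ and $\iota(i)=i$, then $\Omega$ is either unchanged or shifted by a reflection within an interval of unmarked vertices. If $\Gamma_i$ is of type $\DynD_k$ with $k$ odd and $i\in\{m-1,m\}$, the mutation swaps $m-1$ and $m$, which is the diagram automorphism. The remaining $\DynA$-type situations are those where $\Gamma_i$ passes through the branch vertex $m-2$ and contains a short-arm vertex. In each of them we record the image of $\Omega$.

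We then go through the lists in Proposition~\ref{thm:markedD-main} rank by rank and verify closure. For instance, take $\Omega=\{1\}$. Here $\Gamma_1=\DynD_m$. If $m$ is even, $\iota(1)=1$ and $\Omega$ is fixed. If $m$ is odd, $\iota$ still fixes $1$, since it only swaps $m-1$ and $m$. So $\{1\}$ is a fixed point. Similarly, $\{m-1\}$ maps to $\{m-1\}$ or $\{m\}$. For $\{2\}$, $\Gamma_2=\DynD_m$ and $\iota(2)=2$. For a general long-arm vertex $i$, we again get $\Gamma_i=\DynD_m$, so every singleton stays in its class up to the automorphism. Hence the representation type of singletons is invariant.

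For $|\Omega|\ge2$ we do the same computation. Take $\{1,m-1\}$ as an example. At the vertex $1$, $\Gamma_1=\DynA_{m-2}$ is the long arm together with the short vertex $m$. It is a path $1,\dots,m-2,m$, whose reversal sends $1$ to $m$, giving $\{m-1,m\}$. At the vertex $m-1$, $\Gamma=\DynD_{m-1}$ on $\{2,\dots,m\}$, whose Nakayama permutation either fixes $m-1$ or sends it to $m$. Thus $\{1,m-1\}$, $\{1,m\}$ and $\{m-1,m\}$ form one orbit, consistent with the finite column. The tame and wild lists are treated the same way: the tame two- and three-vertex sets form orbits among themselves. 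The wild sets map to sets which are wild by the criteria of Lemma~\ref{lem:D-trunk-pairs} and Proposition~\ref{prop:markedD-rank}, since mutation preserves $|\Omega|$ and a wild pattern (a middle long-arm vertex, or two long-arm vertices other than $\{1,2\}$) is carried to another wild pattern. Once closure is shown, Proposition~\ref{prop:shadow} and Corollary~\ref{cor:wildness-propagation} give the middle and right columns, and closure gives the left column.

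The main obstacle is the wild case, where we cannot enumerate all orbits directly. We plan to organise it around the invariant ``position of marked vertices relative to the branch point''. Mutations supported on the long arm act as reflections of intervals, as in type~$\DynA$ (Lemma~\ref{lem:A-gap-mutation}). Only mutations whose $\Gamma_i$ reaches $m-1$ or $m$ can move a vertex onto a short arm. We will check that such a move never produces one of the finitely many non-wild configurations from a wild one, by running the inverse mutation from each non-wild set, using that $\mu_i^2=\mathrm{id}$.
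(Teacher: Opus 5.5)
Your proposal is correct and follows the paper's proof. You use the Nakayama-permutation rule to check that the representation-finite and tame loci of Proposition~\ref{thm:markedD-main} are each closed under simple mutation, and then deduce closure of the wild locus from involutivity of mutation, exactly as in your last paragraph; the earlier heuristic that wild patterns go to wild patterns is therefore not needed. One small slip: for $\Omega=\{1,m-1\}$ the component $\{1,\dots,m-2,m\}$ containing $1$ has $m-1$ vertices, so it is of type $\DynA_{m-1}$ rather than $\DynA_{m-2}$, but your conclusion $1\mapsto m$ is unaffected.
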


\begin{proof}
For a connected component of type $\DynD_\ell$, the Nakayama permutation of
its preprojective algebra is the identity when $\ell$ is even and, when
$\ell$ is odd, interchanges the two short-arm endpoints and fixes all
remaining vertices. For a component of type $\DynA$, the Nakayama
permutation is the reversal. Using the mutation rule above, a direct check
shows that the representation-finite locus and the tame locus in
Proposition~\ref{thm:markedD-main} are separately stable under simple mutation.
Since simple mutation is involutive, the wild locus is stable as well.
Hence the representation type of $\Gam(\DynD_m,\Omega)$ is preserved under
iterated flops. Lastly, the table now follows from Proposition~\ref{thm:markedD-main}. 
\end{proof}

We finally consider the exceptional types. As in type~$\DynD$, the
finite--tame--wild representation type of the surface quotient is preserved
under iterated flops.

\begin{proposition}\label{prop:exceptional-mutation-orbits}
Let $\Delta=\DynE_6,\DynE_7,\DynE_8$. Then, the representation type of $\Gam(\Delta,\Omega)$ is preserved under iterated flops. Moreover,
\[
\begin{array}{c|c|c|c}
\Delta
&
\begin{array}{c}
\text{all surface quotients}\\
\text{are representation-finite}
\end{array}
&
\begin{array}{c}
\Acon\text{ is representation-infinite}
\end{array}
&
\begin{array}{c}
\Acon\text{ is wild}
\end{array}
\\ \hline
\DynE_6
&
\Omega=\{1\},\ \{6\}
&
\begin{array}{c}
\Omega=\{2\},\ \{3\},\ \{5\},\ \{1,6\},\\
\{1,2\},\ \{1,3\},\ \{2,6\},\ \{5,6\}
\end{array}
&
\text{otherwise}
\\[10pt]
\DynE_7
&
\Omega=\{7\}
&
\begin{array}{c}
\Omega=\{1\},\ \{6\},\ \{1,7\},\ \{6,7\}
\end{array}
&
\text{otherwise}
\\[8pt]
\DynE_8
&
-
&
\Omega=\{8\}
&
\text{otherwise}.
\end{array}
\]
\end{proposition}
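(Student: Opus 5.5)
The plan mirrors the proof of Proposition~\ref{prop:D-mutation-safe}. The table is a direct reading of Propositions~\ref{prop:E6-main}, \ref{prop:E7-main} and~\ref{prop:E8-main}, combined with Proposition~\ref{prop:shadow} and Corollary~\ref{cor:wildness-propagation}. The real content is therefore the invariance statement. Since simple mutation is involutive and the three loci (finite, tame, wild) partition the subsets of $\Delta_0$, it suffices to check two things:
\begin{itemize}
\item the representation-finite locus is closed under every simple mutation $\nu_i$ with $i\in\Omega$;
\item the tame locus is closed under every such mutation.
\end{itemize}
Stability of the wild locus then follows automatically: if $\Omega$ is wild and $\mu_i(\Omega)$ were finite or tame, applying $\mu_i$ again would move a finite or tame set to the wild one $\Omega$, contradicting those two closures.

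To compute mutations I will use the rule $\nu_i(I)=(I\cup\{i\})\setminus\{\iota_I(i)\}$. Here $\iota_I$ is the Nakayama permutation of the preprojective algebra of the component $\Gamma_i$ of $I\cup\{i\}$ containing $i$. The standard facts I need are:
\begin{itemize}
\item type $\DynA$: reversal;
\item type $\DynD_\ell$: identity for $\ell$ even; for $\ell$ odd it swaps the two short arms;
\item $\DynE_6$: the diagram involution;
\item $\DynE_7$, $\DynE_8$: the identity.
\end{itemize}
Only the finitely many non-wild subsets from the three propositions need to be checked. This is a short explicit list; for instance, in $\DynE_6$ it has ten members.

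\textbf{One-vertex cases.} Here $I\cup\{i\}=\Delta_0$, so $\Gamma_i=\Delta$. For $\DynE_7$ and $\DynE_8$ the Nakayama permutation is trivial, so every singleton is fixed. For $\DynE_6$, the singleton $\{j\}$ is sent to its image under the diagram involution. This maps $\{1\}\leftrightarrow\{6\}$ and $\{3\}\leftrightarrow\{5\}$ and fixes $\{2\}$, so both the finite locus and the tame locus are preserved.

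\textbf{Two-vertex cases.} For each non-wild pair $\{p,q\}$ and each $i\in\{p,q\}$, I will identify the component $\Gamma_i$ of $(\Delta_0\setminus\{p,q\})\cup\{i\}$ and read off $\iota_I(i)$. Some examples:
\begin{itemize}
\item In $\DynE_6$ with $\Omega=\{1,6\}$ and $i=1$, the component is $\{1,\dots,5\}\cong\DynD_5$, where $1$ is the long-arm end and is fixed. Hence $\Omega$ is unchanged.
\item In $\DynE_6$ with $\Omega=\{1,2\}$ and $i=2$, the component is $\{2,3,4,5,6\}\cong\DynA_5$, namely the path $2$--$4$--$5$--$6$ with $3$ attached to $4$. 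This is actually $\DynD_5$, and the calculation must locate $2$ among its short arms or its long arm. Under whichever rule applies, the result should again lie in the list $\{1,2\},\{2,6\},\{1,3\},\{5,6\},\{1,6\}$.
\item In $\DynE_7$, for $\{1,7\}$ and $\{6,7\}$ the relevant components are of type $\DynD$ or $\DynE_6$ (the latter inside $\{1,\dots,6\}$), and mutation should send $\{1,7\}$ to itself or to $\{6,7\}$ via the $\DynE_6$ involution.
\end{itemize}

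I expect the main obstacle to be bookkeeping rather than any conceptual step: correctly identifying each $\Gamma_i$ as $\DynA$, $\DynD_{\text{odd}}$, $\DynD_{\text{even}}$ or $\DynE_6$, and checking that no mutation of a tame pair produces a wild pair. In particular, the $\DynE_6$ pairs $\{1,2\}$ and $\{1,3\}$ could a priori mutate to a wild pair such as $\{2,3\}$ or $\{1,5\}$. I will tabulate all roughly twenty mutation computations explicitly and confirm that each lands back in the correct list. If one were to land in the wild list, the invariance claim would fail and would have to be weakened to the one-directional statement already given by Corollary~\ref{cor:wildness-propagation}.
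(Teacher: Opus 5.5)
Your proposal follows the paper's proof. Both check, via the Nakayama-permutation rule, that the finite and tame loci are each closed under simple mutation, and then deduce stability of the wild locus from involutivity. Your pending computations close up as you expect:
\begin{itemize}
\item in $\DynE_6$, $\{1,2\}\leftrightarrow\{1,3\}$, because $2$ and $3$ are the short-arm ends of the $\DynD_5$ on $\{2,\dots,6\}$;
\item also in $\DynE_6$, $\{1,2\}\leftrightarrow\{2,6\}$ and $\{2,6\}\leftrightarrow\{5,6\}$, while $\{1,6\}$ is fixed;
\item in $\DynE_7$, $\{1,7\}\leftrightarrow\{6,7\}$ via the $\DynE_6$ involution.
\end{itemize}
So no tame pair reaches a wild one.
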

\begin{proof}
For the full exceptional diagrams, the Nakayama permutation is the nontrivial diagram involution $1\leftrightarrow 6$ and $3\leftrightarrow 5$ in type $\DynE_6$, and is the identity in types $\DynE_7$ and $\DynE_8$.
For a proper connected component $\Gamma_i$, we use the Nakayama permutation of the preprojective algebra $\Pi_{\Gamma_i}$, according to the Dynkin type of $\Gamma_i$.

In type $\DynE_6$, the representation-finite singletons $\{1\}$ and
$\{6\}$ are exchanged; the tame singleton $\{2\}$ is fixed, while $\{3\}$ and $\{5\}$ are exchanged, the pair $\{1,6\}$ is fixed, and the
remaining tame two-vertex subsets form the mutation orbit
\[
\{1,2\}\longleftrightarrow\{1,3\},\qquad
\{1,2\}\longleftrightarrow\{2,6\},\qquad
\{2,6\}\longleftrightarrow\{5,6\}.
\]
Thus, the representation-finite and tame loci of
Proposition~\ref{prop:E6-main} are separately stable under mutation. In type $\DynE_7$, the representation-finite singleton $\{7\}$ and the tame singletons $\{1\}$ and $\{6\}$ are fixed. For $\Omega=\{1,7\}$, mutation at the vertex $1$ uses the local $\DynE_6$ component on the vertices $\{1,\ldots,6\}$, whose Nakayama
permutation sends $1$ to $6$. Hence, $\{1,7\}\longleftrightarrow\{6,7\}$. These are exactly the tame two-vertex cases in
Proposition~\ref{prop:E7-main}. In type $\DynE_8$, the only non-wild subset is the tame singleton $\{8\}$, and it is fixed by mutation.

In each exceptional type, the representation-finite and tame loci are separately stable under simple mutation. Since simple mutation is
involutive, the wild locus is also stable. Hence, the representation type of the surface quotient is preserved under iterated flops. Then, the table follows from Propositions~\ref{prop:E6-main}, \ref{prop:E7-main}, and
\ref{prop:E8-main}.
\end{proof}

We are able to see that \emph{surface quotient wildness is invariant under iterated flops in every simply-laced Dynkin type}.

\subsection{Cases remaining for Step~3}
\label{subsec:step3-remaining}
We list the remaining cases before passing to the lifting problem. First, all one-curve cases which are not already wild at the surface quotient level remain to be considered; they will be treated uniformly at the beginning of Step~3 by studying the corresponding one-simple contraction
algebras.

For type $\DynA$ with at least two curves, the remaining cases are exactly
\[
\begin{array}{c|c}
r & \text{condition}\\ \hline
2 & \min\{d_0,d_1,d_2\}\le2,\\
3 & h(\mathbf{d})\le2,\\
4 & h(\mathbf{d})\le1,\\
5 & h(\mathbf{d})=0.
\end{array}
\]
There is no remaining type $\DynA$ case for $r\ge6$. Among these cases, the first column of Proposition~\ref{prop:A-mutation-safe} consists of those for which every surface quotient in the iterated flop class is
representation-finite, while the second column consists of those for which $\Acon$ is already known to be representation-infinite.

For type $\DynD$ with at least two curves, the remaining cases are exactly
\[
\begin{aligned}
\Omega={}&
\{1,m-1\},\ \{1,m\},\ \{m-1,m\},\\
&\{1,2\},\ \{2,m-1\},\ \{2,m\},\
\{m-2,m-1\},\ \{m-2,m\},\\
&\{1,2,m-1\},\ \{1,2,m\},\
\{1,m-1,m\},\ \{m-2,m-1,m\},
\end{aligned}
\]
together with the case $m=4$ and $\Omega=\{1,2,3,4\}$. The first three two-vertex subsets are the representation-finite surface
cases. All other cases in the display are tame at the surface level and therefore already imply that $\Acon$ is representation-infinite.

For the exceptional types with at least two curves, the only non-wild cases, up to iterated flops, are represented by
\[
\mathcal{E}_1=(\DynE_6,\{1,6\}),\qquad
\mathcal{E}_2=(\DynE_6,\{1,2\}),\qquad
\mathcal{E}_3=(\DynE_7,\{6,7\}).
\]
Their surface data orbits are
\[
\mathcal O(\mathcal{E}_1)=\bigl\{\{1,6\}\bigr\},\quad
\mathcal O(\mathcal{E}_2)=\bigl\{\{1,2\},\{1,3\},\{2,6\},\{5,6\}\bigr\},\quad
\mathcal O(\mathcal{E}_3)=\bigl\{\{1,7\},\{6,7\}\bigr\}.
\]
Every contracted preprojective algebra occurring in these three orbits is tame and representation-infinite. Hence, the corresponding contraction algebras are representation-infinite, and Step~3 only has to decide whether they are tame or wild.

\section{Representation type of the remaining cases}
\label{sec:lifts} We now complete Step~3 of the classification. We shall show that the surface quotient determines the arrows between distinct vertices, while one-curve subset contraction algebras provide information about loops, possibly after iterated flops. We then examine the radical-square-zero quotient to detect wildness. If this is insufficient, we consider the multiplication map
\[
\mathfrak{m}_{\rad}\colon
(\rad \Acon/\rad^2\Acon)\otimes_{\Acon/\rad \Acon}
(\rad \Acon/\rad^2\Acon)
\longrightarrow
\rad^2\Acon/\rad^3\Acon,
\]
In a few loopless cases, we shall also need the third radical layer. We write 
\[
\operatorname{gr}_{\rad}\Acon:=\bigoplus_{n\ge0}\rad^n\Acon/\rad^{n+1}\Acon
\]
for the associated graded algebra with respect to the radical filtration.

\subsection{One exceptional curve}
For one-curve flops of types~$\DynA$ and~$\DynD$, explicit presentations of the contraction algebras are available in \cite{BW25}. We instead use the radical filtration and the local algebras from Proposition~\ref{prop:local-benchmark-alg}. Suppose that $\Acon$ has only one simple module. Then $\Acon$ is local, and its Gabriel quiver has $\dim_\Bbbk\rad \Acon/\rad^2\Acon$ loops. The cases of dimension $0$ and $1$ give $\Bbbk$ and $\Bbbk[x]/(x^a)$, respectively, while dimension at least $3$ are wild by the separated-quiver criterion. Thus, it remains to consider the case of two loops.

\begin{lemma}\label{lem:onecurve-central-loop-lift}
Let $\Gam(\Delta,\{i\})\cong \Acon/(g)$ with $g\in Z(\Acon)\cap\rad \Acon$ be the surface quotient of $\Acon$ associated with a one-curve contraction. Then, the natural map
\[
\rad \Acon/\rad^2\Acon
\longrightarrow
\rad\Gam(\Delta,\{i\})/
\rad^2\Gam(\Delta,\{i\})
\]
is surjective, and its kernel has dimension at most one. Consequently, the number of loops in the quiver of $\Acon$ is equal to that of $\Gam(\Delta,\{i\})$, or exceeds it by one.
\end{lemma}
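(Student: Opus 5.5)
The plan is to work with the surjection $\pi\colon\Acon\twoheadrightarrow\Gam:=\Gam(\Delta,\{i\})$ with kernel $g\Acon$ given by Proposition~\ref{prop:shadow}. Both algebras are local: $\Acon$ has a single simple module and $\Gam$ is a one-vertex truncation. Any surjection of finite-dimensional algebras maps the radical onto the radical, so $\pi(\rad^k\Acon)=\rad^k\Gam$ for all $k\ge1$. Hence $\pi$ induces a surjection
\[
\bar\pi\colon \rad\Acon/\rad^2\Acon\longrightarrow \rad\Gam/\rad^2\Gam ,
\]
which is the first assertion.

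For the kernel, I identify $\ker\bar\pi$ with $(g\Acon+\rad^2\Acon)/\rad^2\Acon$. Indeed, if $x\in\rad\Acon$ and $\pi(x)\in\rad^2\Gam=\pi(\rad^2\Acon)$, then $x-y\in g\Acon$ for some $y\in\rad^2\Acon$. Thus $\ker\bar\pi$ is the image of $g\Acon\cap\rad\Acon=g\Acon$ in $\rad\Acon/\rad^2\Acon$; here $g\in\rad\Acon$ gives $g\Acon\subseteq\rad\Acon$. Since $\Acon$ is local, $\Acon=\Bbbk\cdot1+\rad\Acon$, so
\[
g\Acon=\Bbbk g+g\rad\Acon\subseteq \Bbbk g+\rad^2\Acon .
\]
Therefore $\ker\bar\pi$ is spanned by the class of $g$ and has dimension at most one. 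Centrality of $g$ is not even needed here; only $g\in\rad\Acon$ and the fact that the quotient is by the ideal generated by one element matter. If the ideal is two-sided, $\Acon g\Acon$, the same argument applies using $\Acon=\Bbbk+\rad\Acon$ on both sides.

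Finally, the number of loops in the Gabriel quiver of a local algebra equals $\dim_\Bbbk\rad/\rad^2$. So the dimension count $\dim\rad\Acon/\rad^2\Acon=\dim\rad\Gam/\rad^2\Gam+\dim\ker\bar\pi$ gives the stated dichotomy: equality, or excess by exactly one.

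No step is a real obstacle. The only point needing care is the identification of $\ker\bar\pi$, which uses the equality $\pi(\rad^2\Acon)=\rad^2\Gam$ and not just an inclusion.
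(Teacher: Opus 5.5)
Your proof is correct and is essentially the paper's argument: identify the kernel with the image of $(g)$ in $\rad\Acon/\rad^2\Acon$, then use $g\rad\Acon\subseteq\rad^2\Acon$ to see that it is spanned by the class of $g$. You also fill in details the paper leaves implicit, namely $\pi(\rad^2\Acon)=\rad^2\Gam$ and $\Acon=\Bbbk\cdot 1+\rad\Acon$, and you rightly note that centrality of $g$ is not needed.
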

\begin{proof}
We have $\rad\Gam(\Delta,\{i\})/\rad^2\Gam(\Delta,\{i\})\cong \rad \Acon/ (\rad^2\Acon+(g))$. Since $g\rad \Acon\subseteq\rad^2\Acon$, the kernel is spanned by the image of $g$ modulo $\rad^2\Acon$.
\end{proof}

For a local algebra of the form $\mathbb C[[x,y]]/(f,h)$, we call the span of the degree-two components of its defining relations the \emph{quadratic relation space}. Over $\mathbb C$, a nonsquare binary quadratic form is equivalent to $xy$ under an invertible linear change of variables.

\begin{lemma}\label{lem:two-generator-normal-form}
Suppose that $\Acon\cong\mathbb C[[x,y]]/(f,h)$ is finite-dimensional with $\dim_{\mathbb C}\rad \Acon/\rad^2\Acon=2$. If the quadratic relation space contains a nondegenerate quadratic form, then $\Acon\cong \mathbb C[[x,y]]/(xy,x^p-y^q)$ for some integers $p,q\ge2$, after an invertible formal change of the variables $x,y$. Moreover, if the quadratic relation space is one-dimensional, then $p,q\ge3$. If the quadratic relation space is two-dimensional, then $\min\{p,q\}=2$.
\end{lemma}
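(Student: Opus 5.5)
We normalise the quadratic parts first and then treat the higher-order terms by a formal change of variables. Since $\rad\Acon/\rad^2\Acon$ is two-dimensional, $x,y$ lie in $\mathfrak m=(x,y)$ and $f,h\in\mathfrak m^2$. By hypothesis some linear combination $\alpha f+\beta h$ has nondegenerate quadratic part. After a linear change of generators of the ideal we may take $f$ to be this combination. Over $\mathbb C$ its quadratic part is then equivalent to $xy$ by a linear change of $x,y$. By the formal Morse lemma (splitting lemma in two variables), an invertible formal change of coordinates tangent to the identity turns $f$ into exactly $xy$: write $f=xy+(\text{higher})$ and solve for new coordinates $x'=x(1+\dots)+\dots$, $y'=y(1+\dots)+\dots$ degree by degree, with convergence taken $\mathfrak m$-adically. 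So $\Acon\cong\mathbb C[[x,y]]/(xy,h)$.

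Next we simplify $h$ modulo $xy$. Every element of $\mathbb C[[x,y]]$ is congruent modulo $(xy)$ to $c+a(x)+b(y)$ with $a\in x\mathbb C[[x]]$ and $b\in y\mathbb C[[y]]$. Since $h\in\mathfrak m^2$, we can write $h\equiv u(x)x^p+v(y)y^q$ with units $u,v$ or zero terms. Finite-dimensionality of $\mathbb C[[x,y]]/(xy,h)$ forces both terms to be nonzero: if, say, the $x$-part vanishes, then $\mathbb C[[x]]$ embeds in the quotient. Hence $p,q\ge2$ are finite. We absorb the units by rescaling. We replace $x$ by $x\,u(x)^{1/p}$ and $y$ by $-y\,(-v(y))^{1/q}$, choosing roots of the units, which exist formally over $\mathbb C$. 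This keeps the ideal $(xy)$ unchanged, since each new variable is the old one times a unit in a single variable, so the product is a unit times $xy$. It also turns $h$ into $x^p-y^q$. This gives the normal form $\mathbb C[[x,y]]/(xy,x^p-y^q)$.

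For the last two claims, we compute the quadratic relation space of the normal form directly. The degree-two component of $xy$ is $xy$, and that of $x^p-y^q$ is nonzero exactly when $\min\{p,q\}=2$. If $p=2<q$ it is $x^2$, if $q=2<p$ it is $y^2$, and if $p=q=2$ it is $x^2-y^2$; in each case it is independent of $xy$. The quadratic relation space must be read intrinsically, as the kernel of $S^2(\mathfrak m/\mathfrak m^2)\to\mathfrak m^2/\mathfrak m^3$, so that it does not depend on the chosen generators or coordinates. Its dimension is therefore $1$ if $p,q\ge3$ and $2$ if $\min\{p,q\}=2$. Since these two cases are exhaustive and mutually exclusive, the dimension of the quadratic relation space determines which case occurs.

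The main obstacle is the formal Morse step together with keeping track of the ideal rather than individual generators. The coordinate changes act on the pair $(f,h)$ only up to changing generators of the ideal, so invariance of the quadratic relation space must be checked in the intrinsic form above. I would first verify that intrinsic description and then run the splitting-lemma argument by induction on degree.
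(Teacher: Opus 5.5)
Your proof is correct and follows the same route as the paper. Both arguments bring one relation to $xy$ by a formal Morse-type change of variables, reduce $h$ modulo $(xy)$ to $x^pu(x)+y^qv(y)$, and use finite-dimensionality to force both parts to be nonzero. They then absorb the units separately and read off the last two claims from the quadratic part. Your description of the quadratic relation space as the kernel of $S^2(\mathfrak m/\mathfrak m^2)\to\mathfrak m^2/\mathfrak m^3$ usefully makes explicit an invariance that the paper uses without comment.

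One harmless slip: to obtain $x^p-y^q$, take $y'=y\,(-v(y))^{1/q}$ without the extra minus sign. As written, your formula produces $x^p+y^q$ when $q$ is odd.
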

\begin{proof}
By an invertible formal change of variables and a choice of generators for the defining ideal, a relation with nondegenerate quadratic part may be brought to $xy$. Thus, $\Acon\cong\mathbb C[[x,y]]/(xy,h)$. Modulo $(xy)$, write $h=F(x)+G(y)$.  Finite dimensionality forces both summands to be nonzero, so $F(x)=x^pu(x)$ and $G(y)=y^qv(y)$ with $u(0),v(0)\ne0$. Absorbing the units into separate changes of $x$ and $y$ gives $\Acon\cong\mathbb C[[x,y]]/(xy,x^p-y^q)$. The final assertions follow from whether the second relation has a quadratic part.
\end{proof}

The classification in Step~1 shows that every non-wild one-curve surface quotient has at most two loops. The surface quotients with at most one loop are those of type~$\DynA$, together with $\Gam(\DynD_m,\{1\})$, $\Gam(\DynD_m,\{m-1\})$, $\Gam(\DynD_m,\{m\})$, $\Gam(\DynE_6,\{1\})$, $\Gam(\DynE_6,\{6\})$, and $\Gam(\DynE_7,\{7\})$. The remaining non-wild one-curve surface quotients, each with two loops, are
\[
\begin{array}{llll}
\Gam(\DynD_m,\{2\}),\quad & \Gam(\DynD_m,\{m-2\}),\quad  & \Gam(\DynE_6,\{2\}), \quad & \Gam(\DynE_6,\{3\}),\\
\Gam(\DynE_6,\{5\}), \quad & \Gam(\DynE_7,\{1\}),\quad  & \Gam(\DynE_7,\{6\}),\quad & \Gam(\DynE_8,\{8\}).
\end{array}
\]

\begin{proposition}\label{prop:onecurve-classification}
Let $\Acon$ be a one-curve contraction algebra which survives after Step~2, and let $\Gam(\Delta,\{i\})$ be its surface quotient.
\begin{enumerate}
\item If $\dim_\Bbbk\rad \Acon/\rad^2\Acon\le1$, then $\Acon$ is representation-finite. If $\dim_\Bbbk\rad \Acon/\rad^2\Acon\ge3$, then $\Acon$ is wild.

\item Suppose that $\dim_\Bbbk\rad \Acon/\rad^2\Acon=2$ and that the surface quotient has at most one loop. Then, $\Acon$ is commutative. Let $\eta\colon \operatorname{Sym}^2(\rad \Acon/\rad^2\Acon)\longrightarrow\rad^2\Acon/\rad^3\Acon$ be the map induced by multiplication.
\begin{enumerate}
\item If $\dim_{\Bbbk} \ker \eta=0$, then $\Acon$ is wild.
\item If $\dim_{\Bbbk} \ker \eta=1$, then $\Acon$ is wild when the generator of $\ker \eta$ is the square of a nonzero linear form, and is tame when the generator of $\ker \eta$ is nonsquare.
\item If $\dim_{\Bbbk} \ker \eta=2$, then $\Acon$ is tame.
\end{enumerate}

\item Suppose that $\dim_\Bbbk\rad \Acon/\rad^2\Acon=2$. If the surface data are one of $(\DynD_m,\{2\})$, $(\DynE_6,\{2\})$, $(\DynE_7,\{1\})$, $(\DynE_8,\{8\})$, then $\dim_\Bbbk\rad^2\Acon/\rad^3\Acon$ is either $1$ or $2$. Moreover,
\begin{enumerate}
\item if $\dim_\Bbbk\rad^2\Acon/\rad^3\Acon=1$, then $\Acon$ is tame.

\item if $\dim_\Bbbk\rad^2\Acon/\rad^3\Acon=2$, then $\Acon$ is tame if there exist linearly independent elements $u,v\in\rad \Acon/\rad^2\Acon$ with $u\otimes u,v\otimes v\in\ker\mathfrak{m}_{\rad}$, and is wild otherwise.
\end{enumerate} 
If the surface data are $(\DynD_m,\{m-2\})\ (m\ge5)$, $(\DynE_6,\{3\})$, $(\DynE_6,\{5\})$, $(\DynE_7,\{6\})$, then $\dim_\Bbbk\rad^2\Acon/\rad^3\Acon$ is either $2$ or $3$. Moreover, $\Acon$ is tame when this dimension is $2$, and is wild when it is $3$.
\end{enumerate}
\end{proposition}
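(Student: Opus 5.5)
Part (1) is immediate from the discussion preceding Lemma~\ref{lem:onecurve-central-loop-lift}. If $\dim\rad\Acon/\rad^2\Acon\le1$, then the local algebra $\Acon$ is a quotient of $\Bbbk[[x]]$, so $\Acon\cong\Bbbk[x]/(x^a)$, which is representation-finite. If the dimension is at least $3$, the separated quiver of $\Acon/\rad^2\Acon$ is the generalised Kronecker quiver with at least three arrows, which is wild.

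For (2), the surface quotient has at most one loop, so Lemma~\ref{lem:onecurve-central-loop-lift} shows that $\rad\Acon/\rad^2\Acon$ is spanned by a lift $x$ of the surface generator together with the central element $g$. Since $g$ is central and $\Acon$ is generated by $x$ and $g$, $\Acon$ is commutative. A finite-dimensional commutative local algebra with two generators is a complete intersection here, because it is symmetric (hence Gorenstein) and of embedding dimension two. So $\Acon\cong\mathbb C[[x,y]]/(f,h)$, and $\ker\eta$ is exactly the quadratic relation space. We split by $\dim\ker\eta$.

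\begin{itemize}
\item If $\ker\eta=0$, then $\rad^2/\rad^3$ is $3$-dimensional. We pass to the quotient by $\rad^3$ and then kill the relations needed to reach algebra (6) of Proposition~\ref{prop:local-benchmark-alg}, namely $\langle x^3,y^3,xy-yx,x^2y,xy^2\rangle$. This quotient is wild.
\item If $\ker\eta$ is spanned by a square, say $x^2$, we aim for a quotient isomorphic to algebra (7) of Proposition~\ref{prop:local-benchmark-alg} with $\lambda=1$ and the roles of $x,y$ swapped. The relation $y^3$ (after swapping) can be imposed because $y^3$ lies in $\rad^3$ and the quotient by it is still large enough.
\item If $\ker\eta$ is spanned by a nonsquare, Lemma~\ref{lem:two-generator-normal-form} gives $\mathbb C[[x,y]]/(xy,x^p-y^q)$ with $p,q\ge3$. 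Rescaling $y$ by $\lambda$ gives a family degenerating to a special biserial algebra, or we observe directly that it is special biserial. It is representation-infinite because of its two loops, hence tame.
\item If $\dim\ker\eta=2$, the $2$-dimensional subspace of binary quadratics always contains a nonsquare form. Lemma~\ref{lem:two-generator-normal-form} then gives $\min\{p,q\}=2$, which is again special biserial and representation-infinite, hence tame.
\end{itemize}

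For (3), the surface quotients are known from Step~1. In the first group they are all $\Bbbk\langle x,y\rangle/(x^2,y^2,xy+yx)$, with $\rad^2=\rad^2/\rad^3$ one-dimensional. In the second group (e.g.\ $\Bbbk\langle x,y\rangle/(x^2,y^2,(x+y)^{m-2})$) the space $\rad^2/\rad^3$ is $2$-dimensional. Since the surface quotient already has two loops, Lemma~\ref{lem:onecurve-central-loop-lift} together with the two-loop hypothesis on $\Acon$ shows that $g\in\rad^2\Acon$. Then $\rad^2\Acon/\rad^3\Acon$ surjects onto the surface layer with kernel spanned by the class of $g$. This gives the dimension ranges $\{1,2\}$ and $\{2,3\}$.

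\begin{itemize}
\item If the dimension equals that of the surface layer, then $g\in\rad^3$. The plan is to degenerate $\Acon$, via the radical filtration, to $\operatorname{gr}_{\rad}\Acon$. This graded algebra has the same quadratic part as the surface quotient, which is special biserial (or a quotient of a tame algebra from Proposition~\ref{prop:local-benchmark-alg}(1)--(2)). Hence $\Acon$ is tame by \cite{Geiss}.
\item In the first group with dimension $2$, the quadratic relation space has dimension $2$. If it contains $u^2$ and $v^2$ for independent $u,v$, the relations are $u^2,v^2$ plus a multiple of $uv+\mu vu$, which leads again to a special biserial degeneration. Otherwise, a change of variables produces a quotient isomorphic to Proposition~\ref{prop:local-benchmark-alg}(3) or (7).
\item In the second group with dimension $3$, the only quadratic relations are $x^2,y^2$ (or a conjugate pair). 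Killing $\rad^3$ and adding suitable monomials gives Ringel's algebra (4) or (5), which is wild.
\end{itemize}

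The main obstacle is this last bookkeeping. We must show that the degeneration to $\operatorname{gr}_{\rad}\Acon$ really lands in a tame family, which requires controlling the higher relations using symmetry of $\Acon$ and centrality of $g$. We must also check, case by case, that every quadratic relation space not satisfying the stated criterion admits a wild quotient from Proposition~\ref{prop:local-benchmark-alg}. We would do this first via an explicit normal-form classification of the $2$-dimensional subspaces of $\Bbbk\langle x,y\rangle_2$.
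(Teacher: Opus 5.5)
Your treatment of \textup{(1)}, \textup{(2)} and the tame subcases of \textup{(3)} is essentially the paper's argument. Those tame subcases need no control of higher relations: once the quadratic relation space contains $u^2,v^2$ (or is $\langle x^2,y^2,xy+yx\rangle$), $\operatorname{gr}_{\rad}\Acon$ is a quotient of a special biserial algebra, so radical degeneration already gives non-wildness. The genuine problems are in the two wild subcases of \textup{(3)}.

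\textbf{Second group, $\dim_\Bbbk\rad^2\Acon/\rad^3\Acon=3$.} Your relation count is wrong here. The quadratic relation space is then one-dimensional ($4-3=1$), so it is not $\langle x^2,y^2\rangle$. That space is exactly the tame case of dimension $2$, and with $x^2=y^2=0$ no quotient can be Ringel's algebra \textup{(4)} or \textup{(5)} of Proposition~\ref{prop:local-benchmark-alg}, where $y^2\neq0$. Since the quadratic map of $\Acon$ factors through that of the surface quotient, the relation space is spanned by a single form $ax^2+by^2$.
\begin{itemize}
\item If $ab=0$, you do reach \textup{(4)} by killing $yx$ in $\Acon/\rad^3\Acon$.
\item If $ab\neq0$, rescale to $x^2-y^2$. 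Now neither \textup{(4)} nor \textup{(5)} can be a quotient. A surjection onto a local algebra with two-dimensional cotangent space is an isomorphism on cotangent spaces. So the target's quadratic relation space would have to contain the rank-two tensor $x\otimes x-y\otimes y$. But the relation spaces of \textup{(4)} and \textup{(5)} consist of tensors $\ell\otimes x'$, respectively $x'\otimes\ell$, which have rank at most one.
\end{itemize}
What is needed in the second subcase is the non-monomial wild algebra \textup{(3)}, $\Bbbk\langle x,y\rangle/(x^2-y^2,xy)$. It is obtained from $\Acon/\rad^3\Acon$ by killing $xy$.

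\textbf{First group, dimension $2$.} Your ``otherwise'' step is only asserted. The route you propose cannot prove it: a normal-form classification of $2$-dimensional subspaces of the whole $4$-dimensional space $\Bbbk\langle x,y\rangle_2$. On that larger Grassmannian the stated criterion is false. For example, $\langle xy,yx\rangle$ contains no nonzero square $u\otimes u$. Yet any local algebra with this quadratic relation space degenerates to its associated graded algebra, which is special biserial, so it is not wild.

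The missing idea is a containment you already have but do not use. Since $\rad^2\Acon/\rad^3\Acon$ surjects onto the surface layer, the quadratic relation space of $\Acon$ lies in $\langle x^2,y^2,xy+yx\rangle$, that is, in the space of binary quadratic forms. A plane in this space meets the conic of squares in one of two ways.
\begin{itemize}
\item It meets it in two points. This gives $\langle u^2,v^2\rangle$, which is your tame case. There is no extra relation $uv+\mu vu$, since the space is only two-dimensional.
\item It is tangent to it. The tangent plane at $x^2$ is $\langle x^2,xy+yx\rangle$. Then $\Acon/\rad^3\Acon$ is, after exchanging $x$ and $y$, algebra \textup{(7)} with $\lambda=-1$.
\end{itemize}
Algebra \textup{(3)} does not arise in this subcase.
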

\begin{proof}
Part~\textup{(1)} follows from the separated-quiver criterion and the classification of local algebras. We therefore assume $\dim_\Bbbk\rad \Acon/\rad^2\Acon=2$.

For \textup{(2)}, suppose that the surface quotient has at most one loop. Lemma~\ref{lem:onecurve-central-loop-lift} shows that it has exactly one and that the kernel on the first radical layer is one-dimensional. Choose $x$ lifting this loop and put $y=g$, and then $x,y$ generate the radical of $\Acon$. As $g$ is central, $xy=yx$, and hence $\Acon$ is commutative. Since $\Acon$ is a zero-dimensional Gorenstein local algebra of embedding dimension two, it is a complete intersection; thus $\Acon\cong\mathbb C[[x,y]]/(f,h)$. If the quadratic relation space is zero, then $\Acon/\rad^3\Acon\cong \Bbbk\langle x,y\rangle/(x^3,y^3,xy-yx,x^2y,xy^2)$, which is the wild algebra in Proposition~\ref{prop:local-benchmark-alg}\textup{(6)}.

If the quadratic relation space is one-dimensional and generated by the square of a nonzero linear form, an invertible linear change of $x,y$ allows us to assume that the generator is $x^2$. Then, $\Acon/\rad^3\Acon\cong \Bbbk\langle x,y\rangle/(x^2,y^3,xy^2,xy-yx)$. After exchanging $x$ and $y$, this is Proposition~\ref{prop:local-benchmark-alg}\textup{(7)} with $\lambda=1$. Hence, $\Acon$ is wild.

If the quadratic relation space is one-dimensional and its generator is nonsquare, then it contains a nondegenerate binary quadratic form. Moreover, Lemma~\ref{lem:two-generator-normal-form} gives $\Acon\cong\Bbbk[[x,y]]/(xy,x^p-y^q)$ for some $p,q\ge3$. This algebra is a representation-infinite special biserial algebra, and hence tame.

If the quadratic relation space is two-dimensional, it contains a nonsquare quadratic form. Indeed, the squares of linear forms cannot span a two-dimensional vector subspace consisting entirely of squares: the sum of the squares of two linearly independent forms is nonsquare over $\mathbb C$ after a suitable nonzero scalar multiple. Lemma~\ref{lem:two-generator-normal-form} therefore gives $\Acon\cong\Bbbk[[x,y]]/(xy,x^p-y^q)$ for some $p,q\ge2$ with $\min\{p,q\}=2$. Again, this is special biserial and tame.

For \textup{(3)}, suppose that the surface quotient has two loops. The first radical layers are isomorphic, so $g\in\rad^2\Acon$, and $\rad^2\Acon/\rad^3\Acon\longrightarrow \rad^2\Gam(\Delta,\{i\})/\rad^3\Gam(\Delta,\{i\})$ is surjective with kernel of dimension at most one.

For the cases $(\DynD_m,\{2\})$, $(\DynE_6,\{2\})$, $(\DynE_7,\{1\})$, and $(\DynE_8,\{8\})$, Step~1 gives $\Gam(\Delta,\{i\}) \cong \Bbbk\langle x,y\rangle/(x^2,y^2,xy+yx)$. Thus, its second radical layer is one-dimensional, and its quadratic relation space is $\langle x^2,y^2,xy+yx\rangle$. It follows that $\dim_\Bbbk\rad^2\Acon/\rad^3\Acon$ is either $1$ or $2$.
\begin{itemize}
\item If this dimension is $1$, then the quadratic relation space of $\Acon$ is also $\langle x^2,y^2,xy+yx\rangle$. Hence, $x^2=y^2=xy+yx=0$ in the associated graded algebra. These relations force every product of three loops to vanish. Thus, $\rad^3\Acon/\rad^4\Acon=0$ and  $\rad^3\Acon=0$. Since the map on the second radical layers is an isomorphism, the image of $g$ modulo $\rad^3\Acon$ is zero. Therefore $g=0$ and $\Acon\cong\Gam(\Delta,\{i\})$, which is tame.

\item If $\dim_\Bbbk\rad^2\Acon/\rad^3\Acon=2$, the quadratic relation space is a two-dimensional subspace of $\langle x^2,y^2,xy+yx\rangle$. If it contains the squares of two linearly independent linear forms, an invertible change of generators makes it $\langle x^2,y^2\rangle$. Hence, the initial quadratic relations of $\Acon$ are $x^2$ and $y^2$. Thus $\operatorname{gr}_{\rad}\Acon$ is a quotient of $\Bbbk\langle x,y\rangle/(x^2,y^2)$, so it is special biserial and tame. Since $\Acon$ degenerates to its associated graded algebra, $\Acon$ is tame. Otherwise, an elementary calculation with binary quadratic forms shows that, after an invertible linear change of $x,y$, the quadratic relation space is $\langle x^2,xy+yx\rangle$. Therefore, $\Acon/\rad^3\Acon$ has exactly these two quadratic relations and all products of length three vanish. After exchanging $x$ and $y$, it is $\Bbbk\langle x,y\rangle/ (x^3,y^2,x^2y,xy+yx)$, which is Proposition~\ref{prop:local-benchmark-alg}\textup{(7)} with $\lambda=-1$. Hence, $\Acon$ is wild.
\end{itemize}

It remains to consider $(\DynD_m,\{m-2\})$ with $m\ge5$, $(\DynE_6,\{3\})$, $(\DynE_6,\{5\})$, and $(\DynE_7,\{6\})$. Step~1 shows that in each of these cases the quadratic relation space of the surface quotient is $\langle x^2,y^2\rangle$. Indeed, $\Gam(\DynD_m,\{m-2\}) \cong \Bbbk\langle x,y\rangle/ (x^2,y^2,(x+y)^{m-2})$, the additional relations for the type~$\DynE_6$ cases begin in degree three, and the additional relation for the type~$\DynE_7$ case begins in degree four. Thus, the second radical layer of the surface quotient has dimension $2$, and $\dim_\Bbbk\rad^2\Acon/\rad^3\Acon$ is either $2$ or $3$.
\begin{itemize}
\item If this dimension is $2$, then the quadratic relation space of $\Acon$ is also $\langle x^2,y^2\rangle$. Hence, $\operatorname{gr}_{\rad}\Acon$ is a quotient of $\Bbbk\langle x,y\rangle/(x^2,y^2)$ and is tame. Since $\Acon$ degenerates to $\operatorname{gr}_{\rad}\Acon$, it is tame.

\item Suppose $\dim_\Bbbk\rad^2\Acon/\rad^3\Acon=3$. Then, the quadratic relation space of $\Acon$ is one-dimensional and is contained in $\langle x^2,y^2\rangle$. Its generator has the form $ax^2+by^2$ with $(a,b)\ne(0,0)$. If one of $a,b$ is zero, after exchanging $x$ and $y$ we may assume that the quadratic relation is $x^2=0$. In $\Acon/\rad^3\Acon$, quotienting further by $yx$ gives $\Bbbk\langle x,y\rangle/(x^2,y^3,yx,xy^2)$, the wild algebra in Proposition~\ref{prop:local-benchmark-alg}\textup{(4)}. If both $a$ and $b$ are nonzero, after rescaling $x$ and $y$ we may write the quadratic relation as $x^2-y^2=0$. In $\Acon/\rad^3\Acon$, quotienting further by $xy$ gives $\Bbbk\langle x,y\rangle/(x^2-y^2,xy)$, the wild algebra in Proposition~\ref{prop:local-benchmark-alg}\textup{(3)}. Hence $\Acon$ is wild in both cases.
\end{itemize}
This covers all one-curve cases not already shown to be wild in Step~2.
\end{proof}

\subsection{Two exceptional curves}
Suppose that $\Acon$ has precisely two simple modules, with corresponding primitive idempotents $e_1,e_2$. We first record the effect of the central quotient on the first few radical layers.

\begin{proposition}\label{prop:two-curve-radical-reduction}
Let $\Gam(\Delta,\{i,j\})\cong \Acon/(g)$ with $g\in Z(\Acon)\cap\rad \Acon$ be the surface quotient of $\Acon$ associated with a two-curve contraction. For $i\ne j$, the natural map $e_i(\rad \Acon/\rad^2\Acon)e_j\to e_i(\rad\Gam(\Delta,\Omega)/\rad^2\Gam(\Delta,\Omega))e_j$ is an isomorphism.  At each vertex, the kernel on the loop space has dimension at most one.

If the Gabriel quivers of $\Acon$ and $\Gam(\Delta,\Omega)$ agree, then $g\in\rad^2\Acon$.  In this case, all off-diagonal quadratic radical spaces are unchanged, while in each diagonal corner the kernel in $\rad^2\Acon/\rad^3\Acon$ has dimension at most one.

If, in addition, the natural map on the quadratic radical layers is an isomorphism, then $g\in\rad^3\Acon$, and all off-diagonal cubic radical spaces are unchanged.
\end{proposition}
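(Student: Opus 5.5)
The plan is to mimic the proof of Lemma~\ref{lem:onecurve-central-loop-lift}, working corner by corner with the idempotents $e_1,e_2$. The key point is that $g$ is central, so $g=\sum_k e_kge_k$ and each $e_kge_k$ lies in $e_k(\rad\Acon)e_k$. Write $\Gam:=\Gam(\Delta,\Omega)$ and $A:=\Acon$. The ideal $(g)=gA=Ag$ is central, hence two-sided, and $e_i(g)e_j=g\,e_iAe_j$.

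For every $n\ge1$ we have $\rad^n\Gam=(\rad^nA+(g))/(g)$. Hence
\[
e_i\bigl(\rad^n\Gam/\rad^{n+1}\Gam\bigr)e_j\cong e_i\rad^nAe_j\big/e_i\bigl(\rad^{n+1}A+(g)\bigr)\cap\rad^nAe_j .
\]
So the kernel of the natural surjection $e_i(\rad^nA/\rad^{n+1}A)e_j\to e_i(\rad^n\Gam/\rad^{n+1}\Gam)e_j$ is spanned by the images of the elements of $e_i(g)e_j\cap\rad^nA$.

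\textbf{Step 1: $n=1$.} Since $g\in\rad A$, we have $g\cdot\rad A\subseteq\rad^2A$. Therefore, modulo $\rad^2A$, the ideal $(g)$ contributes only the classes of $e_kge_k$, since $gA=g(A/\rad A)+g\rad A$ and $A/\rad A$ is spanned by $e_1,e_2$. For $i\ne j$ we get $e_ige_j=0$, so the off-diagonal map is an isomorphism. On the diagonal, the kernel is spanned by the class of $e_kge_k$ and has dimension at most one.

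\textbf{Step 2: Gabriel quivers agree.} Then all first-layer maps are isomorphisms, so each $e_kge_k\in\rad^2A$, and hence $g\in\rad^2A$. Repeat the argument at $n=2$: modulo $\rad^3A$, the ideal $(g)\cap\rad^2A$ is spanned by the classes of the $e_kge_k$, because $g\rad A\subseteq\rad^3A$. The off-diagonal corners of this span vanish and each diagonal corner is at most one-dimensional.

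One subtlety needs care. We must check that $e_i((g)+\rad^3A)e_j\cap\rad^2A$ is no larger than $g(A/\rad A)+\rad^3A$ restricted to that corner. This follows because any element $ga$ with $a\notin\rad A$ has a component $ga_0$, with $a_0$ a combination of $e_1,e_2$, whose corner parts are $e_kge_k$.

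\textbf{Step 3: quadratic layers also agree.} Then $e_kge_k\in\rad^3A$, so $g\in\rad^3A$, and the same argument at $n=3$ shows the off-diagonal cubic spaces are unchanged.

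I expect the only real obstacle to be the subtlety in Step 2. It is resolved by the general filtration statement: if $g\in\rad^nA$, then $(g)\subseteq g(A/\rad A)+\rad^{n+1}A$ modulo $\rad^{n+1}A$. This is immediate from $gA=g(\Bbbk e_1+\Bbbk e_2)+g\rad A$ together with $g\rad A\subseteq\rad^{n+1}A$. Combined with $ge_k=e_kge_k$, it shows that only diagonal corners can be affected. I would state this as the single inductive claim and apply it for $n=1,2,3$.
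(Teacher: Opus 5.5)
Your proposal is correct and follows the paper's argument. Centrality gives $g=e_1ge_1+e_2ge_2$, and $g\,\rad\Acon\subseteq\rad^{n+1}\Acon$ whenever $g\in\rad^n\Acon$, so the kernel on each radical layer is spanned by the diagonal classes of the $e_kge_k$; this is applied successively for $n=1,2,3$. Your explicit identification of that kernel as the image of $(g)\cap\rad^n\Acon$ makes precise a step the paper leaves implicit.
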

\begin{proof}
Since $g$ is central, $g=e_1ge_1+e_2ge_2$. Moreover, $(g)\rad \Acon\subseteq\rad^2\Acon$. Thus, the kernel on the first radical layer is diagonal, and its component at vertex $i$ is spanned by the class of $e_i g e_i$. If the Gabriel quivers agree, then $g\in\rad^2\Acon$; repeating the argument modulo $\rad^3\Acon$ gives the quadratic statement. If the quadratic layers also agree, then $g\in\rad^3\Acon$, and the same argument modulo $\rad^4\Acon$ gives the cubic statement.
\end{proof}

\subsubsection*{Type~\(\DynA\)}
Let $\Omega=\{j_1<j_2\}$, with associated parameters $d_0,d_1,d_2$ as in Section~\ref{sec:A-stable}.  By Lemma~\ref{lem:A-gap-mutation}, iterated flops permute these three integers.  We may therefore choose a representative for which $d_1=\min\{d_0,d_1,d_2\}$.  Since the orbit survives in Step~2, we have $d_1=1$ or $d_1=2$. If $d_1=1$, Theorem~\ref{thm:uniformA} shows that the surface quotient has Gabriel quiver $\xymatrix@C=3em{1 \ar@<0.5ex>[r] & 2 \ar@<0.5ex>[l]}$; if $d_1=2$, its Gabriel quiver is obtained by adding one loop to each vertex in the case of $d_1=1$. The following proposition distinguishes the possible types of the contraction algebra in terms of its loops and quadratic relations. If there is exactly one loop at each vertex, we write the quiver as
\[
\mathsf{Q}_2:\quad \xymatrix@C=3em{
1 \ar@(ul,dl)[]_{\alpha} \ar@<0.5ex>[r]^{\mu}
& 2 \ar@(ur,dr)[]^{\beta} \ar@<0.5ex>[l]^{\nu}},
\]
where, for any path $p$ of length two, we denote by $[p]$ its class in $\rad^2\Acon/\rad^3\Acon$. As in the corresponding stable-endomorphism presentation, the off-diagonal quadratic classes satisfy $[\alpha\mu]=[\mu\beta]$ and $[\beta\nu]=[\nu\alpha]$ whenever they are nonzero.

\begin{proposition}\label{thm:step3-A2-quadratic}
Let $\Acon$ be a two-curve contraction algebra of type~$\DynA$
whose surface orbit survives Step~2. After a suitable sequence of
flops and a choice of arrow generators, exactly one of the following
occurs. The representative is chosen as in the proof, and may differ
from the representative chosen above.
\begin{enumerate}
\item The Gabriel quiver of $\Acon$ is
$\xymatrix@C=2em{1 \ar@<0.5ex>[r] & 2 \ar@<0.5ex>[l]}$,
and then $\Acon$ is representation-finite.

\item The Gabriel quiver of $\Acon$ is $\mathsf{Q}_2$ and
$[\alpha\mu]=[\mu\beta]=[\beta\nu]=[\nu\alpha]=0$.
Then $\Acon$ is tame.

\item The Gabriel quiver of $\Acon$ is $\mathsf{Q}_2$,
both loop--arrow directions are nonzero, and
\[
0\ne[\mu\nu]\in\Bbbk^\times[\alpha^2]
\qquad\text{and}\qquad
0\ne[\nu\mu]\in\Bbbk^\times[\beta^2].
\]
Then $\Acon$ is tame.

\item Either some vertex of the Gabriel quiver has at least two
loops, or the Gabriel quiver is $\mathsf{Q}_2$, both loop--arrow
directions are nonzero, and $[\mu\nu]=[\nu\mu]=0$.
Then $\Acon$ is wild.
\end{enumerate}
The tame algebras in \textup{(2)} and \textup{(3)} are
representation-infinite.
\end{proposition}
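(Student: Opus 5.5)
We begin by fixing the Gabriel quiver of $\Acon$, using Proposition~\ref{prop:two-curve-radical-reduction} together with Lemma~\ref{lem:A-gap-mutation}. The off-diagonal arrows of $\Acon$ coincide with those of the surface quotient, so there is exactly one arrow in each direction. At each vertex, the loop space of $\Acon$ exceeds that of $\Gam(\DynA_m,\{j_1,j_2\})$ by at most one.

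If $d_1=2$ in some representative, the surface quotient already carries one loop at each vertex. If in addition some vertex of $\Acon$ acquires a second loop, the separated quiver of $\Acon/\rad^2\Acon$ contains a vertex $i^+$ with at least three neighbours inside a cycle. This is neither Dynkin nor Euclidean, so $\Acon$ is wild, giving the first half of (4). The same argument applies whenever two loops occur at one vertex in any representative.

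If $\min\{d_0,d_1,d_2\}=1$, we choose a representative with $d_1=1$. Then the surface quotient is loopless, and $\Acon$ has at most one loop per vertex. We then use the one-curve subset contraction algebras $\Acon/\Acon(1-e_i)\Acon$, in suitable flops, to decide whether loops occur. If some representative is loopless, we are in case (1): $\Acon$ has quiver $1\rightleftarrows 2$, and is symmetric and hence a Brauer-tree-type Nakayama-like algebra (a symmetric algebra whose quiver is a two-cycle is a symmetric Nakayama algebra), so it is representation-finite. Otherwise every representative has a loop at each vertex, and the quiver is $\mathsf Q_2$. A single loop at only one vertex should be excluded by the symmetry of the two-curve orbit; I expect this to need a direct check from the one-curve classification.

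With the quiver equal to $\mathsf Q_2$, we split according to the quadratic layer, $\mathfrak m_{\rad}$. The off-diagonal quadratic classes are forced by the surface quotient via Proposition~\ref{prop:two-curve-radical-reduction}: $[\alpha\mu]=[\mu\beta]$ and $[\beta\nu]=[\nu\alpha]$.

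\emph{Case (2).} If these classes vanish, then $\operatorname{gr}_{\rad}\Acon$ has commuting-square relations killed. It degenerates to a special biserial algebra with $\alpha\mu=\mu\beta=\beta\nu=\nu\alpha=0$, which is tame. Representation-infiniteness comes from the $\widetilde{\DynA}_3$ separated quiver of $\Acon/\rad^2\Acon$.

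\emph{Case (3).} Here $[\mu\nu]\in\Bbbk^\times[\alpha^2]$ and $[\nu\mu]\in\Bbbk^\times[\beta^2]$. We rescale to obtain the relations of Theorem~\ref{thm:uniformA} with $d_1=2$ to leading order, and degenerate $\Acon$ to its associated graded algebra. The aim is to compare this algebra with a quotient of a tame two-point algebra in Han's Table~T, or with a tame $\Gam(\DynA,\{\ell-2,\ell\})$-type algebra from Proposition~\ref{prop:oneside}.

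\emph{Case (4), second half.} Here $[\mu\nu]=[\nu\mu]=0$ while the loop–arrow classes are nonzero. We pass to $\Acon/\rad^3\Acon$ and further quotient by suitable paths, for instance $\nu\alpha$ and $\alpha^2\mu$, aiming to realise a minimal wild algebra from Table~W such as $W_{33}$, as in Proposition~\ref{thm:two-point}.

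The main obstacle is exhaustiveness together with case (3). I need to show that, once the commutation classes are nonzero, $[\mu\nu]$ must lie in $\Bbbk[\alpha^2]$, with the only alternatives being zero or proportional. This uses the symmetric form of $\Acon$ and the surface relations $a_1b_1=x_1^{d_1}$. The tameness degeneration in case (3) must then be checked against Han's tables.
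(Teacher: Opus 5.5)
There is a genuine gap, and it is the exhaustiveness problem you flag yourself. The statement asserts that \emph{exactly one} of (1)--(4) occurs for a suitable representative. The information you work with (the surface quotient, Proposition~\ref{prop:two-curve-radical-reduction}, one-curve subset algebras, symmetry) does not determine which loop and quadratic configurations actually arise. The missing idea is an explicit model of $\Acon$. The paper writes the singularity as $uv=f_0f_1f_2$, with reduced, pairwise coprime blocks $f_i\in S=\mathbb C[[x,y]]$, and iterated flops permute the blocks. For an ordering $(f,g,h)$ it computes $\Acon$, via Kn\"orrer periodicity, as the stable endomorphism algebra of $S/(f)\oplus S/(fg)$ over $S/(fgh)$. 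The corners are $S/(f,gh)$, $S/(fg,h)$ and $S/(f,h)$ (twice), with $\mu\nu=ge_1$, $\nu\mu=ge_2$ and a central $S$-action. This gives three things:
\begin{enumerate}
\item the loop numbers $\dim\mathfrak m/(\mathfrak m^2+(f,g))$ and $\dim\mathfrak m/(\mathfrak m^2+(g,h))$;
\item the exact relations $\alpha\mu=\mu\beta$ and $\beta\nu=\nu\alpha$ for $\alpha=ye_1$, $\beta=ye_2$;
\item for smooth blocks in a suitable ordering, $\mu\nu=\alpha^dU(\alpha)$ and $\nu\mu=\beta^dV(\beta)$ with $U(0)V(0)\ne0$.
\end{enumerate}
This is what makes the two vertices behave in lockstep. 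The case split is then by the number of singular blocks and the intersection numbers $I(f_i,f_j)$:
\begin{enumerate}
\item two singular blocks placed consecutively give two loops at a vertex;
\item all blocks smooth with $d=\min I(f_i,f_j)=1$ gives (1), after putting a block transverse to both others in the middle;
\item all blocks smooth with $d=2$ gives (3), and $d\ge3$ gives the second half of (4);
\item exactly one singular block $F$ placed in the middle gives (2) if the two smooth blocks are transverse, and otherwise (3) or (4) according to the $y^2$-coefficient of $F(0,y)$.
\end{enumerate}

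Since $d_i=\operatorname{ord}f_i$, the model shows concretely where your route breaks. Proposition~\ref{prop:two-curve-radical-reduction} transports off-diagonal quadratic classes from $\Gam(\DynA_m,\Omega)$ only when the two Gabriel quivers agree. But every representative in which the paper lands in (2), (3) or the second half of (4) has $d_0=d_2=1$. Its surface quotient is therefore loopless, and $\alpha,\beta$ are new loops. Nothing you have then forces $[\alpha\mu]=[\mu\beta]$. Nor does it exclude configurations absent from the list, such as a single nonzero loop--arrow direction, $[\mu\nu]\ne0=[\nu\mu]$, or $[\alpha^2]=0$ in the second half of (4), where your $W_{33}$ quotient would collapse further. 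Conversely, if $\min d_i\ge2$ then all three blocks are singular and every vertex of every representative carries two loops. So your branch in which $\Acon$ has the surface quiver $\mathsf Q_2$ never occurs.

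Symmetry does not force the missing quadratic relations either. In the type~$\DynD$ orbit, for instance, the paper must allow a single nonzero loop--arrow direction.

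Your step ``otherwise every representative has a loop at each vertex'' is also false. Take $F$ singular and $B,C$ smooth and transverse. Each ordering with $d_1=1$, e.g.\ $(F,B,C)$, has exactly one loop, and no ordering is loopless. The correct representative is $(B,F,C)$: it has quiver $\mathsf Q_2$ with all loop--arrow products zero, which is case (2). One-loop representatives are therefore not excluded by symmetry; they are avoided by choosing the ordering.

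A smaller point concerns (3). The two proportionality constants cannot in general be normalised simultaneously, because their ratio is an invariant of $\operatorname{gr}_{\rad}\Acon$. So comparison with $\Gam(\DynA,\{\ell-2,\ell\})$ does not suffice. The paper instead uses a quotient of $T_{21}$ from Table~T with parameter $c/d'$.
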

\begin{proof}
Put $S=\mathbb C[[x,y]]$ and $\mathfrak m=(x,y)$.
The type~$\DynA$ factor-block model writes the singularity as
$uv=f_0f_1f_2$, where the three nonunit blocks are reduced and
pairwise coprime. Iterated flops reorder these blocks; see
\cite[Sections~3.1--3.2]{ZhangGV26}.
For an ordering $(f,g,h)$, put $A=\Acon$ and $J=\rad A$.
The stable-endomorphism model gives
\[
\begin{aligned}
e_1Ae_1&\cong S/(f,gh),&
e_2Ae_2&\cong S/(fg,h),\\
e_1Ae_2&\cong S/(f,h),&
e_2Ae_1&\cong S/(f,h),
\end{aligned}
\qquad
\mu\nu=ge_1,\quad \nu\mu=ge_2,
\]
with central $S$-action.
Indeed, compute the stable endomorphisms of
$S/(f)\oplus S/(fg)$ over $S/(fgh)$: the intervertex maps are
generated by the natural quotient and multiplication by $g$,
and the ideals of maps through the free module are $(gh)$,
$(h)$ and $(h)$ in the corresponding coefficient spaces.
Kn\"orrer periodicity gives the threefold model, up to taking
the opposite algebra.
Killing the other vertex gives $S/(f,g)$ or $S/(g,h)$.
Consequently the loop multiplicities are
\[
\dim_{\mathbb C}\mathfrak m/(\mathfrak m^2+(f,g)),
\qquad
\dim_{\mathbb C}\mathfrak m/(\mathfrak m^2+(g,h)).
\]
Call a block smooth if it is not in $\mathfrak m^2$.
If at least two blocks are singular, put them consecutively.
Their common vertex has two loops, and the two intervertex arrows
are still present. The separated quiver contains a Kronecker
pair with an attached edge, so $A$ is wild. This is the first
alternative of \textup{(4)}.

Suppose next that all three blocks are smooth, and put
$d=\min_{i<j}\dim_{\mathbb C}S/(f_i,f_j)$.
If $d=1$, some block is transverse to both others.
Putting it in the middle gives the loopless quiver in \textup{(1)}.
The algebra is then a finite-dimensional cyclic Nakayama algebra,
and is representation-finite.
If $d\ge2$, choose coordinates and units so that
$f_i=x-\phi_i(y)$ with $\phi_i\in(y^2)$.
The minimum of the three orders
$\operatorname{ord}_y(\phi_i-\phi_j)$ occurs at least twice.
Choose $(f,g,h)$ so that
$I(f,g)=I(g,h)=d$, where $I(a,b)=\dim_{\mathbb C}S/(a,b)$.
The two diagonal rings are univariate of lengths
$d+I(f,h)$, and the off-diagonal coefficient ring has
length $I(f,h)\ge d$.
With $\alpha=ye_1$ and $\beta=ye_2$, centrality gives
$\alpha\mu=\mu\beta$ and $\beta\nu=\nu\alpha$, while
\[
\mu\nu=\alpha^dU(\alpha),\qquad
\nu\mu=\beta^dV(\beta),\qquad U(0)V(0)\ne0.
\]
Thus $d=2$ gives \textup{(3)}, whereas $d\ge3$ gives the
second alternative of \textup{(4)}.

Finally, suppose exactly one block $F$ is singular and the
other two blocks $B,C$ are smooth. Use the ordering $(B,F,C)$.
Both vertices have one loop, so the quiver is $\mathsf Q_2$.
If $B,C$ are transverse, then $S/(B,C)=\mathbb C$,
so all loop--arrow products vanish, giving \textup{(2)}.
Otherwise write $B=x$ and $C=x-\phi(y)$ up to units,
with $\operatorname{ord}\phi\ge2$.
The two backtracks are $F(0,\alpha)$ and
$F(\phi(\beta),\beta)$.
Their quadratic coefficients agree: both are the coefficient
of $y^2$ in $F(0,y)$.
If this coefficient is nonzero, both relations in
\textup{(3)} hold; otherwise both backtracks vanish modulo
$J^3$, giving the second alternative of \textup{(4)}.
In both cases the two loop--arrow directions survive, since
$\dim S/(B,C)\ge2$. This proves that the four cases exhaust
the possible block configurations after the indicated flops.

It remains to justify the representation types in the
non-loopless cases with quiver $\mathsf Q_2$.
In \textup{(2)}, $\operatorname{gr}_J A$ is special biserial:
the four vanishing mixed products leave each arrow at most
one possible nonzero successor and predecessor. In \textup{(3)}, write
$[\mu\nu]=c[\alpha^2]$ and $[\nu\mu]=d'[\beta^2]$
with $c,d'\ne0$.
After replacing $\nu$ by $(d')^{-1}\nu$, the associated graded
algebra is a quotient of $T_{21}$ in
\cite[Appendix~A, Table~T]{Wang22}, with parameters
$k_1=c/d'$ and $k_2=1$; choose the loop nilpotence exponents
larger than the Loewy length.
Thus $\operatorname{gr}_J A$ is non-wild in both cases,
and radical degeneration makes $A$ non-wild.
Its radical-square-zero quotient has separated quiver
$\widetilde{\DynA}_3$, so $A$ is representation-infinite
and hence tame.

In the second alternative of \textup{(4)}, the displayed
coefficient-ring model shows that the only quadratic
relations are
$[\alpha\mu]=[\mu\beta]$,
$[\beta\nu]=[\nu\alpha]$ and
$[\mu\nu]=[\nu\mu]=0$.
Indeed, in the all-smooth case the diagonal lengths are
at least six and the off-diagonal length is at least three;
in the one-singular-block case the diagonal lengths are
at least five and the off-diagonal length is at least two.
The loop squares and the mixed classes therefore survive.
Quotient $A/J^3$ further by
$[\beta\nu]=[\nu\alpha]$.
The result has relations
\[
\alpha\mu=\mu\beta,\qquad
\alpha^3=\beta^3=\mu\nu=\nu\mu=\beta\nu=\nu\alpha
=\mu\beta^2=\alpha^2\mu=0.
\]
This is $W_{33}$ in \cite[Table~W]{Han02},
so $A$ is wild. Finally, representation type is unchanged
under iterated flops by Proposition~\ref{prop:chamber-invariance}.
\end{proof}

\subsubsection*{Type~\(\DynD\): the representation-finite surface orbit}
We next consider the two-curve type~$\DynD$ orbit whose surface quotients are all representation-finite.

\begin{proposition}\label{thm:step3-D-A2}
Let $\Acon$ be a two-curve contraction algebra of  type~$\DynD$ in the orbit with representation-finite surface quotients.
\begin{enumerate}
\item The algebra $\Acon$ is representation-finite if and only if some iterated-flop model has loopless Gabriel quiver $\xymatrix@C=3em{1 \ar@<0.5ex>[r] & 2 \ar@<0.5ex>[l]}$.

\item Assume that $\Acon$ is not representation-finite and choose a two-point representative in this orbit. If its Gabriel quiver is
\[
\xymatrix@C=3em{1 \ar@(ul,dl)[]_{\alpha} \ar@<0.5ex>[r]^{\mu} & 2 \ar@<0.5ex>[l]^{\nu}},
\]
so that there is exactly one loop in total, then $\Acon$ is tame.

\item Suppose that the chosen representative has Gabriel quiver $\mathsf{Q}_2$. Then, $\Acon$ is tame if and only if $[\alpha\mu]=[\mu\beta]=[\beta\nu]=[\nu\alpha]=0$. If at least one of these four classes is nonzero, then $\Acon$ is wild.
\end{enumerate}
\end{proposition}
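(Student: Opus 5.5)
The plan is to model this orbit exactly as in the proof of Proposition~\ref{thm:step3-A2-quadratic}: first identify the surface data, then reduce to the first few radical layers using Proposition~\ref{prop:two-curve-radical-reduction}.

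\textbf{Surface orbit and quiver.} By Proposition~\ref{prop:D-mutation-safe}, the orbit in question is $\{\{1,m-1\},\{1,m\},\{m-1,m\}\}$. The mutation rule of Subsection~\ref{subsec:classical-safe} connects these three subsets. From the proof of Proposition~\ref{prop:D-rank2}:
\begin{itemize}
\item $\Gam(\DynD_m,\{m-1,m\})$ has the loopless Gabriel quiver $1\rightleftarrows 2$;
\item $\Gam(\DynD_m,\{1,m-1\})$ has one loop $\alpha$, at the short-arm vertex.
\end{itemize}
By Proposition~\ref{prop:two-curve-radical-reduction}, every representative of $\Acon$ has exactly one arrow in each direction between its two vertices. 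Each vertex carries at most one more loop than the surface quotient has there. So the possible Gabriel quivers are:
\begin{itemize}
\item the loopless quiver;
\item the quiver with one loop in total;
\item $\mathsf Q_2$;
\item quivers with two loops at a single vertex.
\end{itemize}
The last kind can occur only over $\{1,m-1\}$, at the vertex $m-1$. There the radical-square-zero separated quiver contains a Kronecker pair with an attached edge, so $\Acon$ is wild. Which of these cases occurs, and how it changes along the orbit, I would like to read off from a factor-block model analogous to the one used in type~$\DynA$. Failing such a model, I would argue purely with radical layers and loop lifts.

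\textbf{Part (1).} If some representative is loopless, then $\Acon$ is a symmetric algebra with quiver $1\rightleftarrows 2$. Such an algebra is a self-injective Nakayama algebra, hence representation-finite. Conversely, suppose every representative has a loop. Then the radical-square-zero quotient either has separated quiver of Euclidean type, or contains a loop together with a $2$-cycle. In the one-loop case the separated quiver is $\widetilde{\DynA}$-type: $1^+\to1^-$, $1^+\to2^-$, $2^+\to1^-$, a chain, which is Dynkin $\DynA_4$. That is not enough. So I would instead use the loop and $\mu\nu$ at vertex $1$ to exhibit a band in the radical cube quotient. Alternatively, I would pass to the idempotent truncation at vertex $1$, a local algebra with two generators $\alpha,\mu\nu$ in its radical, and apply Proposition~\ref{prop:onecurve-classification}(1)--(2). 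Representation-infiniteness would then come from that two-generator local corner.

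\textbf{Parts (2) and (3).}
\begin{itemize}
\item With exactly one loop, I would show that $\operatorname{gr}_{\rad}\Acon$ is special biserial, using $[\alpha\mu]=[\nu\alpha]=0$ (forced because the loop does not lift to vertex $2$) and centrality of $g$. Degeneration to the associated graded algebra then gives tameness.
\item For $\mathsf Q_2$ with all four mixed classes zero, the same special biserial argument as in Proposition~\ref{thm:step3-A2-quadratic}(2) applies.
\item For $\mathsf Q_2$ with some mixed class nonzero, centrality forces $[\alpha\mu]=[\mu\beta]$ and $[\beta\nu]=[\nu\alpha]$. The crucial difference from type~$\DynA$ is that $[\mu\nu]$ and $[\nu\mu]$ vanish here, because the surface quotient has $\nu\mu$ in the socle or zero. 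Quotienting $\Acon/\rad^3\Acon$ by one loop–arrow direction then yields $W_{33}$ of \cite[Table~W]{Han02}, as in case (4) of Proposition~\ref{thm:step3-A2-quadratic}.
\end{itemize}

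\textbf{Main obstacle.} I expect the main obstacle to be justifying that $[\mu\nu]=[\nu\mu]=0$ in $\rad^2/\rad^3$ for every $\mathsf Q_2$ representative. This requires control of $g$ in the quadratic layer, and I would attempt it through the explicit stable-endomorphism model over $\DynD$-type cDV singularities.
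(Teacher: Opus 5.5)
Your treatment of the loopless case, of (2), and of the tame half of (3) is sound. For (2), the special biserial associated graded algebra combined with the hypothesis that $\Acon$ is not representation-finite is more explicit than the paper's bare appeal to the two-point classification. There are two gaps.

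\textbf{The wild half of (3).} The vanishing $[\mu\nu]=[\nu\mu]=0$, which you flag as the main obstacle, is not merely unproved: it is false in this orbit. The surjection $\Acon\twoheadrightarrow\Gam$ induces a surjection $\rad^2\Acon/\rad^3\Acon\twoheadrightarrow\rad^2\Gam/\rad^3\Gam$, and the surface quotients have nonzero classes there.
\begin{itemize}
\item For $\Omega=\{m-1,m\}$, $\Gam$ is the Nakayama algebra on $1\rightleftarrows 2$ in which only paths of length at least $m-1\ge3$ vanish. So $\mu\nu,\nu\mu\notin\rad^3\Gam$.
\item For $\Omega=\{1,m-1\}$, $\mu\nu$ spans the socle of $e_1\Gam e_1\cong\Bbbk[x]/(x^2)$. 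Every longer closed path at $1$ is zero, so $\mu\nu\notin\rad^3\Gam$. Lying in the socle does not make a class vanish in $\rad^2/\rad^3$.
\end{itemize}
Hence the type-$\DynA$ route to $W_{33}$ does not transfer. Killing $\mu\nu$ and $\nu\mu$ by hand does not rescue it in general either. $W_{33}$ also needs $[\alpha^2]\neq0\neq[\beta^2]$. In type $\DynA$ this came from the coefficient-ring lengths, and nothing in your setup forces it here. This matters: Proposition~\ref{thm:step3-A2-quadratic}(3) shows that nonzero mixed classes alone do not imply wildness. What decides is how the loop squares sit relative to $[\mu\nu]$ and $[\nu\mu]$. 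The paper instead maps, after swapping vertices, onto $W_{31}$, $W_{32}$ or $W_{34}$ of \cite[Table~W]{Han02}, chosen compatibly with the diagonal quadratic classes. For instance, $W_{32}$ (used in Proposition~\ref{prop:E6-main}) has $\alpha^2=\beta^2=0$, $\alpha\mu=\mu\beta$, $\nu\alpha=\beta\nu=0$, and kills only $\mu\nu\mu$ and $\nu\mu\nu$, so $\mu\nu,\nu\mu$ survive. Your argument needs this case distinction on the diagonal classes.

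\textbf{The converse of (1).} The corner $e_v\Acon e_v$ at the loop vertex need not have two generators. For $m\ge5$, the surface algebra $\Gam(\DynD_m,\{1,m-1\})$ itself has the one-loop quiver and is representation-finite. Its corner at $m-1$ is the uniserial algebra $\Bbbk[x]/(x^{\lfloor m/2\rfloor})$, because $\nu\mu$ is $0$ or a power of the loop. So a single loop gives nothing by itself, and the model must be chosen. Run your argument in the loopless-surface model $\Omega=\{m-1,m\}$, which carries a loop by hypothesis.
\begin{itemize}
\item If it has two loops, the quiver is $\mathsf{Q}_2$ and the separated quiver is $\widetilde{\DynA}_3$, so $\Acon$ is representation-infinite.
\item If it has one loop, Proposition~\ref{prop:two-curve-radical-reduction} lets you take it to be $\alpha=e_vge_v$. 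Then $\alpha\mapsto0$ in $\Gam$, while $\mu\nu$ maps to a generator of $e_v\Gam e_v\cong\Bbbk[x]/(x^{\lfloor m/2\rfloor})$ with $\lfloor m/2\rfloor\ge2$.
\end{itemize}
In the one-loop case, $\alpha$ and $\mu\nu$ are therefore independent modulo $\rad^2(e_v\Acon e_v)$. The corner is then representation-infinite, since its radical-square-zero quotient has Kronecker separated quiver; Proposition~\ref{prop:onecurve-classification} is not needed. At this point the paper itself only invokes the two-point classification.
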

\begin{proof}
Any loopless representative is a representation-finite cyclic Nakayama algebra. The case of precisely one loop is tame and representation-infinite by the corresponding two-point bound-quiver classification; the relevant relations are recalled below.

Assume that the Gabriel quiver is $\mathsf{Q}_2$. By Proposition~\ref{prop:two-curve-radical-reduction}, the arrows between the two vertices coincide with those of the surface quotient. The remaining distinction is therefore detected by the quadratic classes involving loops.

If one loop--arrow orientation is nonzero, the rank-two bound-quiver presentation gives, after interchanging the vertices if necessary, a surjection onto one of $W_{31}$, $W_{32}$, or $W_{34}$; these algebras occur in the minimal wild list of \cite[Table~W]{Han02}, and their quiver relations are compatible with the diagonal quadratic classes of the orbit. Hence, $\Acon$ is wild.

If all four loop--arrow classes vanish, then $\operatorname{gr}_{\rad}\Acon$ is special biserial, since each arrow has at most one nonzero successor and predecessor. Thus, $\Acon$ is tame by degeneration. Its radical-square-zero quotient has separated quiver of Euclidean type $\widetilde{\DynA}_3$, so it is representation-infinite. Together with the one-loop case, this proves the converse in \textup{(1)}.
\end{proof}

\subsubsection*{Exceptional orbit \(\mathcal{E}_1\)}
We now consider the orbit $\mathcal{E}_1=(\DynE_6,\{1,6\})$ from Subsection~\ref{subsec:step3-remaining}. By Proposition~\ref{prop:E6-main}, $\Gam(\DynE_6,\{1,6\}) \cong \Bbbk\mathsf{Q}_2/ (\alpha^2,\beta^2,\mu\nu,\nu\mu, \alpha\mu-\mu\beta,\beta\nu-\nu\alpha)$. This algebra is tame.

\begin{proposition}\label{thm:terminal-X1-complete}
Let $\Acon$ be a two-curve contraction algebra in the orbit $\mathcal{E}_1$. Then, $\Acon$ is representation-infinite. If either vertex has at least two loops, then $\Acon$ is wild.

Suppose that the Gabriel quiver of $\Acon$ is $\mathsf{Q}_2$. Then, $\Acon$ is wild if and only if $[\alpha^2]\ne0,[\beta^2]\ne0, [\mu\nu]=[\nu\mu]=0$, or $[\alpha^2]=[\beta^2]=0, [\mu\nu]\ne0, [\nu\mu]\ne0$. In all other cases, the algebra $\Acon$ is tame.
\end{proposition}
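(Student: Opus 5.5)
Representation-infiniteness is immediate. By Proposition~\ref{prop:shadow} we have a surjection $\Acon\twoheadrightarrow\Gam(\DynE_6,\{1,6\})$, and this quotient is representation-infinite tame by Proposition~\ref{prop:E6-main}. The orbit $\mathcal{E}_1$ consists only of $\{1,6\}$, so no other surface quotient needs checking.

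Now suppose some vertex carries at least two loops. By Proposition~\ref{prop:two-curve-radical-reduction}, the intervertex arrows $\mu,\nu$ agree with those of the surface quotient, and each vertex keeps the loop it has there. So the Gabriel quiver contains two loops at one vertex and one loop at the other, together with $\mu$ and $\nu$. The separated quiver of $\Acon/\rad^2\Acon$ then has a component containing a Kronecker pair with further attached edges. This graph is neither Dynkin nor Euclidean, so $\Acon$ is wild, exactly as in Lemma~\ref{lem:D-trunk-pairs}.

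For the quiver $\mathsf{Q}_2$, Proposition~\ref{prop:two-curve-radical-reduction} gives $g\in\rad^2\Acon$. The off-diagonal quadratic classes are therefore those of the surface quotient: $[\alpha\mu]=[\mu\beta]\ne0$ and $[\beta\nu]=[\nu\alpha]\ne0$. In each diagonal corner the kernel of $\rad^2\Acon/\rad^3\Acon\to\rad^2\Gam/\rad^3\Gam$ is at most one-dimensional and is spanned by the class of $e_ige_i$. In the surface quotient $\alpha^2=\beta^2=\mu\nu=\nu\mu=0$. Hence at vertex~$1$ the space spanned by $[\alpha^2]$ and $[\mu\nu]$ is at most one-dimensional, and likewise for $[\beta^2],[\nu\mu]$ at vertex~$2$. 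So each corner has exactly one of three shapes: both classes zero, a single nonzero class (only one of the pair nonzero), or both nonzero and proportional. Combining the two corners gives a finite list of quadratic patterns, which I will treat case by case on $\operatorname{gr}_{\rad}\Acon$.

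\emph{Wild cases.} If $[\alpha^2],[\beta^2]\ne0$ and $[\mu\nu]=[\nu\mu]=0$, then $\Acon/\rad^3\Acon$ further quotiented by $[\beta\nu]=[\nu\alpha]$ (and cubic terms) should be $W_{33}$ of \cite[Table~W]{Han02}. This is the same computation as in case~(4) of Proposition~\ref{thm:step3-A2-quadratic}. If $[\alpha^2]=[\beta^2]=0$ and $[\mu\nu],[\nu\mu]\ne0$, I expect a surjection onto another minimal wild two-point algebra from Table~W, with nilpotent loops and a nonzero backtracking cycle in both directions. Failing that, I would use the $\mathbb Z$-grading $\deg\alpha=\deg\beta=1$, $\deg\mu=0$, $\deg\nu=1$ on $A/\rad^3$ modulo the commutativity relations, and apply Proposition~\ref{prop:covering-wild} to a wild convex hereditary subcategory of the covering.

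\emph{Tame cases.} If $[\mu\nu]\in\Bbbk^\times[\alpha^2]$ and $[\nu\mu]\in\Bbbk^\times[\beta^2]$, then $\operatorname{gr}$ is a quotient of $T_{21}$ in \cite[Table~T]{Wang22}, as in case~(3) of Proposition~\ref{thm:step3-A2-quadratic}. If all four classes vanish, $\operatorname{gr}$ is a quotient of the graded surface quotient $\Gam(\DynE_6,\{1,6\})$, which is tame. For the mixed patterns, where one corner has only a loop square and the other has only a backtrack (or a vanishing pair), I would degenerate the offending nonzero class by a scalar $\lambda\to0$ to land in one of the tame patterns above. Geiss's theorem plus representation-infiniteness then gives tameness.

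The main obstacle is this last mixed case. Degenerating a nonzero class can produce a wild algebra, so for each mixed pattern I must check directly that $\operatorname{gr}_{\rad}\Acon$ is a quotient of some algebra in Table~T. I would first try to match each pattern to Han's $T$-list, possibly after swapping vertices and rescaling $\nu$. I would also double-check that higher radical layers cannot force wildness, by arguing that the cubic layers already match those of $T$-type algebras once the quadratic data are fixed.
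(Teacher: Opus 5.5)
Your arguments for representation-infiniteness, for the case of at least two loops at a vertex, and for the configuration $[\alpha^2],[\beta^2]\ne0$, $[\mu\nu]=[\nu\mu]=0$ (via $W_{33}$) are the paper's. For the second wild configuration you only state an expectation, but it closes exactly as in the paper. If $[\alpha^2]=[\beta^2]=0$ and $[\mu\nu],[\nu\mu]\ne0$, then $\Acon/\rad^3\Acon$ modulo $[\nu\alpha]=[\beta\nu]$ is presented by $\alpha^2=\beta^2=0$, $\alpha\mu=\mu\beta$, $\nu\alpha=\beta\nu=\mu\nu\mu=\nu\mu\nu=0$. This is $W_{32}$ of \cite[Table~W]{Han02}, the algebra already used for $\Gam(\DynE_6,\{3,5\})$, so no covering argument is needed.

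The genuine gap is the converse direction: non-wildness of all other configurations. You settle only the all-zero and the doubly proportional configurations. The remaining twelve are left open, including the six with exactly one proportional corner, which your list of mixed patterns omits. The degeneration you propose cannot close them, for the following reasons.
\begin{itemize}
\item A degeneration preserves dimension. In any of these twelve configurations, the classes of $e_1,e_2,\alpha,\beta,\mu,\nu,\alpha\mu,\nu\alpha$ together with at least one nonzero diagonal quadratic class are independent in $\operatorname{gr}_{\rad}\Acon$, so $\dim_{\Bbbk}\Acon\ge9$.
\item A quotient of $\Gam(\DynE_6,\{1,6\})$ has dimension at most $8$. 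Indeed, if all four diagonal classes vanish, every cubic path is zero in $\operatorname{gr}_{\rad}\Acon$. Hence $\rad^3\Acon=0$, $g=0$, and $\Acon\cong\Gam(\DynE_6,\{1,6\})$.
\item Scaling a nonzero class to zero cannot create the two nonzero proportional classes of the $T_{21}$ configuration.
\end{itemize}
So your procedure never reaches a configuration already known to be tame. It lands either in another open configuration or in one of the two wild ones; for example, from the pattern with one corner ``only $[\alpha^2]$'' and the other proportional you can reach the $W_{33}$ pattern.

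What is missing is an explicit case-by-case verification, which can take one of two forms.
\begin{itemize}
\item For each configuration, exhibit a Table~T algebra of which $\operatorname{gr}_{\rad}\Acon$ is a quotient. This requires control of all higher radical layers, not only the quadratic one.
\item Alternatively, as the paper does, use the other half of Han's theorem: a two-point algebra is wild precisely when it has a quotient in Table~W. Then check that no remaining configuration has such a quotient.
\end{itemize}
The paper asserts this check rather than carrying it out. Still, your proposal contains neither version, so the ``only if'' direction of the equivalence is not established.
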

\begin{proof}
Since the surface quotient is tame representation-infinite, $\Acon$ is representation-infinite. If one vertex has at least two loops, the separated quiver of $\Acon/\rad^2\Acon$ has a connected component which is neither Dynkin nor Euclidean. Hence $\Acon$ is wild by the radical-square-zero criterion.

Assume now that the Gabriel quiver of $\Acon$ is $\mathsf{Q}_2$. The Gabriel quiver is then the same as that of the surface quotient, so Proposition~\ref{prop:two-curve-radical-reduction} gives $g\in\rad^2\Acon$. Since $\alpha^2$ and $\mu\nu$ both vanish in the corresponding diagonal corner of the surface algebra, while $\beta^2$ and $\nu\mu$ vanish in the other,
\[
\dim\langle[\alpha^2],[\mu\nu]\rangle\le1,
\qquad
\dim\langle[\beta^2],[\nu\mu]\rangle\le1.
\]
The off-diagonal quadratic layer is unchanged. In particular, the surface relations $\alpha\mu=\mu\beta$ and $\beta\nu=\nu\alpha$ lift to the same nonzero classes in $\rad^2\Acon/\rad^3\Acon$.

We apply the minimal wild quotient criterion of \cite{Han02}. For the first displayed configuration, the two-point presentation admits a surjection onto $W_{33}$; for the second, it admits a surjection onto $W_{32}$. Consequently, none of the remaining configurations has a minimal wild two-point quotient. Then, these cases are not wild, and hence are tame since they are already representation-infinite.
\end{proof}

\subsubsection*{The remaining tame surface orbits}
We next consider the remaining tame two-curve surface cases of
type~$\DynD$ and the exceptional orbits
$\mathcal E_2,\mathcal E_3$.
Put $A=\Acon$, $J=\rad A$, and
$B=A/(g)=e_\Omega\Pi_\Delta e_\Omega$.
The one-curve subset contraction algebra is
$A_i=A/A(1-e_i)A$ by \cite{DW19}, and
\[
\frac{e_iJe_i}{e_iJ^2e_i}
\cong\frac{\rad A_i}{\rad^2 A_i}.
\]
Its surface quotient is
\[
A_i/(\bar g)\cong B/B(1-e_i)B,
\]
which need not coincide with $e_i\Pi_\Delta e_i$.

\begin{proposition}\label{prop:two-curve-singleton-obstruction}
Let $A$ be a two-curve contraction algebra with surface quotient
$B=A/(g)$. Suppose that $B/B(1-e_i)B$ has at least two loops
for some $i$, and that the Gabriel quiver of $B$ contains
$\xymatrix@C=2em{1\ar@<0.5ex>[r]&2\ar@<0.5ex>[l]}$.
Then $A$ is wild.
\end{proposition}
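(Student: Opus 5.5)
The idea is to show that the Gabriel quiver of $A$ contains a vertex $i$ with at least two loops together with a pair of opposite arrows $i\rightleftarrows j$. The radical-square-zero criterion then gives wildness directly.

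\emph{Loops.} Consider $A_i=A/A(1-e_i)A$. By \cite{DW19} this is again a contraction algebra, namely that of the one-curve subset contraction, and its surface quotient is $A_i/(\bar g)\cong B/B(1-e_i)B$. Since $A_i\twoheadrightarrow B/B(1-e_i)B$ is surjective, the natural map
\[
\rad A_i/\rad^2A_i\longrightarrow \rad\bigl(B/B(1-e_i)B\bigr)/\rad^2\bigl(B/B(1-e_i)B\bigr)
\]
is surjective. This is the same argument as in Lemma~\ref{lem:onecurve-central-loop-lift}. Hence $\dim \rad A_i/\rad^2A_i\ge2$, and by the displayed isomorphism $e_iJe_i/e_iJ^2e_i\cong\rad A_i/\rad^2A_i$, vertex $i$ carries at least two loops in the Gabriel quiver of $A$.

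I expect the main point needing care to be this identification of $e_iJe_i/e_iJ^2e_i$ with the loop space of $A_i$. It holds because $J^2$ together with the paths through the other vertex $j$ form exactly the kernel. More precisely, passing to $A_i$ kills the paths $i\to j\to i$, which lie in $J^2$ anyway. So $e_iJe_i/(e_iJ^2e_i)$ maps isomorphically onto $\rad A_i/\rad^2A_i$. I would simply cite the displayed isomorphism stated just before the proposition.

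\emph{Arrows between vertices.} By Proposition~\ref{prop:two-curve-radical-reduction}, the off-diagonal parts of the first radical layers of $A$ and $B$ are isomorphic. Since the Gabriel quiver of $B$ contains arrows $1\to2$ and $2\to1$, so does that of $A$.

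\emph{Conclusion.} Pass to $A/J^2$ and form its separated quiver. It contains $i^+$ with at least two arrows to $i^-$, and an arrow $i^+\to j^-$. It also contains an arrow $j^+\to i^-$. So the component through $i^\pm$ contains a Kronecker pair $i^+\rightrightarrows i^-$ with two further edges attached, at $i^+$ and at $i^-$. Any graph with a double edge plus an extra edge is neither Dynkin nor Euclidean (the only Euclidean graph with a multiple edge is the Kronecker graph $\widetilde{\DynA}_1$ itself). By the criterion from \cite[Chapter~X]{ARS95} recalled in Section~\ref{sec:prelim}, $A/J^2$ is wild, and therefore $A$ is wild as well.
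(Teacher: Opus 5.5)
Your proposal is correct and follows essentially the same route as the paper. It obtains at least two loops at $i$ from the isomorphism $e_iJe_i/e_iJ^2e_i\cong\rad A_i/\rad^2A_i$ together with the surjection $A_i\twoheadrightarrow B/B(1-e_i)B$, lifts the two opposite arrows from $B$, and concludes via the Kronecker pair with attached edges in the separated quiver of $A/J^2$. Your justification of that isomorphism, namely that the kernel is $(e_iJe_i)^2+e_iJ(1-e_i)Je_i=e_iJ^2e_i$, is a useful detail the paper leaves implicit.
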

\begin{proof}
The displayed loop-space isomorphism and the quotient
$A_i\twoheadrightarrow B/B(1-e_i)B$ give at least two loops
at vertex $i$ of $A$.
The two opposite arrows also lift from $B$.
Thus the separated quiver of $A/J^2$ contains a Kronecker pair
with an attached edge, so $A/J^2$, and hence $A$, is wild.
\end{proof}

\begin{proposition}\label{thm:two-curve-remaining-wild}
For the remaining tame two-curve surface orbits, choose a
representative from the following table:
\[
\begin{array}{c|c|c}
\text{orbit}&\text{representative}&v\\ \hline
\DynD&\{1,2\}&2\\
\DynD&\{m-2,m-1\}&m-2\\
\mathcal E_2&(\DynE_6,\{1,3\})&3\\
\mathcal E_3&(\DynE_7,\{6,7\})&6.
\end{array}
\]
Here $v$ is the unique vertex carrying a loop in the surface
quiver. Let $\alpha\in e_vJe_v$ lift that loop.
Then $A$ is representation-infinite, and
\[
A\text{ is tame}
\quad\Longleftrightarrow\quad
\dim_{\mathbb C}e_v(J/J^2)e_v=1
\quad\text{and}\quad
[\alpha^2]=0\text{ in }J^2/J^3.
\]
In all other cases $A$ is wild.
A new loop at the other vertex is allowed in the tame case.
\end{proposition}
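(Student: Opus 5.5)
Representation-infiniteness is immediate: in each listed orbit the surface quotient $B$ is tame and representation-infinite (Propositions~\ref{prop:D-rank2}, \ref{prop:E6-main}, \ref{prop:E7-main}), so $A\twoheadrightarrow B$ shows that $A$ is representation-infinite. By Proposition~\ref{prop:chamber-invariance}, passing to the tabulated representative does not change the representation type. For each representative I will first read off the Gabriel quiver of $B$ from the presentations in Step~1. In all four cases the quiver is the pair of opposite arrows $\mu,\nu$ with exactly one loop, at $v$. By Proposition~\ref{prop:two-curve-radical-reduction}, $A$ has the same arrows between the vertices, at most one extra loop at each vertex, and these come from the diagonal components of $g$.

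\textbf{Wild direction.} Suppose first that $\dim e_v(J/J^2)e_v\ge2$. Then Proposition~\ref{prop:two-curve-singleton-obstruction} already shows that $A$ is wild; equivalently, $A/J^2$ has a Kronecker pair with an attached edge. For an extra loop at $v$ I would check directly that $B/B(1-e_v)B$ has two loops in the $\DynD$ cases, namely $\Gam(\DynD_m,\{2\})$ or $\{m-2\}$ after killing the other vertex, and similarly in $\mathcal E_2,\mathcal E_3$. Otherwise the separated-quiver argument applies directly.

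Now suppose exactly one loop sits at $v$ but $[\alpha^2]\ne0$. Here I will use the surface relations: in $B$ one has $\alpha^2=0$ in each representative ($\alpha^2=0$ for $\{1,2\}$, $(a_{m-1}b_{m-1})^2=0$, $z^2\in J^4$, and the $\DynE_7$ analogue). Hence $[\alpha^2]\ne0$ forces $e_vge_v\equiv c\alpha^2$ modulo $J^3$, so $g\in J^2$ and the quadratic layer of $A$ gains exactly the class $[\alpha^2]$. I would then write $A/J^3$, killing $\mu\nu$, $\nu\mu$ and any new loop at the other vertex, so that only $\alpha^2$, $\alpha\mu$ and $\nu\alpha$ survive. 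This quotient should be a one-loop minimal wild two-point algebra from \cite[Table~W]{Han02}, either by direct identification or via a $\mathbb Z$-covering as in Proposition~\ref{prop:H1HH}, where one finds a wild tree: the loop $\alpha$ of length two attached to the $\mu,\nu$ zigzag, which lifts to a $\widetilde{\DynD}$-violating star.

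\textbf{Tame direction.} If there is one loop at $v$ and $[\alpha^2]=0$, then $\operatorname{gr}_J A$ agrees with $\operatorname{gr} B$ at $v$ up to degree two. A possible new loop $\beta$ at the other vertex $w$ arises from $e_wge_w$. Since $g$ is central, $\beta$ commutes with the paths $\nu\alpha\mu$, and $\beta\nu$, $\mu\beta$ land in $J^2$ classes already accounted for. I would show that $\operatorname{gr}_J A$ is special biserial, or degenerates to a quotient of the special biserial or $T$-algebras used for $B$, by checking that each arrow has at most one nonzero successor and one nonzero predecessor. Tameness then follows from \cite{Geiss} together with representation-infiniteness.

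The main obstacle is this last step: controlling the higher radical layers when $g\in J^3$ and a new loop appears at $w$, because one needs the relations to stay biserial. I would first try to argue by degeneration to $\operatorname{gr}_J A$, using centrality of $g$ to force $\beta\nu=\nu\alpha^k$-type relations of high degree that vanish in the graded algebra.
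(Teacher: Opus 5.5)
\textbf{Gap in the wild direction.} Your argument fails in the case of one loop at $v$ with $[\alpha^2]\neq0$. Use the traversal convention, with $\mu\colon v\to w$ and $\nu\colon w\to v$, so that $\mu\nu$ is the $2$-cycle at $v$.

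After killing $\mu\nu$, $\nu\mu$, the new loop and $J^3$, what is left is the monomial algebra $A'$ on $\alpha,\mu,\nu$ in which only $\alpha^2$, $\alpha\mu$, $\nu\alpha$ survive in length two. This algebra is not wild:
\begin{itemize}
\item Its corner $e_vA'e_v$ is just $\Bbbk[\alpha]/(\alpha^3)$.
\item In its universal cover, every vertex of valency four has the zero relation $\nu\mu$ running through it.
\item The length-three zero relations cut every arm short.
\end{itemize}
So the relation-free convex subtrees are at most of type $\DynE_6$, and the wild star you hope to lift does not exist. Separating the node at $w$ gives $\Bbbk[\alpha]/(\alpha^3)$ with a source and a sink attached, and a Tits-form check on its comb-shaped cover shows $A'$ is in fact tame.

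What you discarded is exactly where the wildness lives. The products $\alpha\cdot\mu\nu$ and $\mu\nu\cdot\alpha$ have length three in $A$. In the corner $C=e_vAe_v$, however, $z=\mu\nu$ is a second radical generator and these products are quadratic. The paper argues in $C$:
\begin{itemize}
\item Every quadratic relation of $C$ is a quadratic relation of $e_vBe_v$, so it lies in $\langle\alpha^2,z^2,\alpha z+z\alpha\rangle$ or $\langle\alpha^2,z^2\rangle$.
\item Its $\alpha^2$-coefficient must vanish, because $\alpha z,z\alpha,z^2\in J^3$ while $\alpha^2\notin J^3$.
\end{itemize}
Hence $C/\rad^3C$ maps onto $\Bbbk\langle u,w\rangle/(w^2,uw+wu,(u,w)^3)$. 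This is Proposition~\ref{prop:local-benchmark-alg}\textup{(7)} with $\lambda=-1$, so $A$ is wild by idempotent truncation.

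A smaller point concerns two loops at $v$. Your direct separated-quiver argument is fine there. But Proposition~\ref{prop:two-curve-singleton-obstruction} does not apply as you cite it. $B/B(1-e_v)B$ is a quotient, not the corner, and for $\{1,2\}$ in $\DynD_m$ it is $\Bbbk[x]/(x^2)$.

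\textbf{Tame direction.} This part is only a plan, and the special-biserial check you intend would fail. With no new loop at $w$ one has $g_w\in J^2$, and $g_w$ may be a nonzero multiple of $\nu\mu$ modulo $J^3$. Then $[\nu\mu]\neq0$ in $\operatorname{gr}_JA$, and $\nu$ has two nonzero successors, $\mu$ and $\alpha$. Three steps are missing.
\begin{itemize}
\item[(i)] When $[\alpha^2]=0$, the only quadratic class at $v$ is $[\mu\nu]$, and it survives in $B$. Hence $g_v\in J^3$.
\item[(ii)] Centrality moves $g_w$ across arrows: $\mu g_w=g_v\mu$ and $g_w\nu=\nu g_v$. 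From $\nu\mu=g_wh$ one then gets $\mu\nu\mu,\nu\mu\nu\in J^4$. So $\operatorname{gr}_JA$ is a quotient of $T_9$, not a special biserial algebra.
\item[(iii)] With a new loop $\beta=g_w$, the radical filtration is not enough, since $\nu\mu$ can be a multiple of $\beta^2$ modulo $J^3$. One must give $\beta$ weight two. Then $\nu\mu$, $\mu\beta=g_v\mu$, $\beta\nu=\nu g_v$ and $\alpha^2$ all vanish in the weighted associated graded algebra, which is special biserial and gives the required degeneration.
\end{itemize}
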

\begin{proof}
The two $\DynD$ rows cover the surviving tame pairs up to
mutation and diagram automorphism.
For example, mutation at $2$ sends $\{2,m-1\}$ to
$\{m-2,m-1\}$: the component obtained by deleting $m-1$
is an $\DynA_{m-1}$ chain, whose reversal sends $2$ to $m-2$.
The other short arm is obtained by diagram automorphism.
The exceptional representatives lie in the orbits of
Proposition~\ref{prop:exceptional-mutation-orbits}.
When $m=4$, the two $\DynD$ rows agree up to diagram automorphism.

Relabel $v$ as $1$ and the other vertex as $2$.
In every row the surface quiver has generators
$\alpha:1\to1$, $a:1\to2$, and $b:2\to1$.
Put $z=ab$.
The path reductions in Step~1 give the exact relation $ba=0$
and the quadratic relation space
$\langle\alpha^2,ba\rangle$ in this two-vertex quiver.
The local corner $e_1Be_1$ is generated by $\alpha,z$,
and its quadratic relation space is
\[
\begin{cases}
\langle\alpha^2,z^2,\alpha z+z\alpha\rangle,
&\Omega=\{1,2\}\text{ in }\DynD_m,
  \text{ or }\Omega=\{m-2,m-1\}\text{ with }m=4,\\
\langle\alpha^2,z^2\rangle,&\text{otherwise}.
\end{cases}
\]
Indeed, the relevant local presentations from Step~1 are,
up to rescaling generators,
\[
\begin{array}{c|l}
\DynD_m,\{1,2\}
 &\alpha^2=z^2=\alpha z+z\alpha=0\\
\DynD_m,\{m-2,m-1\}
 &\alpha^2=z^2=(\alpha+z)^{m-2}=0\\
\DynE_6,\{1,3\}
 &z^2=0,\quad\alpha^2=z\alpha z,\quad\alpha z\alpha=0\\
\DynE_7,\{6,7\}
 &\alpha^2=z^2=(\alpha z)^2+(z\alpha)^2=0.
\end{array}
\]
For the last row take
$a=a_6$, $b=b_6$, and
$\alpha=b_5b_4b_3a_3a_4a_5$.
Then $z=a_6b_6$, and the local relations are precisely
those at vertex $6$ in Proposition~\ref{prop:E7-main}.
The endpoint relation at $7$ gives $ba=0$.
The paths $ab,\alpha a,b\alpha$ have nonzero, independent
classes at their respective endpoints in the quadratic
radical layer, whereas $\alpha^2$ vanishes there.
This also verifies the stated two-vertex quadratic relation space.

The central quotient leaves the intervertex arrows unchanged
and adds at most one loop at each vertex.
If vertex $1$ has two loops, the separated-quiver criterion
gives wildness. Hence assume that $\alpha$ is its only loop.
Write $g_i=e_ig$; then $g_1\in J^2$.

Suppose $\alpha^2\notin J^3$.
Set $C=e_1Ae_1$ and $K=\rad C$.
We have $C/(g_1)\cong e_1Be_1$.
If $\dim K/K^2\ge3$, then $C$ is wild.
Otherwise $\alpha,z$ form a basis of $K/K^2$, since their
images generate the two-dimensional cotangent space of $e_1Be_1$.
Every quadratic relation of $C$ therefore belongs to the
quadratic relation space of $e_1Be_1$ displayed above.
Its $\alpha^2$ coefficient must be zero:
$z\in J^2$, so $\alpha z,z\alpha,z^2$ and $K^3$ lie in $J^3$,
whereas $\alpha^2$ does not.
Thus the quadratic relation space of $C$ is contained in
$\langle z^2,\alpha z+z\alpha\rangle$.
Consequently $C/K^3$ surjects onto
\[
\mathbb C\langle u,w\rangle/
\bigl(w^2,uw+wu,(u,w)^3\bigr),
\qquad \alpha\mapsto u,\quad z\mapsto w.
\]
This is the wild local algebra in
Proposition~\ref{prop:local-benchmark-alg}\textup{(7)}
with parameter $-1$.
Hence $A$ is wild by the idempotent truncation criterion.

Now assume $\alpha^2\in J^3$.
Since $[ab]$ maps to a nonzero class in the surface quadratic
layer, the diagonal quadratic kernel at vertex $1$ is zero.
Thus $g_1\in J^3$.
If vertex $2$ has no loop, then $g_2\in J^2$.
The exact surface relation $ba=0$ gives $ba=g_2h$ for some
$h\in e_2Ae_2$.
Centrality yields
\[
aba=g_1ah\in J^4,\qquad
bab=hb g_1\in J^4.
\]
Therefore $\operatorname{gr}_J A$ satisfies
$\alpha^2=aba=bab=0$.
With a sufficiently large nilpotence exponent on $b\alpha a$,
it is a quotient of the tame algebra $T_9$ in
\cite[Appendix~A, Table~T]{Wang22}.
Radical degeneration shows that $A$ is non-wild.

If vertex $2$ has a new loop, choose $\beta=g_2$ as its generator.
Give $\alpha,a,b$ weight one and $\beta$ weight two, and let
$F^nA$ be spanned by images of paths of weight at least $n$.
This finite multiplicative filtration satisfies
$F^nA\subseteq J^{\lceil n/2\rceil}$.
Again $ba=\beta h$, but now $h\in\rad(e_2Ae_2)$;
otherwise $\beta\in J^2$, contrary to its being a new loop.
Every path in this local radical has weight at least two.
Hence
\[
ba\in F^4A,\qquad
a\beta=g_1a\in F^4A,\qquad
\beta b=bg_1\in F^4A,\qquad
\alpha^2\in F^3A.
\]
The weighted associated graded algebra consequently satisfies
$ba=a\beta=\beta b=\alpha^2=0$.
Each arrow has at most one possible nonzero successor and
predecessor, so this algebra is special biserial.
A basis adapted to the finite filtration gives a flat Rees
degeneration of $A$ to this algebra.
Thus $A$ is non-wild in this case as well.

Finally, $B$ is representation-infinite by Step~1, so the
non-wild cases above are tame representation-infinite.
If $\alpha$ is the unique loop at vertex $v$, replacing its
lift changes it by an element of $J^2$; its square modulo
$J^3$ is therefore unchanged.
Rescaling the surface-loop generator multiplies this class
by a nonzero square.
The criterion is consequently independent of these choices.
Representation type is preserved under iterated flops by
Proposition~\ref{prop:chamber-invariance}.
\end{proof}

\subsection{Three exceptional curves}
Suppose that $\Acon$ has exactly three simple modules. By the classification in Step~2, it suffices to consider types~$\DynA$ and~$\DynD$. We again determine the Gabriel quiver from the surface quotient and the one-curve subset algebras. We then apply the separated-quiver criterion to detect wildness and the degeneration method of Section~\ref{sec:prelim} to establish tameness.

\subsubsection*{Type~\(\DynA\)}
For type~$\DynA$ with three exceptional curves, the mutation description may be expressed in terms of the four positions of the gap sequence $(d_0,d_1,d_2,d_3)$ from Step~2. Each ordering determines a representative whose three exceptional curves correspond to the consecutive pairs. Thus, the six unordered pairs index the possible one-curve subset factors occurring under iterated mutation. We call such a pair \emph{nontrivial} if the corresponding one-curve subset contraction algebra is not isomorphic to $\Bbbk$. The correspondence between these pairs under mutation follows from \cite[Section~3.1 and Theorem~3.13]{ZhangGV26}. We use it only to identify the subset contraction algebras across the different representatives.

\begin{proposition}\label{thm:terminal-cA3-trichotomy}
Let $\Acon$ be a three-curve contraction algebra of type~$\DynA$ whose surface orbit survives Step~2. Then $\Acon$ is never representation-finite. Moreover, $\Acon$ is tame representation-infinite if and only if the nontrivial pairs above are pairwise disjoint. Otherwise $\Acon$ is wild.
\end{proposition}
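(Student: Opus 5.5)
Since the orbit survives Step~2, Proposition~\ref{prop:A-mutation-safe} gives $h(\mathbf d)\le2$. The first task is to establish representation-infiniteness. If $h(\mathbf d)=2$, some permutation of $\mathbf d$ yields a tame surface quotient by Theorem~\ref{thm:markedA-main}, and Corollary~\ref{cor:wildness-propagation} finishes that case. If $h(\mathbf d)\le1$, every surface quotient is representation-finite, so a lifting argument is needed. I would work in the factor-block model used in the proof of Proposition~\ref{thm:step3-A2-quadratic}: the singularity is $uv=f_0f_1f_2f_3$ over $S=\mathbb C[[x,y]]$, and iterated flops reorder the blocks. The stable-endomorphism model of $S/(f_0)\oplus S/(f_0f_1)\oplus S/(f_0f_1f_2)$ over $S/(f_0f_1f_2f_3)$ then supplies all three-vertex corners. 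The one-curve subset algebra of the vertex lying between the blocks of a consecutive pair $(f_i,f_j)$ is $S/(f_i,f_j)$. This is nontrivial exactly when the two blocks are not transverse, and that is what the pair terminology in the statement encodes.

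Next I would determine the Gabriel quiver. The quotient $\Acon\twoheadrightarrow\Gam(\DynA_m,\Omega)$ always supplies the linear quiver $1\rightleftarrows2\rightleftarrows3$. The analogue of Proposition~\ref{prop:two-curve-radical-reduction} for three vertices shows that the central quotient changes only loops, adding at most one at each vertex. A loop at a vertex is detected by the subset algebra $S/(f_i,f_j)$, where $f_i,f_j$ are the blocks adjacent to that vertex. For representation-infiniteness when $h\le1$, the approach is to choose an ordering that places some nontrivial pair in the middle; this produces a loop at vertex $2$. The radical-square-zero quotient then has separated quiver $\widetilde{\DynD}_5$, exactly as in the proof of Proposition~\ref{prop:1HH1tame}. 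If no pair is nontrivial, all four blocks are pairwise transverse, so every block is smooth. I expect this to be impossible: the length of the intermediate vertex corners forces $h\ge1$ unless the local picture degenerates. I would need to check this against the $d_s$, and it is a possible gap.

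Wildness. Suppose two nontrivial pairs share a block, say $(f_0,f_1)$ and $(f_1,f_2)$. Order the blocks as $(f_0,f_1,f_2,f_3)$. Then vertices $1$ and $2$ both carry loops, and the separated quiver contains the non-Dynkin, non-Euclidean graph already exhibited in Proposition~\ref{prop:radicalA}, so $\Acon$ is wild. Any two intersecting pairs can be brought into consecutive position by a reordering, because two unordered pairs sharing an element in a four-element set are consecutive in some ordering.

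Tameness. If the nontrivial pairs are disjoint, there are at most two of them, say $\{f_0,f_1\}$ and $\{f_2,f_3\}$. Choose the ordering $(f_0,f_2,f_1,f_3)$, in which every consecutive pair is transverse, so that the quiver has no loops whatsoever, or choose the ordering giving a single loop at the middle vertex. I would then compute the quadratic and cubic radical layers from the block model. As in Propositions~\ref{prop:1HH1tame} and~\ref{prop:111H1tame}, I would then show that $\operatorname{gr}_{\rad}\Acon$ (or a further degeneration obtained by scaling one relation by $\lambda\to0$) is a special biserial algebra or a quotient of the tame algebras already used there. Combined with representation-infiniteness, this gives tameness. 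The main obstacle is this last step: controlling the third radical layer in the loopless representatives so that the degenerate algebra really is special biserial and does not acquire an extra commutativity relation that would make it wild.
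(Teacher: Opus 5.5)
Your wildness direction is the paper's argument: two intersecting nontrivial pairs are made consecutive, the subset factors give loops at two adjacent vertices, and the separated quiver of $\Acon/\rad^2\Acon$ contains a four-cycle with an attached edge. The gaps are in the other two assertions.

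\textbf{The no-nontrivial-pair case.} The configuration you expect to be impossible does occur. For $uv=xy(x-y)(x+y)$ the four blocks are smooth and pairwise transverse, so there is no nontrivial pair and $\mathbf d=(1,1,1,1)$. Every surface quotient in the orbit is then $\Pi_{\DynA_3}$, every iterated-flop model is loopless, and every radical-square-zero quotient has separated quiver $\DynA_3\sqcup\DynA_3$. The statement counts this case as tame representation-infinite, but none of your tools detects it. More generally, disjointness of the nontrivial pairs forces $h(\mathbf d)=0$, since a block in $\mathfrak m^2$ is non-transverse to each of the other three. So throughout the tame case all surface quotients are representation-finite, and your $h(\mathbf d)=2$ branch never applies there.

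Representation-infiniteness therefore has to come from the cubic layer. Take a loopless model with blocks $(g_0,g_1,g_2,g_3)$.
\begin{itemize}
\item The block model gives $e_2\Acon e_2\cong S/(g_0g_1,g_2g_3)$, with the two $2$-cycles acting as $g_1,g_2$; these are independent modulo $\mathfrak m^2$.
\item The $2$-cycles at the end vertices act by uniformizers of $S/(g_0)$ and $S/(g_3)$.
\item The corner $e_1\Acon e_3\cong S/(g_0,g_3)$ is generated by the length-two path, while $e_1\rad^3\Acon\, e_3=e_1\rad^4\Acon\, e_3\subseteq\mathfrak m\, e_1\Acon e_3$.
\end{itemize}
Hence there are no quadratic relations, and $\Acon/\rad^3\Acon\cong\Bbbk Q/J^3$ for the doubled $\DynA_3$ quiver $Q$. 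Give all arrows degree one. The $\mathbb Z$-covering then contains the relation-free convex star on $1_{-1},3_{-1},2_0,1_1,3_1$, which is of type $\widetilde{\DynD}_4$, and its one-parameter family pushes down.

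\textbf{The tameness route.} The same computation shows that your tameness route cannot succeed. In any loopless representative, all six length-two paths are linearly independent in $\rad^2\Acon/\rad^3\Acon$. So in $\operatorname{gr}_{\rad}\Acon$ each arrow ending at vertex $2$ has two nonzero successors, and the algebra is not special biserial. The model with a single loop at the middle vertex is worse, since three arrows enter and leave vertex $2$. Rescaling a relation of degree at least three cannot create the missing quadratic zero relations. So the obstacle is not an unwanted commutativity relation, as you anticipated, but the absence of any quadratic relation at all.

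The paper does not attempt an associated-graded argument here. It passes to a loopless representative, imports the full bound-quiver presentation from the cited work of Zhang, and reads off tame representation-infiniteness from that presentation. Without such a presentation, or a degeneration that changes the generators rather than passing to $\operatorname{gr}_{\rad}\Acon$, your proposal does not establish the tame direction. Hence it does not establish the stated equivalence either.
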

\begin{proof}
Suppose two nontrivial pairs have a position in common. We may order the four positions so that these pairs occur consecutively in the same three-curve model. The corresponding one-curve subset factors are nontrivial local algebras, so their radicals modulo their squares are nonzero. The subset-factor argument used in the two-curve case therefore gives a loop at each of the two adjacent vertices. The surface quotient supplies the doubled line
\[
\xymatrix@C=3.2em{1 \ar@<0.5ex>[r] & 2 \ar@<0.5ex>[l] \ar@<0.5ex>[r] & 3 \ar@<0.5ex>[l]},
\]
and the two loops at adjacent vertices make the separated quiver of $\Acon/\rad^2\Acon$ contain a four-cycle with an attached edge. Hence $\Acon$ is wild by the radical-square-zero criterion. Representation type is unchanged by iterated mutation by Proposition~\ref{prop:chamber-invariance}.

Now suppose that the nontrivial pairs are pairwise disjoint. Thus there are at most two nontrivial pairs among the four positions. Choose an ordering in which none of these pairs is consecutive. In that model, all three one-curve subset factors are $\Bbbk$, so its Gabriel quiver has no loops. For this representative, the full bound quiver presentation is given by \cite[Corollary~3.5 and Section~3.3]{ZhangGV26} and \cite[Section~6]{Zhang26}. In particular, the relations in that presentation yield a tame representation-infinite algebra. Thus this model, and hence $\Acon$ by Proposition~\ref{prop:chamber-invariance}, is tame representation-infinite. In particular, no case in the theorem is representation-finite.
\end{proof}

\subsubsection*{Type~\(\DynD\)}
In type~$\DynD$, Step~2 leaves only one non-wild mutation orbit with three exceptional curves, represented by $\Omega=\{1,m-1,m\}$. The remaining tame triples belong to this orbit. Its surface Gabriel quiver is
\[
\vcenter{\xymatrix@C=3em@R=3.2em{& m-1 \ar@<0.5ex>[dr]^{c} \ar@<0.5ex>[dl]^{q} \\
1 \ar@<0.5ex>[ur]^{p} \ar@<0.5ex>[rr]^{r} && m \ar@<0.5ex>[ul]^{d} \ar@<0.5ex>[ll]^{s}}}\;,
\]
and its surface algebra is tame representation-infinite by Proposition~\ref{prop:D-rank3}.

\begin{proposition}\label{thm:terminal-D-rank3}
Let $\Acon$ be a three-curve contraction algebra of type~$\DynD$ whose surface orbit survives Step~2. In the representative $\{1,m-1,m\}$, the algebra $\Acon$ is wild if and only if its Gabriel quiver has a loop. If that quiver is loopless, then $\Acon$ is tame.
\end{proposition}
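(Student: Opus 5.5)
The plan is to follow the two-curve template. The Gabriel quiver of $\Acon$ is read off from the surface quotient together with the one-curve loop data. Wildness is detected on the radical-square-zero quotient, and tameness is obtained by degenerating to a special biserial algebra.

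\textbf{The quiver.} First, I would extend Proposition~\ref{prop:two-curve-radical-reduction} verbatim to three idempotents. Since $g$ is central, $g=\sum_i e_ige_i$, so the arrows between distinct vertices of $\Acon$ coincide with those of $\Gam(\DynD_m,\{1,m-1,m\})$. This gives the six arrows $p,q,c,d,r,s$ displayed above. At each vertex at most one extra loop can appear, and no surface loop is present, since the surface quiver is loopless.

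\textbf{Loops force wildness.} Suppose some vertex carries a loop, and write $\lambda$ for it. Every vertex of the triangle is joined to both other vertices by a pair of opposite arrows. The separated quiver of $\Acon/\rad^2\Acon$ then contains the $\widetilde{\DynA}_5$ hexagon already exhibited in the proof of Proposition~\ref{prop:D-rank3}. The loop contributes an additional edge $i^+\to i^-$. This edge joins two vertices of the hexagon that lie in the same cycle (for instance, $1^+$ and $1^-$ are opposite vertices of the hexagon), so it creates a chord. A cycle with a chord is neither Dynkin nor Euclidean. Hence $\Acon/\rad^2\Acon$ is wild, and so is $\Acon$. This establishes the ``if'' direction.

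\textbf{Loopless implies tame.} Now assume the quiver is loopless. Then $g\in\rad^2\Acon$, and $\Acon$ has the same Gabriel quiver as the surface algebra. Representation-infiniteness is already known, because the surface quotient is tame representation-infinite by Proposition~\ref{prop:D-rank3}. It remains to exclude wildness, and for this I would pass to $\operatorname{gr}_{\rad}\Acon$. The surface algebra is special biserial: each arrow has at most one nonzero successor and at most one nonzero predecessor. Concretely, the vanishing compositions are those of the form ``go and return through the same short arm'', such as $qp$ and $dc$. I would show that these zero relations lift to $\operatorname{gr}_{\rad}\Acon$. The mechanism is the argument from the proof of Proposition~\ref{thm:two-curve-remaining-wild}: an exact surface relation $xy=0$ lifts to $xy=g_ih$. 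Centrality and $g_i\in\rad^2$ push this error term into higher radical layers, so it vanishes in the graded algebra. It follows that $\operatorname{gr}_{\rad}\Acon$ is a quotient of a special biserial algebra with the same quiver, hence tame or finite. By radical degeneration~\cite{Geiss}, $\Acon$ is non-wild, and therefore tame.

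\textbf{Main obstacle.} The hardest step is the quadratic layer in the loopless case. The diagonal corners may acquire one extra quadratic class from $g$, and this class could destroy biseriality, since a previously zero product such as $qp$ might become a nonzero multiple of $rs$ at vertex $1$. My first approach would be the weight-filtration trick from Proposition~\ref{thm:two-curve-remaining-wild}: show that any such $g_i$ lies in $\rad^3\Acon$, using the fact that $\rad^2$ in each corner is spanned by two-cycles through the other vertices, together with $\Acon$ being symmetric. Failing that, I would compute directly with the surface presentation to check that the only possible perturbations are of commutativity type. These are compatible with a Rees degeneration to a special biserial algebra. Finally, mutation invariance (Proposition~\ref{prop:chamber-invariance}) transfers the conclusion to the whole orbit.
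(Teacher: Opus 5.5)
Your quiver identification and your wildness argument match the paper's, as does the overall plan of degenerating to a special biserial $\operatorname{gr}_{\rad}\Acon$. (The loop adds a chord $i^+\to i^-$ to the $\widetilde{\DynA}_5$ hexagon.) The gap is in the tameness step: you have chosen the wrong zero relations. The two-cycles do not vanish in the surface algebra $\Gam=\Gam(\DynD_m,\{1,m-1,m\})$.
\begin{itemize}
\item At vertex $1$, the two returns $1\to m-1\to 1$ and $1\to m\to 1$ are nonzero and proportional. Together they span the socle of $e_1\Gam e_1\cong\Bbbk[x]/(x^2)$.
\item The cycles $cd$ and $dc$ generate $e_{m-1}\Gam e_{m-1}\cong e_m\Gam e_m\cong\Bbbk[x]/(x^{\lfloor m/2\rfloor})$. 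In particular $[dc]\neq 0$ in $\rad^2\Gam/\rad^3\Gam$ for every $m$: it has path length $4$ in $\Pi_{\DynD_m}$, while $\rad^3\Gam$ lives in path length at least $6$.
\end{itemize}
So no special biserial structure exists with relations such as $qp=dc=0$. The obstacle you isolate comes from this misidentification. Lifting a diagonal relation gives $xy=g_ih$ with $h\in e_i\Acon e_i$ possibly a unit, which only yields $xy\in\rad^2\Acon$. Neither remedy you propose is supported. You give no argument that $g_i\in\rad^3\Acon$, and the proof does not need it.

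The missing observation is that, for the doubled triangle, the monomials that must vanish are the six two-arrow paths through three distinct vertices. Path reduction in $\Pi_{\DynD_m}$ gives $pc=rd=cs=dq=0$. With $x=a_{m-2}b_{m-2}$ and $y=a_{m-1}b_{m-1}$, one has $qr=b_{m-2}(x+y)^{m-3}a_{m-1}$ and $sp=b_{m-1}(x+y)^{m-3}a_{m-2}$. These vanish for even $m$. For odd $m=2k+3$ they are multiples of $c(dc)^k$ and $d(cd)^k$, which need not be zero but lie in $\rad^3\Gam$. So you need vanishing modulo $\rad^3$, not the exact relation $xy=0$ your lifting mechanism assumes.

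All six classes are off-diagonal. Since $g=\sum_i e_ige_i\in\rad^2\Acon$ is central, $e_i(g\Acon)e_k=g\,e_i\Acon e_k\subseteq\rad^3\Acon$ for $i\neq k$. Hence these classes still vanish in $\rad^2\Acon/\rad^3\Acon$. In $\operatorname{gr}_{\rad}\Acon$, the only possible nonzero successor and predecessor of each arrow is then its reverse arrow. This makes $\operatorname{gr}_{\rad}\Acon$ special biserial no matter what $g$ contributes in the diagonal corners. The degeneration $\Acon\rightsquigarrow\operatorname{gr}_{\rad}\Acon$ then completes the argument as you intended.
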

\begin{proof}
If the Gabriel quiver has a loop, then it contains a full subquiver of the form
\[
\xymatrix@C=3.2em@R=3em{& m-1 \ar@<0.5ex>[dr] \ar@<0.5ex>[dl] \\
1 \ar@(ul,dl)[] \ar@<0.5ex>[ur] \ar@<0.5ex>[rr] && m \ar@<0.5ex>[ul] \ar@<0.5ex>[ll]},
\]
so the separated quiver of $\Acon/\rad^2\Acon$ contains a six-cycle with an additional edge. Hence $\Acon$ is wild by the radical-square-zero criterion.

Assume that the Gabriel quiver is loopless and write $\Gam(\DynD_m,\{1,m-1,m\})\cong \Acon/(g)$. The path reductions in Step~1 give $pc=rd=cs=dq=0$. For the other two products passing through three distinct vertices, put $x=a_{m-2}b_{m-2}$ and $y=a_{m-1}b_{m-1}$ at the branching vertex. The short-arm relations give $x^2=y^2=0$, while reduction along the long arm gives, up to the signs determined by the path convention, $qr=b_{m-2}(x+y)^{m-3}a_{m-1}$ and $sp=b_{m-1}(x+y)^{m-3}a_{m-2}$. For even $m$ these products vanish. For odd $m$, set $k=(m-3)/2$. Only alternating words in $x,y$ survive, and moving them to the contracted corner gives $qr\in\Bbbk\,c(dc)^k$ and $sp\in\Bbbk\,d(cd)^k$. Since these are paths of length at least three, every two-arrow path through three distinct vertices vanishes in $\rad^2\Gam(\DynD_m,\{1,m-1,m\})/ \rad^3\Gam(\DynD_m,\{1,m-1,m\})$.

Both $\Acon$ and its surface quotient are loopless, so $g\in\rad^2\Acon$. Centrality makes the kernel of the map on quadratic radical layers diagonal: multiplying $g$ by a radical element only contributes in $\rad^3\Acon$. Thus all off-diagonal quadratic classes are unchanged on passing from $\Acon$ to $\Acon/(g)$. The same two-arrow paths therefore vanish in $\rad^2\Acon/\rad^3\Acon$. It follows that, in $\operatorname{gr}_{\rad}\Acon$, each arrow has at most one possible nonzero successor and predecessor, namely its reverse arrow. Since the Gabriel quiver is the doubled triangle, $\operatorname{gr}_{\rad}\Acon$ is special biserial and therefore tame. The radical filtration gives the degeneration $\Acon\rightsquigarrow\operatorname{gr}_{\rad}\Acon$, so the degeneration criterion in Section~\ref{sec:prelim} makes $\Acon$ tame. Finally, its surface quotient is representation-infinite by Proposition~\ref{prop:D-rank3}; therefore, $\Acon$ is tame representation-infinite.
\end{proof}

\subsection{Four or more exceptional curves}

\begin{proposition}\label{prop:step3-higher-rank}
Every contraction algebra with at least four exceptional curves whose surface data survive Step~2 is wild.
\end{proposition}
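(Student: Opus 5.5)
The plan is to first enumerate which surface data with $n=|\Omega|\ge4$ survive Step~2, and then rule out each survivor by lifting wildness from the surface quotient via the radical-square-zero criterion.

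\textbf{Enumerating the survivors.} By Propositions~\ref{prop:A-mutation-safe}, \ref{prop:D-mutation-safe} and~\ref{prop:exceptional-mutation-orbits}, the only orbits with $n\ge4$ that are not already wild are the following.
\begin{enumerate}
\item Type~$\DynA$ with $r=4$ and $h(\mathbf d)\le1$.
\item Type~$\DynA$ with $r=5$ and $h(\mathbf d)=0$, that is, $\Omega=\{1,\dots,5\}$ in $\DynA_5$.
\item $(\DynD_4,\{1,2,3,4\})$.
\end{enumerate}
The exceptional types contribute nothing. For each orbit, I will exhibit an iterated-flop representative and a full subquiver of the Gabriel quiver of $\Acon$ whose separated quiver is neither Dynkin nor Euclidean.

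\textbf{Type~$\DynA$ with $r\ge4$.} Here Lemma~\ref{lem:A-gap-mutation} lets me permute $\mathbf d$ freely. Since $\sum d_s=m+1$ with $h(\mathbf d)\le1$, at least $r\ge4$ of the entries equal $1$.
\begin{itemize}
\item \emph{Loopless representative.} Placing the entries $1$ appropriately makes the surface quotient loopless. Its Gabriel quiver then contains the doubled line on four vertices.
\item \emph{Forcing loops.} As in Proposition~\ref{thm:terminal-cA3-trichotomy}, I will use the one-curve subset contraction algebras to force loops. The centrality of $g$ (Proposition~\ref{prop:two-curve-radical-reduction}, which extends verbatim to $n$ vertices: the kernel on $J/J^2$ is diagonal and spanned by the $e_ige_i$) shows that each vertex acquires at most one new loop.
\item \emph{The key point.} A doubled line $\DynA_4$ has a separated quiver that is a union of two paths of type $\DynA_4$, so looplessness alone does not give wildness. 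Wildness must come from loops.
\end{itemize}
I expect that for cA$_n$ singularities with $n\ge4$ curves and Gorenstein terminal source, an argument with the $uv=f_0\cdots f_r$ factor-block model (as in the proof of Proposition~\ref{thm:step3-A2-quadratic}) applies. Since $R$ is isolated, the blocks are reduced and pairwise coprime. Ordering the blocks so that two adjacent pairs have intersection multiplicity at least $2$ forces loops at two adjacent vertices, giving a four-cycle with a tail in the separated quiver. If that fails, I would fall back on the cubic layer: take the loopless model, compute $\operatorname{gr}_J\Acon$ from the stable-endomorphism model, and find a Galois covering with a wild hereditary convex piece, as in Proposition~\ref{prop:1H1H1wild}.

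\textbf{Type~$\DynD_4$ with $\Omega=\Delta_0$.} Here $R$ is a $cD_4$ singularity and $\Acon$ surjects onto $\Pi_{\DynD_4}$. The Gabriel quiver of $\Acon$ contains the doubled star, whose separated quiver is two copies of $\widetilde{\DynD}_4$. Any additional loop then yields a non-Euclidean separated component. For the loopless case, I would pass to the quadratic layer: by the centrality argument, the off-diagonal quadratic classes $[b_ia_j]$ for $i\ne j$ survive from $\Pi_{\DynD_4}$. I would then exhibit a wild three-point idempotent truncation $e\Acon e$ on $\{2,i,j\}$ whose quiver acquires a loop at the centre, produced by the composites $a_kb_k$ through the deleted vertex $k$. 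The resulting quiver has two doubled edges and a loop at the centre, so its separated quiver is non-Euclidean.

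\textbf{Main obstacle.} I expect the main obstacle to be the type~$\DynA$ loopless boundary. Showing that the loops forced by the subset contractions appear at two adjacent vertices in some single flop representative requires the pairwise-intersection combinatorics of $r+1\ge5$ coprime blocks, together with the isolated/terminal hypothesis.
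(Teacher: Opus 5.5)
There are genuine gaps in both the $\DynD_4$ case and the type~$\DynA$ case. Your enumeration of the surviving orbits is correct.

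\textbf{The $\DynD_4$ case.} The separated quiver of the doubled star is two copies of $\DynD_4$, not of $\widetilde{\DynD}_4$: each of $2^\pm$ has only three neighbours, so it is Dynkin. As a result, a loop at a leaf $\ell$ only joins the two stars along $2^+,\ell^-,\ell^+,2^-$. This gives $\widetilde{\DynD}_7$, which is Euclidean. Only a loop at the central vertex produces a non-Euclidean component, which is why the paper first moves any nontrivial one-curve factor to the centre.

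Your loopless argument fails for the same kind of reason. Grant the new central loop $a_kb_k$ in the truncation on $\{2,i,j\}$. That truncation then has quiver a doubled $\DynA_3$ with a loop at the middle vertex. Its separated quiver is the tree with adjacent trivalent vertices $2^+,2^-$ and four leaves, i.e. $\widetilde{\DynD}_5$, which is Euclidean. This gives only representation-infiniteness; it is exactly the quiver of the tame algebras in Proposition~\ref{prop:1HH1tame}. You need the third radical layer instead. In the loopless model there are no quadratic relations, so $\Acon/\rad^3\Acon\cong\Bbbk Q/J^3$. The $\mathbb Z$-covering then contains the relation-free convex piece $2_0\to\{1_1,3_1,4_1\}\to 2_2$, whose graph is neither Dynkin nor Euclidean.

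\textbf{The type~$\DynA$ case.} The claim that wildness must come from loops is not right. More to the point, two adjacent loops are not needed: one interior loop already suffices. A single loop at vertex $2$ of the doubled line $1,2,3,4$ gives a separated tree with adjacent trivalent vertices $2^+,2^-$ and arms of lengths $1,2,1,2$. That is $\widetilde{\DynD}_5$ with two extra vertices, hence wild. Placement matters: a loop at an endpoint gives only $\DynA_8$.

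Iterated flops realise every ordering of the blocks. So any single non-transverse pair can be moved to the second vertex, which is what the paper does. Your two-adjacent-loops criterion misses configurations with one tangent pair, or with two disjoint tangent pairs. These cases are pushed into your undeveloped fallback.

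The case that genuinely needs more work is when all pairs are transverse, so every representative is loopless. There you must show that the loopless model has no quadratic relations. At each interior vertex the two $2$-cycles come from two transverse smooth blocks and are independent. The endpoint $2$-cycles and the length-two paths between distinct vertices are also nonzero. The paper takes this from the explicit loopless presentation used in the three-curve case. Only then does the covering of $\Bbbk Q/J^3$ apply, with the convex piece on $1_0,3_0,2_1,4_1,1_2,3_2$: a four-cycle with two leaves.

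You name this step as the main obstacle but leave it as a sketch, so as written the type~$\DynA$ case is not proved.
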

\begin{proof}
We first consider type~$\DynA$, and let $r\ge4$ be the number of exceptional curves. As above, the mutation correspondence identifies one-curve subset contraction algebras across iterated-flop representatives.

Suppose that some such factor is not isomorphic to $\Bbbk$. Choose an iterated-mutation model in which this factor occurs at the second simple vertex. The subset-factor argument gives a loop there, while the surface quotient supplies a doubled line on four consecutive vertices,
\[
\xymatrix@C=2.8em{1 \ar@<0.5ex>[r] & 2 \ar@(ul,ur)[]^{ } \ar@<0.5ex>[l] \ar@<0.5ex>[r] & 3 \ar@<0.5ex>[l] \ar@<0.5ex>[r] & 4 \ar@<0.5ex>[l]}.
\]
Retain these four consecutive vertices and pass to the radical-square-zero quotient. The separated quiver contains a subquiver of Euclidean type $\widetilde{\DynD}_5$, arising from the loop at the second vertex and the opposite arrows to its two neighbours. The fourth vertex supplies additional edges, so the resulting connected component is neither Dynkin nor Euclidean. Hence the radical-square-zero quotient is wild.

Suppose now that every one-curve subset factor occurring under iteration is $\Bbbk$. The bound-quiver presentation for the corresponding loopless representative is taken from the same bound-quiver description invoked in the three-curve case. Let $Q$ be its doubled $\DynA_r$ Gabriel quiver and $J$ the arrow ideal of $\Bbbk Q$. In this bound-quiver presentation, there are no quadratic defining relations. Consequently, $\Acon/\rad^3\Acon\cong\Bbbk Q/J^3$. We apply the graded Galois covering construction from Subsection~\ref{subsec:galois-covering} to the radical-cube-zero quotient supported on four consecutive vertices. The full convex subcategory on the vertices $1_0,3_0,2_1,4_1,1_2,3_2$ has no nonzero defining relations. Its underlying graph is a four-cycle with two attached leaves, so it is hereditary wild. Proposition~\ref{prop:covering-wild} makes the four-vertex quotient wild. If $r>4$, killing the other vertices yields this same quotient. Thus every surviving type~$\DynA$ case with $r\ge4$ is wild.

We next consider type~$\DynD$. Step~2 already gives a wild surface quotient whenever there are at least five exceptional curves. With four curves, the only non-wild surface candidate is the full $\DynD_4$ marking. If an iterated-mutation model has a loop at the central vertex, then its Gabriel quiver contains a full subquiver of the form
\[
\xymatrix@C=3.2em@R=2.5em{
& 2 \ar@(ul,ur)[]^{\ } \ar[dl] \ar[d] \ar[dr] \\
1 & 3 & 4
}
\]
with the opposite arrows suppressed from the notation. Its radical-square-zero quotient is wild: indeed, the separated quiver of the doubled $\DynD_4$ quiver consists of two three-armed stars, and the central loop joins their centres. It contains a proper Euclidean subquiver of type $\widetilde{\DynD}_4$, with further vertices attached, and so is neither Dynkin nor Euclidean. By the mutation transport of the one-curve subset factor, any nontrivial factor in this orbit can be placed at the central vertex; hence this covers the case in which a loop occurs in a suitable model.

It remains to treat the loopless model. The corresponding bound-quiver presentation has no quadratic relations, by the same reduction as in type~$\DynA$. Thus, with $Q$ the doubled $\DynD_4$ quiver and $J$ the arrow ideal of $\Bbbk Q$, $\Acon/\rad^3\Acon\cong\Bbbk Q/J^3$. In the graded covering from Subsection~\ref{subsec:galois-covering}, consider the central vertex in degrees $0$ and $2$ and the three end vertices in degree $1$. They form a full convex subcategory with quiver
\[
\xymatrix@C=2em@R=2em{
& 2_0 \ar[dl] \ar[d] \ar[dr] & \\
1_1 \ar[dr] & 3_1 \ar[d] & 4_1 \ar[dl] \\
& 2_2 &
}
\]
and no defining relations. Since its underlying graph is neither Dynkin nor Euclidean, this subcategory is hereditary wild. Then, Proposition~\ref{prop:covering-wild} gives wildness of $\Acon/\rad^3\Acon$, and hence of $\Acon$.

In the exceptional case, Step~2 already removes every case with at least three exceptional curves. This completes the proof.
\end{proof}

\appendix

\section{A tame case}\label{app:four-vertex-tame}
This appendix proves the tameness assertion in Proposition~\ref{prop:111H1tame}. The argument combines node separation with the hammock and vector-space categories associated with one-point extensions and coextensions. Let $Q$ be the quiver
\[
\xymatrix@C=3.2em{1 \ar@<0.5ex>[r]^{a} & 2 \ar@<0.5ex>[l]^{b} \ar@<0.5ex>[r]^{c} & 3 \ar@<0.5ex>[l]^{d} \ar@<0.5ex>[r]^{e} & 4 \ar@<0.5ex>[l]^{f}},
\]
and put $S=\Bbbk Q/\langle ab,\ ba,\ dc-ef,\ fe,\ acd,\ cdb\rangle$.
The algebra $S$ has a node at vertex $1$. Its separation yields an algebra $\widetilde S$ obtained by adjoining a source and then a sink to $B\simeq\operatorname{Aus}(\Bbbk[x]/(x^3))$. 
The first attachment is representation-finite by Kleiner's criterion \cite{Kleiner1972}, whereas the second is tame by Bekkert's criterion \cite[Theorem~1]{BekkertPolynomialGrowth}. The latter hammock also yields a one-parameter family of indecomposable modules.

\subsection{Reduction to the fixed algebra}

\begin{proposition}\label{app:prop:reduction-S}
Let $A_p= \Bbbk Q/\langle
ab,\ ba-(cd)^p,\ dc-ef,\ fe,\ acd,\ cdb
\rangle$ with $p\ge2$. Then, $A_p\simeq S$ for $p\ge3$, and $A_2$ degenerates flatly to $S$. Moreover, $A_p$ is representation-infinite for every $p\ge2$.
\end{proposition}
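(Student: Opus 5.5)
The proof has three parts: an exact collapse for $p\ge3$, a one-parameter degeneration for $p=2$, and a radical-square-zero argument for representation-infiniteness.

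\emph{The isomorphism $A_p\simeq S$ for $p\ge3$.} We first show $(cd)^3=0$ in $A_p$. The relations $dc=ef$ and $fe=0$ give
\[
(dc)^2=efef=e(fe)f=0 ,
\]
so $(cd)^3=c(dc)^2d=0$. This is exactly the computation used in the proof of Proposition~\ref{prop:111H1tame}. Hence for $p\ge3$ the relation $ba-(cd)^p$ reduces to $ba=0$, and the two presentations define the same ideal in $\Bbbk Q$. Since $S$ is finite-dimensional, each of these ideals should contain all sufficiently long paths, so $A_p\simeq S$.

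\emph{The degeneration of $A_2$ to $S$.} We introduce the family
\[
A_{2,\lambda}=\Bbbk Q/\langle ab,\ ba-\lambda(cd)^2,\ dc-ef,\ fe,\ acd,\ cdb\rangle, \qquad \lambda\in\Bbbk .
\]
For $\lambda\ne0$, rescaling $b\mapsto\lambda b$ (with all other arrows fixed) gives $A_{2,\lambda}\simeq A_2$, since every other relation involving $b$ is homogeneous in $b$. At $\lambda=0$ we recover $S$.

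Flatness requires $\dim A_{2,\lambda}$ to be constant in $\lambda$. The plan is to write down a basis of paths valid for all $\lambda$.
\begin{itemize}
\item At vertex $2$: the paths $e_2$, $cd$, $(cd)^2$, $ba$, and the mixed words.
\item The new relation $ba=\lambda(cd)^2$ only identifies $ba$ with a multiple of $(cd)^2$. We check that this creates no new consequences among the paths, using $acd=0$, $cdb=0$, $ab=0$ and $(cd)^3=0$. For instance, $aba=\lambda a(cd)^2=0$ and $bab=0$.
\item We then compare dimensions. Equivalently, the Gröbner basis should have leading terms independent of $\lambda$, with $ba$ as the leading term of the new relation.
\end{itemize}
If the two-dimensional degree-two piece at vertex $2$ does not collapse in $A_2$, then $ba$ and $(cd)^2$ are independent in $S$. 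In that case the dimension of $A_{2,\lambda}$ is one less than that of $S$, and the argument must be reorganised. Then $\lambda=0$ would correspond to $S/\langle\text{socle}\rangle$ or something similar, and the correct statement becomes a degeneration to a quotient of $S$ after adjoining the relation $ba=0$. This needs checking, and it is the step I expect to be the main obstacle.

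To settle it, I compute the Loewy structure of the indecomposable projective at vertex $2$ in both algebras. If $A_2$ is symmetric, then $(cd)^2$ is its socle and $ba$ spans it, which suggests equal dimensions. In $S$, on the other hand, $(cd)^2$ and $ba$ may span different socle elements. The resolution is to choose the correct family: we keep the ideal generated so that $\dim_\Bbbk e_2A_{2,\lambda}e_2$ stays the same.

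\emph{Representation-infiniteness.} It suffices to show that some quotient of $A_p$ is representation-infinite. The quotient $A_p/\rad^2A_p$ has Gabriel quiver $Q$, the doubled $\DynA_4$ quiver, because no arrow is a combination of longer paths. Its separated quiver has two components, each of type $\DynA_4$, so it is representation-finite and does not suffice.

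Instead we take the idempotent truncation at $\{2,3\}$, or the quotient killing vertex $1$. Killing vertex $1$ gives the doubled $\DynA_3$ quiver on $\{2,3,4\}$ with relations $dc=ef$, $fe=0$ and $(cd)^p=0$. By Proposition~\ref{prop:coarsen}, this is $\Gam(\DynA_m,\Omega)$ with parameter sequence $(1,1,2,1)$ after matching. This is the representation-infinite case with indices $\{2,3\}$ in Proposition~\ref{prop:r3table}, so we must check that the parameters match the configuration used in Proposition~\ref{prop:111H1tame}.

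Alternatively, we use the truncation $e_{\{1,2,4\}}A_pe_{\{1,2,4\}}$, as in the proof of Proposition~\ref{prop:111H1tame}. Since the ideals coincide, this is the same algebra as there, and its truncation with parameter sequence $(1,d_1,2,1)$ is representation-infinite by Proposition~\ref{prop:r3table}. Either route shows that $A_p$ is representation-infinite for every $p\ge2$.
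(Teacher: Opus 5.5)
\textbf{Verdict.} The proposal follows the paper's approach but has a genuine gap in the flatness step, and the first of your two routes to representation-infiniteness fails.

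\textbf{What matches the paper.} Three parts are exactly the paper's argument: the isomorphism $A_p\simeq S$ for $p\ge3$, the family $A_{2,\lambda}$ with the rescaling of $b$, and your second route to representation-infiniteness. That route deletes vertex $3$ to get the truncation with parameter sequence $(1,p,2,1)$, which is tame by Proposition~\ref{prop:r3table}.

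\textbf{The gap: flatness.} Flatness is part of the statement, and you never show that $\dim_\Bbbk A_{2,\lambda}$ is independent of $\lambda$. The obstruction you worry about does not exist. Since $ba=0$ is a defining relation of $S$, the element $ba$ vanishes in $S$ and cannot be independent of $(cd)^2$ there. In every fibre the corner $e_2A_{2,\lambda}e_2$ has basis $e_2,\ cd,\ cdcd$, so $ba$ belongs in no basis. No change of family or passage to a quotient of $S$ is needed; that would prove a different statement.

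What you must supply is the confluence check over $\Bbbk[\lambda]$.
\begin{enumerate}
\item Use the reductions $ba\mapsto\lambda\,cdcd$, $ef\mapsto dc$, and $ab,fe,acd,cdb\mapsto0$.
\item Add $fdc,dce,dcdc\mapsto0$, which are forced by the overlaps $fef$, $efe$, $efef$.
\item Choose weights making $ba$ the leading term, say weight $3$ on $a,b$ and weight $1$ on $c,d,e,f$.
\item Check that the remaining overlaps resolve, e.g.\ $aba\mapsto\lambda(acd)cd\mapsto0$, $bab\mapsto\lambda\,cd(cdb)\mapsto0$, $cdba\mapsto\lambda\,c(dcdc)d\mapsto0$, and $bacd\mapsto\lambda\,c(dcdc)d\mapsto0$.
\end{enumerate}
The irreducible paths are then the $21$ paths $e_1,\dots,e_4$, $a,\dots,f$, $ac,cd,ce,db,dc,fd$, $ace,cdc,dcd,fdb$, $cdcd$, for every $\lambda$. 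The diamond lemma makes the family free of rank $21$ over $\Bbbk[\lambda]$, which gives the flat degeneration $A_2\rightsquigarrow S$. As a consistency check, the stable Hom formula gives $\dim_\Bbbk A_2=\dim_\Bbbk S=21$.

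\textbf{The first route to representation-infiniteness is wrong.} You present it as interchangeable with the second, but it detects nothing. By the presentation in the proof of Proposition~\ref{prop:111H1tame}, $A_p\cong\Gam(\DynA_{p+3},\{1,p+1,p+2,p+3\})$ with parameter sequence $(1,p,1,1,1)$.
\begin{itemize}
\item Killing vertex $1$ gives, by Proposition~\ref{prop:intervalquot}, the parameter sequence $(p,1,1,1)$, not $(1,1,2,1)$. It has only one entry $\ge2$, so this quotient is representation-finite by Proposition~\ref{prop:r3table}.
\item Even $(1,1,2,1)$ would be representation-finite, since the tame index sets in Proposition~\ref{prop:r3table} need two entries $\ge2$.
\item The truncation at $\{2,3\}$ has parameter sequence $(p+1,1,2)$ and is representation-finite by Proposition~\ref{thm:two-point}.
\end{itemize}
Only the truncation at $\{1,2,4\}$ shows that $A_p$ is representation-infinite, so drop the first route and keep the second alone.
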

\begin{proof}
From $dc=ef$ and $fe=0$ we obtain $(dc)^2=(ef)^2=e(fe)f=0$ and $(cd)^3=c(dc)^2d=0$. Thus $(cd)^p=0$ for $p\ge3$, and $A_p\simeq S$. For $p=2$ and $\lambda\in\Bbbk$, set
\[
A_{2,\lambda}:=
\Bbbk Q/
\langle
ab,\ ba-\lambda(cd)^2,\ dc-ef,\ fe,\ acd,\ cdb
\rangle.
\]
For $\lambda\neq0$, rescaling $b$ yields $A_{2,\lambda}\simeq A_2$, while $A_{2,0}=S$. To prove flatness, work first over $\Bbbk[\lambda]$. The relations admit the reductions
\[
ba\longmapsto\lambda cdcd,\qquad ef\longmapsto dc,
\qquad ab,\ fe,\ acd,\ cdb,\ fdc,\ dce,\ dcdc\longmapsto0.
\]
The last three zero relations follow from the original ones: $fdc=fef=0$, $dce=efe=0$, and $dcdc=(ef)^2=0$. Give $a,b$ weight three and $c,d,e,f$ weight one, and order $ef$ above $dc$ at equal weight. The reductions terminate and all overlap ambiguities resolve. Their irreducible paths are
\[
\begin{gathered}
e_1,e_2,e_3,e_4,\quad a,b,c,d,e,f,\quad
ac,cd,ce,db,dc,fd,\\
ace,cdc,dcd,fdb,\quad cdcd .
\end{gathered}
\]
They form a $\Bbbk[\lambda]$-basis, of rank $21$, by the diamond lemma. Thus the family is free, hence flat, and gives $A_2\rightsquigarrow S$.

Deleting the third vertex yields an idempotent truncation with parameter sequence $(1,p,2,1)$ by Proposition~\ref{prop:coarsen}. It is representation-infinite by Proposition~\ref{prop:r3table}, and therefore so is $A_p$.
\end{proof}

\subsection{Node separation and the representation-finite core}

\begin{proposition}\label{app:prop:node-separation}
Vertex $1$ is a node of $S$. Its separation gives
$\widetilde S=\Bbbk\widetilde Q/\langle ab,\ dc-ef,\ fe,\ acd,\ cdb\rangle$, where
\[
\widetilde Q:\quad\vcenter{\xymatrix@C=3em@R=1em{
1^+ \ar[dr]^{a} &&& \\
& 2 \ar[dl]_{b} \ar@<0.5ex>[r]^{c} & 3 \ar@<0.5ex>[l]^{d} \ar@<0.5ex>[r]^{e} & 4 \ar@<0.5ex>[l]^{f} \\
1^- &&&
}}.
\]
Node separation identifies the isomorphism classes of indecomposable $S$-modules with those of indecomposable $\widetilde S$-modules other than the simple at $1^-$. In particular, $S$ and $\widetilde S$ have the same representation type. If
\[
B=\Bbbk\left(\xymatrix@C=3em{2 \ar@<0.5ex>[r]^{c} & 3 \ar@<0.5ex>[l]^{d} \ar@<0.5ex>[r]^{e} & 4 \ar@<0.5ex>[l]^{f}}\right)/\langle dc-ef,\ fe\rangle,
\]
then $\widetilde S$ is obtained from $B$ by successively adjoining the source $1^+$ and the sink $1^-$.
\end{proposition}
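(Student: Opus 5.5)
Everything here is read off the explicit relations of $S$. We first verify that vertex $1$ is a node. We need every path of length two passing through $1$ to be zero in $S$. The only arrows meeting vertex $1$ are $a\colon 1\to 2$ and $b\colon 2\to 1$, so the only length-two paths through $1$ are $ab$ and $ba$. Both are among the defining relations of $S$. Moreover $1$ carries no loop, and $e_1$ is a nonzero idempotent. Vertex $1$ is neither a source nor a sink, since it has one incoming and one outgoing arrow. Hence $1$ is a node in the sense of Martinez-Villa.

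Separating the node means splitting vertex $1$ into a source $1^+$, which receives the outgoing arrow $a$, and a sink $1^-$, which receives the incoming arrow $b$. We then drop the relations $ab$ and $ba$ through the node, and keep every other relation unchanged as a relation on $\widetilde Q$. This gives the ideal $\langle ab, dc-ef, fe, acd, cdb\rangle$. Strictly, $ab$ is now a path $1^+\to2\to1^-$ and stays zero, which is consistent with the displayed list. The relations $ba$, $dc-ef$ and $fe$ never mention the split vertex in a way that needs to be reinterpreted; they are simply inherited.

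The comparison of module categories follows from Martinez-Villa's node-separation theorem. The functor $\operatorname{mod}S\to\operatorname{mod}\widetilde S$ splits the vector space at vertex $1$: the part $M_1 b$ goes to vertex $1^-$, and a complement of it goes to vertex $1^+$. This functor induces a bijection between the indecomposable $S$-modules, excluding the simple $S_1$ if necessary, and the indecomposable $\widetilde S$-modules other than the simple modules at the new vertices. The precise bookkeeping matches the statement: the simple at $1^+$ corresponds to $S_1$, and only the simple at $1^-$ is omitted. In particular $S$ and $\widetilde S$ have the same finite/tame/wild representation type. This holds because the bijection preserves dimension up to a bounded change, and representation embeddings transfer through the separation functor.

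For the last assertion, we delete $1^+$ and $1^-$ from $\widetilde Q$. The relations $ab$, $acd$ and $cdb$ involve $a$ or $b$, so they disappear, and we are left with $B=\Bbbk(\cdots)/\langle dc-ef, fe\rangle$. The vertex $1^+$ is a source whose only arrow is $a$, landing at $2$. So $\widetilde S$ restricted to $\{1^+,2,3,4\}$ is the one-point extension $B[M]$ with $M=aB=e_2B/(e_2cdB)$, because of the relation $acd=0$. The vertex $1^-$ is then a sink reached only through $b$, so $\widetilde S$ is the one-point coextension of $B[M]$ by the module determined by $b$ and the relations $ab$ and $cdb$.

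The main obstacle is checking the precise hypotheses of the node-separation theorem, in particular whether the omitted module is the simple at $1^-$ or the one at $1^+$. This depends on the right-module and path-order conventions fixed in the preliminaries, so I would track the convention carefully rather than recompute anything.
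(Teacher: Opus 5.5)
Your route is the same as the paper's: check that $1$ is a node, split it into a source $1^+$ and a sink $1^-$, invoke the node-separation theorem, and delete $1^{\pm}$ to recover $B$. Your identification $M=aB\cong e_2B/cdB$ via $acd=0$ also matches the module used later in the appendix.

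The one genuine slip is in which relation disappears. The paper uses the traversal convention, with $a\colon 1\to2$, $b\colon 2\to1$, and $ab$ meaning ``$a$ then $b$''. So $ab$ is the cycle $1\to2\to1$ through vertex $2$, and the only length-two path through the node is $ba$. After separation, $b$ ends at $1^-$ while $a$ starts at $1^+$, so $ba$ is no longer a path. It is therefore precisely the relation that becomes redundant, not one that is ``simply inherited''. Conversely, $ab$ becomes the genuine path $1^+\to2\to1^-$ and must be retained. Dropping it, as your first sentence proposes, would give a different algebra with a nonzero path $1^+\to1^-$. You reach the correct ideal only through the after-the-fact correction. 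Fix the convention at the outset and say plainly that $ba=0$ becomes vacuous while $ab=0$ survives.

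The obstacle you single out at the end is not real. Consider the gluing functor $\operatorname{mod}\widetilde S\to\operatorname{mod}S$, i.e.\ restriction along the inclusion $S\hookrightarrow\widetilde S$ with $e_1\mapsto e_{1^+}+e_{1^-}$. It sends both new simples to $S_1$, so omitting $S_{1^-}$ rather than $S_{1^+}$ is merely a choice of matching. This functor is exact and preserves dimension exactly, not just ``up to a bounded change''. It is bijective on indecomposables away from $S_{1^\pm}$ and $S_1$, and together with Drozd's tame--wild dichotomy this is what transfers the representation type. The paper simply cites the node-separation theorem at this point.

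Finally, ``$M_1b$'' should be the image $Mb=Me_2b\subseteq Me_1$; with the paper's conventions $Me_1b=0$.
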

\begin{proof}
The only path of length two passing through vertex $1$ is $ba$, which vanishes in $S$; hence vertex $1$ is a node. Separating its incoming and outgoing arrows produces a sink $1^-$ and a source $1^+$. The relation $ba=0$ then becomes redundant, while $ab=0$ remains, giving the displayed presentation of $\widetilde S$. The claimed correspondence follows from the node-separation theorem. Removing $1^+$ and $1^-$ leaves the full convex subalgebra $B$, which gives the stated sequence of attachments.
\end{proof}

\subsection{The first attachment}
\begin{lemma}\label{app:lem:AR-B}
Up to relabelling the vertices and the opposite-algebra convention, $B\simeq\operatorname{Aus}(\Bbbk[x]/(x^3))$. Thus $B$ is representation-finite. Its Auslander--Reiten quiver has $21$ vertices $X_0,\ldots,X_{20}$, with dimension vectors, ordered by the vertices $(2,3,4)$, as follows:
\[
\begin{array}{c|ccccccc}
i&0&1&2&3&4&5&6\\ \hline
\underline{\dim}X_i
 &(0,0,1)&(0,1,0)&(1,0,0)&(0,1,1)&(0,1,1)&(1,1,0)&(1,1,0)
\end{array}
\]
\[
\begin{array}{c|ccccccc}
i&7&8&9&10&11&12&13\\ \hline
\underline{\dim}X_i
 &(1,1,1)&(1,1,1)&(1,1,1)&(1,1,1)&(2,1,0)&(1,2,1)&(2,1,1)
\end{array}
\]
and
\[
\begin{array}{c|ccccccc}
i&14&15&16&17&18&19&20\\ \hline
\underline{\dim}X_i
 &(2,1,1)&(2,2,1)&(2,2,1)&(2,2,1)&(2,2,1)&(2,2,2)&(3,2,1).
\end{array}
\]
Modules with equal dimension vectors are distinguished by their positions in the Auslander--Reiten quiver. Its arrows are
\[
\begin{aligned}
&X_0\to X_8,\quad
X_1\to X_{16},\quad
X_2\to X_8,\quad
X_3\to X_{15},\quad
X_4\to X_9,\quad
X_5\to X_9,\\
&X_6\to X_{10},\ X_6\to X_{15},\quad
X_7\to X_5,\quad
X_8\to X_3,\ X_8\to X_6,\ X_8\to X_{13},\\
&X_9\to X_0,\ X_9\to X_2,\ X_9\to X_{12},\quad
X_{10}\to X_{19},\quad
X_{11}\to X_{16},\\
&X_{12}\to X_8,\ X_{12}\to X_{17},\quad
X_{13}\to X_{15},\quad
X_{14}\to X_9,\ X_{14}\to X_{18},\\
&X_{15}\to X_1,\ X_{15}\to X_{11},\ X_{15}\to X_{19},\\
&X_{16}\to X_4,\ X_{16}\to X_5,\ X_{16}\to X_{14},\quad
X_{17}\to X_{13},\\
&X_{18}\to X_{12},\ X_{18}\to X_{20},\quad
X_{19}\to X_7,\ X_{19}\to X_{16},\quad
X_{20}\to X_{17}.
\end{aligned}
\tag{A.1}
\]
The indecomposable projectives are $X_{10},X_{18},X_{20}$.
\end{lemma}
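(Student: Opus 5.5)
The plan is to identify $B$ directly with the Auslander algebra of $\Lambda_3=\Bbbk[x]/(x^3)$, and then read off its Auslander--Reiten quiver. The identification uses the stable-endomorphism description from Section~\ref{sec:A-stable}, adapted to ordinary Hom spaces. Take $M=\Lambda_3\oplus M_2\oplus M_1$ with $M_i=\Lambda_3/(x^i)$, and send vertices $4,3,2$ to $M_1,M_2,M_3=\Lambda_3$ respectively (possibly reversed, depending on the opposite convention).

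The arrows are the irreducible maps. The arrows $e\colon 3\to4$ and $c\colon 2\to3$ correspond to the canonical surjections, and the arrows $f,d$ correspond to the inclusions $1\mapsto x$. The composites then behave as follows:
\begin{itemize}
\item the composite through $M_1$ at $M_2$ (namely $ef$ or $fe$, according to the convention) is multiplication by $x$ on $M_2$;
\item the composite through $\Lambda_3$ at $M_2$ is also multiplication by $x$, which gives the mesh relation $dc=ef$;
\item the composite $M_1\to M_2\to M_1$ is zero, which gives $fe=0$.
\end{itemize}
A dimension count confirms that these are all the relations. The matrix $(\min\{i,j\})$ has entry sum $1+1+1+1+2+2+1+2+3=14$, and the path algebra modulo $\langle dc-ef,\ fe\rangle$ has the same dimension. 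Alternatively, I would just cite the standard presentation of $\operatorname{Aus}(\Bbbk[x]/(x^n))$ as the preprojective-type algebra of $\DynA_n$ with the relation at the last vertex omitted (Dlab--Ringel). Since $\Lambda_3$ is representation-finite, its Auslander algebra is representation-finite by Auslander's theory, because it has global dimension at most two and its module category is equivalent to a category of morphisms. More concretely, one can compute the AR quiver and check that it is a finite connected component, so by Auslander's theorem it is the whole quiver.

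Next, I would compute the AR quiver by knitting. The first step is to list the indecomposable projectives of $B$ with their dimension vectors (ordered $(2,3,4)$). These are $P_4=(1,1,1)$ by uniseriality, and $(1,2,1)$ and $(2,2,1)$-like vectors for the others, matching $X_{10},X_{18},X_{20}$. The injectives come from the dual computation. Starting from the simple projective (if one exists) or the radicals of the projectives, I would knit preprojectively with $\tau^{-1}$ via the additive function $\underline{\dim}\,\tau^{-1}X=\sum\underline{\dim}(\text{middle})-\underline{\dim}X$. Whenever the radical of a projective appears, the projective is inserted. The process continues until the injectives are reached and the component closes up. The output should be exactly $21$ modules, which is also the expected count: the number of indecomposables equals the number of indecomposable objects in the morphism category of $\operatorname{mod}\Lambda_3$, i.e.\ $21$, as for $\operatorname{Aus}(\Bbbk[x]/(x^3))$. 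The resulting dimension vectors and arrows are then compared with the tables and with (A.1). Modules sharing a dimension vector are distinguished by their position in the knitting.

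The main obstacle is the bookkeeping of conventions. The opposite-algebra convention and the vertex ordering (whether $M_1$ sits at $4$ or $2$) must be fixed so that the listed projectives $X_{10},X_{18},X_{20}$ and the arrows in (A.1) match literally. I would resolve this by first computing $\underline{\dim}$ of the three projectives in both conventions and choosing the one that matches $(1,1,1)$, $(2,2,1)$, $(3,2,1)$. Knitting is then routine once this is fixed. As a final check, $\sum_i \underline{\dim}\,X_i$ should be consistent with Auslander--Reiten sequences, with every non-projective $X$ having its $\tau$ and mesh matching (A.1).
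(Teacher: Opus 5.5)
Your route is the paper's route. You identify $B$ with $\operatorname{Aus}(\Bbbk[x]/(x^3))$, then knit, then conclude by Auslander's theorem on finite components. The identification part is correct: the assignment $2,3,4\mapsto \Lambda_3,M_2,M_1$, the resulting relations $dc=ef$ and $fe=0$, and the dimension count $14$ all check out.

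\textbf{The knitting step cannot run as you describe it.} $B$ has no simple projective, since the projectives have dimension vectors $(1,1,1),(2,2,1),(3,2,1)$. Moreover, according to the very data (A.1) you want to reproduce, the Auslander--Reiten quiver is not directed. For example, $\tau X_8=X_9$, $\tau X_9=X_{16}$, $\tau X_{16}=X_{15}$ and $\tau X_{15}=X_8$. Similarly, $\tau$ cycles $(X_2,X_4,X_{11},X_3)$, $(X_0,X_5,X_{19},X_6)$ and $(X_1,X_{13},X_{12},X_{14})$. Only $X_{20}$ and the pairs $\tau^{-1}X_{18}=X_{17}$, $\tau^{-1}X_{10}=X_7$ lie in $\tau$-orbits of projectives.

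Preprojective knitting only ever produces modules $\tau^{-k}P$. So it can never reach the other $16$ modules, which include all three simples. In fact it stalls at once:
\begin{itemize}
\item computing $\tau^{-1}P_4$ needs $\tau^{-1}(\rad P_4)=\tau^{-1}X_6$, and that needs the predecessor $X_8$ of $X_6$;
\item $\rad P_3=X_{14}$ has the unknown predecessor $X_{16}$;
\item $\rad P_2=X_{18}$ has the predecessor $X_{14}$.
\end{itemize}

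The knitting therefore has to be seeded by other means. One option is to start from the sequence $0\to\rad P_2\to P_2\oplus\rad P_2/\operatorname{soc}P_2\to P_2/\operatorname{soc}P_2\to0$ of the projective-injective $P_2$. You then compute further translates directly from minimal presentations $P_1\to P_0\to X\to0$ via $\underline{\dim}\,\tau X=\underline{\dim}\,\nu P_1-\underline{\dim}\,\nu P_0+\underline{\dim}\,\nu X$. For instance, this gives $\underline{\dim}\,\tau S_2=(0,1,1)$ and $\underline{\dim}\,\tau S_3=(2,1,1)$.

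Another option is to pass to the $\mathbb Z$-covering given by $\deg c=\deg e=1$ and $\deg d=\deg f=0$. This covering is a directed mesh-type category on $\mathbb Z\DynA_3$, so you can knit there and push down by Gabriel's covering theorem. The paper's one-line proof is equally silent on how the knitting is started, so this seeding must be supplied either way.

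\textbf{Three side remarks are wrong and should be corrected.}
\begin{itemize}
\item Auslander algebras of representation-finite algebras need not be representation-finite. For example, $\operatorname{Aus}(\Bbbk[x]/(x^n))$ surjects onto its stable Auslander algebra $\Pi_{\DynA_{n-1}}$, which is wild for $n\ge7$. Finiteness here must come from the finite component together with Auslander's theorem, as you also say.
\item The passage to the morphism category is not a bijection on indecomposables. The functor $(X\xrightarrow{g}Y)\mapsto\operatorname{Cok}\Hom(-,g)$ kills $(X\xrightarrow{1}X)$ and $(X\to0)$. So the morphism category of $\operatorname{mod}\Lambda_3$ has $21+2\cdot 3=27$ indecomposables, not $21$. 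Quoting $21$ ``as for $\operatorname{Aus}$'' is circular in any case.
\item Your first list of projective dimension vectors is off. $(1,2,1)$ is the dimension vector of $\rad P_2/\operatorname{soc}P_2=X_{12}$, not of a projective; the correct triple is the one you state at the end.
\end{itemize}
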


\begin{proof}
The Auslander algebra of $\Bbbk[x]/(x^3)$ has the doubled three-vertex line as its quiver, with an endpoint zero relation and a middle mesh relation. Its presentation agrees with that of $B$ after relabelling and, if necessary, passage to the opposite algebra.

The knitting procedure applied to the three indecomposable projectives yields the $21$ modules and the arrows in \textup{(A.1)}. Closure of the meshes under Auslander--Reiten translation establishes completeness.
\end{proof}

\begin{proposition}\label{app:prop:first-hammock}
Let $M=e_2B/(cd)B$ and let $C=B[M]$ be the one-point extension obtained by adjoining the source $1^+$. Explicitly,
\[
C=\Bbbk\left(\xymatrix@C=3em{1^+ \ar[r]^{a} & 2 \ar@<0.5ex>[r]^{c} & 3 \ar@<0.5ex>[l]^{d} \ar@<0.5ex>[r]^{e} & 4 \ar@<0.5ex>[l]^{f}}\right)/\langle dc-ef,\ fe,\ acd\rangle.
\]
Then $C$ is representation-finite.
\end{proposition}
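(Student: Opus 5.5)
Since $B$ is representation-finite with the explicit Auslander--Reiten quiver of Lemma~\ref{app:lem:AR-B}, I would apply the classical criterion for one-point extensions. For $C=B[M]$ with $B$ representation-finite and directed, $C$ is representation-finite if and only if the vector space category $\Hom_B(M,\operatorname{mod}B)$ is subspace-finite, which by Kleiner's criterion \cite{Kleiner1972} means that the poset of indecomposables $X$ with $\Hom_B(M,X)\ne0$ contains none of the critical posets $(1,1,1,1)$, $(2,2,2)$, $(1,3,3)$, $(1,2,5)$, $(N,4)$. The plan has three steps: identify $M$, compute the hammock, and check the posets.

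\textbf{Step 1.} First I would identify $M=e_2B/(cd)B$ inside the list $X_0,\dots,X_{20}$. The projective $e_2B$ is one of $X_{10},X_{18},X_{20}$; I expect it to be $X_{20}$, with dimension vector $(3,2,1)$. I would compute the dimension vector of the quotient by the submodule generated by $cd$; it should be $(2,1,1)$ or $(2,2,1)$, after checking which paths starting at $2$ survive modulo $cd$. The position of $M$ in the AR quiver is then fixed by its radical and top, most likely $X_{13}$ or $X_{14}$. I would confirm the choice by verifying the relevant Hom to or from a projective.

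\textbf{Step 2.} Next I would compute the support of $\Hom_B(M,-)$ by knitting the hammock starting at $M$. I would use additivity along meshes, $h(X)+h(\tau X)=\sum_{E\to X}h(E)$ with $h(X)=\dim\Hom_B(M,X)$, with the injectives and the projectives handled by the standard corrections. The AR quiver is finite, and since $B$ is an Auslander algebra of global dimension at most $2$, I expect the relevant part of the component to be directed, so the knitting is routine. The output is the poset $\mathcal P$ of those $X_i$ with $h(X_i)>0$, ordered by the existence of nonzero maps compatible with the hammock. If some values are $2$, the vector space category is not a poset, and I would instead pass to the subspace category and use Nazarova--Roiter reduction.

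\textbf{Step 3.} Finally I would read off the width and shape of $\mathcal P$ and check that it contains no Kleiner critical subposet. I expect $\mathcal P$ to have width at most $3$ and to contain no $(2,2,2)$. The main obstacle is getting the order relations in the hammock exactly right, together with the possible dimension-$2$ entries. A mistake there could falsely create or hide a $(2,2,2)$ or $(1,3,3)$. I would double-check by computing the Tits form of $C$ on the candidate dimension vectors and confirming it is weakly positive, in line with Bongartz's criterion for the directed case. This gives a consistency check for the representation-finiteness of $C$.
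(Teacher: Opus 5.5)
Your route is the paper's: treat $C=B[M]$ as a one-point extension of the representation-finite algebra $B$, compute the hammock $h_M=\dim_\Bbbk\Hom_B(M,-)$ on the $21$ indecomposables of Lemma~\ref{app:lem:AR-B}, and apply Kleiner's criterion to the resulting poset. The skeleton is sound. However, several of your specific expectations are wrong, and two of them would derail the execution as you describe it.

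\textbf{The module $M$.} It does not have dimension vector $(2,1,1)$ or $(2,2,1)$. The right ideal $(cd)B$ is spanned by $cd,cdc,cdcd$, because $cdce=cefe=0$. So $M$ has basis $\bar e_2,\bar c,\overline{ce}$. It is the uniserial module $2\to3\to4$ with dimension vector $(1,1,1)$, one of the modules of that dimension vector in Lemma~\ref{app:lem:AR-B}, and not $X_{13}$ or $X_{14}$. More to the point, you never need to locate $M$ in the Auslander--Reiten quiver. Since $M$ is cyclic with the single relation $cd$, you have $\Hom_B(M,X)\cong\{x\in Xe_2 : x\,cd=0\}=\ker X(cd)$. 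This is how the paper computes the hammock, reading it off each $X_i$ directly. It vanishes exactly on $X_0,X_1,X_3,X_4$ and is one-dimensional everywhere else.

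\textbf{Directedness.} $B$ is not directed. Its quiver $2\rightleftarrows3\rightleftarrows4$ has oriented cycles, so there are cycles of nonzero radical maps among the indecomposable projectives, and all three projectives lie in the support of $h_M$. For instance, the Auslander--Reiten quiver of Lemma~\ref{app:lem:AR-B} contains the cycle $X_8\to X_6\to X_{10}\to X_{19}\to X_{16}\to X_5\to X_9\to X_{12}\to X_8$ entirely inside that support. Consequences:
\begin{itemize}
\item Knitting from $M$ is not a forward recursion. The identity $h(X)+h(\tau X)=\sum_{E\to X}h(E)+\delta_{X,M}$ for non-projective $X$ still holds, but only as a linear system to be solved.
\item Your Tits-form consistency check is unavailable: Bongartz's criterion does not apply to $C$, which has oriented cycles and is not triangular.
\end{itemize}
Neither point affects the proof, since directedness is not actually needed anywhere. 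The description of $\operatorname{mod}B[M]$ through the subspace category of $\Hom_B(M,\operatorname{mod}B)$ holds for arbitrary $B$. Moreover, because all nonzero hammock spaces are one-dimensional, the relation ``the induced map is nonzero'' is automatically a partial order. Indeed, a composite $X\to Y\to X$ of radical maps is nilpotent in $\End_B(X)$, so it acts as zero on a one-dimensional space. Hence no Nazarova--Roiter reduction is needed.

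\textbf{Step 3.} The resulting poset is a union of two chains, so it has width $2$. Every Kleiner critical poset has width at least $3$, so your Step~3 is immediate rather than a delicate search for $(2,2,2)$ or $(1,3,3)$.
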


\begin{proof}
The module $M$ is generated at vertex $2$ subject to $cd=0$, so $\Hom_B(M,X)\cong\ker\!\bigl(X(cd):X_2\to X_2\bigr)$.
With the numbering of Lemma~\ref{app:lem:AR-B}, the hammock function $h_M(X_i)=\dim_{\Bbbk}\Hom_B(M,X_i)$ vanishes for $i\in\{0,1,3,4\}$ and equals $1$ for $i\in\{2,5,6,7,8,9,10,11,12,13,14,15,16,17,18,19,20\}$. In particular, every nonzero hammock space is one-dimensional.

Let $P_M$ denote the hammock poset. The nonzero maps induced by $\Hom_B(M,-)$ on the arrows in \textup{(A.1)} yield the following cover relations after transitive reduction:
\[
\begin{aligned}
&X_7\prec X_5,\quad X_5\prec X_9,\quad
X_9\prec X_2,\quad X_9\prec X_{12},\\
&X_2\prec X_8,\quad
X_{12}\prec X_8,\quad X_{12}\prec X_{17},\\
&X_8\prec X_6,\quad X_8\prec X_{13},\quad
X_{17}\prec X_{13},\\
&X_6\prec X_{10},\quad X_6\prec X_{15},\quad
X_{13}\prec X_{15},\\
&X_{10}\prec X_{19},\quad
X_{15}\prec X_{11},\quad X_{15}\prec X_{19},\\
&X_{11}\prec X_{16},\quad X_{19}\prec X_{16},\quad
X_{16}\prec X_{14},\quad X_{14}\prec X_{18},\quad
X_{18}\prec X_{20}.
\end{aligned}
\tag{A.2}
\]
The poset $P_M$ is the union of the chains
\[
X_7<X_5<X_9<X_2<X_8<X_6<X_{10}<X_{19}<X_{16}<X_{14}<X_{18}<X_{20}
\]
and $X_{12}<X_{17}<X_{13}<X_{15}<X_{11}$. Hence $\operatorname{width}(P_M)=2$. Since every critical poset in Kleiner's criterion has width at least $3$, the poset $P_M$ is representation-finite. The one-point-extension/subspace-category correspondence then implies that $C=B[M]$ is representation-finite.
\end{proof}

\subsection{The second attachment}

\begin{proposition}\label{app:prop:second-attachment}
The algebra $\widetilde S$ is the one-point coextension of $C$ by the left $C$-module $N:=Cb\simeq Ce_2/\bigl(Ca+C(cd)\bigr)$. Equivalently, $\widetilde S^{\op}\simeq C^{\op}[N^{\op}]$.
\end{proposition}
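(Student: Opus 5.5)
We will check directly that $\widetilde S$ has the shape of a one-point coextension of $C$ at the sink $1^-$, and then identify the coextending module with $N$. Recall the definition: if $D$ is an algebra and $L$ a left $D$-module, the one-point coextension $[L]D$ is the triangular matrix algebra $\begin{pmatrix}\Bbbk & 0\\ L & D\end{pmatrix}$, or the transpose form, depending on conventions. It is characterised by three conditions: the new vertex is a sink, deleting it leaves $D$, and $L$ is spanned by the paths ending at the new vertex. Dually, $([L]D)^{\op}\simeq D^{\op}[L^{\op}]$ is a one-point extension. This gives the stated equivalence once the coextension description is established.

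First, $1^-$ is a sink of $\widetilde Q$ whose only incoming arrow is $b\colon 2\to 1^-$. Hence $e_{1^-}\widetilde S e_{1^-}=\Bbbk$, and no nontrivial path starts at $1^-$. Second, we compute $(1-e_{1^-})\widetilde S(1-e_{1^-})$. Every defining relation of $\widetilde S$ not involving $b$ is among $dc-ef$, $fe$, $acd$. The relations involving $b$ are $ab$ and $cdb$, and these only kill paths ending at $1^-$. So this corner is $\Bbbk Q_C/\langle dc-ef,fe,acd\rangle=C$, and $\widetilde S\simeq\begin{pmatrix}\Bbbk&0\\ L&C\end{pmatrix}$ with $L=(1-e_{1^-})\widetilde S e_{1^-}$. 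Here $L$ is spanned by the nonzero paths ending with $b$ and is a left $C$-module.

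Third, we identify $L$. Every path ending at $1^-$ factors uniquely as $pb$ with $p$ a path of $C$ ending at vertex $2$. This gives a surjection of left $C$-modules
\[
Ce_2\longrightarrow L,\qquad p\longmapsto pb.
\]
Its kernel should be exactly the left ideal generated by $a$ and $cd$, corresponding to the relations $ab=0$ and $cdb=0$. The inclusion of $Ca+C(cd)$ in the kernel is immediate. The reverse inclusion is the main obstacle: we must show that no further path $pb$ vanishes, so that $\dim L=\dim Ce_2/(Ca+Ccd)$.

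For this we will use a diamond-lemma basis for $\widetilde S$, extending the reduction system from the proof of Proposition~\ref{app:prop:reduction-S} with $\lambda=0$. The irreducible paths ending in $b$ are then $b$ and $db$, together with $fdb$; the path $cdb$ reduces to $0$. We will check that the same paths $e_2,d,fd$ are exactly the irreducible elements of $Ce_2$ outside $Ca+Ccd$. The paths $cd$, $ac$-type paths and longer ones are absorbed by $a$ or $cd$, using $ef=dc$ and $fe=0$.

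With these dimensions matching, the map induces an isomorphism $Ce_2/(Ca+C(cd))\simeq L$. Since $L$ is visibly $Cb$ inside $\widetilde S$, we obtain $N=Cb\simeq Ce_2/(Ca+C(cd))$. Taking opposites then yields $\widetilde S^{\op}\simeq C^{\op}[N^{\op}]$.
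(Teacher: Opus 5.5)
Your proposal is correct and follows essentially the same route as the paper. Both arguments identify $e\widetilde S e=C$ and $N=e\widetilde S e_{1^-}=Cb$, then show that the kernel of $Ce_2\to Cb$, $x\mapsto xb$, is exactly $Ca+C(cd)$ by a basis count: $Ce_2$ has basis $e_2,a,d,cd,fd,efd,cefd$, of which $e_2,d,fd$ map to the independent paths $b,db,fdb$ (these lie in distinct Peirce components, so only their nonvanishing needs checking) and the rest are killed. The only slip is cosmetic: paths beginning with $ac$ end at vertex $3$, so they do not lie in $Ce_2$ at all, and in fact $Ca=\Bbbk a$.
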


\begin{proof}
Put $v=1^-$ and $e=e_{1^+}+e_2+e_3+e_4$. Then $e\widetilde S e=C$ and $N=e\widetilde S e_v=Cb$. For the map $\pi:Ce_2\to Cb$, $x\mapsto xb$, the relations $ab=cdb=0$ give $Ca+C(cd)\subseteq\ker\pi$.

A basis of $Ce_2$ is $e_2,\ a,\ d,\ cd,\ fd,\ efd,\ cefd$. Here $efd=dcd$ and $cefd=(cd)^2$. Right multiplication by $b$ sends $e_2,d,fd$ to $b,db,fdb$ and annihilates $a,cd,efd,cefd$. Consequently, $\ker\pi=\operatorname{span}_{\Bbbk}\{a,cd,efd,cefd\}=Ca+C(cd)$. This identifies $N$, and the triangular matrix description of a one-point coextension yields $\widetilde S^{\op}\simeq C^{\op}[N^{\op}]$.
\end{proof}

\subsection{The Auslander--Reiten quiver of \(C\)}

\begin{proposition}\label{app:prop:AR-C}
The algebra $C$ has exactly $49$ isomorphism classes of indecomposable modules.

Let $\mathcal A(P_M)$ denote the antichains of $P_M$, indexed by the subscripts of the $X_i$. Explicitly,
\[
\begin{aligned}
&\varnothing,\ \{2\},\ \{5\},\ \{6\},\ \{7\},\ \{8\},\ \{9\},
\ \{10\},\ \{11\},\ \{12\},\ \{13\},\ \{14\},\ \{15\},\ \{16\},\\
&\{17\},\ \{18\},\ \{19\},\ \{20\},\
\{2,12\},\ \{2,17\},\ \{6,13\},\ \{6,17\},\
\{8,17\},\ \{10,11\},\\
&\{10,13\},\ \{10,15\},\ \{10,17\},\ \{11,19\}.
\end{aligned}
\tag{A.3}
\]
For $A\in\mathcal A(P_M)$ put $Y_A=\bigl(\Bbbk,\bigoplus_{i\in A}X_i,\phi_A\bigr)$, where each nonzero component $\phi_i\in\Hom_B(M,X_i)$ is fixed, and set $Y_\varnothing=(\Bbbk,0,0)$. Then $\operatorname{ind} C=\{X_0,\ldots,X_{20}\}\cup\{Y_A\mid A\in\mathcal A(P_M)\}$.

Every nonzero irreducible morphism space is one-dimensional. The Auslander--Reiten quiver $\Gamma_C$ has the following arrows.

\noindent\emph{Old-to-old arrows:}
\[
\begin{aligned}
&X_{0}\to X_{8},\quad X_{1}\to X_{16},\quad X_{3}\to X_{15},\quad
X_{4}\to X_{9},\quad X_{5}\to X_{9},\quad X_{6}\to X_{10},\\
&X_{7}\to X_{5},\quad X_{8}\to X_{6},\quad X_{9}\to X_{2},\quad
X_{9}\to X_{12},\quad X_{12}\to X_{17},\\
&X_{14}\to X_{18},\quad X_{15}\to X_{11},\quad
X_{16}\to X_{14},\quad X_{18}\to X_{20}.
\end{aligned}
\tag{A.4}
\]

\noindent\emph{Old-to-new arrows:}
\[
\begin{aligned}
&X_{2}\to Y_{\{2,12\}},\quad
X_{5}\to Y_{\{5\}},\quad
X_{6}\to Y_{\{6,17\}},\quad
X_{7}\to Y_{\{7\}},\\
&X_{8}\to Y_{\{8,17\}},\quad
X_{9}\to Y_{\{9\}},\quad
X_{10}\to Y_{\{10,17\}},\quad
X_{11}\to Y_{\{10,11\}},\\
&X_{12}\to Y_{\{2,12\}},\quad
X_{13}\to Y_{\{6,13\}},\quad
X_{14}\to Y_{\{14\}},\\
&X_{15}\to Y_{\{10,15\}},\quad
X_{16}\to Y_{\{16\}},\quad
X_{17}\to Y_{\{2,17\}},\\
&X_{18}\to Y_{\{18\}},\quad
X_{19}\to Y_{\{11,19\}},\quad
X_{20}\to Y_{\{20\}}.
\end{aligned}
\tag{A.5}
\]

\noindent\emph{New-to-old arrows:}
\[
\begin{aligned}
&Y_{\{8\}}\to X_{3},\quad
Y_{\{9\}}\to X_{0},\quad
Y_{\{14\}}\to X_{9},\quad
Y_{\{15\}}\to X_{1},\\
&Y_{\{16\}}\to X_{4},\quad
Y_{\{16\}}\to X_{5},\quad
Y_{\{18\}}\to X_{12},\quad
Y_{\{19\}}\to X_{7},\\
&Y_{\{20\}}\to X_{17},\quad
Y_{\{2,12\}}\to X_{8},\quad
Y_{\{6,13\}}\to X_{15},\\
&Y_{\{8,17\}}\to X_{13},\quad
Y_{\{10,15\}}\to X_{19},\quad
Y_{\{11,19\}}\to X_{16}.
\end{aligned}
\tag{A.6}
\]

\noindent\emph{New-to-new arrows:}
\[
\begin{aligned}
&Y_{\{2\}}\to Y_{\{8\}},\quad
Y_{\{5\}}\to Y_{\{9\}},\quad
Y_{\{6\}}\to Y_{\{10,15\}},\quad
Y_{\{7\}}\to Y_{\{5\}},\\
&Y_{\{8\}}\to Y_{\{6,13\}},\quad
Y_{\{9\}}\to Y_{\{2,12\}},\quad
Y_{\{10\}}\to Y_{\{19\}},\\
&Y_{\{11\}}\to Y_{\{16\}},\quad
Y_{\{12\}}\to Y_{\{8,17\}},\quad
Y_{\{13\}}\to Y_{\{15\}},\\
&Y_{\{14\}}\to Y_{\{18\}},\quad
Y_{\{15\}}\to Y_{\{11,19\}},\quad
Y_{\{16\}}\to Y_{\{14\}},\\
&Y_{\{17\}}\to Y_{\{13\}},\quad
Y_{\{18\}}\to Y_{\{20\}},\quad
Y_{\{19\}}\to Y_{\{16\}},\\
&Y_{\{20\}}\to Y_{\varnothing},\quad
Y_{\{2,12\}}\to Y_{\{12\}},\quad
Y_{\{2,12\}}\to Y_{\{2,17\}},\\
&Y_{\{2,17\}}\to Y_{\{2\}},\quad
Y_{\{2,17\}}\to Y_{\{8,17\}},\quad
Y_{\{6,13\}}\to Y_{\{6\}},\\
&Y_{\{6,13\}}\to Y_{\{10,13\}},\quad
Y_{\{6,17\}}\to Y_{\{6,13\}},\quad
Y_{\{6,17\}}\to Y_{\{10,17\}},\\
&Y_{\{8,17\}}\to Y_{\{8\}},\quad
Y_{\{8,17\}}\to Y_{\{6,17\}},\quad
Y_{\{10,11\}}\to Y_{\{10\}},\\
&Y_{\{10,11\}}\to Y_{\{11,19\}},\quad
Y_{\{10,13\}}\to Y_{\{13\}},\quad
Y_{\{10,13\}}\to Y_{\{10,15\}},\\
&Y_{\{10,15\}}\to Y_{\{15\}},\quad
Y_{\{10,15\}}\to Y_{\{10,11\}},\quad
Y_{\{10,17\}}\to Y_{\{17\}},\\
&Y_{\{10,17\}}\to Y_{\{10,13\}},\quad
Y_{\{11,19\}}\to Y_{\{11\}},\quad
Y_{\{11,19\}}\to Y_{\{19\}}.
\end{aligned}
\tag{A.7}
\]
Thus $\Gamma_C$ has $49$ vertices and $83$ arrows.
\end{proposition}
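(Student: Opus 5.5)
The plan is to use the standard description of indecomposables over a one-point extension. Every indecomposable $C$-module is either an indecomposable $B$-module $X_0,\dots,X_{20}$ (restricted along $C\to B$), or it is supported at $1^+$. The modules of the second kind correspond to indecomposable objects of the subspace category $\mathcal U(\Hom_B(M,\operatorname{mod}B))$. By Proposition~\ref{app:prop:first-hammock} every nonzero $\Hom_B(M,X_i)$ is one-dimensional, so this vector space category is the poset $P_M$ (linearised).

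\textbf{Step 1: the indecomposables supported at $1^+$.} Since $P_M$ has width $2$, the indecomposable subspace representations are of two kinds. The first kind are the triples $Y_A=(\Bbbk,\bigoplus_{i\in A}X_i,\phi_A)$ for antichains $A$. The second kind are the objects with zero space at $1^+$, which are the old modules $X_i$. Indecomposability of $Y_A$ for an antichain should follow from the fact that no nonzero map between two elements of $A$ factors through the hammock. Conversely, if an indecomposable has a non-antichain support, a comparability $i<j$ lets one split off a summand by a change of basis.

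\textbf{Step 2: count.} Enumerate the antichains directly from the two chains in (A.2) together with the cross cover relations. Elements of the short chain $X_{12}<X_{17}<X_{13}<X_{15}<X_{11}$ are incomparable only with specific elements of the long chain, namely $2$ with $12$ and $17$, and so on. This yields the list (A.3): the empty set, $17$ singletons and $10$ pairs, giving $28$ antichains. Adding the $21$ old modules gives $49$ indecomposables.

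\textbf{Step 3: the Auslander--Reiten quiver.} Knit $\Gamma_C$ starting from the known AR quiver (A.1) of $B$. First, the new projective $P_{1^+}=Y_{\{20\}}$ has radical $M$, and $M$ corresponds to the socle-relevant part of the hammock. Second, the AR sequences of $B$ that are not split by $\Hom_B(M,-)$ remain AR sequences in $C$; these give the old-to-old arrows (A.4). Third, for each $X_i$ with $h_M(X_i)\ne0$, compute the mesh using the dimension vectors of the $Y_A$, namely $(1,\underline{\dim}\,X_A)$, and additivity. This produces (A.5)--(A.7). One-dimensionality of the irreducible spaces follows because all dimension vectors involved are small and the hammock values are $0$ or $1$.

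The main obstacle is verifying completeness and correctness of the $83$ arrows by knitting without error. I would check this by confirming that every mesh is additive on dimension vectors and that the knitting closes up, reaching all injectives, including $I_{1^+}=Y_\varnothing$. By connectedness and the Auslander theorem for finite components, this confirms both the count and the arrows.
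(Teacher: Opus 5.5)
Your Steps 1 and 2 follow the paper's argument: the subspace category of the width-two poset $P_M$, a one-dimensional space at $1^+$, antichains, and $28+21=49$. You should add the paper's reason for $\dim V\le 1$: the two chains give two filtrations of $V$, and these split simultaneously.

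The gap is in Step 3, whose starting point is wrong. The new projective is $P_{1^+}=(\Bbbk,M,\mathrm{id}_M)$. Here $M=e_2B/(cd)B$ is the uniserial module with basis $\bar e_2,\bar c,\overline{ce}$ and $\underline{\dim}\,M=(1,1,1)$; it is the injective $I_B(4)$, that is, the minimum $X_7$ of $P_M$. Hence $P_{1^+}=Y_{\{7\}}$. This module is projective-injective in $C$, and in the listed quiver its only neighbours are $X_7\to Y_{\{7\}}\to Y_{\{5\}}$. Your $Y_{\{20\}}$ has dimension vector $(1;3,2,1)\neq(1;1,1,1)$. It is in fact the injective $I_C(2)=(\Hom_B(M,X_{20}),X_{20},\mathrm{ev})$, the middle term of $0\to X_{20}\to Y_{\{20\}}\to Y_\varnothing\to 0$. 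Knitting seeded at $Y_{\{20\}}$ attaches the new projective at the wrong end of the hammock, so it cannot produce (A.5)--(A.7).

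Your rule for which $B$-meshes survive also needs correcting. For $i\neq 1^+$ we have $e_iC=e_iB$, while $\nu_C(e_iC)=(\Hom_B(M,I_B(i)),I_B(i),\mathrm{ev})$. So a minimal projective presentation gives $\tau_CY=(\Hom_B(M,\tau_BY),\tau_BY,\mathrm{ev})$ for $Y\in\operatorname{ind}B$. Hence an almost split sequence of $B$ stays almost split in $C$ exactly when $\Hom_B(M,\tau_BY)=0$.

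Here that leaves only the meshes starting at $X_0,X_1,X_3,X_4$, which account for $8$ of the $15$ arrows in (A.4). The other seven arrows lie in meshes containing new modules: $X_7\to X_5$, $X_5\to X_9$, $X_9\to X_{12}$, $X_{12}\to X_{17}$, $X_6\to X_{10}$, $X_{14}\to X_{18}$, $X_{18}\to X_{20}$. For instance, $X_{18}\to X_{20}$ lies in the mesh $X_{18}\to X_{20}\oplus Y_{\{18\}}\to Y_{\{20\}}$.

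Finally, dimension-vector additivity alone cannot drive this computation. $\Gamma_B$ already has oriented cycles, for instance $X_9\to X_2\to X_8\to X_6\to X_{15}\to X_{19}\to X_{16}\to X_5\to X_9$ in (A.1). Also $X_7,\dots,X_{10}$ share a dimension vector, so additivity does not determine the translates. You must compute $\tau_C$ directly, using the formula above for old modules. Alternatively, as the paper does, compute $\rad_C/\rad_C^2$ from the morphism condition $\beta\phi=\psi(\alpha\otimes 1_M)$. Additivity is then only a necessary check. The one-dimensionality of the irreducible spaces comes out of that computation, not from the dimension vectors being small.
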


\begin{proof}
A $C=B[M]$-module is a triple $(V,X,\phi)$ with $V\in\mathrm{mod}\,\Bbbk$, $X\in\mathrm{mod}\,B$, and $\phi:V\otimes_{\Bbbk}M\to X$. A morphism $(V,X,\phi)\to(W,Y,\psi)$ is a pair $(\alpha,\beta)$ such that $\beta\phi=\psi(\alpha\otimes1_M)$.

By Proposition~\ref{app:prop:first-hammock}, the nonzero spaces $\Hom_B(M,X_i)$ are one-dimensional and $P_M$ has width two. Representations of this poset correspond to pairs of filtrations, which admit simultaneous splittings. Consequently, an indecomposable new module has $\dim V=1$ and is determined by an antichain of $P_M$. Together with the $21$ old modules, these are precisely the modules listed in \textup{(A.3)}.

The morphism condition determines the Hom-spaces; irreducible maps are obtained from
\[\operatorname{Irr}_C(U,V)=\rad_C(U,V)/\rad_C^2(U,V).
\]
The mesh-category calculation yields \textup{(A.4)}--\textup{(A.7)}; the listed irreducible spaces are one-dimensional and all others vanish. Closure under Auslander--Reiten translation proves completeness.
\end{proof}

\subsection{The second hammock and tameness}

\begin{proposition}\label{app:prop:second-hammock}
Let $D=\Hom_{\Bbbk}(-,\Bbbk)$. With the above numbering, $DN\simeq X_{10}$. For the coextension, we use the covariant tensor hammock
\[
H_N(U):=U\otimes_C N
\cong D\Hom_C(U,DN),\qquad
h_N(U):=\dim_{\Bbbk}H_N(U).
\]
Let $\mathcal K_N$ be its image vector-space category: its morphism spaces are the images of the maps
$\Hom_C(U,V)\to\Hom_{\Bbbk}(H_N(U),H_N(V))$.
Thus the contravariant Hom functor is the dual of $H_N$, rather than the functor used for the maps below. The support of $H_N$ comprises the following $36$ indecomposable $C$-modules:
\[
\begin{aligned}
\mathcal X=\{&
X_2,X_5,X_6,X_7,X_8,X_9,X_{10},X_{11},X_{12},
X_{13},X_{14},X_{15},\\
&X_{16},X_{17},X_{18},X_{19},X_{20}\}
\end{aligned}
\tag{A.8}
\]
and
\[
\begin{aligned}
\mathcal Y=\{&
Y_{\{11\}},Y_{\{13\}},Y_{\{14\}},Y_{\{15\}},
Y_{\{16\}},Y_{\{17\}},Y_{\{18\}},Y_{\{19\}},Y_{\{20\}},\\
&Y_{\{2,12\}},Y_{\{2,17\}},Y_{\{6,13\}},Y_{\{6,17\}},
Y_{\{8,17\}},Y_{\{10,11\}},Y_{\{10,13\}},\\
&Y_{\{10,15\}},Y_{\{10,17\}},Y_{\{11,19\}}\}.
\end{aligned}
\tag{A.9}
\]
All nonzero hammock spaces are one-dimensional except
\[
Z:=Y_{\{11,19\}},
\qquad
h_N(Z)=2.
\tag{A.10}
\]
The associated vector-space category $\mathcal K_N$ is schurian of polynomial growth. In particular, $\widetilde S$ is tame.
\end{proposition}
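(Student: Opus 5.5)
We have to establish four assertions about the coextension $\widetilde S$ of $C$ by $N$: $DN\simeq X_{10}$; the support of $H_N$ is the list (A.8)--(A.9) with the dimensions in (A.10); $\mathcal K_N$ is schurian of polynomial growth; and $\widetilde S$ is tame. We take them in that order.

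\emph{Identifying $DN$.} By Proposition~\ref{app:prop:second-attachment}, $N=Cb$ has basis $b,db,fdb$. Hence $DN$ is a right $C$-module with dimension vector $(0;1,1,1)$ on the vertices $(1^+;2,3,4)$. Since $a$ acts trivially ($ab=0$), $DN$ is a $B$-module. It is uniserial along $f,d$, and $cd$ acts by zero. Among the modules $X_7,\ldots,X_{10}$ of dimension vector $(1,1,1)$ in Lemma~\ref{app:lem:AR-B}, exactly one has this structure: the indecomposable injective of $B$ whose socle lies at vertex $2$. We identify it with the projective $X_{10}$ by checking the radical layers. One caveat: we must confirm that $X_{10}$ is also injective as a $C$-module. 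The vertex $1^+$ is a source and $\Hom_B(M,X_{10})\ne0$, so this injectivity needs to be checked; if it fails, we instead read $DN$ off as the corresponding $Y$-module.

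\emph{Computing the hammock.} We use the duality $H_N(U)\cong D\Hom_C(U,DN)$, so $h_N(U)=\dim\Hom_C(U,X_{10})$. This function can be computed on $\Gamma_C$ from Proposition~\ref{app:prop:AR-C} by knitting backwards from $X_{10}$. Concretely, we start with $h_N=1$ at $X_{10}$ and apply the additive formula $h(\tau^{-1}U)=\sum h(E_i)-h(U)$ along the meshes of (A.4)--(A.7), running in reverse. Since $C$ is representation-finite and directed, this is a finite bookkeeping exercise. It should produce support exactly $\mathcal X\cup\mathcal Y$, with the single value $2$ occurring at $Z=Y_{\{11,19\}}$. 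The value $2$ arises because $Z$ receives maps through both $Y_{\{10,11\}}$ and $Y_{\{10,15\}}\to Y_{\{11,19\}}$, and these survive along two independent paths to $X_{10}$.

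\emph{Structure of $\mathcal K_N$, the main obstacle.} We next describe $\mathcal K_N$, the key step. For every pair $U,V$ in the support, $\Hom_C(U,V)$ is at most one-dimensional, because $C$ is representation-directed and the modules involved are thin. So $\mathcal K_N$ is a poset with one exceptional object $Z$ of dimension $2$. We expect $Z$ to behave like two incomparable points, or like a point together with a "double" structure. We then determine the induced order on the 36 objects by composing irreducible maps and discarding those killed by $H_N$. The plan is to exhibit this poset as a union of chains plus $Z$, mirroring the width-$2$ analysis in Proposition~\ref{app:prop:first-hammock}. Its width should be at most $4$, and it should contain no Nazarova critical poset (of type $(1,1,1,1,1)$, $(1,1,1,2)$, $(2,2,3)$, $(1,3,4)$, $(N,5)$) outside an isolated $\widetilde{\DynD}_4$-type configuration. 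Handling $Z$ correctly is the main obstacle: we must check that the two-dimensional space carries no further automorphism data, so that $\mathcal K_N$ is schurian, and then apply Bekkert's criterion \cite[Theorem~1]{BekkertPolynomialGrowth} to get polynomial growth. We also expect to find exactly one $(1,1,1,1)$ configuration, which supplies the one-parameter family.

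\emph{Conclusion.} The subspace-category correspondence for one-point coextensions, applied to $\widetilde S^{\op}\simeq C^{\op}[N^{\op}]$ with $\mathcal K_N$ tame, shows that $\widetilde S$ is tame. The one-parameter family shows that it is representation-infinite. Transferring through Proposition~\ref{app:prop:node-separation}, $S$ is then tame, and by Proposition~\ref{app:prop:reduction-S} together with the degeneration argument, the algebras $A_p$ are tame as well.
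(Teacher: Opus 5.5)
Your outline matches the paper's: identify $DN$, compute $h_N$, show $\mathcal K_N$ is schurian, apply Bekkert's criterion, and conclude by the factor-space description of coextensions. However, the decisive verifications are only announced, and several facts you plan to rely on are false.

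Neither $B$ nor $C$ is representation-directed. Indeed, $X_7,\dots,X_{10}$ are four non-isomorphic indecomposables with the same dimension vector $(1,1,1)$, which cannot happen for a directed algebra. So there is no well-founded order along which to knit $\Hom_C(-,X_{10})$ backwards from $X_{10}$ with zeros elsewhere. Concretely, there is a nonzero map $X_{10}\to X_{19}$ (an arrow of (A.1)), yet $X_{19}$ lies in the support (A.8).

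The supported modules are not thin either: $X_{20}$ has dimension vector $(3,2,1)$ and $Z$ has $(1;4,3,2)$. Hom-spaces also fail to be at most one-dimensional exactly where it matters. $Y_{\{10,15\}}$ maps to $Z$ through $Y_{\{10,11\}}$ and through $X_{19}$, with images $\Bbbk e_1$ and $\Bbbk e_2$ in $H_N(Z)$. Hence the induced space $\Hom_{\mathcal K_N}(Y_{\{10,15\}},Z)$ is already two-dimensional, and the paper finds $23$ of the $35$ linear objects in this situation.

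The paper avoids knitting altogether. The formula $U\otimes_C N\cong U_2/(U(a)+U(cd))$ gives $h_N(X_i)$ directly and $h_N(Y_A)=|A|-\varepsilon_A$. So $h_N(Z)=2$ simply because $\{11,19\}$ is the only two-element antichain contained in $F$; note there is no arrow $Y_{\{10,15\}}\to Y_{\{11,19\}}$.

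On $DN$: with top $S_4$ and socle $S_2$, it is the projective $e_4B$, not an injective. The injective envelope of $S_2$ is the six-dimensional $X_{20}=e_2B$. Your caveat about injectivity over $C$ and the fallback to a $Y$-module are moot, since $DN$ vanishes at $1^+$ (because $ab=0$) and is just the inflation of $e_4B$.

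The genuinely missing idea is the analysis of $Z$, which you flag as the main obstacle but do not carry out. The irreducible maps into $Z$ from $Y_{\{10,11\}}$, $X_{19}$ and $Y_{\{15\}}$ have image lines $\Bbbk e_1$, $\Bbbk e_2$ and $\Bbbk(e_1+e_2)$. Dually, there are three kernel lines for the maps out of $Z$. An endomorphism in $\mathcal K_N$ must preserve these three distinct lines, hence is scalar, and this is what makes $\mathcal K_N$ schurian. Note that $Z$ cannot ``behave like two incomparable points'', because $\End_{\mathcal K_N}(Z)$ is a quotient of the local ring $\End_C(Z)$.

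The paper then verifies the hypotheses of Bekkert's criterion. The proper line strata at $Z$ are disjoint chains. The poset $P_N$ of the $35$ linear objects is representation-finite by Kleiner's list: it has width $3$ and contains no $(2,2,2)$, $(1,3,3)$, $(1,2,5)$ or $(N,4)$.

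Your expected $(1,1,1,1)$ configuration does not exist, since $P_N$ has no four-element antichain. The one-parameter family comes instead from the factor spaces $(Z,|Z|/L,\pi_L)$, $L\in\mathbb P^1$, of the single two-dimensional object. Finally, Nazarova's hypercritical list is a criterion for posets and does not apply directly to $\mathcal K_N$, which is not a poset; your version of the list also omits $(1,2,6)$.
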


\begin{proof}
For a right $C$-module $U$, tensor--Hom duality gives
\[
\Hom_C(U,DN)\simeq D(U\otimes_C N),
\qquad
U\otimes_C N
\simeq
\frac{U_2}{U(a)+U(cd)}.
\tag{A.11}
\]
The basis $\bar e_2,\bar d,\overline{fd}$ of $N$ gives $DN$ dimension vector $(1,1,1)$ on $(2,3,4)$, with the maps $d,f$ identities and $c,e$ zero. Its space at $1^+$ is zero. This is the projective $e_4B=X_{10}$, inflated to $C$.

Put $E=\{2,5,6,7,8,9,10,12\}$ and $F=\{11,13,14,15,16,17,18,19,20\}$. For the old modules, \textup{(A.11)} gives $h_N(X_i)=0$ for $i\in\{0,1,3,4\}$ and $h_N(X_i)=1$ for $i\in E\cup F$.

For $Y_A=(\Bbbk,\bigoplus_{i\in A}X_i,\phi_A)$, choose $0\neq u_i\in\Hom_B(M,X_i)\simeq\ker X_i(cd)$. Its image in $(X_i)_2/\operatorname{im}X_i(cd)$ is nonzero exactly when $i\in E$ and zero exactly when $i\in F$. Since $|A|\le2$,
\[
h_N(Y_A)=|A|-\varepsilon_A,
\qquad
\varepsilon_A=
\begin{cases}
1,&A\cap E\neq\varnothing,\\
0,&A\cap E=\varnothing.
\end{cases}
\tag{A.12}
\]
This yields \textup{(A.8)}--\textup{(A.9)}, with $Z=Y_{\{11,19\}}$ the unique object satisfying $h_N(Z)=2$.

Choose a basis $e_1,e_2$ of $|Z|:=H_N(Z)\simeq\Bbbk^2$. After rescaling the neighboring one-dimensional spaces, the incoming maps are
\[
|X_{19}|\to|Z|,\quad 1\mapsto e_2,
\qquad
|Y_{\{15\}}|\to|Z|,\quad 1\mapsto e_1+e_2,
\qquad
|Y_{\{10,11\}}|\to|Z|,\quad 1\mapsto e_1,
\tag{A.13}
\]
and the outgoing maps are
\[
|Z|\to|X_{16}|,\quad (x,y)\mapsto x-y,
\qquad
|Z|\to|Y_{\{11\}}|,\quad (x,y)\mapsto x,
\qquad
|Z|\to|Y_{\{19\}}|,\quad (x,y)\mapsto y.
\tag{A.14}
\]
Accordingly, the incoming image lines and outgoing kernel lines are
\[
\Bbbk e_1,\qquad
\Bbbk e_2,\qquad
\Bbbk(e_1+e_2).
\tag{A.15}
\]
The same formulas hold in characteristic $2$.

An endomorphism of $Z$ in $\mathcal K_N$ preserves the three lines in \textup{(A.15)}. The first two make it diagonal, and the third forces its diagonal entries to coincide. Thus $\End_{\mathcal K_N}(Z)=\Bbbk$. All other indecomposable objects have one-dimensional underlying spaces; hence $\mathcal K_N$ is schurian. The proper image-line strata for maps into $Z$ are
\[
\{X_{11},Y_{\{10,11\}}\},
\qquad
\{X_{19}\},
\qquad
\{Y_{\{17\}},Y_{\{13\}},Y_{\{15\}}\},
\tag{A.16}
\]
and the kernel-line strata are
\[
\{Y_{\{19\}},X_7\},
\qquad
\{Y_{\{11\}}\},
\qquad
\{X_{16},X_{14},X_{18}\}.
\tag{A.17}
\]
Each stratum in \textup{(A.16)}--\textup{(A.17)} is a chain, so the line-stratum obstructions in Bekkert's criterion are absent.

For clarity, the other $23$ one-dimensional objects have a full two-dimensional space of induced maps in exactly one direction between them and $Z$, and no nonzero map in the opposite direction. Thus every linear object is comparable with $Z$. The six proper image-line objects and the six proper kernel-line objects in \textup{(A.16)}--\textup{(A.17)} are disjoint.

Let $P_N$ be the poset on the $35$ linear objects, with $U<V$ when the image of $\Hom_C(U,V)$ under $H_N$ is nonzero. This uses the full induced Hom-space, not mere reachability in the Auslander--Reiten quiver. The morphism condition in Proposition~\ref{app:prop:AR-C}, followed by the quotient in \textup{(A.11)}, gives the following complete cover list:
\[
\begin{aligned}
X_{11}\prec Y_{\{10,11\}}\quad X_{12}\prec Y_{\{2,12\}}\quad X_{13}\prec Y_{\{6,13\}}\\
X_{14}\prec X_{18}\quad X_{14}\prec Y_{\{14\}}\quad X_{15}\prec X_{11}\\
X_{15}\prec Y_{\{10,15\}}\quad X_{16}\prec X_{14}\quad X_{16}\prec Y_{\{16\}}\\
X_{17}\prec Y_{\{2,17\}}\quad X_{18}\prec Y_{\{18\}}\quad X_{19}\prec X_{16}\\
X_{19}\prec Y_{\{19\}}\quad X_{2}\prec Y_{\{2,12\}}\quad X_{20}\prec Y_{\{20\}}\\
X_{5}\prec X_{9}\quad X_{6}\prec X_{10}\quad X_{7}\prec X_{5}\\
X_{8}\prec X_{6}\quad X_{9}\prec X_{12}\quad X_{9}\prec X_{2}\\
Y_{\{10,11\}}\prec X_{16}\quad Y_{\{10,11\}}\prec Y_{\{11\}}\quad Y_{\{10,13\}}\prec Y_{\{10,15\}}\\
Y_{\{10,13\}}\prec Y_{\{13\}}\quad Y_{\{10,15\}}\prec X_{19}\quad Y_{\{10,15\}}\prec Y_{\{10,11\}}\\
Y_{\{10,15\}}\prec Y_{\{15\}}\quad Y_{\{10,17\}}\prec Y_{\{10,13\}}\quad Y_{\{10,17\}}\prec Y_{\{17\}}\\
Y_{\{11\}}\prec Y_{\{16\}}\quad Y_{\{13\}}\prec Y_{\{15\}}\quad Y_{\{14\}}\prec X_{9}\\
Y_{\{14\}}\prec Y_{\{18\}}\quad Y_{\{15\}}\prec Y_{\{11\}}\quad Y_{\{15\}}\prec Y_{\{19\}}\\
Y_{\{16\}}\prec X_{5}\quad Y_{\{16\}}\prec Y_{\{14\}}\quad Y_{\{17\}}\prec Y_{\{13\}}\\
Y_{\{18\}}\prec X_{12}\quad Y_{\{19\}}\prec X_{7}\quad Y_{\{19\}}\prec Y_{\{16\}}\\
Y_{\{2,12\}}\prec X_{8}\quad Y_{\{2,17\}}\prec Y_{\{8,17\}}\quad Y_{\{20\}}\prec X_{17}\\
Y_{\{6,13\}}\prec X_{15}\quad Y_{\{6,13\}}\prec Y_{\{10,13\}}\quad Y_{\{6,17\}}\prec Y_{\{10,17\}}\\
Y_{\{6,17\}}\prec Y_{\{6,13\}}\quad Y_{\{8,17\}}\prec X_{13}\quad Y_{\{8,17\}}\prec Y_{\{6,17\}}
\end{aligned}
\tag{A.18}
\]
Here and below, a label $Y_{\{i,j\}}$ denotes the object from \textup{(A.3)}, not a pair of poset elements. The three-element antichains of this poset are exactly
\[
\begin{aligned}
\{X_{11},X_{19},Y_{\{13\}}\}\quad \{X_{11},X_{19},Y_{\{15\}}\}\\
\{X_{11},X_{19},Y_{\{17\}}\}\quad \{X_{11},Y_{\{10,13\}},Y_{\{17\}}\}\\
\{X_{11},Y_{\{10,15\}},Y_{\{13\}}\}\quad \{X_{11},Y_{\{10,15\}},Y_{\{17\}}\}\\
\{X_{14},X_{7},Y_{\{11\}}\}\quad \{X_{14},X_{7},Y_{\{16\}}\}\\
\{X_{14},Y_{\{11\}},Y_{\{19\}}\}\quad \{X_{15},Y_{\{10,13\}},Y_{\{17\}}\}\\
\{X_{16},X_{7},Y_{\{11\}}\}\quad \{X_{16},Y_{\{11\}},Y_{\{19\}}\}\\
\{X_{18},X_{5},Y_{\{14\}}\}\quad \{X_{18},X_{7},Y_{\{11\}}\}\\
\{X_{18},X_{7},Y_{\{14\}}\}\quad \{X_{18},X_{7},Y_{\{16\}}\}\\
\{X_{18},Y_{\{11\}},Y_{\{19\}}\}\quad \{X_{19},Y_{\{10,11\}},Y_{\{13\}}\}\\
\{X_{19},Y_{\{10,11\}},Y_{\{15\}}\}\quad \{X_{19},Y_{\{10,11\}},Y_{\{17\}}\}
\end{aligned}
\tag{A.19}
\]
This gives a short certificate for Kleiner's criterion. No four-element set has all its three-element subsets in \textup{(A.19)}, so there is no four-element antichain. Every object occurs in at most six triples in that list. Also, for any two distinct objects $U,V$, form the graph whose edges are the pairs $\{W,T\}$ for which both $\{U,W,T\}$ and $\{V,W,T\}$ occur in \textup{(A.19)}. Each such graph is a forest, as is seen directly from the list.

These facts exclude all five critical posets. The critical poset $(1,1,1,1)$ is excluded by the width bound. A full $(2,2,2)$ would make the common graph for the two elements of one chain contain a four-cycle, contrary to the forest property. The isolated element in $(1,3,3)$ or $(1,2,5)$ would occur in at least nine or ten three-element antichains, respectively. Finally, in $(N,4)$ choose an element of the four-element $N$ incomparable with two other elements of $N$; together with the four-element chain it would occur in eight such antichains. All contradict the bound six. Thus $P_N$ is representation-finite by Kleiner's criterion.

We can now apply Bekkert's criterion \cite[Theorem~1]{BekkertPolynomialGrowth}. All objects have dimension at most two, and each linear object is comparable with $Z$. Conditions involving two distinct nonlinear objects are vacuous. For the single nonlinear object $Z$, its endomorphisms are scalars, the incoming and outgoing proper line strata are disjoint, and each stratum is a chain. These verify the same-object intersection and line-chain conditions, including iterated line maps. The selected linear subposets have no four-element antichain, since $P_N$ has width three. Finally, representation-finiteness of $P_N$ excludes the forbidden linear subposets in the polynomial-growth criterion. Hence every condition of that criterion is satisfied, and $\mathcal K_N$ has polynomial growth.

The factor-space-category description of one-point coextensions identifies the new indecomposable $\widetilde S$-modules with indecomposable factor spaces of $\mathcal K_N$. Since $C$ is representation-finite, $\widetilde S$ is tame.
\end{proof}

\subsection{Conclusion}

\begin{proposition}\label{app:thm:S-tame}
The algebra $S$ defined above is tame. Moreover, its node separation $\widetilde S$ is tame representation-infinite.
\end{proposition}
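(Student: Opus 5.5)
The plan is to assemble the appendix results in the following order. By Proposition~\ref{app:prop:node-separation}, vertex $1$ is a node of $S$, and node separation gives a bijection between the indecomposable $S$-modules and the indecomposable $\widetilde S$-modules other than the simple at $1^-$. So $S$ and $\widetilde S$ have the same representation type, and it suffices to show that $\widetilde S$ is tame and representation-infinite.

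For tameness, I would follow the chain of attachments. Lemma~\ref{app:lem:AR-B} shows that $B$ is representation-finite. Proposition~\ref{app:prop:first-hammock} shows that the one-point extension $C=B[M]$ is representation-finite, and Proposition~\ref{app:prop:AR-C} lists its $49$ indecomposables. By Proposition~\ref{app:prop:second-attachment}, $\widetilde S$ is the one-point coextension of $C$ by $N$. Proposition~\ref{app:prop:second-hammock} then shows that the vector-space category $\mathcal K_N$ is schurian of polynomial growth. Every indecomposable $\widetilde S$-module is either an indecomposable $C$-module (finitely many) or corresponds to an indecomposable factor-space object over $\mathcal K_N$. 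Polynomial growth of $\mathcal K_N$ therefore bounds the number of one-parameter families in each dimension, so $\widetilde S$ is tame. Tameness transfers to $S$ through the node-separation bijection.

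For representation-infiniteness, the quickest route is through $A_p$. By Proposition~\ref{app:prop:reduction-S}, $A_p\simeq S$ for $p\ge3$, and $A_p$ is representation-infinite for every $p\ge2$. Hence $S$ is representation-infinite, and so is $\widetilde S$ by node separation, since only one simple module is lost. As an independent check, I would exhibit the one-parameter family directly in $\mathcal K_N$. Take $Z=Y_{\{11,19\}}$ with its three distinct lines $\Bbbk e_1$, $\Bbbk e_2$ and $\Bbbk(e_1+e_2)$ from (A.15). Combining $Z$ with a fourth incomparable line object, or using the width-three antichains in (A.19) together with $Z$, should produce a four-subspace-type configuration whose factor spaces carry a parameter $\lambda$. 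This yields infinitely many nonisomorphic indecomposable $\widetilde S$-modules of a fixed dimension.

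The main obstacle is this direct parameter-family construction. The reduction argument through $A_p$ avoids it, so I would rely on that argument and mention the family only as a confirmation.
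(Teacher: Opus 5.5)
Your proposal is correct. The tameness half is the paper's argument: node separation reduces to $\widetilde S$, which is obtained from the representation-finite $B$ by the representation-finite one-point extension $C=B[M]$ and then the coextension by $N$, whose vector-space category $\mathcal K_N$ has polynomial growth.

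For representation-infiniteness you take a genuinely different route.

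\emph{The paper's route.} The paper stays inside the coextension picture. Take $Z=Y_{\{11,19\}}$, so that $|Z|\simeq\Bbbk^2$ and $\End_{\mathcal K_N}(Z)=\Bbbk$. The factor spaces $(Z,|Z|/L,\pi_L)$, for $L$ a line in $|Z|$, are indecomposable and pairwise nonisomorphic. This gives an explicit $\mathbb P^1$-family of $\widetilde S$-modules, which is then transferred to $S$ by node separation.

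\emph{Your route.} You use Proposition~\ref{app:prop:reduction-S}. First, $S\simeq A_p$ for $p\ge3$ because $(cd)^3=0$. Second, $A_p$ is representation-infinite by the idempotent-truncation criterion. Its truncation $\Gam(\DynA_{p+3},\{1,p+1,p+3\})$ has parameter sequence $(1,p,2,1)$, and its radical-square-zero quotient has separated quiver of type $\widetilde{\DynD}_5$. You then pass from $S$ to $\widetilde S$ by the node-separation bijection.

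\emph{Comparison.} Your route is valid, non-circular and shorter. It needs only the elementary type-$\DynA$ results of Section~\ref{sec:A-stable}, not the factor-space correspondence or the computation of $\End_{\mathcal K_N}(Z)$ from the three lines in (A.15). In exchange, the paper's argument is self-contained within the hammock analysis and locates the one-parameter family concretely: its dimension vector is that of $Z$ plus one at $1^-$.

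\emph{Your optional check.} As formulated, it is misdirected. No fourth line object and no antichain from (A.19) is needed. The varying kernel $L\subset|Z|$ already plays the role of the fourth subspace relative to the three fixed lines in (A.15). Those fixed lines are exactly what force $\End_{\mathcal K_N}(Z)=\Bbbk$, and hence make distinct choices of $L$ give nonisomorphic modules.
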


\begin{proof}
By Proposition~\ref{app:prop:node-separation}, $\widetilde S$ is obtained from $B$ by a one-point extension $C=B[M]$ and a subsequent coextension by $N$. The former is representation-finite by Proposition~\ref{app:prop:first-hammock}, and the latter is tame by Proposition~\ref{app:prop:second-hammock}. Node separation therefore implies that $S$ is tame.

For representation-infiniteness, consider $Z=Y_{\{11,19\}}$, for which $|Z|\simeq\Bbbk^2$ and $\End_{\mathcal K_N}(Z)=\Bbbk$. For a line $L\subset |Z|$, take the factor space $(Z,|Z|/L,\pi_L)$. It is indecomposable: a nontrivial decomposition would split the indecomposable object $Z$, or give a nonzero direct summand of the target missed by the surjective map $\pi_L$. An isomorphism between two such factor spaces is induced by an automorphism of $Z$, which is scalar, so their kernels must be equal. Distinct lines in $\mathbb P(|Z|)\simeq\mathbb P^1(\Bbbk)$ therefore give pairwise nonisomorphic indecomposables. Hence $\widetilde S$, and by node separation also $S$, is representation-infinite.
\end{proof}

\section*{Acknowledgements}
The author is supported by the National Natural Science Foundation of China No. 12401048 and the Fundamental Research Funds for the Central Universities No. DUT25RC(3)132. The author would like to thank ChatGPT (OpenAI) for helpful discussions, suggestions, and assistance in refining the presentation of the mathematical ideas in this paper. The author takes full responsibility for the mathematical results, proofs, and any remaining errors.

\ \\

\bibliographystyle{alpha}
\bibliography{reference}
\ \\
\end{document}